\numberwithin{equation}{section}
\newtheorem{prop}{Proposition}
\newtheorem{lemma}[prop]{Lemma}
\newtheorem{thm}[prop]{Theorem}
\newtheorem{cor}[prop]{Corollary}
\newtheorem{conj}[prop]{Conjecture}
\numberwithin{prop}{section}
\theoremstyle{definition}
\newtheorem{defn}[prop]{Definition}
\newtheorem{ex}[prop]{Example}
\newtheorem{rmk}[prop]{Remark}
\DeclareSymbolFont{script}{U}{eus}{m}{n}
\DeclareSymbolFontAlphabet{\mathscr}{script}
\DeclareMathSymbol{\Wedge}{0}{script}{"5E}
\DeclareMathAlphabet{\mathrmsl}{OT1}{cmr}{m}{sl}
\newcommand{\del}{\partial}
\newcommand{\delb}{\bar{\partial}}\newcommand{\dt}{\tfrac{\partial}{\partial t}}
\newcommand{\brs}[1]{\left| #1 \right|}
\newcommand{\gD}{\Delta}
\newcommand{\gd}{\delta}
\newcommand{\gs}{\sigma}
\newcommand{\gU}{\Upsilon}
\newcommand{\gl}{\lambda}
\newcommand{\gw}{\omega}
\newcommand{\ga}{\alpha}
\newcommand{\gb}{\beta}
\newcommand{\gL}{\Lambda}
\newcommand{\N}{\nabla}
\newcommand{\LL}{\mathcal L}
\newcommand{\til}[1]{\widetilde{#1}}
\renewcommand{\bar}[1]{\overline{#1}}
\renewcommand{\i}{\sqrt{-1}}
\renewcommand{\part}{\del}
\newcommand{\bpart}{\bar{\del}}
\newcommand{\hook}{\lrcorner}
\newcommand{\bi}{\bar{i}}
\newcommand{\bj}{\bar{j}}
\newcommand{\bk}{\bar{k}}
\newcommand{\bl}{\bar{l}}
\newcommand{\bq}{\bar{q}}
\newcommand{\bs}{\bar{s}}
\newcommand{\IP}[1]{\left<#1\right>}
\newcommand{\bga}{\bar{\alpha}}
\newcommand{\bgb}{\bar{\beta}}
\newcommand{\bmu}{\bar{\mu}}
\newcommand{\HH}{{\mathscr H}}
\DeclareMathOperator{\Rc}{Rc}
\DeclareMathOperator{\tr}{tr}
\DeclareMathOperator{\Vol}{Vol}
\DeclareMathOperator{\Ham}{Ham}
\begin{document}

\title[The nondegenerate generalized K\"ahler Calabi-Yau
problem]{The nondegenerate generalized K\"ahler Calabi-Yau
problem}

\begin{abstract} We formulate a Calabi-Yau type conjecture in generalized
K\"ahler geometry, focusing on the case of nondegenerate Poisson
structure.  After defining natural Hamiltonian deformation spaces for
generalized K\"ahler
structures generalizing the notion of K\"ahler class, we conjecture unique
solvability of Gualtieri's Calabi-Yau equation within this class.  We establish
the uniqueness, and moreover show that all such solutions are actually
hyper-K\"ahler metrics.  We furthermore establish a GIT framework for this
problem, interpreting solutions of this equation as zeros of a moment map
associated to a Hamiltonian action and finding a Kempf-Ness functional.  Lastly
we indicate the naturality of
generalized K\"ahler-Ricci flow in this setting, showing that it evolves within
the given Hamiltonian deformation class, and that the Kempf-Ness functional is
monotone, so
that
the only possible fixed points for the flow are hyper-K\"ahler metrics.  On a
hyper-K\"ahler background, we establish global existence and weak convergence
of the flow.
\end{abstract}

\date{\today}

\author{Vestislav Apostolov}
\address{D\'epartment de mathe\'ematiques\\
         Universit\'e du Qu\'ebec \`a Montr\'eal\\
         Case postale 8888, succursale centre-ville
         Mongtr\'eal (Qu\'ebec) H3C 3P8}
\email{\href{mailto:apostolov.vestislav@uqam.ca}{apostolov.vestislav@uqam.ca}}

\author{Jeffrey Streets}
\address{Rowland Hall\\
         University of California\\
         Irvine, CA 92617}
\email{\href{mailto:jstreets@uci.edu}{jstreets@uci.edu}}

\maketitle

\section{Introduction}

Let $(M^{2m}, g, J)$ be a compact K\"ahler manifold, with $\Theta \in
\wedge^{m,0} (M,J)$ a holomorphic volume form.  Yau's theorem (\cite{YauCC})
asserts
that in any
K\"ahler class there exists a unique Calabi-Yau (Ricci-flat) metric.  This
result provides a wide class of examples of Ricci flat metrics, which play a
central role in geometry and mathematical physics.  Since Yau's original proof a
number of new analytic techniques have been brought to bear on the problem.  For
instance the Aubin-Yau $J$-functional (see e.g. \cite{aubin}) yields a
variational characterization of Calabi-Yau metrics, which can be used to 
yield a variational proof of the Calabi-Yau theorem \cite{Berman}.  Also, as
shown by Cao~\cite{Cao}, in this setting the K\"ahler-Ricci flow with arbitrary
initial data exists for all time and converges to a Calabi-Yau metric.  In this
paper we generalize aspects of this picture to generalized K\"ahler
geometry.

Generalized K\"ahler structures first appeared through investigations into
supersymmetric sigma models \cite{Gates}, and were rediscovered in
a purely mathematical context in the work of Gualtieri \cite{Gualtieri-PhD} and 
Hitchin \cite{HitchinGCY}, and have recently
attracted interest in both the physics and mathematical communities as
natural generalizations of K\"ahler structures.  We will focus here entirely on
the biHermitian
description of generalized K\"ahler geometry (cf. \cite{AGG,Gates}). Thus,  a
generalized
K\"ahler manifold is a smooth manifold $M$ with a triple $(g, I, J)$ consisting
of two integrable almost-complex structures,  $I$ and $J$,  together with a
Riemannian metric $g$ which is
Hermitian with respect to both, such that the K\"ahler forms $\gw_I$ and
$\gw_J$ satisfy
\begin{align*}
 d^c_I \gw_I = H = - d^c_J \gw_J, \qquad dH = 0,
\end{align*}
where the first equation defines $H$, and $d^c_I = \i (\delb_I - \del_I)$.

A key feature of generalized K\"ahler geometry, observed by Hitchin
in \cite{HitchinPoisson}  (cf. \cite{AGG, Pontecorvo} for the $4$-dimensional
case) is that there are naturally associated Poisson structures.  In particular,
the tensor
\begin{align*}
\gs = [I,J] g^{-1}
\end{align*}
is a real Poisson structure, which is the real part of a holomorphic Poisson
structure with respect to both complex structures $I$ and $J$.  In this paper, 
we focus entirely on the case when this  holomorphic Poisson
structure is nondegenerate, in which case we will refer to the generalized
K\"ahler structure itself as nondegenerate.  These structures are also
referred to as ``type $(0,0)$'' in the language of generalized complex
structures.  In this case we define the
corresponding symplectic form
\begin{align*}
\Omega = \gs^{-1},
\end{align*}
which is the  common real part of holomorphic symplectic forms with respect to
$I$
and $J$.

The simplest example of a nondegenerate generalized K\"ahler structure comes
from hyper-K\"ahler geometry.  In particular, if $(M^{4n}, g, I, J, K)$ is
hyper-K\"ahler then $(M^{4n}, g, I, J)$ is a nondegenerate generalized K\"ahler
structure with $\Omega = -\tfrac{1}{2}\gw_K$.  Later, Joyce (\cite{AGG, 
Gualtieri-CMP, HitchinPoisson}) showed that one can appropriately deform this
example using an
$\gw_K$-Hamiltonian isotopy to produce non-K\"ahler, nondegenerate generalized
K\"ahler structures.  We rederive this construction in the purely biHermitian
context in \S \ref{s:variations}, and moreover show that the proof adapts to
show a more general statement, namely that for an arbitrary nondegenerate
generalized K\"ahler structure, $\Omega$-Hamiltonian isotopies act locally to
produce new nondegenerate generalized K\"ahler structures (Proposition
\ref{nondegvariations}).

Given this variational space for nondegenerate generalized K\"ahler
structures, it is natural to seek canonical representatives of this
class.  A Calabi-Yau equation in this setting was defined by Gualtieri
(\cite{Gualtieri-PhD} Definition 6.40).  We define this equation only
referencing biHermitian geometry in \S \ref{s:GCYeqn}, noting here that, in
analogy with the classical Calabi-Yau equation, it asks for constancy of a
certain ``Ricci potential,'' denoted $\Phi$, which is defined as the ratio of the top exterior
powers of the closed spinors defining the two relevant generalized complex
structures.   In the nondegenerate setting, this Ricci potential has a simple expression in terms of the given bihermitian triple, namely
\begin{align*}
\Phi = \log \frac{\det \left( I + J \right)}{\det \left(I - J \right)}.
\end{align*}
The relevant Calabi-Yau type equation is then
\begin{align*}
\Phi \equiv \gl,
\end{align*}
where $\gl$ is a topological invariant of the $\Omega$-Hamiltonian deformation class (cf. Lemma \ref{lambdalemma}).
Our first main result is that, within our given deformation class,
solutions to the equation are unique, and moreover more rigid than expected:
they are hyper-K\"ahler.

\begin{thm} \label{thm:uniquerigidity}  (cf. Theorem \ref{thm:uniquerigidity2})
Let $(M^{4n}, g, I, J)$ be a nondegenerate generalized K\"ahler manifold.  Any
two solutions $(g_i,I,J_i)$, $i=1,2$ of the generalized K\"ahler Calabi-Yau
equation in the $\Omega$-Hamiltonian deformation class agree, and moreover
define a hyper-K\"ahler structure.
\end{thm}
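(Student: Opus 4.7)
The plan is to pass to the underlying Kähler geometry of $(M,I)$, translate the generalized Kähler Calabi-Yau equation into a classical complex Monge-Ampère equation, and then combine Yau's uniqueness theorem with a Bochner argument to get both uniqueness and hyper-Kähler rigidity.

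First I would extract from the preceding Hamiltonian deformation analysis (Proposition \ref{nondegvariations}) that an $\Omega$-Hamiltonian deformation of a nondegenerate generalized Kähler structure preserves both the complex structure $I$ and the symplectic form $\Omega$, and moves the Kähler form $\omega_{I,t} := g_t(I\cdot,\cdot)$ within a fixed de Rham class on $(M,I)$. The last point should come out of writing the infinitesimal variation of $\omega_I$ along an $\Omega$-Hamiltonian vector field as an exact form manufactured from the Hamiltonian potential. This puts the two candidate solutions $(g_1,I,J_1)$ and $(g_2,I,J_2)$ on the common footing of the complex manifold $(M,I)$ with fixed holomorphic symplectic form $\Omega_I := \tfrac{1}{2}(\Omega - \sqrt{-1}\,I^*\Omega)$ and with Kähler forms $\omega_i := g_i(I\cdot,\cdot)$ lying in the same cohomology class.

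Second, I would unwind the generalized Kähler Calabi-Yau equation in the nondegenerate (type $(0,0)$) setting, where the two pure spinors defining the generalized complex structures are of symplectic type. A direct spinor computation should identify the ratio of their top components with $\omega_i^{2n}/(\Omega_I^n \wedge \overline{\Omega_I^n})$, up to a universal constant, so that the Calabi-Yau equation becomes
\begin{equation*}
\omega_i^{2n} = c_i\, \Omega_I^n \wedge \overline{\Omega_I^n}
\end{equation*}
for a constant $c_i$. Integrating and using $[\omega_1]=[\omega_2]$ forces $c_1 = c_2$, and Yau's uniqueness theorem for the complex Monge-Ampère equation then yields $\omega_1=\omega_2$, hence $g_1 = g_2$. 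For the hyper-Kähler rigidity, the common Ricci-flat Kähler metric $\omega$ forces the holomorphic section $\Omega_I$ of $\Lambda^{2,0}T^*M$ to be $\nabla^g$-parallel, by the Bochner identity for $(p,0)$-forms on Ricci-flat Kähler manifolds. Writing $\Omega_I = \omega_J^{\mathrm{hk}} + \sqrt{-1}\,\omega_K^{\mathrm{hk}}$ and setting $J^{\mathrm{hk}} := g^{-1}\omega_J^{\mathrm{hk}}$, $K^{\mathrm{hk}} := g^{-1}\omega_K^{\mathrm{hk}}$ produces parallel endomorphisms; the algebraic identities coming from $\Omega_I$ being of type $(2,0)$ with respect to $I$, together with the Monge-Ampère normalization, should force $(I,J^{\mathrm{hk}},K^{\mathrm{hk}})$ to be a quaternionic triple, so $(g,I,J^{\mathrm{hk}},K^{\mathrm{hk}})$ is hyper-Kähler. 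Finally, since each $J_i$ is determined by $g$, $I$, $\Omega$ through the nondegenerate identity $[I,J_i]g^{-1} = -\Omega^{-1}$ together with $J_i^2 = -\mathrm{Id}$ and $g$-compatibility, and since $J^{\mathrm{hk}}$ satisfies the same relations, $J_1 = J_2 = J^{\mathrm{hk}}$.

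The main obstacle will be the second step, namely verifying that the generalized Kähler Calabi-Yau equation, defined via ratios of top components of pure spinors, genuinely reduces to a single complex Monge-Ampère equation on $(M,I)$. This rests on an explicit computation with the symplectic-type spinors of a type $(0,0)$ generalized complex structure, and uses crucially that $[I,J]$ is everywhere invertible; a priori the equation involves $I$ and $J$ nonsymmetrically, and the reduction to a pure $(M,I)$-side statement is the heart of the matter. A secondary subtlety is pinning down the precise algebraic identities forced by parallelism of $\Omega_I$ and the Monge-Ampère normalization in order to conclude that $J^{\mathrm{hk}}$ and $K^{\mathrm{hk}}$ actually square to $-\mathrm{Id}$; this should go through by comparing volumes in the standard way.
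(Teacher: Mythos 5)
There is a genuine gap, and it sits exactly where you flagged the ``heart of the matter.'' The Ricci potential in the nondegenerate case is \emph{not} $\log\bigl(\omega_I^{2n}/(\Omega_I^n\wedge\bar\Omega_I^n)\bigr)$: the two pure spinors are $\varphi_+=e^{\i F_+}$ and $\varphi_-=e^{4\Omega+\i F_-}$ with $F_\pm=-2g(I\pm J)^{-1}$, so the equation reads $F_+^{2n}=e^{\lambda}F_-^{2n}$, i.e. $\det(I-J)/\det(I+J)=\mathrm{const}$. By contrast $\omega_I^{2n}/(\Omega_I^n\wedge\bar\Omega_I^n)$ is proportional to $\bigl(\det(I+J)\det(I-J)\bigr)^{1/2}$ — the \emph{product} rather than the \emph{ratio} — so your proposed Monge--Amp\`ere equation is a different equation, and in dimension $4n>4$ the two conditions are genuinely inequivalent. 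Compounding this, even if one had a Monge--Amp\`ere formulation, Yau's uniqueness theorem is unavailable at the outset: a solution in the deformation class is a priori only a pluriclosed Hermitian metric, and $\omega_I$ need not be closed, so it has no K\"ahler class to speak of. Your plan invokes Yau first and deduces rigidity afterwards via parallelism of $\Omega_I$, but the logical order must be reversed: one has to prove first that the Calabi--Yau condition forces $(g,I)$ to be K\"ahler.

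That first step is the real content of the rigidity half, and it does not follow from a Bochner argument. The paper proves the identity $(\rho_B)_I=-\tfrac12\,dJd\Phi$ (via the Lee-form identities for symplectic-type pluriclosed metrics), so $\Phi\equiv\lambda$ gives $(\rho_B)_I=0$; then the pluriclosed identity $|d\omega|_g^2=\delta\theta+|\theta|_g^2$ converts $s_B=0$ into $s_C=|d\omega|^2\ge 0$, and Gauduchon's plurigenera theorem (trivial canonical bundle) forces $s_C\equiv 0$, hence $d\omega_I=0$. Only after this does one get Ricci-flatness, hyper-K\"ahlerness, and — using that the two K\"ahler forms lie in the same class because the taming symplectic forms $F_\pm$ (not $\omega_I$ itself) have fixed de Rham classes under the Hamiltonian deformation — the appeal to uniqueness of the Calabi--Yau metric to conclude $g_1=g_2$ and then $J_1=J_2$ via $F(I+J)=-2g$. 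Your proposal is missing the vanishing-theorem input entirely and rests the reduction on an incorrect identification of the spinor ratio.
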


Taking inspiration from the Calabi-Yau theorem, this rigidity, as well as the
further geometric and analytic results described below, we conjecture unique
solvability of the generalized K\"ahler Calabi-Yau equation in the given
$\Omega$-Hamiltonian deformation class.
\begin{conj} \label{CYconj} Let $(M^{4n}, g, I, J)$ be a nondegenerate
generalized K\"ahler manifold.  There exists a nondegenerate generalized
K\"ahler structure $(g',I,J')$ in the $\Omega$-Hamiltonian deformation class
solving the generalized K\"ahler Calabi-Yau equation.  Moreover, this resulting
generalized K\"ahler structure is unique, and hyper-K\"ahler.
\end{conj}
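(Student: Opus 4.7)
The plan is to separate the conjecture into its three assertions — existence, uniqueness, and the hyper-K\"ahler conclusion — and observe that Theorem \ref{thm:uniquerigidity} already disposes of the last two. Thus the substantive content is existence, and I would attack this via two complementary routes that both pass through the rigidity result: running the generalized K\"ahler-Ricci flow, and applying a continuity/implicit-function argument along paths in the deformation class.

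Route one is the flow approach. Start the generalized K\"ahler-Ricci flow from the given nondegenerate GK structure $(g,I,J)$. By the paper's announced results, the flow remains inside the $\Omega$-Hamiltonian deformation class, and the Kempf-Ness functional is monotone nonincreasing along it with critical points being precisely the hyper-K\"ahler structures. Assuming the flow can be shown to exist for all time and to subconverge smoothly modulo the $\Omega$-Hamiltonian group action, any smooth limit would be a zero of the moment map, hence a hyper-K\"ahler representative of the class, hence (by Theorem \ref{thm:uniquerigidity}) the unique Calabi-Yau solution we seek. The hyper-K\"ahler background case in the paper supplies the model for long-time existence and weak convergence; the extension is an a priori estimates problem.

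Route two is a continuity method, needed to handle classes not a priori near a hyper-K\"ahler one. Given any nondegenerate GK structure, choose a smooth path $\{(g_s, I, J_s)\}_{s \in [0,1]}$ inside the deformation space, set $\mathcal{O} \subset [0,1]$ to be the set of parameters for which a Calabi-Yau representative exists, and argue that $\mathcal{O}$ is both open and closed. Openness should follow from the implicit function theorem: the linearization of the Calabi-Yau equation, restricted to the $\Omega$-Hamiltonian variations modulo the stabilizer, should be an elliptic operator whose kernel is controlled by the infinitesimal version of Theorem \ref{thm:uniquerigidity}, i.e.\ solutions infinitesimally deform only by symmetries. Closedness requires uniform higher-order estimates along the family.

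The main obstacle in both routes is exactly this global a priori estimate. The Calabi-Yau equation here is a coupled PDE for the metric, the complex structure $J$, and a Hamiltonian potential, and without a nearby hyper-K\"ahler reference one lacks the structural input used in the background case. My preferred strategy would be to invoke the GIT framework: establish that the Kempf-Ness functional is proper on the $\Omega$-Hamiltonian orbit space modulo the stabilizer, in analogy with Tian's properness for the Mabuchi functional in the cscK problem. A Kempf-Ness-type properness, combined with the moment-map interpretation of the equation, would reduce existence to a direct minimization, with the resulting minimizer automatically smooth by elliptic regularity and automatically hyper-K\"ahler by the uniqueness theorem; the analytic heart of the conjecture would then be this single properness statement.
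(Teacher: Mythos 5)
The statement you are addressing is Conjecture \ref{CYconj}, and the paper itself does not prove it: only the uniqueness and rigidity clauses are established (Theorem \ref{thm:uniquerigidity}, via Propositions \ref{p:CY} and \ref{p:uniqueness}), while existence is left open. Your proposal correctly isolates this structure and correctly observes that the last two assertions are already disposed of by the rigidity theorem. But everything you say about existence is explicitly conditional --- ``assuming the flow can be shown to exist for all time and to subconverge,'' ``closedness requires uniform higher-order estimates,'' ``the analytic heart \ldots would then be this single properness statement'' --- and none of these conditions is verified. The proposal therefore does not close the gap; it reformulates it.

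Concretely, the missing step is a global a priori estimate, and it is genuinely missing rather than merely omitted. The paper's own conditional result (Theorem \ref{formalflowtheorem2}) shows that uniform two-sided metric bounds together with a torsion-potential bound along GKRF would give long-time existence and convergence to a hyper-K\"ahler metric, and Theorem \ref{flowtheorem} obtains global existence and only \emph{weak} convergence under the additional hypothesis that a hyper-K\"ahler metric already exists on $(M,I)$ --- that background is used essentially, to produce the reduced potentials $\alpha$, $\beta$, $f$ and to run the maximum-principle arguments of \S \ref{ltesec}. Without such a background, neither your flow route nor your continuity route has the required compactness; moreover, openness in the continuity method would also need surjectivity of the linearized operator modulo the stabilizer, which you assert but do not address. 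Your suggestion that properness of the Kempf--Ness functional would suffice is a plausible reformulation consistent with the paper's GIT picture (Propositions \ref{momentmap} and \ref{l:strict-convexity}), but it is proved neither by you nor in the paper, so the existence assertion remains exactly what the paper calls it: a conjecture.
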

\noindent Of course the uniqueness and rigidity statements have been established
in Theorem \ref{thm:uniquerigidity}.  Observe that this conjecture has the
consequence that every nondegenerate generalized K\"ahler structure is an
$\Omega$-Hamiltonian deformation of a hyper-K\"ahler structure, i.e. given by
the
Joyce construction.

To give this conjecture more context, we next show that it fits into a formal
GIT
picture.  Building on the observation that there is a natural action of
$\Omega$-Hamiltonian diffeomorphisms on generalized K\"ahler structures, and
taking inspriation from prior constructions in K\"ahler geometry, we define a
closed $1$-form on the $\Omega$-Hamiltonian deformation class (seen as a
Frech\'et space) which vanishes if and only
if the underlying GK structure is Calabi-Yau.  After taking topological
considerations into account, one can construct a primitive ${\bf F}$ for this
$1$-form, which is a natural analogue of the Aubin-Yau
$J$-functional~\cite{aubin} in
K\"ahler geometry.  We go on to define a formal symplectic structure and almost
complex
structure on each  $\Omega$-Hamiltonian deformation class.  We show (cf.
Proposition \ref{momentmap}) that the natural action of $\Omega$-Hamiltonians on
the space of generalized K\"ahler structures is itself Hamiltonian with respect
to the symplectic structure we define, and compute the moment map.  Moreover,
we show that the functional ${\bf F}$ serves as a Kempf-Ness functional, meaning
that the critical points of ${\bf F}$ are the zeroes of the momentum map.  We
note here that a different symplectic action and moment map in the
context of generalized K\"ahler geometry was recently discovered by
Boulanger~\cite{boulanger} and  Goto~\cite{Gotomoment}.

Next we provide a concrete analytic approach to Conjecture \ref{CYconj} through
the
use of the generalized K\"ahler-Ricci flow (GKRF), a natural notion of Ricci
flow adapted to the context of generalized K\"ahler geometry introduced by the
second author and Tian \cite{STGK}.  First we show that, starting with
nondegenerate initial data, solutions to the flow preserve the
$\Omega$-Hamiltonian deformation class.  We also show convexity of the $\bf
F$-functional arising
from the formal  GIT picture (cf. Proposition \ref{p:energy} for a precise
statement of part (3)).

\begin{thm} \label{flowandGIT} Let $(M^{4n}, g, I, J)$ be a nondegenerate
generalized K\"ahler manifold, and let $(g_t, I, J_t)$ denote the solution to GKRF
with this initial condition.  Then
\begin{enumerate}
\item The structure $J_t$ evolves by the one-parameter family of
$\Omega$-Hamiltonian isotopies determined by the time-dependent Ricci potential.
\item The only fixed points of the flow are solutions to the generalized
K\"ahler Calabi-Yau equation.  In particular they are hyper-K\"ahler.
\item The functional ${\bf F}$ is convex along the flow. 
\end{enumerate}
\end{thm}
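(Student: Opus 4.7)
The plan is to leverage Proposition \ref{nondegvariations} (the biHermitian description of $\Omega$-Hamiltonian variations) together with the explicit form of the GKRF in the $I$-fixed gauge and the construction of ${\bf F}$ as a primitive of a closed $1$-form.

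For part (1), I would first write out $\partial_t J_t$ in the $I$-fixed gauge explicitly in terms of the time-dependent Ricci potential $f_t$ (the logarithm of the ratio of the two closed spinors). The goal is to show that $\partial_t J_t = \mathcal{L}_{X_{f_t}} J_t$, where $X_{f_t} = \sigma(df_t, \cdot)$ is the $\Omega$-Hamiltonian vector field of $f_t$. This is done by expressing the Ricci-form side of GKRF through the Bismut connection, and comparing with the biHermitian formula for the infinitesimal action of an $\Omega$-Hamiltonian obtained in \S \ref{s:variations}. Once this identity holds pointwise in time, integrating exhibits $J_t$ as the $\Omega$-Hamiltonian isotopy of $J_0$ generated by the time-dependent $f_t$.

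For part (2), a fixed point of the flow satisfies $\partial_t g_t = 0$ and $\partial_t J_t = 0$. The second condition combined with part (1) gives $\mathcal{L}_{X_f} J = 0$; combined with the first, which imposes a Hessian-type condition on $f$, this forces $df \equiv 0$, so $f$ is constant. This is precisely the generalized K\"ahler Calabi-Yau equation, and Theorem \ref{thm:uniquerigidity} then implies the structure is hyper-K\"ahler. For part (3), since ${\bf F}$ is a primitive of the closed $1$-form $\mu$ vanishing exactly at Calabi-Yau points, $\tfrac{d}{dt}{\bf F}(J_t) = \mu(\partial_t J_t)$; via part (1) this becomes an integral expression in $f_t$. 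To obtain convexity I would differentiate once more, using the parabolic equation satisfied by $f_t$ along GKRF, and expect $\tfrac{d^2}{dt^2}{\bf F}(J_t)$ to reduce to a manifestly non-negative quadratic such as a weighted $L^2$-norm of an appropriate derivative of $f_t$ (the GK analogue of $\int_M |\grad f_t|^2$).

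The hard part will be part (1): matching the Ricci-flow dynamics, naturally phrased through the Bismut curvature and the flow equations of \cite{STGK}, with the purely Poisson-geometric variation formula for $\Omega$-Hamiltonians from Proposition \ref{nondegvariations}. This requires some nontrivial bookkeeping with the biHermitian curvature identities in order to recognize the Ricci-potential gradient as the generator of the $J$-evolution. Once part (1) is in place, parts (2) and (3) follow by relatively direct computations using the identification of ${\bf F}$ with the Kempf-Ness functional of the GIT picture from Proposition \ref{momentmap}.
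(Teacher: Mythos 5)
Your proposal follows essentially the same route as the paper: part (1) reduces to identifying the vector field $\theta_J^{\sharp}-\theta_I^{\sharp}$ driving the $J$-evolution in the $I$-fixed gauge with the $\Omega$-Hamiltonian field of $\Phi$ (the paper's Lemma \ref{l:extension}, $[I,J]d\Phi = 2(\theta_I-\theta_J)$, combined with Proposition \ref{p:Hamiltonian-flow}), part (2) is the rigidity statement of Propositions \ref{ricci-potential} and \ref{p:CY}, and part (3) is exactly the second-derivative computation of Proposition \ref{p:energy}, which uses the heat equation for $\Phi$ to reduce $\tfrac{d^2}{dt^2}{\bf F}$ to the nonnegative quantity $\int_M \brs{\N\Phi}^2\left((F_+)^{2n}+e^{\gl}(F_-)^{2n}\right)/(2n)!$. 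The plan is correct and correctly isolates the one nontrivial input (the Lee-form/Ricci-potential identity) on which part (1) rests.
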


Building on these natural formal properties, in line with Conjecture
\ref{CYconj} it is natural to expect
that in this setting the solution to GKRF exists for all time and converges to
a hyper-K\"ahler metric (cf. Conjecture \ref{conj:GKRF}).  With this in mind we
establish some a priori estimates which are relevant to establishing the long
time existence, and moreover lead to a definitive convergence statement assuming
certain further a priori estimates.  In particular, we begin by establishing the
key fact that the Ricci potential evolves by the pure heat equation, in
line with the corresponding behavior of K\"ahler-Ricci flow.  Based on this we
are able to obtain strong a priori estimates on the gradient of
the Ricci potential.  Combining this with previous regularity results
for GKRF, we can establish a conditional resolution of Conjecture \ref{CYconj}.

\begin{thm} \label{formalflowtheorem} (cf. Theorem \ref{formalflowtheorem2}) Let
$(M^{4n},
g_0, I, J)$ be a nondegenerate generalized K\"ahler manifold.  Let $(g_t, I,
J_t)$ denote the solution to GKRF with this initial condition.  Suppose there exists a constant $\gL > 0$ such that for all times $t$ in
the maximal interval of existence, the solution satisfies
\begin{align*}
\gL^{-1} g_0 \leq g_t \leq \gL g_0.
\end{align*}
Then the solution exists for all time and converges to a hyper-K\"ahler metric.
In particular, Conjecture~\ref{CYconj} holds true.
\end{thm}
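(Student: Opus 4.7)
The plan is to use the hypothesized $C^0$-type bounds to bootstrap to full higher-order regularity of $(g_t,J_t)$, which will yield long-time existence, and then combine the convexity of ${\bf F}$ with the heat equation satisfied by the Ricci potential $f_t$ to force convergence to a fixed point of the flow. By Theorem~\ref{flowandGIT}(2), any such fixed point must be hyper-K\"ahler, and by Theorem~\ref{thm:uniquerigidity} it is the unique one in the $\Omega$-Hamiltonian deformation class.

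For long-time existence, I would first observe that the uniform metric equivalence $\gL^{-1} g_0 \leq g_t \leq \gL g_0$ and the torsion potential bound $|\partial \ga|^2 \leq \gL$ provide $C^0$ control on the underlying geometry. Combining this with the sharp gradient estimate for $f_t$ announced in the introduction (a consequence of the pure heat equation that $f_t$ satisfies in this setting) and the Shi-type regularity theory for GKRF developed in prior work, a standard bootstrap should produce uniform $C^k$ bounds on $(g_t, J_t)$ on compact time intervals, and in fact globally. A continuation criterion for GKRF then rules out a finite-time singularity and extends the flow to $[0, \infty)$.

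For convergence, I would exploit the GIT picture of Theorem~\ref{flowandGIT}. Convexity of ${\bf F}$ along the flow, combined with its boundedness from below for a solution of uniformly controlled geometry, yields an integrated-in-time bound on the squared norm of the ``gradient'' of ${\bf F}$, which in the moment-map interpretation measures the failure of $f_t$ to be constant. Together with the gradient estimate for $f_t$ and the uniform higher regularity established above, this permits passage to a subsequential Cheeger--Gromov limit along a sequence $t_k \to \infty$. Because the flow preserves the $\Omega$-Hamiltonian deformation class by Theorem~\ref{flowandGIT}(1), this limit is a nondegenerate generalized K\"ahler structure in the given class with constant Ricci potential, hence a generalized K\"ahler Calabi--Yau solution, hence hyper-K\"ahler by Theorem~\ref{thm:uniquerigidity}; uniqueness asserted by that same theorem identifies every subsequential limit as the same structure.

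The main obstacle I anticipate is upgrading subsequential convergence to smooth convergence of the full flow. Convexity of ${\bf F}$ together with uniqueness of the minimizer does not automatically produce a rate of decay, so ruling out drift of $J_t$ within the infinite-dimensional $\Omega$-Hamiltonian class near the hyper-K\"ahler limit is delicate. The natural route is to exploit the Kempf--Ness structure together with the rigidity from Theorem~\ref{thm:uniquerigidity} to extract a Lojasiewicz-type inequality, or alternatively to argue directly from the $L^2$-in-time integrability of the Ricci potential, parabolic regularity, and uniqueness of the limit that $(g_t, J_t)$ is forced into arbitrarily small neighborhoods of the hyper-K\"ahler limit for all large $t$.
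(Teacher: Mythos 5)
Your outline of the long-time existence step matches the paper: the hypothesized metric equivalence and torsion potential bound are fed into the regularity theory of the reduced pluriclosed flow ((\cite{SBIPCF}, Theorem 1.7), applied after choosing a Chern--Ricci-flat background metric $h$, which exists because $(M,I)$ carries the holomorphic volume form $(\Omega^{2,0})^{2n}$), and a continuation argument gives existence on $[0,\infty)$. Your identification of subsequential limits as hyper-K\"ahler is also essentially the paper's argument, though the paper does not route this through convexity of ${\bf F}$: it uses the pointwise decay $\sup_M \brs{\N\Phi}^2 \leq Ct^{-1}$ from Proposition \ref{gradspinordecay} (a maximum-principle consequence of the pure heat equation for $\Phi$), which directly forces $\Phi_\infty \equiv \gl$ in any $C^\infty$ subsequential limit, and then Propositions \ref{ricci-potential} and \ref{p:CY} give hyper-K\"ahler.

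However, there are two genuine gaps. First, you explicitly leave open the passage from subsequential to full convergence, proposing a Lojasiewicz-type inequality as a possible route but not carrying it out; the paper closes this step much more cheaply by invoking the stability theorem for Hermitian curvature flow ((\cite{ST1}, Theorem 1.2)): once one subsequential limit is a static (hyper-K\"ahler) metric, that theorem yields exponential convergence of the entire flow to it, with no Lojasiewicz analysis needed. Second, your assertion that the limit lies in the $\Omega$-Hamiltonian deformation class ``because the flow preserves the class'' does not follow: preservation at each finite time gives a family $\phi_t \in {\rm Ham}(M,\Omega)$ with $J_t = \phi_t^* J_0$, but one must still show $\phi_t$ converges in ${\rm Ham}(M,\Omega)$. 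The paper does this using the exponential rate of convergence: the generating Hamiltonians $\til{\Phi}_t = \Phi_t - \gl$ decay exponentially, so after the reparametrization $t = \tan(s)$ the isotopy extends smoothly to $s = \tfrac{\pi}{2}$, producing $\phi_\infty \in {\rm Ham}(M,\Omega)$ with $\phi_\infty^* J_0 = J_\infty$. Without this, the ``In particular, Conjecture~\ref{CYconj} holds true'' conclusion is not justified.
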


In prior work of the second author \cite{SNDG} the global existence and weak
convergence of the flow when $n=1$ was established.  The proof exploits the
classification of complex surfaces, in particular using the existence of a
background K\"ahler metric to obtain some necessary a priori estimates.  Our
final result extends this to arbitrary $n \geq 1$.  In particular, we
establish the global existence of GKRF
under the assumption that a hyper-K\"ahler metric exists, together with some weak convergence statements.

\begin{thm} \label{flowtheorem} Let $(M^{4n}, I)$ be a compact hyper-K\"ahler
manifold.  Suppose $(g,I,J)$ is a nondegenerate generalized K\"ahler structure
on $M$.  The solution to generalized K\"ahler-Ricci flow with initial condition
$(g,I,J)$ exists on $[0,\infty)$, and satisfies
\begin{align*}
 \brs{\brs{\Phi - \gl}}_{H_1^2}^2 \leq C t^{-1}.
\end{align*}
Moreover, there exists a sequence of times $\{t_j\} \to \infty$ such that
$(\gw_I)_{t_j}$ converges in the sense of currents to a closed positive $(1,1)$ current.
\end{thm}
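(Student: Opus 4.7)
The plan rests on exploiting the hyper-K\"ahler background $g_{HK}$ on $(M, I)$ together with the Kempf--Ness framework of Theorem \ref{flowandGIT}. Since the hyper-K\"ahler structure is a fixed point of GKRF by Theorem \ref{flowandGIT}(2) and ${\bf F}$ is convex along the $\Omega$-Hamiltonian class by Theorem \ref{flowandGIT}(3), ${\bf F}$ attains its infimum on this class at the hyper-K\"ahler representative, and is hence bounded below along the flow starting from $(g, I, J)$. Combined with the monotonicity of ${\bf F}$ along GKRF, this provides the essential a priori control. Note however that the plan does \emph{not} verify the hypotheses of Theorem \ref{formalflowtheorem} directly; the convergence here is only weak.

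The main obstacle is upgrading this soft control to long-time existence. I would reduce GKRF in the $I$-fixed gauge to a parabolic scalar equation for the Hamiltonian potential $\ga_t$ parametrizing the $\Omega$-Hamiltonian deformation, treating $(\gw_I)_{HK}$ as the reference K\"ahler form. A Monge--Amp\`ere-type interpretation and the parabolic maximum principle in the spirit of Cao, together with the lower bound on ${\bf F}$ (which normalizes $\ga_t$ in $L^1$), should yield a uniform $C^0$ bound on $\ga_t$, from which the usual short-time estimates propagate for all time. Without the hyper-K\"ahler reference no such background is available, which is precisely the source of the hyper-K\"ahler hypothesis; this is the hardest step of the argument and is what the $n=1$ case in \cite{SNDG} bypasses using the Kodaira classification of complex surfaces.

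Next I would prove the $H_1^2$ decay of $\Phi - \gl$ from the pure heat evolution $\del_t \Phi = \gD_t \Phi$ of the Ricci potential. The Kempf--Ness identity $\tfrac{d}{dt}{\bf F} = - c \int (\Phi - \gl)^2 dV_t$ together with the lower bound on ${\bf F}$ forces $\int_0^\infty \brs{\brs{\Phi-\gl}}_{L^2}^2 dt < \infty$, and differentiating $\int (\Phi-\gl)^2 dV_t$ via the heat equation produces
\[
\tfrac{d}{dt}\int (\Phi-\gl)^2 dV_t = -2 \int \brs{\N \Phi}^2 dV_t + \text{l.o.t.},
\]
whence $\int_0^\infty \brs{\brs{\N \Phi}}_{L^2}^2 dt < \infty$ as well. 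A further differentiation shows that both quantities are approximately monotone, with errors controlled by the metric equivalence from the previous step; a standard averaging argument ($t f(t) \leq 2 \int_{t/2}^t f$ for $f$ decreasing) then converts integrability into the rate $E(t) := \brs{\brs{\Phi - \gl}}_{H_1^2}^2 \leq C t^{-1}$.

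Finally, for the weak convergence of $(\gw_I)_{t_j}$, the structure equations of GK geometry express $H_t = d^c_I (\gw_I)_t$ pointwise in terms of the Ricci potential and its derivatives, so the $H_1^2$ decay above implies that $(\gw_I)_t$ is asymptotically closed in $L^2$. Choosing a subsequence $\{t_j\} \to \infty$ along which $\brs{\brs{H_{t_j}}}_{L^2} \to 0$, the forms $(\gw_I)_{t_j}$ are positive $(1,1)$ and have uniformly bounded mass by the metric equivalence from the long-time existence step, so weak compactness in the space of currents produces a subsequential limit, which is closed and positive $(1,1)$ by construction.
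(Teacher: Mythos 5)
Your proposal has several genuine gaps, the most serious being the long-time existence step. There is no reduction of GKRF in this setting to ``a parabolic scalar equation for the Hamiltonian potential'': the metric deformations here move in an \emph{Aeppli} class, parametrized by a $(1,0)$-form potential $\ga$ via $\gw_I = \gw_{\mbox{\tiny HK}} + \del\bga + \delb\ga$, not by a scalar, and no Monge--Amp\`ere/Cao-type $C^0 \Rightarrow C^2$ machinery is available. The paper's Proposition \ref{lteprop} instead (i) uses Demailly--Paun (Proposition \ref{pluriclosed-to-kahler}) to place a K\"ahler metric in the Aeppli class of $\gw_I$, (ii) reduces to the system (\ref{decflow}) for a pair $(\gb_t,f_t)$, and (iii) runs the maximum principle on carefully built test functions combining $\log\tfrac{\det g}{\det g_{\mbox{\tiny HK}}}$, $\brs{\del\ga}^2$, $\brs{\gb}^2$, $\tfrac{\del f}{\del t}$ and $\brs{\N\Phi}^2$; the torsion-potential bound $\brs{\del\ga}^2 \leq \gL$ is exactly what is needed to invoke the regularity theory of \cite{SBIPCF}, and it does not follow from a $C^0$ bound on any scalar. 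Your ``soft control'' input is also circular: to say ${\bf F}$ is bounded below because it attains its infimum at the hyper-K\"ahler representative presupposes that the hyper-K\"ahler structure lies in the \emph{same} $\Omega$-Hamiltonian deformation class as $(g,I,J)$, which is precisely the connectivity statement the introduction flags as unknown and which this theorem is partly meant to address. Note also that the paper does \emph{not} obtain uniform-in-time metric equivalence (the trace bound in (\ref{lteprop30}) grows with $t$); the uniform mass bound on $(\gw_I)_t$ used for current compactness comes from the invariance of the Aeppli class, not from metric bounds, so your repeated appeals to ``metric equivalence from the previous step'' are not available.

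Two further points. For the $H^2_1$ decay, the paper does not use an energy/integrability-plus-averaging argument; it uses the pointwise maximum-principle estimate $\sup_M\brs{\N\Phi}^2 \leq Ct^{-1}$ of Proposition \ref{gradspinordecay} (from the subsolution $W = t\brs{\N\Phi}^2 + \Phi^2$), combined with the fixed Aeppli class to control the $L^2$ norm against a background K\"ahler form; this bypasses any need for a lower bound on ${\bf F}$. Finally, your closedness argument fails at the key step: $H = d^c_I\gw_I$ is \emph{not} expressible in terms of $\Phi$ and its derivatives — Lemma \ref{l:extension} only identifies $d\Phi$ with the difference of Lee forms $\theta_I - \theta_J$, i.e.\ a trace of $H$, so decay of $\N\Phi$ does not give decay of the full torsion. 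The paper needs the separate integral estimate of Proposition \ref{torsiondecayprop},
\begin{align*}
\frac{d}{dt}\int_M W^p\, dV_g \leq -\int_M \brs{T}^2\, dV_g + Ct^{-1},
\end{align*}
with $W = A - \log\tfrac{\det g}{\det g_{\mbox{\tiny HK}}}$, to extract a sequence $t_j$ with $\int_M\brs{T}^2 dV \to 0$, and then a further computation (Proposition \ref{closedlimitprop}) to show the limit current is closed. Without a substitute for these steps your argument does not close.
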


Note that, even
with
the assumption that a hyper-K\"ahler metric exists on our given complex
manifold,
Conjecture \ref{CYconj} does not immediately follow, as it asks for
deformability of an arbitrary GK structure to a hyper-K\"ahler one, and it is
not
a priori known if this space is \emph{connected}.  Theorem~\ref{flowtheorem} at least shows that there is a smooth deformation of the underlying K\"ahler form which limits in a weak sense to a closed positive $(1,1)$-current.  If the convergence of the flow could be improved to genuine $C^{\infty}$ convergence to a K\"ahler structure, this would verify Conjecture \ref{CYconj} for
hyper-K\"ahler backgrounds, in particular yielding the connectivity of the
nonlinear space of generalized K\"ahler structures on these manifolds, a
nonobvious fact.  We note that the weak convergence statements most likely do not represent a fundamental obstruction to strong convergence, but are rather representative of how far the analytic techniques can currently carry us.  To emphasize this point, we show that strong convergence can be verified on tori~\cite{SBIPCF}, yielding the global structure of the space of nondegenerate generalized K\"ahler structures on these manifolds.

\begin{cor} \label{c:toriconv} Let $(T^{4n}, I)$ denote a torus with a K\"ahler complex structure.  Suppose $(g, I, J)$ is a nondegenerate generalized K\"ahler structure on $T^{4n}$.  The solution to generalized K\"ahler-Ricci flow with initial condition $(g, I, J)$ exists on $[0,\infty)$ and converges to a flat, hyper-K\"ahler metric.  In particular, Conjecture~\ref{CYconj} holds true.
\end{cor}

Here is an outline of the rest of this paper.  In \S \ref{s:defns} we provide
relevant background on nondegenerate generalized K\"ahler structures, precisely
describing the deformations under consideration.  In \S \ref{s:conj} we review
the relevant Calabi-Yau type equation, give a precise setup for Conjecture
\ref{CYconj}, and prove Theorem \ref{thm:uniquerigidity}.  Next in \S
\ref{s:GIT} we establish a GIT framework for Conjecture \ref{CYconj}.  We begin
our analysis of GKRF in \S \ref{flowsec}, recalling background and proving
Theorems \ref{flowandGIT} and \ref{formalflowtheorem}.  Finally, in \S
\ref{ltesec} we develop further a priori estimates for GKRF in this setting and
prove Theorem \ref{flowtheorem} and Corollary~\ref{c:toriconv}.

\subsection*{Acknowledgements}

The first author was supported in par by an NSERC Discovery Grant and is
grateful to the Institute of Mathematics and Informatics of the Bulgarian
Academy of Sciences where a part of this project was realized.  The second
author gratefully acknowledges support from the NSF via DMS-1454854, and from an
Alfred P. Sloan Fellowship.
The second author would like to thank Marco Gualtieri for helpful discussions on
this topic.  The authors thank the referee for useful comments.

\section{Nondegenerate Generalized K\"ahler structures} \label{s:defns}

In this section we establish some fundamental properties of nondegenerate
generalized K\"ahler structures which we will use throughout the paper.  In \S
\ref{NGKdef} we recall the basic definitions, and show that they lead to the
existence of a holomorphic symplectic structure on the underlying complex
manifolds.  Next in \S \ref{HSsec} we recall some fundamental aspects of
holomorphic symplectic manifolds.  We also observe that the pluriclosed metrics
associated to a
nondegenerate generalized K\"ahler structure are symplectically tamed, and in \S
\ref{s:symptamed}
we establish some technical identities associated to such structures central to
the results to follow.  In \S \ref{s:variations} we show that
Hamiltonian diffeomorphisms associated to the real part of the holomorphic
symplectic structures can be used to produce nontrivial deformations of these
generalized K\"ahler 
structures.

\subsection{Background} \label{NGKdef}

In this subsection we recall the biHermitian formulation of generalized K\"ahler
geometry, and the basic properties of the associated Poisson structures.  We
also here record definitions of connections and their curvature relevant to the
rest
of the paper, but not \S \ref{NGKdef} directly.

\begin{defn} \label{GKdef} Given a smooth manifold $M$, we say that $(g, I, J)$
is a \emph{generalized K\"ahler structure (GK structure)} if $I$ and $J$ are
integrable complex structures, $g$ is compatible with both $I$ and $J$, and
furthermore
\begin{align*} 
 d^c_I \gw_I = H = - d^c_J \gw_J, \qquad d H = 0.
\end{align*}
Associated to this structure we define
\begin{align*}
 \gs = g^{-1} [I,J] \in \wedge^2 (TM).
\end{align*}
We will call a given GK
structure \emph{nondegenerate} if $\gs$ defines a nondegenerate pairing on each
fibre of $T^*M$, and our
focus will be on nondegenerate structures throughout this paper.
\end{defn}

In general, the tensor $\gs$ is the real part of a holomorphic $(2,0)$-bivector
with respect to both complex structures $I$ and $J$ \cite{HitchinPoisson} 
(cf. \cite{AGG, Pontecorvo} for the $4$-dimensional
case).  Observe that
when $\gs$ is nondegenerate, we can define
\begin{align} \label{Omegadef}
 \Omega = \gs^{-1} = [I,J]^{-1} g.
\end{align}
From the properties of $\gs$ it follows easily that $\Omega$ is the real part of
a holomorphic symplectic $(2,0)$-form with respect to either complex structure
$I$ and $J$, and moreover the real parts agree.  We recall some fundamental
aspects of complex manifolds admitting holomorphic symplectic $(2,0)$-forms in
\S \ref{HSsec}.  As we explain in Lemma
\ref{l:symplectic-forms}
below, the data of distinct holomorphic symplectic structures with matching real
parts determines a unique nondegenerate generalized K\"ahler structure.

Associated to generalized K\"ahler structures, and more generally Hermitian
structures, are several relevant connections.  We record these definitions here
for convenience.

\begin{defn} \label{Bismutdef} Let $(M^{2n}, g, I)$ be a complex
manifold with a Hermitian metric $g$.  The \emph{Bismut connection} is defined
by
\begin{align} \label{Bismutformula}
\IP{\N^{B}_X Y, Z} =&\ \IP{\N_X Y, Z} - \tfrac{1}{2} d^c \gw (X,Y,Z),
\end{align}
where $\nabla$ stands for the Riemannian connection of $g$.
Observe that in the context of generalized K\"ahler geometry we have two Bismut
connections defined via
\begin{align*}
\IP{\N^{B,I}_X Y, Z} =&\ \IP{\N_X Y, Z} - \tfrac{1}{2} d^c_I \gw_I (X,Y,Z) =
\IP{\N_X Y, Z} - \tfrac{1}{2} H(X,Y,Z)\\
\IP{\N^{B,J}_X Y, Z} =&\ \IP{\N_X Y, Z} - \tfrac{1}{2} d^c_J \gw_J (X,Y,Z) =
\IP{\N_X Y, Z} + \tfrac{1}{2} H(X,Y,Z).
\end{align*}
\end{defn}

\begin{defn} \label{Cherndef} Let $(M^{2n}, g, I)$ be a compact complex
manifold with  a Hermitian metric $g$.  The \emph{Chern connection} is defined
by
\begin{align} \label{Chernformula}
\IP{\N^{C}_X Y, Z} =&\ \IP{\N_X Y, Z} + \tfrac{1}{2} d^c \gw (X,IY,IZ).
\end{align}
\end{defn}

These are both Hermitian connections, which necessarily have torsion in the
non-K\"ahler case.  The
curvatures of these connections arise in our analysis, and most important are
the associated representatives of the first Chern class.

\begin{defn} \label{Riccidefs} Let $(M^{2n}, g, I)$ be a compact complex
manifold with pluriclosed metric $g$.  The \emph{Bismut Ricci curvature} and
\emph{Chern-Ricci curvature} are defined by
\begin{align*}
 \rho_B(X,Y) =&\ \tfrac{1}{2} R^B(X,Y,e_i, I e_i)\\
 \rho_C(X,Y) =&\ \tfrac{1}{2} R^C(X,Y,e_i, I e_i),
\end{align*}
where $R^B$ and $R^C$ are the curvature tensors associated to the
Bismut and Chern connections respectively, and $\{e_i\}$ is an orthonormal 
basis for $(TM, g)$.
\end{defn}

\subsection{Holomorphic symplectic manifolds} \label{HSsec}

Recall that a {\it holomorphic-symplectic} structure on a complex manifold
$(M,I)$ is defined by a holomorphic $(2,0)$-form $\Omega$ which is
non-degenerate in the sense that at each point the complex linear map $\Omega:
T^{1,0}M \to \wedge^{1,0}(T^*M)$ is non-degenerate. Elementary linear algebra
shows that the non-degeneracy condition is equivalent to ${\rm Re} (\Omega)$
being non-degenerate (and therefore symplectic) or,  equivalently, $\Omega^n
\wedge {\bar \Omega}^n \neq 0$, where $4n$ is the real dimension of
$(M,I)$.  In particular, on any holomorphic-symplectic manifold $(M,I, \Omega)$,
the form $\Omega^n$ defines a trivialization of its canonical bundle
$\wedge^{2n,0}(M,I)$.

The theory of {\it compact, K\"ahler} holomorphic-symplectic manifolds has been
extensively developed by A. Beauville and F. Bogomolov, using the Calabi-Yau
theorem~\cite{yau}. We refer to \cite{Joyce-book} for an overview and  recall
below the following well-known Beauville-Bogomolov-Yau decomposition theorem
(see e.g. \cite[Prop.~6.2.2]{Joyce-book}).

\begin{thm}\label{t:decomposition}~\cite{Beauville,bogomolov,yau} Let $(M, I)$
be a compact complex manifold which admits a K\"ahler metric and a
holomorphic-symplectic form $\Omega$. Then, in any K\"ahler class of $(M,I)$
there exists a unique Ricci-flat K\"ahler metric $g$ with respect to which
$\Omega$ is parallel.  Furthermore, up to a finite cover, $(M, I, g)$ is the
product of  irreducible simply-connected hyper-K\"ahler manifolds with a flat
(hyper-K\"ahler) even dimensional complex torus.
\end{thm}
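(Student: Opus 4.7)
The plan is to derive the theorem from Yau's resolution of the Calabi conjecture combined with the Bochner principle and the de Rham/Berger holonomy classification. First observe that the holomorphic symplectic form provides a nowhere-vanishing section $\Omega^n$ of the canonical bundle $\wedge^{2n,0}(M,I)$, so $c_1(M,I) = 0$ in real cohomology. Yau's theorem then produces, in any given K\"ahler class, a unique Ricci-flat K\"ahler metric $g$.

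Next I would show that $\Omega$ is parallel with respect to the Levi-Civita connection of $g$. This is the Bochner principle for holomorphic $(p,0)$-forms on a Ricci-flat K\"ahler manifold: the Weitzenb\"ock formula for $\bpart$-harmonic forms has a curvature term that is linear in $\Rc(g)$ and hence vanishes identically, and pairing against $\Omega$ and integrating forces $\N \Omega = 0$. Consequently the restricted holonomy of $g$ lies in the stabilizer $Sp(n) \subset SU(2n)$ of the pair $(g,\Omega)$, and one obtains the uniqueness and parallelism claims of the first half of the theorem.

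To extract the structure statement, pass to the universal cover $(\til M, \til g)$ and apply the de Rham decomposition theorem, splitting $\til M$ as a Riemannian product whose factors act irreducibly on tangent spaces. Since $\Omega$ is parallel, its restriction to each factor is also parallel, and Berger's classification in the Ricci-flat K\"ahler setting forces each irreducible factor to be either a flat Euclidean space or an irreducible hyper-K\"ahler manifold with holonomy exactly $Sp(m_i)$; the pure Calabi-Yau factors with strict $SU(m_i)$ holonomy are ruled out because they carry no parallel holomorphic symplectic form. Compactness of $M$ together with Bieberbach's theorem then ensures that, after passing to a finite cover, the flat part is a flat complex torus, whose complex dimension must be even for $\Omega$ to restrict non-degenerately.

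The principal subtlety lies in the treatment of the flat factor: one must check that Bieberbach's classification of compact flat Riemannian manifolds is compatible with the ambient complex structure and the parallel holomorphic symplectic form, so that the flat factor can be promoted, after a further finite cover, to a complex torus with $\Omega$ restricting to a standard holomorphic symplectic form. Everything else is a direct invocation of Yau's theorem, the holomorphic Bochner principle, and the de Rham decomposition.
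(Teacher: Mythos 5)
The paper does not actually prove this statement: it is quoted as the classical Beauville--Bogomolov--Yau decomposition, with the proof deferred to \cite{Beauville,bogomolov,yau} and to \cite[Prop.~6.2.2]{Joyce-book}. Your sketch reproduces the standard argument behind those references --- triviality of the canonical bundle via $\Omega^n$, Yau's theorem for the unique Ricci-flat K\"ahler metric in each K\"ahler class, the Bochner principle forcing $\nabla\Omega=0$, and a holonomy analysis --- and it is correct in outline; in particular your exclusion of strict $SU(m)$ factors via the absence of a parallel $(2,0)$-form on them, and the evenness of the flat factor from non-degeneracy of the restricted $\Omega$, are both fine. The one step that is thinner than in the actual proofs is the passage from the de~Rham decomposition of the \emph{universal} cover to a splitting of a \emph{finite} cover of $M$ as a genuine Riemannian product. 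This does not follow from de~Rham plus Bieberbach alone: one needs the Cheeger--Gromoll splitting theorem to see that the universal cover is $\mathbb{R}^k\times N$ with $N$ compact and simply connected, and then the fact that a compact simply connected Ricci-flat manifold has finite isometry group (no Killing fields), which is what produces a finite-index subgroup of $\pi_1(M)$ acting as a Bieberbach group on the flat factor and trivially on the compact factors. If you add that ingredient, your argument is complete and is essentially the proof the cited references give.
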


However, there exist examples of holomorphic-symplectic
structures on non-K\"ahler manifolds, for instance the Kodaira-Thurston surface
or the higher dimensional examples of Guan \cite{Guan1,Guan2,Guan3}.  Thus the
complex
structures underlying a
nondegenerate generalized K\"ahler structure are not a priori K\"ahler, even
though the only known examples arise by deformation away from hyper-K\"ahler
structures (cf \S \ref{s:variations}), where the underlying complex structures
remain K\"ahler.  What our results suggest, and what would follow from our
main conjecture, is that these examples are the \emph{only} way to construct
nondegenerate generalized K\"ahler structures, in the sense that they are all
deformable to hyper-K\"ahler structures.

\subsection{Symplectic type generalized K\"ahler structures} \label{s:symptamed}

In this subsection we slightly generalize the discussion of nondegenerate
generalized K\"ahler structures to those of symplectic type, and isolate some
necessary properties and identities useful to what follows.  To begin we recall
the definition of a taming almost complex structure on a symplectic manifold.

\begin{defn} An almost complex structure $I$ on a symplectic manifold $(M,F)$ is
{\it tamed} by the symplectic form $F$ if $F(X, IX) > 0$ for any non-zero
tangent vector $X$.  It is easily seen that this is equivalent to the statement
that
\begin{equation}\label{tamed}
-IF = g + b,
\end{equation}
where $g$ is a positive-definite $I$-invariant Riemannian metric and $b$ is a
$2$-form of type $(2,0)+ (0, 2)$.
\end{defn}

Before specializing to generalized K\"ahler structures, we record some facts
relating taming complex structures and pluriclosed metrics.
\begin{lemma}\label{symplectic-to-pluriclosed} Let $(M, I)$ be a complex
manifold, and suppose $I$ is tamed by a  symplectic form $F$. Then the
Hermitian structure $(g, I)$  defined by \eqref{tamed} is pluriclosed, i.e.
satisfies $dd^c_I \gw_I=0$.
\end{lemma}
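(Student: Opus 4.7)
The plan is to use the type decomposition of $F$ with respect to $I$ together with the closedness $dF = 0$.  Since $I$ is integrable, the exterior derivative splits as $d = \partial_I + \bar\partial_I$ with $\partial_I^2 = \bar\partial_I^2 = 0$ and $\partial_I \bar\partial_I = -\bar\partial_I \partial_I$.  Write $F = F^{2,0} + F^{1,1} + F^{0,2}$ for its bidegree decomposition.  The $I$-invariance of $g$ forces its associated fundamental form $\omega_I$ to be of type $(1,1)$, while $b$ is of type $(2,0) + (0,2)$ by hypothesis.  A short linear algebra computation unpacking $-IF = g + b$ then identifies $\omega_I$ with $F^{1,1}$, and the $(2,0) + (0,2)$ part of $F$ with the antisymmetric part $b$; the point is that the symmetric component of the bilinear form $-F(I\cdot,\cdot)$ comes entirely from $F^{1,1}$, while the antisymmetric component comes entirely from $F^{2,0} + F^{0,2}$.

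Next I would expand $dF = 0$ by bidegree.  The $(3,0)$-component gives $\partial_I F^{2,0} = 0$ (and conjugately $\bar\partial_I F^{0,2} = 0$), and the $(2,1)$-component gives
\[
\partial_I F^{1,1} + \bar\partial_I F^{2,0} = 0,
\]
so $\partial_I \omega_I = -\bar\partial_I F^{2,0}$.  Applying $\bar\partial_I$ to this identity and using $\bar\partial_I^2 = 0$ yields $\bar\partial_I \partial_I \omega_I = 0$, equivalently $\partial_I \bar\partial_I \omega_I = 0$.  Since $dd^c_I = 2i\, \partial_I \bar\partial_I$, this is precisely $dd^c_I \omega_I = 0$, the pluriclosed condition.

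The main (and essentially only) obstacle is to nail down the type-level identification $\omega_I = F^{1,1}$, which amounts to sorting out sign conventions for $-I$ acting on $2$-forms of various bidegrees and verifying that the symmetric, $I$-invariant part does recover the fundamental form $g(I\cdot, \cdot)$.  Once that bookkeeping is complete, the pluriclosed condition falls out as a short formal consequence of integrability of $I$ (which yields $d = \partial_I + \bar\partial_I$ with $\bar\partial_I^2 = 0$) together with $dF = 0$.
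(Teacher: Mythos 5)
Your proof is correct and follows essentially the same route as the paper: both arguments rest on decomposing the closed taming form $F$ into its $(1,1)$-part $\gw_I$ and its $(2,0)+(0,2)$-part, and then reading off the pluriclosed condition from $dF=0$ together with integrability of $I$. The only cosmetic difference is that the paper phrases the computation with the real operator $\mathbf{I}$ so as to extract the sharper identity $d^c_I\gw_I = db$ (which it reuses later), whereas your bidegree bookkeeping and $\bpart_I^2=0$ give $\part_I\bpart_I\gw_I=0$ directly.
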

\begin{proof} According to \eqref{tamed},  $F = \gw_I + Ib$ is closed, so that
$d\gw_I = - d I b$, i.e.  $d^c_I \gw_I = -{\bf I} d Ib = db$ (cf. the end of the
proof of Lemma~\ref{l:symplectic-forms}). \end{proof}

The converse is also true,  if we suppose that the complex manifold $(M, I)$
satisfies the $\part_I \bpart_I$-Lemma. 
\begin{lemma}\label{pluriclosed-to-symplectic} Suppose $(M,I)$ is a complex
manifold on which the $\part_I \bpart_I$-Lemma holds at degree $(1,2)$, meaning
that  the natural map from the Bott-Chern cohomology group $H^{1,2}_{BC}(M,I)$
to the Dolbeault  cohomology group $H^{1,2}_{\bpart_I}(M,I)$ is an isomorphism.
Then, any pluriclosed Hermitian metric $g$ on $(M,I)$ is obtained from a
symplectic form $F$ which tames $I$, via \eqref{tamed}.
\end{lemma}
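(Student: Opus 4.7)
My plan is to construct a symplectic form $F$ whose $(1,1)$-component equals the K\"ahler form $\gw_I$ of the given pluriclosed metric $g$. Such an $F$ automatically tames $I$: since a $(2,0)+(0,2)$-form annihilates every pair $(X, IX)$, we have $F(X,IX) = \gw_I(X,IX) = g(X,X)>0$ for $X \neq 0$, and in particular $F$ is nondegenerate. The decomposition \eqref{tamed} then recovers $g$ as the $I$-invariant part of $-IF$ and produces $b$ from the $(2,0)+(0,2)$-part of $F$, along the lines of the converse calculation in Lemma~\ref{symplectic-to-pluriclosed}.

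First I would reduce the construction of $F$ to a cohomological problem. Seeking $F = \gw_I + \beta$ with $\beta$ a real $(2,0)+(0,2)$-form, the condition $dF = 0$ becomes $d\beta = -d\gw_I$. Splitting by bidegree and invoking complex conjugation to halve the equations, the remaining task is to find $\beta^{0,2}$ of type $(0,2)$ satisfying
\begin{align*}
\part \beta^{0,2} = -\bpart \gw_I, \qquad \bpart \beta^{0,2} = 0,
\end{align*}
and then to set $\beta := \beta^{0,2} + \overline{\beta^{0,2}}$.

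Next I would apply the hypothesis. The form $\bpart \gw_I$ is of pure type $(1,2)$, is manifestly $\bpart$-exact, and is $\part$-closed because $\part \bpart \gw_I = 0$ by the pluriclosed assumption. It therefore defines a class in $H^{1,2}_{BC}(M,I)$ whose image in $H^{1,2}_{\bpart_I}(M,I)$ vanishes. Injectivity of the natural map, guaranteed by the stated $\part\bpart$-Lemma at bidegree $(1,2)$, forces the Bott-Chern class to vanish as well, producing a $(0,1)$-form $\psi$ with $\bpart \gw_I = \part\bpart \psi$. Setting $\beta^{0,2} := -\bpart \psi$ then immediately gives the two desired equations: $\part \beta^{0,2} = -\part \bpart \psi = -\bpart \gw_I$, and $\bpart \beta^{0,2} = 0$ by $\bpart^2 = 0$.

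I do not anticipate a significant obstacle in this argument. The entire analytic content is concentrated in the single cohomological vanishing supplied by the hypothesis; the remainder, including verification of taming and of the precise conventions in \eqref{tamed} (i.e.\ tracking the action of $I$ on $(2,0)+(0,2)$-forms that relates $\beta$ to the $b$ appearing in that equation), is routine bookkeeping that essentially inverts the calculation already carried out in Lemma~\ref{symplectic-to-pluriclosed}.
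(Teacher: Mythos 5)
Your proposal is correct and follows essentially the same route as the paper: both arguments observe that $\bpart_I\gw_I$ is a $d$-closed, $\bpart_I$-exact $(1,2)$-form, use injectivity of the map $H^{1,2}_{BC}\to H^{1,2}_{\bpart_I}$ to write $\bpart_I\gw_I=\part_I\bpart_I\xi$, and then set $F=\gw_I-\bpart_I\xi-\part_I\bar\xi$ (your $\beta^{0,2}=-\bpart\psi$ yields exactly this $F$). The only difference is presentational: you arrive at the formula by first splitting $dF=0$ into bidegree components, whereas the paper writes down $F$ and verifies closedness directly.
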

\begin{proof} For any pluriclosed Hermitian metric $g$ on $(M,I)$,  with
K\"ahler form $\gw_I$, $\bpart_I \gw_I$ is a $d$-closed $(1, 2)$-form which
defines a trivial class in  the Dolbeault cohomology $H^{1, 2}_{\bpart_I} 
(M,I)$ and a class in the Bott-Chern cohomology $H^{1,2}_{BC}(M,I)$. The
$\part_I \bpart_I$-Lemma implies that the class of $\bpart_I \gw_I$ in
$H^{1,2}_{BC}(M, I)$ must also be trivial, i.e.  there exists a $(0,1)$-form
$\xi$ such that $\bpart_I \gw_I = \part_I \bpart_I \xi$.  Letting
$$F: = \gw_I - \bpart_I \xi - \part_I \bar \xi, $$
we have \eqref{tamed} with $Ib = -2{\rm Re}(\bpart_I \xi)$.  Furthermore,
$$d F = \bpart_{I} F + \part_I F =  \bpart_{I} \gw_I -  \part_I \bpart_I \xi  +
\part_I \gw_I - \bpart_I \part_I \bar \xi =0.$$ \end{proof}

\begin{defn} A pluriclosed Hermitian structure $(g, I)$ on $M$, associated to a
symplectic form taming $I$ via (\ref{tamed}), will be referred to as a {\it
pluriclosed Hermitian metric of symplectic type}.
\end{defn}
We next record an important identity for the Lee form associated to a
pluriclosed Hermitian metric of symplectic type which will be central to various
calculations to follow.
\begin{lemma}\label{l:Lee-forms} Let $(M, g, I)$ be a pluriclosed Hermitian
metric
of symplectic type, $F$ a symplectic $2$-form taming $I$, and $b$ the real
$(2,0)+(0,2)$ form defined by \eqref{tamed}.  Then the Lee form $\theta_I = I
\delta \gw_I$ satisfies
\begin{equation}
 \theta_I (X)= b(\theta_I^{\sharp}, X) - \langle b, (\imath_X db)\rangle_g + 
(\delta b)(X),\ \end{equation}
 where $\delta$ is the $L^2$ adjoint of $d$.
 \end{lemma}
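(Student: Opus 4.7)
My approach uses the decomposition $\omega_I = F - Ib$ coming from Lemma~\ref{symplectic-to-pluriclosed} combined with the characterization $\theta_I(X) = -(\delta_g \omega_I)(IX)$ of the Lee form, so that
\begin{equation*}
\theta_I(X) \;=\; -\delta_g F(IX) \;+\; \delta_g(Ib)(IX).
\end{equation*}
The proof reduces to computing these two terms directly in a normal orthonormal frame $\{e_i\}$ at a point, and recognizing the three summands on the right-hand side of the claim.

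For $\delta_g(Ib)(IX)$, the Leibniz expansion applied to $(Ib)(A, B) = b(IA, B)$ produces a trace $-\sum_i (\nabla_{e_i} b)(Ie_i, IX)$ plus a $\nabla I$-correction involving the vector $V := \sum_i (\nabla_{e_i} I) e_i$. A short identification shows $V = I\theta_I^{\sharp}$, and combining this with the type constraint $b(IA, IB) = -b(A, B)$ on $(2,0)+(0,2)$-forms converts the correction into the term $b(\theta_I^{\sharp}, X)$ (up to signs). The remaining trace is then simplified by again applying the type constraint to substitute $b(Ie_i, IX) = -b(e_i, X)$ inside the covariant derivative, which produces $(\delta b)(X)$ plus residual $\nabla I$-terms involving $\sum_i b(Ie_i, (\nabla_{e_i} I) X)$.

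For $-\delta_g F(IX)$, I would exploit the closedness $dF = 0$: contracting this identity against $g^{-1}$ in two slots and using $F = \omega_I + Ib$ along with the derived relation $d\omega_I = -d(Ib)$ reorganizes the contribution into additional $\nabla I$-terms of the same general form. Combining these with the residual $\nabla I$-terms from $\delta_g(Ib)(IX)$ and applying the cyclic exterior derivative formula $db(e_i, e_j, X) = \sum_{\mathrm{cyc}} (\nabla b)(e_i, e_j, X)$ identifies the total $\nabla I$-contribution with $-\langle b, \imath_X db\rangle_g$, completing the proof.

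\textbf{Main obstacle.} The delicate step is the final identification of the residual $\nabla I$-contributions as precisely $\langle b, \imath_X db\rangle_g$. This requires careful use of the algebraic identity $(\nabla_X I) I + I (\nabla_X I) = 0$ (from $\nabla(I^2) = 0$), the integrability condition $N_I = 0$ to eliminate spurious Nijenhuis terms, and the antisymmetry and $(2,0)+(0,2)$-type of $b$, in order to match coefficients with the cyclic sum defining $db$.
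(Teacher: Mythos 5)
There is a genuine gap at the first step of your plan, in the treatment of the term $-(\delta_g F)(IX)$. The symplectic form $F$ is closed but it is \emph{not} coclosed, and its codifferential cannot be extracted from $dF=0$ in the way you describe: $dF$ is a totally antisymmetric $3$-tensor, so contracting it against the symmetric tensor $g^{-1}$ in two slots gives identically $0=0$ and carries no information. Equivalently, writing $dF=0$ as the cyclic identity $(\nabla_A F)(B,C)+(\nabla_B F)(C,A)+(\nabla_C F)(A,B)=0$ and tracing over $A=B=e_i$ produces $-(\delta F)(C)+(\delta F)(C)+0=0$. The only other access to $\delta F$ is through $F=\gw_I+Ib$, which returns you to $\delta\gw_I$ and $\delta(Ib)$ and is circular. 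So the decomposition $\theta_I(X)=-(\delta_g F)(IX)+(\delta_g(Ib))(IX)$ leaves a term you cannot evaluate, and the claimed reorganization of $-(\delta_g F)(IX)$ into $\nabla I$-terms does not go through.

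The fix, which is how the paper proceeds, is to trade the codifferential for the contraction $\Lambda$ with $\gw_I$: one has the standard Hermitian identity $\theta_I=I(\delta\gw_I)=\Lambda(d\gw_I)$ (equation \eqref{Lee1}), and then the closedness of $F$ enters in the useful form $d\gw_I=-d(Ib)$, equivalently $d^c_I\gw_I=db$, so that $\theta_I(X)=\tfrac{1}{2}\sum_i d\gw_I(e_i,Ie_i,X)=\tfrac{1}{2}\sum_i db(e_i,Ie_i,IX)$ involves only $db$ and no codifferential of $F$ ever appears. From there the paper's computation is close in spirit to the second half of your plan: expand $db$ covariantly, use the type constraint $b(I\cdot,I\cdot)=-b$ and the formula \eqref{DF} expressing $\nabla I$ through $d^c_I\gw_I=db$ to produce the three terms $b(\theta_I^{\sharp},X)$, $(\delta b)(X)$ and $-\langle b,\imath_X db\rangle_g$. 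Your identification $\sum_i(\nabla_{e_i}I)e_i=I\theta_I^{\sharp}$ is correct with the paper's sign conventions, and the bookkeeping you flag as the main obstacle is indeed where \eqref{DF} does the work; but as written the argument does not close because of the $\delta F$ term.
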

 \begin{proof} We shall use the following well-known expression for the
covariant derivative of the K\"ahler form of a Hermitian structure $(g, I)$ (see
e.g. \cite[Ch.~IX, Prop.~4.2]{KN} or \cite[Prop.~1]{gauduchon-connection}):
\begin{equation}\label{DF}
\begin{split}
g((\nabla_X I)(Y),  Z) &=\tfrac{1}{2}\Big((d\gw_I)(X,Y, Z) - (d\gw_I)(X,IY,
IZ)\Big) \\
&= \tfrac{1}{2} \Big(-(d^c_I \gw_I)(IX, Y, Z) +
(d^c_I\gw_I)(IX, IY, IZ)\Big)\\
                        &= \tfrac{1}{2} \Big((d^c_I \gw_I)(X, Y, IZ) +
(d^c_I\gw_I)(X, IY, Z)\Big),
\end{split}                        
\end{equation}
where $\nabla$ is the Levi-Civita connection of $g$, $\gw_I=gI$ is the
fundamental
form of $I$, and we have used that  $I$ is integrable (so that $d\gw_I$ and $d^c
\gw_I$ are of type $(1,2) + (2,1)$) to go from the second line to the  third. 

We denote by $\Lambda$ the contraction with $\gw_I$ acting on a $p$-form $\psi$
by 
 \begin{equation}\label{contraction}
\Lambda (\psi):= \gw_I \mathrel{\lrcorner} \psi =
\tfrac{1}{2}\sum_{i=1}^{2m}\psi(e_i, Ie_i, \cdot, \ldots, \cdot),
\end{equation}
where $\{e_i\}$ is any  $I$-adapted orthonormal frame and $m$ is the complex
dimension of $M$.  We then can express the Lee form as (see \cite{gauduchon})
\begin{equation}\label{Lee}
d\gw_I^{m-1} = \theta_I \wedge \gw_I^{m-1},
\end{equation} 
or, equivalently, by 
\begin{equation}\label{Lee1}
\theta_I = I (\delta \gw_I) = \Lambda (d\gw_I).
\end{equation}
Recall that under the hypothesis of Lemma~\ref{l:Lee-forms} we have $d^c_I \gw_I
= db$,  so we  compute using \eqref{DF}:
  \begin{equation*}
 \begin{split}
 \theta_I (X)&= \tfrac{1}{2}\sum_{i=1}^{2m} d \gw_I(e_i, Ie_i, X) =
\tfrac{1}{2}\sum_{i=1}^{2m}db(e_i, Ie_i, I X)
                =\tfrac{1}{2}\Big(\sum_{i=1}^{2m} (\nabla_{IX} b)(e_i, Ie_i) +
2
(\nabla_{e_i} b)(Ie_i, IX)\Big) \\
                &= -\langle b, \nabla_{IX} \gw_I \rangle_g +(\delta b)(X) +
b(\theta_I^{\sharp}, X)  =  -\langle b, (\imath_X db)\rangle_g + (\delta b)(X) +
b(\theta_I^{\sharp}, X).
                \end{split}
                \end{equation*}  \end{proof}

We next shift attention to generalized K\"ahler structures with associated
taming symplectic structures.  We first give an equivalent formulation of a wide
class of generalized K\"ahler structures encompassing the nondegenerate case.

\begin{defn}\label{d:symplectic} Given a manifold $M$, a {\it generalized
K\"ahler structure of symplectic type} on $M$ is a triple $(F, I,J)$ of a real
symplectic $2$-form $F$ and integrable complex structures $I$ and $J$, such that
$I$ and $J$ are tamed by $F$ and
\begin{equation} \label{GK-symplectic}
F(IX, Y) = - F(X, JY).
\end{equation}
Equivalently,
\begin{equation}\label{basic-1}
F I = -g + b, \ \ F J= -g - b, 
\end{equation}
where $g$ is a positive-definite symmetric tensor  and $b$ is a real $2$-form  
of type $(2,0)+(0,2)$ (with respect to both $I$ and $J$).  It follows from
direct calculations (cf. Lemma~\ref{l:symplectic-forms} below) that $(g, I, J)$
is then a
generalized K\"ahler structure with $d^c_I \gw_I = db = -d^c_J \gw_J$.  By
further direct calculations one can obtain that $I + J$ is invertible, and
\begin{align} \label{l:symplectic-unique}
F = -2 g (I+ J)^{-1}, \ \ b = \tfrac{1}{2} F (I-J) = - g (I+J)^{-1}(I-J).
\end{align}
 \end{defn}

Note that any compact generalized K\"ahler $4$-manifold is of
symplectic type, provided that $I$ and $J$ induce the same orientation and
$b_1(M)$ is even, see \cite[Prop.~4]{AGG} and \cite[Prop.~4]{HitchinPoisson}. 
We next observe that a nondegenerate generalized
K\"ahler structure is of symplectic type, and compute the relevant symplectic
forms.

\begin{lemma}\label{l:non-degenerate-symplectic} Let $(M^{4n}, g, I, J)$ be a
nondegenerate generalized K\"ahler structure.  Then it is of symplectic type in
two ways, with associated symplectic forms
\begin{align} \label{Fdefs}
F_{\pm} =&\ -2 g(I \pm J)^{-1}.
\end{align}
\end{lemma}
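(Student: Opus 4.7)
The plan is to verify each axiom of Definition~\ref{d:symplectic} for the candidate triples built from $F_\pm$.

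First I show that $I \pm J$ are invertible and that $F_\pm$ are real non-degenerate $2$-forms. From $I^2 = J^2 = -\Id$ one computes $(I+J)(I-J) = -[I,J]$ and $(I-J)(I+J) = [I,J]$; the hypothesis that $\sigma = g^{-1}[I, J]$ is non-degenerate forces $[I, J]$ to be an invertible bundle map on $TM$, so both $I \pm J$ are invertible. Since $I \pm J$ are skew-symmetric with respect to $g$, so are $(I\pm J)^{-1}$, and hence $F_\pm(X, Y) = -2g((I\pm J)^{-1}X, Y)$ are real, skew-symmetric, non-degenerate bilinear forms.

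Next I derive the decomposition \eqref{basic-1}. From $I^2 = J^2 = -\Id$ one has the identities $I(I+J) = (I+J)J$ and $J(I+J) = (I+J)I$, yielding the conjugation relations $(I+J)^{-1}I = J(I+J)^{-1}$ and $(I+J)^{-1}J = I(I+J)^{-1}$. Setting $b := -g(I+J)^{-1}(I-J)$, direct substitution gives $F_+ I = -g + b$ and $F_+ J = -g - b$; moreover the endomorphism $\hat b = -(I+J)^{-1}(I-J)$ anticommutes with each of $I$ and $J$, showing that $b$ is a real $(2,0)+(0,2)$-form with respect to both complex structures. From this the taming conditions follow (as $b$ is skew, its diagonal contribution vanishes, leaving the positive form $g(X,X)$), and the compatibility \eqref{GK-symplectic} for $F_+$ with $(I, J)$ reduces to the identity $I(I+J) = (I+J)J$ already established. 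The same argument with $J$ replaced by $-J$ produces the analogous decomposition for $F_-$, associated with the nondegenerate generalized K\"ahler structure $(g, I, -J)$ (whose Poisson tensor is $-\sigma$).

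The remaining, and main, step is closedness $dF_\pm = 0$. Writing $F_+ = \omega_I - bI$, this is equivalent to $d\omega_I = d(bI)$. Splitting into Hodge bidegrees with respect to $I$, and noting that $\omega_I$ is of type $(1,1)$ while $bI$ is of type $(2,0) + (0,2)$, the identity becomes equivalent to $db = d^c_I \omega_I = H$ together with the vanishing of the pure $(3,0)$ and $(0,3)$ components of $db$; the latter is automatic from $H \in \Lambda^{2,1}_I \oplus \Lambda^{1,2}_I$, a consequence of $\omega_I$ being $(1,1)$. Thus closedness of $F_+$ reduces to the identity $db = H$, which can be derived from the explicit formula $\hat b = -(I+J)^{-1}(I-J)$ together with the closedness of the holomorphic symplectic form $\Omega = [I,J]^{-1}g$ (itself a consequence of $\sigma$ being Poisson). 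The treatment of $F_-$ is parallel, using the GK identity $d^c_J \omega_J = -H$ and the analogous decomposition with $J$ replaced by $-J$.

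The main obstacle is this last step: the preceding computations are purely algebraic consequences of non-degeneracy, whereas closedness encodes the genuine geometric content of the generalized K\"ahler condition. The challenge is to coordinate the two bidegree decompositions of $b$---one with respect to $I$, the other with respect to $J$---to jointly cancel all terms in $dF_\pm$.
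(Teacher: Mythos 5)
Your algebraic preliminaries are correct and essentially identical to the paper's: the invertibility of $I\pm J$ via $(I+J)(I-J)=-[I,J]$, the identities $F_\pm I = -g \pm b$, $F_\pm J = -g \mp b$ recovering \eqref{basic-1}, and the fact that $b=-g(I+J)^{-1}(I-J)$ is a $(2,0)+(0,2)$-form for both complex structures. The problem is the step you yourself flag as ``the main obstacle'': you never prove $dF_\pm=0$. You reduce it to the identity $db=H$ and then assert this ``can be derived from'' the formula for $\hat b$ together with the closedness of $\Omega$, but no derivation is given, and the anticipated difficulty of ``coordinating the two bidegree decompositions of $b$'' is a sign that the route is wrong-headed: in the paper the implication runs the other way, with Lemma~\ref{symplectic-to-pluriclosed} deducing $d^c_I\gw_I=db$ \emph{from} $dF=0$, so taking $db=H$ as the thing to be established first puts you on the hard side of the equivalence with no tools to attack it. As it stands the crucial assertion of the lemma is unproved.

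The paper's closedness argument requires no bidegree bookkeeping at all. The external input is that $\gs=g^{-1}[I,J]$ is the real part of a holomorphic Poisson bivector with respect to \emph{both} $I$ and $J$ (Hitchin); in the nondegenerate case this gives $d\Omega=0$ and, since $\Omega$ is of type $(2,0)+(0,2)$ for each complex structure (pure algebra: $[I,J]$ anticommutes with $I$ and with $J$), the forms $I\Omega=\mathrm{Im}(\Omega_I)$ and $J\Omega=\mathrm{Im}(\Omega_J)$ are also closed. A one-line manipulation, inserting $\gs\gs^{-1}$ into $F_+=-2g(I+J)^{-1}$, then yields
\begin{align*}
F_+ \;=\; 2(I-J)\gs^{-1} \;=\; 2\big(\mathrm{Im}(\Omega_I)-\mathrm{Im}(\Omega_J)\big),
\end{align*}
and similarly for $F_-$ with $\Omega_I$ replaced by $-\Omega_I$, so $dF_\pm=0$ is immediate. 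This is the ingredient missing from your sketch: not the closedness of the real form $\Omega$ alone, but the closedness of the \emph{imaginary} parts $I\Omega$ and $J\Omega$, which is what lets you exhibit $F_\pm$ as a difference of closed forms. If you want your identity $db=H$ afterwards, it then falls out of $dF_+=0$ by exactly the type argument of Lemma~\ref{symplectic-to-pluriclosed}, rather than serving as an input.
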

\begin{proof} Letting 
\begin{equation}\label{AB}
A:= (I+ J), \ \ B:= (I-J), 
\end{equation}
we can compute that
\begin{equation}\label{Omega}
\begin{split}
\gs &= -AB g^{-1}, \\
\Omega_I & = -g B^{-1}A^{-1} - \i g I B^{-1}A^{-1}, \\
\Omega_J &= -g B^{-1}A^{-1} - \i gJB^{-1}A^{-1}. 
\end{split}
\end{equation}
We introduce the $(0,2)$-tensors
\begin{equation}\label{omega-def}
F_+:= -2g A^{-1}; \  b:=  -gA^{-1}B,
\end{equation}
or, equivalently, 
\begin{equation*}
- F_+ A = 2g;  \ \ F_+ B = 2b.
\end{equation*}
It follows by the very definitions of $F_+$ and $b$, 
\begin{equation}\label{definition}
F_+ I= \tfrac{1}{2} (F_+ A   + F_+ B)= -g  + b, \ \ F_+ J =\tfrac{1}{2}(F_+ A -
F_+ B)=-g -b.
\end{equation}
Using $AB = - BA$ and that $A$ and $B$ are skew with respect to $g$, we see that
$b$ is a skew-symmetric tensor, i.e. a $2$-form. Thus \eqref{definition} implies
that $F_+$ tames both $I$ and $J$, and that $b$ is of type $(2,0)+(0,2)$ with
respect to either $I$ or $J$, i.e. \eqref{basic-1} holds true.  In order to
conclude that $(g, I, J)$ is of symplectic type it is enough to show that $F_+$
is  closed. Under the non-degeneracy assumption for $(g, I, J)$ we have 
\begin{equation*}
\begin{split}
F_+ &= -2 g A^{-1} = -2 g  A^{-1}  \gs\gs^{-1}\\
               &= 2 g A^{-1} (AB)g^{-1} \gs^{-1}  = 2B \gs^{-1}\\
               &= 2I\gs^{-1} - 2J\gs^{-1}= -2gI B^{-1}A^{-1} + 2gJ B^{-1}A^{-1}
\\
               &= 2\Big({\rm Im}(\Omega_I) - {\rm Im}(\Omega_J)\Big).
               \end{split}
               \end{equation*}  
Since ${\rm Im} (\Omega_I)$ and ${\rm Im} (\Omega_I)$ are closed it follows that
$F_+$ is closed.  The arguments above can be repeated for $F_-$, and the lemma
follows.               
\end{proof}
We now specialize to the case  of a generalized K\"ahler structure $(g, I, J)$
of symplectic type, so that both $(g, I)$ and $(g, J)$ are pluriclosed Hermitian
metrics of symplectic type,  corresponding to the same symplectic form $\omega$
and the same up to sign $2$-form $b$.
\begin{lemma}\label{l:half-non-degenerate} Let $(M^{2m}, g, I, J)$ be a
generalized
K\"ahler manifold for which ${\rm det} (I+ J)\neq 0$ and $b= -g (I+
J)^{-1}(I-J)$ be the $2$-form  of type $(2,0)+(0,2)$ with respect to either $I$
or $J$. Then,
\begin{equation}\label{step2}
\begin{split}
d\Big(\log \det (I+ J)\Big) (X) 
=\ -2\langle (\imath_X H), b \rangle_g.
\end{split}
\end{equation}
\end{lemma}
\begin{proof} Using  $d \log {\rm det} A = {\rm tr} A^{-1}
\nabla A$ and that $(I+J)$ hence also $(I+ J)^{-1}$ is skew,  we calculate
\begin{equation}\label{step1}
\begin{split}
d\Big(\log \det (I+ J)\Big) (X)  & =  {\rm tr }\Big( (I+J)^{-1} (\nabla_X I +
\nabla_X J)
\Big) 
       = - \Big\langle (\nabla_X I + \nabla_X J), (I+J)^{-1} \Big\rangle_g.
       \end{split}
\end{equation}
Recall that $H=d_I^c \gw_I= -d^c_J \gw_J$ is the torsion $3$-form of $(g, I,
J)$, so
substituting \eqref{DF} into \eqref{step1} we obtain \eqref{step2}, 
where our convention is that the inner product induced on
$\wedge^2(T^*M)$ is one half  the inner product  induced from ${\rm End}(TM)$
via the riemannian metric $g$. \end{proof}

\subsection{Variations of structure} \label{s:variations}

In this subsection we exhibit a natural class of variations of nondegenerate
generalized K\"ahler structures.  The central observation is due to Joyce, who
showed how to construct large families of generalized K\"ahler
structures in the $4$-dimensional case by deforming away from hyper-K\"ahler
structures appropriately using Hamiltonian diffeomorphisms (cf. \cite{AGG}). 
This construction was extended to arbitrary
dimensions by Gualtieri (\cite{Gualtieri-CMP} Example 2.21).  These
constructions focused on
deformation away from hyper-K\"ahler structure, and below we show that these
ideas
also yield deformations of arbitrary nondegenerate generalized K\"ahler
structures.  The first step is a higher dimensional extension of \cite[Thm.~2]{AGG},  see also  \cite[Sect. 2.2]{HitchinPoisson}.

\begin{lemma}\label{l:symplectic-forms} Suppose $(I, \Omega_I)$ and $(J,
\Omega_J)$ are two holomorphic-symplectic structures on $M^{4n}$, such that
\begin{enumerate}
\item ${\rm Re}(\Omega_I)= {\rm Re}(\Omega_J)$,
\item The $(1,1)$-part with respect to $I$ of the $2$-form $-{\rm
Im}(\Omega_J)$ is positive definite.
\end{enumerate}
Then the $I$-Hermitian metric defined by $g(X,X)=-2{\rm Im}(\Omega_J) (X, IX)$
is also $J$-invariant, and $(g, I, J)$ defines a nondegenerate generalized
K\"ahler
structure with $\Omega = {\rm Re}(\Omega_I)$.

\begin{proof} Because of condition (1), we can write the holomorphic
symplectic forms as
\begin{equation*}
\Omega_I = \Omega +\i I \Omega, \qquad \Omega_J = \Omega +\i J \Omega,
\end{equation*}
where $\Omega$ is a real symplectic form on $M$ of complex type $(2,0) + (0,2)$
with respect to both $I$ and $J$. Setting
\begin{equation*}
F := 2\big({\rm Im}(\Omega_I) - {\rm Im}(\Omega_J) \big)= 2(I\Omega - J\Omega)
\end{equation*}
we obtain another real symplectic  form which, by condition (2), tames the
complex structure $I$. Let 
\begin{equation}\label{I-decomposition}
F(X , IY)= g(X,Y) + b(X,Y)
\end{equation} 
be the decomposition of $- IF$ as the sum of a  symmetric tensor $g$ 
(which is a Riemannian metric compatible with $I$ according to (2)) and a
$2$-form $b$ (which is of complex type $(2,0)+(0,2)$ with respect to $I$).
As 
\begin{equation*}
F I = 2(I\Omega I - J \Omega I )= 2(\Omega + JI\Omega) = JF,
\end{equation*}
or, equivalently, 
\begin{equation}\label{conjugate}
F(IX, Y)= -F(X, JY),
\end{equation}
we obtain that $g$ is also $J$-invariant,  and (by using \eqref{conjugate})
\begin{equation}\label{J-decomposition}
F(X, JY) = g(X,Y) - b(X,Y).
\end{equation}
Let us denote  by $\gw_I= (F)^{1,1}_I$ and $\gw_J=(F)^{1,1}_J$ the
K\"ahler forms of $(g, I)$ and  $(g, J)$, respectively.  According to
\eqref{I-decomposition} and \eqref{J-decomposition},  we have
\begin{equation*}
F= \gw_I + Ib = \gw_J -Jb.
\end{equation*}
As $F$ is closed, 
$$d\gw_I = - dIb, \ \ d\gw_J = d Jb,$$
so that, setting ${\bf I} \eta = - \eta(I, I, I)$ for a $3$-form $\eta \in
\wedge^3(M)$,  
\begin{equation*}
d^c_I \gw_I = {\bf I} d\gw_I = -{\bf I} d Ib, \ \ d^c_J \gw_J = {\bf J} d\gw_J =
{\bf J}
d Jb.
\end{equation*}
As $b$ is of type $(2,0)+(0,2)$ with respect to $I$  we can write 
$$b= b^{2,0} + b^{0,2}, \ I b = -ib^{2,0} + ib^{0,2}.$$
As  $dIb= -d\gw_I$ is of type $(2,1) + (1,2)$, we deduce
$$d (Ib) = -i \delb b^{2,0} + i\del b^{0,2},$$
and therefore ${\bf I} d (Ib) = -\delb b^{2,0} -\del b^{0,2} = -db$. Similarly,
${\bf J} d (Jb) = -db$. We conclude, therefore, that $(g, I, J)$ is generalized
K\"ahler with 
$H= d^c_I \gw_I = db = - d^c_J \gw_J.$ 
\end{proof}
\end{lemma}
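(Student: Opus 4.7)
The plan is to construct the desired generalized K\"ahler structure by first producing a symplectic $2$-form $F$ that tames $I$, and then verifying that the induced Hermitian data is compatible with both complex structures and satisfies the generalized K\"ahler equations. By condition (1) I can write $\Omega_I = \Omega + \sqrt{-1}\,I\Omega$ and $\Omega_J = \Omega + \sqrt{-1}\,J\Omega$, where $\Omega$ is a real $2$-form of type $(2,0)+(0,2)$ with respect to both $I$ and $J$. I then set
\begin{equation*}
F := 2\bigl(\mathrm{Im}(\Omega_I) - \mathrm{Im}(\Omega_J)\bigr) = 2(I\Omega - J\Omega),
\end{equation*}
which is a closed real $2$-form. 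Since $\mathrm{Im}(\Omega_I) = I\Omega$ is of pure type $(2,0)+(0,2)$ with respect to $I$, the $(1,1)$-part of $F$ with respect to $I$ equals $-2\,\mathrm{Im}(\Omega_J)^{1,1}_I$, which is positive definite by hypothesis (2). Hence $F$ tames $I$, and I decompose $F(X,IY) = g(X,Y) + b(X,Y)$ with $g$ a positive-definite $I$-invariant metric and $b$ a real $2$-form of type $(2,0)+(0,2)$ with respect to $I$. Setting $Y=X$ and using that $\mathrm{Im}(\Omega_I)(X,IX) = 0$ by type considerations recovers the formula $g(X,X) = -2\,\mathrm{Im}(\Omega_J)(X,IX)$ asserted in the statement.

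The crux of the argument is the algebraic identity $F(IX,Y) = -F(X,JY)$. Unravelling the definition of $F$, this reduces to the relation $\Omega(JIX, Y) = \Omega(IX, JY)$, which follows from the fact that $\Omega$ is of type $(2,0)+(0,2)$ with respect to $J$ (so that $\Omega(J\cdot, J\cdot) = -\Omega(\cdot,\cdot)$). Combined with the previous decomposition, this identity immediately forces $F(X,JY) = g(X,Y) - b(X,Y)$. A single equation now accomplishes two things at once: it exhibits $g$ as $J$-invariant (so Hermitian with respect to both complex structures), and it shows that $b$ is of type $(2,0)+(0,2)$ with respect to $J$ as well. I view this identity as the main obstacle; the remaining steps are type-theoretic consequences of integrability.

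To close, I would verify the generalized K\"ahler equation $d^c_I \omega_I = -d^c_J \omega_J$. Writing $F = \omega_I + Ib = \omega_J - Jb$ and using $dF = 0$ gives $d\omega_I = -d(Ib)$ and $d\omega_J = d(Jb)$. Since $b = b^{2,0} + b^{0,2}$ is of pure type $(2,0)+(0,2)$ with respect to $I$, integrability of $I$ forces $d(Ib) = -\sqrt{-1}\,\bar{\partial} b^{2,0} + \sqrt{-1}\,\partial b^{0,2}$ to lie in type $(2,1)+(1,2)$, and a direct computation yields $\mathbf{I}\,d(Ib) = -db$; hence $d^c_I \omega_I = \mathbf{I}\,d\omega_I = db$. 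The analogous computation for $J$ gives $d^c_J \omega_J = -db$, establishing the generalized K\"ahler equation with torsion $H = db$. Nondegeneracy is automatic: $\Omega = \mathrm{Re}(\Omega_I)$ is symplectic as the real part of a holomorphic symplectic form, and unwinding the definitions identifies it with $\sigma^{-1}$ for the associated Poisson bivector $\sigma = [I,J]g^{-1}$.
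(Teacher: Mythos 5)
Your proposal is correct and follows essentially the same route as the paper: the same symplectic form $F = 2(\mathrm{Im}(\Omega_I) - \mathrm{Im}(\Omega_J))$, the same decomposition $F(X,IY)=g+b$, the same key identity $F(IX,Y)=-F(X,JY)$ derived from $\Omega$ being of type $(2,0)+(0,2)$ for both complex structures, and the same type computation giving $d^c_I\omega_I = db = -d^c_J\omega_J$. The only (welcome) additions are your explicit checks that $g(X,X)=-2\,\mathrm{Im}(\Omega_J)(X,IX)$ and that $\Omega$ inverts the Poisson bivector, which the paper leaves implicit.
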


Lemma \ref{l:symplectic-forms} yields a natural construction of generalized
K\"ahler manifolds.  In fact every nondegenerate generalized K\"ahler structure
arises from this description.  We state this in the next lemma, whose proof is
contained in the proof of Lemma \ref{l:non-degenerate-symplectic}.

\begin{lemma}\label{c} Let $(M^{4n}, g, I, J)$ be a nondegenerate generalized
K\"ahler structure and $\Omega_J$ the holomorphic symplectic structure
associated to $J$. Then the $(1,1)$ part of $-2{\rm Im}(\Omega_J)$ with respect
to $I$ is positive definite and equals $\omega_I$, i.e. $(g, I, J)$ is given by
the construction of Lemma~\ref{l:symplectic-forms}.
 \end{lemma}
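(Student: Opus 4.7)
The plan is to read off the statement directly from the identities already derived in the proof of Lemma~\ref{l:non-degenerate-symplectic}. That argument produced the symplectic form $F_+ = -2g(I+J)^{-1}$ from the given nondegenerate GK structure and established two facts about it which together contain everything I need: first, that $F_+$ tames $I$ with the decomposition $F_+ I = -g + b$, equivalently $F_+ = \gw_I + Ib$, where $b$ is of type $(2,0)+(0,2)$ with respect to $I$; second, the identity
\begin{equation*}
F_+ = 2\bigl({\rm Im}(\Omega_I) - {\rm Im}(\Omega_J)\bigr).
\end{equation*}
These are the only ingredients required.

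The second step is a type argument. Since $\Omega_I$ is of type $(2,0)$ with respect to $I$, the real $2$-form ${\rm Im}(\Omega_I)$ is of pure type $(2,0)+(0,2)$ with respect to $I$, and in particular has vanishing $(1,1)$-component. Consequently, taking the $(1,1)$-part with respect to $I$ of the displayed identity kills the ${\rm Im}(\Omega_I)$ term and gives
\begin{equation*}
(F_+)^{1,1}_I = \bigl(-2\,{\rm Im}(\Omega_J)\bigr)^{1,1}_I.
\end{equation*}
On the other hand, the taming decomposition identifies $(F_+)^{1,1}_I$ with $\gw_I = g(I\cdot,\cdot)$, which is positive definite because $g$ is Riemannian and $I$-invariant. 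Combining these two observations yields $\bigl(-2\,{\rm Im}(\Omega_J)\bigr)^{1,1}_I = \gw_I$ together with the required positivity. This is exactly hypothesis (2) of Lemma~\ref{l:symplectic-forms} applied to the pair $(I,\Omega_I)$, $(J,\Omega_J)$, while hypothesis (1) holds because both $\Omega_I$ and $\Omega_J$ have real part $\Omega = \gs^{-1}$ by construction; that lemma then reconstructs $(g,I,J)$ via the stated formula.

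The expected obstacle here is essentially nonexistent: the content of the lemma is a bookkeeping observation that isolates one type-component of a computation already carried out, rather than a new estimate or construction. The only care needed is to verify consistency of signs and normalization conventions relating $\Omega_I$, $\Omega_J$, $\gs$, and $F_+$, all of which has been settled in the proof of Lemma~\ref{l:non-degenerate-symplectic}.
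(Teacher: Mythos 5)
Your proposal is correct and follows exactly the paper's route: the paper states that the proof of Lemma~\ref{c} is contained in the proof of Lemma~\ref{l:non-degenerate-symplectic}, namely the identities $F_+ = \gw_I + Ib$ (with $b$ of type $(2,0)+(0,2)$) and $F_+ = 2\bigl({\rm Im}(\Omega_I) - {\rm Im}(\Omega_J)\bigr)$, from which taking the $I$-$(1,1)$ part gives the claim since ${\rm Im}(\Omega_I)$ is of type $(2,0)+(0,2)$. Nothing is missing.
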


With this description in place we can now exhibit a natural class of
deformations of nondegenerate generalized K\"ahler structures.

\begin{prop} \label{nondegvariations} Let $(M^{4n}, g, I, J)$ be a nondegenerate
generalized K\"ahler
manifold.  Let $f_t$ be a smooth time dependent function on $M$, and $X_f$ be
the
$\Omega$-Hamiltonian vector field associated to $f$, i.e.
\begin{align*}
 df =&\ -X_f \hook \Omega.
\end{align*}
Let $\phi_t$ be the $1$-parameter family of diffeomorphisms of $M$ generated by
$X_f$.  Then for all $t$ such that the $(1,1)$-part with respect to $I$ of
$-{\rm Im} (\Omega_{\phi_t^*J})$ is positive definite, the triple $(I, \phi_t^*
J, \Omega)$ are the complex structures and symplectic structure associated to a
unique nondegenerate generalized
K\"ahler structure.
 \begin{proof} As in Lemma \ref{l:symplectic-forms}, associated to the given
generalized K\"ahler structure is a pair of complex symplectic forms written as
 \begin{equation*}
\Omega_I = \Omega +\i I \Omega, \ \Omega_J = \Omega +\i J \Omega,
\end{equation*}
where $\Omega$ is a real symplectic form on $M$ of complex type $(2,0) + (0,2)$
with respect to both $I$ and $J$.  Now define a one-parameter family of
$2$-forms via $(\Omega_J)_t = \phi_t^*(\Omega_J)$ Certainly by construction
$(\phi^* J, \phi^* \Omega_J)$ remains a holomorphic symplectic structure.  Since
$\phi$ is $\Omega$-Hamiltonian, it follows that
 \begin{align*}
  (\Omega_J)_t =&\ \phi^* (\Omega + \i J \Omega) = \Omega + \i (\phi^* J)
\Omega,
 \end{align*}
and so ${\rm Re}((\Omega_J)_t) = {\rm Re}(\Omega_I)$ for all $t$.  The result
follows from Lemma \ref{l:symplectic-forms}.
\end{proof}
\end{prop}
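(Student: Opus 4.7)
The plan is to reduce the proposition to a direct application of Lemma \ref{l:symplectic-forms} by exploiting the fact that a Hamiltonian flow preserves its defining symplectic form. So the strategy is to show that the pair $\Omega_I$ and $\phi_t^*\Omega_J$ continues to satisfy hypotheses (1) and (2) of that lemma.

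First I would use Lemma \ref{c} (or equivalently the discussion following Lemma \ref{l:non-degenerate-symplectic}) to write the original holomorphic symplectic forms as $\Omega_I = \Omega + \i I \Omega$ and $\Omega_J = \Omega + \i J \Omega$, where $\Omega$ is the common real part. The key preliminary observation is that because $\phi_t$ is the flow of an $\Omega$-Hamiltonian vector field, Cartan's formula gives $\phi_t^* \Omega = \Omega$ for every $t$. Next, by naturality of pullback applied to the endomorphism $J$ acting on $2$-forms, one has
\begin{equation*}
\phi_t^*(J\Omega) = (\phi_t^* J)(\phi_t^*\Omega) = (\phi_t^* J)\,\Omega.
\end{equation*}
Consequently
\begin{equation*}
\phi_t^*\Omega_J = \phi_t^*\Omega + \i\,\phi_t^*(J\Omega) = \Omega + \i (\phi_t^* J)\,\Omega,
\end{equation*}
which in particular exhibits this $2$-form as holomorphic symplectic with respect to the integrable complex structure $\phi_t^* J$ and having real part equal to $\Omega = \mathrm{Re}(\Omega_I)$. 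Thus hypothesis (1) of Lemma \ref{l:symplectic-forms} holds for the pair $(I, \Omega_I)$ and $(\phi_t^* J, \phi_t^*\Omega_J)$.

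Hypothesis (2) of Lemma \ref{l:symplectic-forms} — that the $(1,1)$-part with respect to $I$ of $-\mathrm{Im}(\phi_t^*\Omega_J)$ is positive definite — is precisely the standing assumption of the proposition at the relevant times $t$. Applying the lemma then produces a nondegenerate generalized K\"ahler structure whose complex structures are $I$ and $\phi_t^* J$ and whose symplectic form is $\Omega$, and the Riemannian metric is given explicitly by $g_t(X,X) = -2\,\mathrm{Im}(\phi_t^*\Omega_J)(X, IX)$; uniqueness of this structure from the data follows from the explicit formulas \eqref{l:symplectic-unique}.

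The only subtle point — and the main thing to verify carefully — is the interchange $\phi_t^*(J\Omega) = (\phi_t^*J)(\phi_t^*\Omega)$, since $J$ is an endomorphism of the tangent bundle rather than a tensor that is simply pulled back; this is just the statement that for a diffeomorphism, pullback commutes with the natural action of an almost complex structure on forms when one pulls back both the complex structure and the form. Apart from this, the proof is essentially a bookkeeping exercise in combining Lemma \ref{l:symplectic-forms} with the defining property of a Hamiltonian flow.
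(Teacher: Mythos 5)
Your proposal is correct and follows essentially the same route as the paper: decompose $\Omega_I,\Omega_J$ over the common real part $\Omega$, use that the Hamiltonian flow preserves $\Omega$ so that $\phi_t^*\Omega_J = \Omega + \i(\phi_t^*J)\Omega$, and then invoke Lemma \ref{l:symplectic-forms}, whose positivity hypothesis is exactly the standing assumption on $t$. Your explicit verification of the naturality identity $\phi_t^*(J\Omega) = (\phi_t^*J)(\phi_t^*\Omega)$ is a point the paper leaves implicit, but it is the same argument.
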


With this proposition in place we give a definition central to this work.

\begin{defn} \label{d:positiveHamilt} Let $(M^{4n}, g, I, J)$ be a nondegenerate
generalized K\"ahler manifold.  An $\Omega$-Hamiltonian diffeomorphism $\phi$ is
\emph{positive} if
\begin{align*}
-\left( {\rm Im}(\Omega_{\phi^* J}) \right)^{1,1}_I > 0.
\end{align*}
By Proposition \ref{nondegvariations}, a positive $\Omega$-Hamiltonian
diffeomorphism defines a unique generalized K\"ahler structure, which we denote
$(g_{\phi}, I, J_{\phi})$.  It is important to note that while $J_{\phi} =
\phi_t^* J$, the metric $g_{\phi}$ is defined implicitly from the triple $(I,
\phi_t^* J, \Omega)$, and is \emph{not} a pullback of the given $g$.
\end{defn}

With these constructions in place we can also recover the aforementioned
construction 
of Joyce of nontrivial nondegenerate generalized K\"ahler
structures by deformation away from hyper-K\"ahler structures.

\begin{cor} (cf.~\cite{AGG, Gualtieri-CMP, HitchinPoisson}) \label{p:joyce} Let
$(M^{4n}, g, I,
J, K)$ be a compact hyper-K\"ahler manifold.  For any smooth function $f$,
denote by $\phi_t$ the $\gw_K$-Hamiltonian flow determined by $f$. Then, for
$|t|$ small enough, $\phi_t$ is postive, and the associated structure $(g_t,
I, J_t)$ is not
K\"ahler,  unless $f\equiv const$.
\begin{proof} Proposition \ref{nondegvariations} already yields that the
deformation generates generalized K\"aher structures, so we need to show
that $(g_t, I)$ is not  K\"ahler  for $|t|$ small enough,
when the generating function $f$ is not constant.  To this end, consider the
angle
function defined by
\begin{equation}\label{p(t)}
p_t= -\tfrac{1}{4n}{\rm tr}(IJ_t)= \tfrac{1}{4n}\langle I, J_t \rangle_{g_t}.
\end{equation}
Clearly, if $(g_t, I)$ were K\"ahler, so would be $(g_t, J_t)$, and hence $p_t$
would be a constant function on $M$ as $I$ and $J_t$ are $g_t$-parallel.  One
notes that with the given hyper-K\"ahler backgorund, $X_f = \tfrac{1}{2} [I, J]
g^{-1}(df)$. Hence differentiating \eqref{p(t)}, we obtain
\begin{equation*}
\begin{split}
\tfrac{dp_t}{dt}\Big|_{t=0} &= \tfrac{1}{4n} {\rm tr} \Big(I  \circ ({\mathcal
L}_{X_f}
J)\Big) =  -\tfrac{1}{4n} {\rm tr} \Big(I \circ
[\nabla X_f, J]\Big)
                                       =  \tfrac{1}{8n} {\rm tr} \Big(
[I,J]\circ [I,J] \circ \nabla g^{-1}(df) \Big) \\
                                       &= - \tfrac{1}{2n}\Delta_g f,
\end{split}
\end{equation*}                                       
where $\nabla$ is the Levi-Civita connection of the hyper-K\"ahler metric $g$,
and $\gD_g = - d\delta^g - \delta^g d$ will denote the `analytic' Laplace
operator in accordance with the notation of \cite{STGK}.  This maximum principle
shows that if
$f$ is not a constant function, $p_t$ cannot stay constant for $t$ sufficiently
small.
\end{proof}
\end{cor}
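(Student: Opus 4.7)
The plan is to verify the two assertions (positivity of $\phi_t$ for small $t$, and non-K\"ahlerness of $(g_t, I)$ for nonconstant $f$) separately, the first by a standard open-condition argument and the second by a linearization of an angle function at $t=0$.

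For positivity, I would start at $t=0$: here $J_0 = J$, and in the hyper-K\"ahler normalization $\Omega = -\tfrac{1}{2}\gw_K$ the condition $-(\operatorname{Im}(\Omega_J))^{1,1}_I > 0$ reduces by direct unpacking to positivity of $\gw_I$, which is automatic.  Since the positivity condition from Definition~\ref{d:positiveHamilt} is open, it persists for small $|t|$, and Proposition~\ref{nondegvariations} then supplies the nondegenerate generalized K\"ahler structures $(g_t, I, J_t)$.

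For the non-K\"ahler statement I would argue by contradiction.  If $(g_t, I)$ were K\"ahler for some small $t \neq 0$, then $H_t = d^c_I \gw_I^t = 0$, and the generalized K\"ahler condition forces $d^c_{J_t}\gw_{J_t}^t = -H_t = 0$ as well, so by integrability of $J_t$ the structure $(g_t, J_t)$ is K\"ahler too.  Then both $I$ and $J_t$ are parallel for the Levi-Civita connection of $g_t$, so the angle function
\begin{equation*}
p_t := -\tfrac{1}{4n}\tr(I J_t)
\end{equation*}
is pointwise constant on $M$.  To rule this out I aim to show $\tfrac{d}{dt}\big|_{t=0} p_t$ is a nonconstant function on $M$ whenever $f$ is.  The variation is computed as follows: $J_t = \phi_t^* J$ gives $\tfrac{d}{dt}\big|_{t=0} J_t = \mathcal{L}_{X_f} J$, and on the hyper-K\"ahler background the identification $\Omega = -\tfrac{1}{2}\gw_K$ yields $X_f = \tfrac{1}{2}[I, J] g^{-1}(df)$.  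Since $\nabla J = 0$, the identity $\mathcal{L}_X J = -[\nabla X, J]$ holds (derivable from the definition of the Lie derivative combined with parallelism), so $\tfrac{d}{dt}\big|_{t=0} p_t = \tfrac{1}{4n}\tr\bigl(I\,[\nabla X_f, J]\bigr)$.  Substituting and using the quaternionic relations $IJ = -JI$, $I^2 = J^2 = -1$, and hence $[I,J]^2 = -4\,\mathrm{Id}$, together with the parallelism of $I, J$, one collapses the right-hand side to a scalar multiple of the Laplacian, namely $-\tfrac{1}{2n}\gD_g f$.  On a compact manifold a constant $\gD_g f$ integrates to zero, hence vanishes, forcing $f$ constant; so for nonconstant $f$ the derivative is nonconstant on $M$, providing the desired contradiction.

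The main obstacle is the algebraic reduction of $\tr\bigl(I\,[\nabla X_f, J]\bigr)$ to a pure Laplacian.  One has to cycle $I$, $J$, and $[I,J]$ inside the trace, using the parallelism of the hyper-K\"ahler triple together with the quaternionic identities, to reach $\tr(\nabla g^{-1}(df)) = \gD_g f$.  The key mechanism making this work is that $[I,J]^2$ is a scalar multiple of the identity, which is what converts a seemingly nonlinear trace into an honest second-order differential operator on $f$; past that, sign conventions and the $(0,2)$ vs.\ endomorphism identifications require care but no new idea.
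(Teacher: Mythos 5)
Your proposal is correct and follows essentially the same route as the paper: positivity via Proposition~\ref{nondegvariations} and openness, then the angle function $p_t = -\tfrac{1}{4n}\tr(IJ_t)$, the identity $\mathcal{L}_{X_f}J = -[\nabla X_f, J]$ with $X_f = \tfrac{1}{2}[I,J]g^{-1}(df)$, and the quaternionic reduction $[I,J]^2 = -4\,\mathrm{Id}$ yielding $\tfrac{d}{dt}\big|_{t=0}p_t = -\tfrac{1}{2n}\Delta_g f$, which cannot be constant for nonconstant $f$ on a compact manifold. Your write-up only adds explicit detail (unpacking positivity at $t=0$ and integrating $\Delta_g f$ to zero) that the paper leaves implicit.
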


\section{A Calabi-Yau conjecture for nondegenerate generalized K\"ahler
manifolds} \label{s:conj}
\subsection{Generalized Calabi-Yau structures} 

In this subsection we recall some motivating ideas from the theory of
generalized complex geometry which lead to the definition of the generalized
Calabi-Yau equation.  Our exposition is brief, and we refer to
\cite{Gualtieri-PhD, Gualtieri-Ann, HitchinGCY} for the general theory of
generalized complex structures.
The notion of an (even) generalized Calabi-Yau structure was introduced by
Hitchin~\cite{HitchinGCY} as  a special example of a generalized complex
structure,
i.e. an almost complex structure $\mathbb I$ defined on the vector bundle $TM
\oplus
T^*M$, which is orthogonal with respect to the natural non-degenerate inner
product 
\begin{equation}\label{product}
\langle X+ \xi, X + \xi \rangle : = -\xi(X), \ X \in TM, \ \xi \in T^*M, 
\end{equation}
and is integrable in the sense that the $i$-eigenspace $E \subset (TM \oplus
T^*M )\otimes {\mathbb C}$ of $\mathbb I$ 
is closed under the Courant bracket on $TM \oplus T^*M$. 

The case of (even) generalized Calabi-Yau structures studied in
\cite{HitchinGCY}
corresponds to the special situation when the $i$-eigenspace of the generalized
complex structure ${\mathbb I}$ coincides with the annihilator $E_{\varphi}$ of
an
even-degree {\it closed} form $\varphi \in \Gamma(\wedge^{\rm ev} (M)\otimes
{\mathbb C})$,  under the natural pointwise action of $TM\oplus T^*M$ on the
exterior algebra
$\wedge^{\bullet} (M)$  of $T^* M$,
$$(X, \xi) \cdot \varphi  := \imath_X \varphi + \xi\wedge \varphi.$$
As  $(X, \xi)^2 \cdot \varphi = -\langle X+\xi, X+\xi \rangle \varphi$, 
the annihilator $E_{\varphi}$ of a non-zero even form  $\varphi$ is
automatically
isotropic with respect to the $\mathbb C$-linear extension of the product
$\langle \cdot, \cdot \rangle$ on $(TM\oplus T^*M)\otimes {\mathbb C}$.  It is
shown in \cite{HitchinGCY} that $d\varphi=0$ implies that $E_{\varphi}$ is
closed
under
the Courant bracket. Furthermore, the condition  $E_{\varphi}\cap \bar
E_{\varphi}=0$
can be equivalently expressed in terms of the natural $\wedge^{2m}(M)$-valued
inner-product  on $\wedge^{\rm ev} (M )\otimes \mathbb C$, defined by
$$\langle \varphi, \psi \rangle = \sum_{r} (-1)^r \varphi_{2r}\wedge
\psi_{2m-2r}, $$
where $2m$ denotes the real dimension of $M$, and we write $\varphi = \sum_r
\varphi_{2r}$ for the degree decomposition of a form in $\wedge^{\rm ev} (M)$.
It
turns out that  for any $\varphi, \psi \in \wedge^{\rm ev}(M)$, 
$E_{\varphi}\cap
E_{\psi} \neq 0$ iff $\langle \varphi, \psi \rangle =0$. Thus,  fixing an
orientation of $M$,  $\varphi$ must satisfy (at every point)
\begin{equation}\label{pure-spinor}
\langle \varphi, \varphi \rangle =0, \ \ \ \langle \varphi, \bar \varphi \rangle
>0.
\end{equation}
Finally, the fact that $E_{\varphi}$ has maximal dimension ($={\rm dim}_{\mathbb
R}(M)=2m$) leads to a complicated non-linear pointwise algebraic condition on 
$\varphi$, which is referred in the literature to as $\varphi$ being a  {\it
pure
spinor}. To summarize, 
\begin{defn}\label{d:hitchin} An (even) generalized Calabi-Yau structure on $M$
is defined by a closed pure spinor $\varphi \in \Gamma(\wedge^{\rm ev} (M))$
satisfying \eqref{pure-spinor} at each point of $M$.
\end{defn}

The main observation in \cite{HitchinGCY}, and the reason for the terminology,
is
the following prototypical example, showing that the complex and K\"ahler
structures underlying a Calabi-Yau manifold can both be interpreted as even
generalized Calabi-Yau structures.

\begin{ex}  If  $\Theta$ is a holomorphic trivialization of the canonical bundle
$K(M,I) = \wedge^{m,0}(M, I)$ of a complex manifold $(M^{2m}, I)$, then $\Theta$
viewed as a closed  complex $(m,0)$  form on $M$, defines a closed pure spinor
giving
rise to the generalized  complex structure
$${\mathbb I}_{\Theta} = \left(\begin{array}{cc}- I & 0 \\ 0 & -I
\end{array}\right).$$
Similarly,  if $\omega$ is a real symplectic $2$-form on $M$, then  
$$\varphi=\exp{\i \omega}= 1 + \i \omega -\tfrac{1}{2}\omega^2 + \cdots$$
is  a closed pure spinor, and the corresponding generalized complex structure
${\mathbb I}_{\varphi}$ takes the form (see \cite[Ex. 4.6]{Gualtieri-PhD})
\begin{equation*}
{\mathbb I}_{\omega} = \left(\begin{array}{cc} 0 & - \omega^{-1} \\ \omega & 0
\end{array}\right).
\end{equation*}
In particular,  any  Calabi-Yau manifold, i.e. a compact complex manifold
$(M,I)$ which admits both a K\"ahler metric $\omega$ and a trivialization
$\Theta$ of its canonical line bundle,  is naturally endowed with two
generalized Calabi-Yau structures, given by the closed pure spinors
$\varphi_1=\exp{\i\omega}$ and $\varphi_2=\Theta$.
\end{ex}

To make the link with the non-degenerate generalized K\"ahler structures studied
in this paper, we recall that Gualtieri~\cite{Gualtieri-PhD, Gualtieri-CMP}
has shown that the data $(g, I, J, b)$ of a Riemannian metric $g$, two
$g$-orthogonal  integrable almost-complex structures $(I, J)$, and a $2$-form
$b$ on $M$,  satisfying the relation
$$d^c \gw_I = db = -d^c_J \gw_J$$
determine and are determined by a pair of {\it commuting} generalized complex
structures ${\mathbb I}_{\pm}$ on $TM \oplus T^*M,$ such that ${\mathbb I}_+
\circ {\mathbb I}_-$ is positive-definite with respect to \eqref{product}.
Specifically, given $(g, I, J, b)$ as above, the generalized complex structures
$\mathbb I_{\pm}$ are
\begin{equation}\label{GK-true}
\mathbb I_{\pm} = \tfrac{1}{2} e^b \left(\begin{array}{cc} (I \mp J) &
-(\gw_I^{-1} \pm \gw_J^{-1}) \\ \gw_I \pm \gw_J & (I \mp J )\end{array}\right)
e^{-b},
\end{equation}
where $e^b :=\left(\begin{array}{cc}  1 & 0 \\ b & 1\end{array}\right)$ is the
exponential  of  $\left(\begin{array}{cc}  0 & 0 \\ b & 0\end{array}\right)$
(and we note that our convention for the action of $I, J$ on $T^*M$ defers by a
sign with the one used in \cite{Gualtieri-PhD}). By
Lemma~\ref{l:non-degenerate-symplectic}, for any non-degenerate GK structure
$(g, I,  J)$ in the sense of the present paper (see Definition~\ref{GKdef}),  
we
can take  $b= g (I-J) (I+J)^{-1}$. A straightforward computation shows that with
this choice of $b$, \eqref{GK-true} becomes
 \begin{equation}\label{GK-symplectic-true}
\mathbb I_{-} = e^{-4\Omega} \left(\begin{array}{lc} 0 & - (F_-)^{-1} \\ F_- & 0
\end{array}\right)e^{4\Omega}, \ \ \mathbb I_{+}= \left(\begin{array}{lc} 0 & -
(F_+)^{-1} \\ F_+ & 0 \end{array}\right),
\end{equation}
where $F_{\pm} = -2g(I \pm J)^{-1}$ are the  symplectic forms taming $(I, \pm
J)$ via Lemma~\ref{l:non-degenerate-symplectic} and
$\Omega=g(I+J)^{-1}(I-J)^{-1}$ is the common real part of the
holomorphic-symplectic forms $\Omega_I$ and $\Omega_J$.    In particular,
$\mathbb I_{\pm}$  are generalized
Calabi-Yau structures corresponding to the closed pure spinors $\varphi_- =
e^{4\Omega +\i F_-}$ and $\varphi_+= e^{\i F_+}$. The above formula
extends to the non-degenerate generalized K\"ahler case the observations made
in \cite[Examples 6.30 \& 6.31]{Gualtieri-PhD}  in the hyper-K\"ahler  and the
nondegenerate
$4$-dimensional cases.

\subsection{Generalized Calabi-Yau equation} \label{s:GCYeqn}

In this subsection we recall the definition of the generalized K\"ahler
Calabi-Yau equation, and describe it in terms of biHermitian data in our
setting.  To motivate the definition, recall that a classical Calabi-Yau
K\"ahler metric on $(M^{2m}, I)$, with holomorphic volume form $\Theta$, is
defined by an $I$-compatible symplectic $2$-form $\omega$ such that $\Theta
\wedge \bar \Theta = \gl \omega^{m},$ for some constant $\gl$.  We note that
this condition is equivalently expressed in terms of the corresponding pure
spinors $\varphi_-=\exp{\i\omega}, \varphi_+=\Theta$ as
\begin{equation}\label{generalized-calabi-yau-metric}
\langle \varphi_+, \bar \varphi_+ \rangle = \gl \langle \varphi_-, \bar
\varphi_- \rangle.
\end{equation}

\begin{defn}  Let $(M^{2m}, \mathbb I_+, \mathbb
I_-)$ be a generalized K\"ahler manifold where $\mathbb I_{\pm}$ are even
generalized Calabi-Yau structures defined by pure spinors $\varphi_{\pm}$. 
Define
the \emph{Ricci potential} $\Phi$ via
\begin{align*}
\Phi = \log \frac{(\varphi_+, \bar{\varphi}_+)}{(\varphi_-, \bar{\varphi}_-)}.
\end{align*}
We say that the structure is \emph{generalized K\"ahler Calabi-Yau} if
\begin{align*}
  \Phi \equiv \gl
\end{align*}
for some $\gl \in \mathbb R$.
\end{defn}

The terminology ``Ricci potential'' certainly matches with the situation in the
K\"ahler case, where we see that $- \i \del \delb \Phi = \rho$, the usual Ricci
form.  Moreover, in that setting the equation $\Phi \equiv \gl$ certainly
defines a Ricci-flat, Calabi-Yau metric.  We will justify this terminology in
our setting in Proposition \ref{ricci-potential}.  We note that the geometry of
pairs of even generalized Calabi-Yau structures satisfying
\eqref{generalized-calabi-yau-metric} on $K3$ complex surface has been studied
by D. Huybrechts in \cite{huybrechts}.  A detailed study of the local
description of solutions to this equation appeared in \cite{HullGCY}.   In the
next lemma we reduce the
generalized Calabi-Yau equation to one involving purely biHermitian data in the
nondegenerate case.

\begin{lemma}  \label{GKCYspinors} Let $(M^{4n}, g,
I, J)$ be a nondegenerate generalized
K\"ahler manifold.  Then
\begin{align*}
 \Phi = \log \tfrac{F_+^{2n}}{F_-^{2n}}.
\end{align*}
\begin{proof}  Returning to the notation of the
previous subsection, it follows from \eqref{GK-symplectic-true} that (see
\cite[Example~4.10]{Gualtieri-PhD}) both $\mathbb I_{\pm}$  are generalized
Calabi-Yau structures corresponding to the closed pure spinors $\varphi_- =
e^{4\Omega +\i F_-}$ and $\varphi_+= e^{\i F_+}$. Furthermore,  as shown for
instance
in \cite[pp. 286-287]{HitchinGCY}, we have 
$$\langle e^{4\Omega + \i F_-}, e^{4\Phi -i F_-}\rangle = \langle e^{\i F_-},
e^{-\i
F_-}\rangle= \tfrac{2^{2n}}{(2n)!} (F_-)^{2n}, \ \ 
\langle e^{\i F_+}, e^{-\i F_+}\rangle = \tfrac{2^{2n}}{(2n)!} (F_+)^{2n},$$
finishing the lemma. \end{proof}
\end{lemma}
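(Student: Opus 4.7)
The plan is to compute the two Mukai pairings appearing in the definition of $\Phi$ directly from the explicit pure spinors produced by \eqref{GK-symplectic-true}, namely $\varphi_+ = e^{\i F_+}$ and $\varphi_- = e^{4\Omega + \i F_-}$, whose conjugates are $\bar\varphi_+ = e^{-\i F_+}$ and $\bar\varphi_- = e^{4\Omega - \i F_-}$ since $\Omega$ and $F_\pm$ are real. Thus the quantity I need is
$$\Phi \;=\; \log \frac{\langle \varphi_+, \bar\varphi_+\rangle}{\langle \varphi_-, \bar\varphi_-\rangle}.$$

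For the numerator I would substitute $\varphi_+ = \sum_r \tfrac{(\i F_+)^r}{r!}$ and $\bar\varphi_+ = \sum_s \tfrac{(-\i F_+)^s}{s!}$ into the definition
$$\langle \varphi,\psi\rangle = \sum_r (-1)^r \varphi_{2r}\wedge \psi_{4n-2r},$$
and observe that only the terms with $r+s = 2n$ contribute to the top degree $4n$. Collecting the resulting scalar via the binomial identity $\sum_r \binom{2n}{r} = 2^{2n}$ yields $\langle e^{\i F_+}, e^{-\i F_+}\rangle = \tfrac{2^{2n}}{(2n)!}\, F_+^{2n}$, up to an overall sign depending on the sign convention adopted for the Mukai pairing.

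For the denominator the only mildly delicate point is eliminating the extra factor $e^{4\Omega}$. Here I would invoke the standard $B$-field (equivalently, $\mathrm{Spin}$) invariance of the Mukai pairing: for any real $2$-form $B$,
$$\langle e^B \wedge \varphi,\ e^B \wedge \psi\rangle \;=\; \langle \varphi,\psi\rangle,$$
a consequence of the fact that the bilinear form $\langle\cdot,\cdot\rangle$ is $\mathrm{Spin}$-invariant and $e^B$ is a (real) orthogonal transformation of $TM\oplus T^*M$. Applied with $B=4\Omega$, this reduces $\langle \varphi_-,\bar\varphi_-\rangle$ to $\langle e^{\i F_-}, e^{-\i F_-}\rangle$, which by the same expansion equals $\tfrac{2^{2n}}{(2n)!}\, F_-^{2n}$.

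Taking the quotient, the universal prefactor $\tfrac{2^{2n}}{(2n)!}$ and the common convention-dependent sign cancel, leaving $F_+^{2n}/F_-^{2n}$, and taking logarithms gives the claim. The only thing one must be careful about is the bookkeeping of signs and factorials in the Mukai pairing, combined with the invocation of $B$-field invariance to strip off the $e^{4\Omega}$ factor; no analytic input whatsoever is required, and the calculation is entirely algebraic and pointwise.
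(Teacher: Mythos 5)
Your proposal is correct and follows essentially the same route as the paper: both identify the pure spinors $\varphi_+=e^{\i F_+}$, $\varphi_-=e^{4\Omega+\i F_-}$ from \eqref{GK-symplectic-true} and then evaluate the two Mukai pairings, the paper by citing Hitchin's computation on pp.~286--287 of \cite{HitchinGCY} and you by carrying out the expansion and invoking $B$-field invariance to strip off $e^{4\Omega}$ --- which is exactly the content of the cited computation. Your remark that the convention-dependent sign (in fact $(-1)^n$ with the paper's definition of the pairing) cancels in the quotient is the right bookkeeping.
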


\subsection{A generalized Calabi conjecture}\label{s:GCY}

Let $(M^{4n}, g , I, J)$ be a compact nondegenerate generalized K\"ahler
structure, with $\Omega = g [I,J]^{-1}$ as above.
Proposition~\ref{nondegvariations} shows that there exists a natural local
action of the group ${\rm Ham}(M, \Omega)$ of Hamiltonian isotopies with respect
to $\Omega$ on the space of nondegenerate generalized K\"ahler structures, also
fixing $I$.  
Before stating our conjecture, we record a basic lemma indicating an integral
invariant associated to this action, which determines the possible value of
$\gl$ in a generalized K\"ahler Calabi-Yau structure.

\begin{lemma} \label{lambdalemma} Let $(M^{4n}, g, I, J)$ be a nondegenerate
generalized K\"ahler
structure.  The quantity
\begin{equation}\label{GCY-constant}
\gl := \log \left\{ \left(\int_M F_+^{2n} \right) \Big/ \left( \int_M F_-^{2n}
\right) \right\}
\end{equation}
is an invariant of the $\Omega$-Hamiltonian deformation class.
\begin{proof} First, it is clear that the $\Omega$-Hamiltonian action preserves
$I$ and $\Omega$, and so preserves  $\Omega_I$ and the deRham cohomology class
of $\Omega_{J_t}$.  Since
\begin{align*}
F_{\pm} = 2\big({\rm Im}(\pm \Omega_I) -  {\rm Im} (\Omega_J)\big),
\end{align*}
the lemma follows.
\end{proof}
\end{lemma}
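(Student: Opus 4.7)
The plan is straightforward: since the integral of a closed top-degree form depends only on its de Rham cohomology class, it suffices to show that the classes $[F_+^{2n}]$ and $[F_-^{2n}]$ in $H^{4n}(M;\mathbb{R})$ are constant along any $\Omega$-Hamiltonian deformation. Using the decomposition $\Omega_I = \Omega + \i I\Omega$ and $\Omega_J = \Omega + \i J\Omega$ from the proof of Lemma~\ref{l:symplectic-forms}, one has
\begin{equation*}
F_{\pm} = 2\big({\rm Im}(\pm \Omega_I) - {\rm Im}(\Omega_J)\big),
\end{equation*}
so by the linearity of the de Rham class it is enough to show that $[\Omega_I]$ and $[\Omega_J]$ are each preserved by the action.

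The class $[\Omega_I]$ is trivially preserved: by the very definition of an $\Omega$-Hamiltonian deformation, $I$ and $\Omega$ are fixed pointwise, and hence so is the form $\Omega_I$ itself. For the $J$-factor, recall from Proposition~\ref{nondegvariations} that under the Hamiltonian flow $\phi_t$ generated by $X_f$, the deformed holomorphic symplectic form is $(\Omega_J)_t = \phi_t^* \Omega_J$. Since $\phi_t$ is the time-$t$ map of a smooth one-parameter family of diffeomorphisms with $\phi_0 = \operatorname{Id}$, it is isotopic to the identity, and hence acts as the identity on de Rham cohomology. (Alternatively, differentiating $\phi_s^* \Omega_J$ in $s$ and applying Cartan's magic formula along with $d\Omega_J=0$ yields $\tfrac{d}{ds}\phi_s^*\Omega_J = d(\phi_s^*(\imath_{X_f}\Omega_J))$, which integrates to an exact form.) Thus $[(\Omega_J)_t] = [\Omega_J]$ in $H^2(M;\mathbb{C})$ for all $t$.

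Combining these observations, $[F_{\pm}]$ is invariant in $H^2(M;\mathbb{R})$, and therefore so is the cup-product power $[F_{\pm}^{2n}] \in H^{4n}(M;\mathbb{R})$. The integrals $\int_M F_{\pm}^{2n}$ are thus constant along the deformation class, and hence so is their logarithmic ratio $\lambda$. There is no real obstacle here; the lemma amounts to matching the natural invariant of an isotopy with the geometric data on both sides, and the main conceptual input is the elementary fact that flows preserve de Rham classes.
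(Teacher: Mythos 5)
Your proof is correct and follows essentially the same route as the paper's: both reduce the claim to the formula $F_{\pm} = 2\big({\rm Im}(\pm\Omega_I) - {\rm Im}(\Omega_J)\big)$ together with the facts that the action fixes $\Omega_I$ and preserves the de Rham class of $\Omega_J$. You merely spell out the details (Cartan's formula, invariance of cup powers, and that integrals of closed top forms depend only on their class) that the paper leaves implicit.
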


We now state our main conjecture:

\begin{conj}\label{GCY-conj} Let $(M^{4n}, g, I, J)$ be a compact nondegenerate
generalized K\"ahler manifold.  
\begin{enumerate}
\item \textbf{Existence:} There exists a positive diffeomorphism $\phi \in
{\rm Ham}(M, \Omega)$ such that 
\begin{align*}
\Phi(g_{\phi},I,J_{\phi}) \equiv \gl,
\end{align*}
where $\lambda$ is the constant introduced in \eqref{GCY-constant}.
\item \textbf{Uniqueness:} The induced generalized K\"ahler structure
$(g_{\phi}, I, J_{\phi})$ is unique.
\item \textbf{Rigidity:} The induced generalized K\"ahler structure $(g_{\phi},
I, J_{\phi})$ is hyper-K\"ahler.
\end{enumerate}
\end{conj}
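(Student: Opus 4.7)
The plan is to address the three parts of the conjecture in order of increasing difficulty, beginning with rigidity and uniqueness before tackling existence. For part (3), using Lemma~\ref{GKCYspinors}, the condition $\Phi \equiv \lambda$ is equivalent to the pointwise identity $F_+^{2n} = e^\lambda F_-^{2n}$. Using the explicit formulas \eqref{Fdefs} for $F_\pm$ in terms of $g$ and the operators $(I \pm J)$, this ratio can be expressed in terms of $\det(I+J)$ and $\det(I-J)$. Combining this with the identity of Lemma~\ref{l:half-non-degenerate}, which expresses $d \log \det(I+J)$ against the torsion $3$-form $H$, together with the $d$-closedness of $F_\pm$, one expects a Bochner-type integration-by-parts argument to force $H = d^c_I \omega_I = -d^c_J \omega_J$ to vanish identically. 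Once $H \equiv 0$, both Bismut connections of Definition~\ref{Bismutdef} coincide with the Levi-Civita connection, so $I$ and $J$ are $g$-parallel and, since $[I,J]$ is nondegenerate, extend to a hyper-K\"ahler triple.

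For part (2), with both solutions now known to be hyper-K\"ahler, one appeals to the variational structure developed in \S\ref{s:GIT}: solutions are the critical points of the Kempf-Ness functional $\mathbf{F}$, and $\mathbf{F}$ is convex along any Hamiltonian path in the deformation class (cf.~Proposition~\ref{p:energy}). A path connecting two solutions must be stationary for $\mathbf{F}$, and strict convexity at a hyper-K\"ahler solution forces the generating Hamiltonian to be constant, yielding $(g_1, J_1)=(g_2, J_2)$.

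For part (1), the natural tool is the generalized K\"ahler-Ricci flow. Theorem~\ref{flowandGIT} guarantees the flow stays in the $\Omega$-Hamiltonian deformation class with $\mathbf{F}$ monotone, so by Theorem~\ref{formalflowtheorem} it suffices to establish the uniform equivalence $\Lambda^{-1} g_0 \leq g_t \leq \Lambda g_0$ together with the torsion-potential bound $|\partial \alpha|^2 \leq \Lambda$ along the flow. The principal obstacle is producing these bounds without a background K\"ahler potential: in the classical setting Yau's $C^0$ estimate relies on Moser iteration for a Monge-Amp\`ere equation, but here the deformation space is parametrized by Hamiltonian functions rather than K\"ahler potentials, and no scalar reduction of the flow is immediate. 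A plausible route is to first exploit the parabolic equation satisfied by $\Phi$ to be derived in \S\ref{flowsec} to obtain $L^\infty$ and gradient bounds on $\Phi$, then transfer these to bounds on the generating Hamiltonian via $d\alpha = -X_\alpha \hook \Omega$, and finally close the loop by controlling $g_t$ in terms of $\alpha$ using the implicit metric construction of Proposition~\ref{nondegvariations}. The successful treatment of the hyper-K\"ahler background case in Theorem~\ref{flowtheorem} demonstrates the viability of this strategy when a reference hyper-K\"ahler metric is available; in the general case one would likely need to combine this flow analysis with a continuity argument producing such a reference.
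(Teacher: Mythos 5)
The statement you were given is a conjecture: the paper itself proves only parts (2) and (3) (Theorem \ref{thm:uniquerigidity2}, via Propositions \ref{p:CY} and \ref{p:uniqueness}) and leaves part (1) open, as do you, so the real question is whether your arguments for rigidity and uniqueness close. They do not. For rigidity, the sentence ``one expects a Bochner-type integration-by-parts argument to force $H=0$'' is the entire content of the step and is not substantiated; Lemma \ref{l:half-non-degenerate} together with $dF_{\pm}=0$ does not visibly produce such an identity. The paper's actual mechanism is different: Proposition \ref{ricci-potential} shows $(\rho_B)_I = -\tfrac{1}{2}dJd\Phi$, so $\Phi\equiv\gl$ gives $(\rho_B)_I=0$ and hence $s_B=0$; the pluriclosed identity $|d\gw_I|^2_g=\delta\theta+|\theta|^2_g$ then gives $s_C=|d\gw_I|^2_g\geq 0$, and since the canonical bundle is trivialized by $\Omega_I^{2n}$, Gauduchon's plurigenera theorem forces $s_C\equiv 0$, i.e.\ $d\gw_I=0$. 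The two external inputs --- the Bismut--Ricci identity and the Gauduchon vanishing theorem --- are precisely what your sketch is missing, and without them there is no route from constancy of $\Phi$ to $H=0$.

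For uniqueness your convexity argument has two problems. First, ``a path connecting two solutions must be stationary for $\mathbf{F}$'' is false for a general path; what you need is a geodesic (the flow of a time-independent Hamiltonian) joining the two solutions, along which $\delta\mathbf{F}$ is monotone nondecreasing and vanishes at both endpoints. Second, and more seriously, geodesic connectivity of $\mathcal{M}/\mathcal{H}$ inside $\mathcal{O}_J$ is not established: the paper explicitly remarks after Proposition \ref{l:strict-convexity} that the convexity only ``provides an alternative proof assuming geodesic convexity of $\mathcal{M}/\mathcal{H}$,'' and this is a genuinely open issue for the $L^2$ metric on ${\rm Ham}(M,\Omega)$. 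The paper's actual proof of Proposition \ref{p:uniqueness} avoids this entirely: having shown both solutions are hyper-K\"ahler, it observes that the taming forms $F_{\phi_1}$ and $F_{\phi_2}$ lie in the same de Rham class, that their $(1,1)$-parts with respect to $I$ are Ricci-flat K\"ahler metrics in the same K\"ahler class and hence coincide by Yau's uniqueness, and that the $(2,0)+(0,2)$ part of the difference is closed, exact and parallel, hence zero. You should either replace the convexity argument with this one or supply the missing geodesic connectivity.
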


In particular the conjecture claims there is a generalized K\"ahler Calabi-Yau
structure in every $\Omega$-Hamiltonian deformation class, which is unique, and
which moreover is highly rigid: it is hyper-K\"ahler.  Thus,
Conjecture~\ref{GCY-conj} would imply that {\it any} non-degenerate generalized
K\"ahler structure on a compact hyper-K\"ahler manifold $M^{4n}$ must arise from
the construction of Proposition~\ref{p:joyce}.  We verify the last two parts of
the conjecture below.

\begin{rmk}\label{goto} Recently, R. Goto~\cite{Gotomoment} introduced a notion
of a {\it generalized scalar curvature} of a generalized K\"ahler manifold of
symplectic type. In the nondegenerate case, it is given by (see \cite[p.
4]{Gotomoment} and \eqref{GK-symplectic-true}) 
\begin{equation*}
\begin{split}
{\rm Gscal}_g &= 4\Big(\tfrac{d (\Omega F_-^{-1} (d\Phi)) \wedge
F_+^{2n-1}}{F_+^{2n}}\Big) = - \tfrac{1}{4n}\Big(\Delta_g \Phi  + 
d\Phi\big(\big(\nabla_{e_i} (I+J)^{-1}\big)((I+J)(e_i))\big)\Big).
\end{split}
\end{equation*}
It follows by the maximum principle that on a compact nondegenerate generalized
K\"ahler manifold $(M^{4n}, g, I, J),$  the generalized scalar curvature ${\rm
Gscal}_g$ is constant if and only if $\Phi$ is constant, thus giving a yet
another motivation for Conjecture~\ref{GCY-conj}.                        
\end{rmk}

\subsection{Uniqueness and rigidity}

In this subsection we establish both the uniqueness and rigidity claims of
Conjecture \ref{GCY-conj}.  To begin we establish a key curvature identity
relating the Bismut-Ricci curvatures of a nondegenerate generalized K\"ahler
structure to the Ricci potential $\Phi$.  We recall that on complex surfaces, or
more
generally, when $I, J$ are compatible with an almost-quaternion structure on
$M^{4n}$ (see \cite{AMP}), $I$ and $J$ verify the identity
\begin{equation}\label{angle-qc} I J + J I = - 2p {\rm Id},  
\end{equation}  where the smooth function  $p:= -\tfrac{1}{4n} {\rm tr} IJ$ is
the so-called the {\it angle function} of $(g, I, J)$.  It follows from
\eqref{angle-qc} that $(I\pm J)^{2} = -2(1\pm p) {\rm Id},$  so we obtain
$\Phi=\log \Big(\tfrac{1-p}{1+p}\Big).$  This shows in particular that $\Phi$ is
constant when $I, J$ belong to the same hyper-K\"ahler structure on $M^{4n}$.
More generally we will show that for a nondegenerate generalized K\"ahler
structure, $\Phi$ determines all of the relevant curvature and torsion
quantities.  We begin with a lemma on the differential of the Ricci
potential.

\begin{lemma}\label{l:extension} For any non-degenerate generalized K\"ahler
structure $(g, I, J)$, one has
$$[I,J] (d\Phi) = -d\Phi \circ [I, J]  = 2(\theta_I - \theta_J).$$
\end{lemma}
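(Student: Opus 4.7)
The plan is to compute both $d\Phi$ and $\theta_I-\theta_J$ as explicit contractions of the torsion $H$, and then reconcile them via a pointwise algebraic identity. The relation $[I,J](d\Phi)=-d\Phi\circ[I,J]$ is immediate from the $g$-skew-adjointness of $[I,J]$, so the content lies in the identification with $2(\theta_I-\theta_J)$.

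For $d\Phi$: by Lemma~\ref{GKCYspinors} and a Pfaffian calculation using $F_\pm=-2g(I\pm J)^{-1}$, one reduces to $2\Phi = \log\det(I-J)-\log\det(I+J)$. Lemma~\ref{l:half-non-degenerate} applied to $(g,I,J)$ gives $d\log\det(I+J)(X)=-2\langle b,\iota_X H\rangle_g$ with $b=-g(I+J)^{-1}(I-J)$. The same lemma applied to $(g,I,-J)$---which is again a nondegenerate generalized K\"ahler structure with the same torsion $3$-form $H$, since $d^c_{-J}\omega_{-J}=d^c_J\omega_J=-H$---yields $d\log\det(I-J)(X)=-2\langle\tilde b,\iota_X H\rangle_g$ with $\tilde b=-g(I-J)^{-1}(I+J)$. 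Setting $A=I+J$, $B=I-J$, the Clifford-like relations $AB=-BA$ (from $(I+J)(I-J)=-[I,J]$) and $A^2+B^2=-4\Id$ (from $I^2=J^2=-\Id$) simplify $b-\tilde b = 4gA^{-1}B^{-1}=4\Omega$, using $\Omega^\sharp=[I,J]^{-1}=A^{-1}B^{-1}$. Therefore
\begin{equation*}
d\Phi(X)=4\langle\Omega,\iota_X H\rangle_g.
\end{equation*}

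For $\theta_I-\theta_J$: the sign conventions $d^c_I\omega_I=H$, $d^c_J\omega_J=-H$ combined with $\mathbf{I}\eta=-\eta(I\cdot,I\cdot,I\cdot)$ yield $d\omega_I(X,Y,Z)=H(IX,IY,IZ)$ and $d\omega_J(X,Y,Z)=-H(JX,JY,JZ)$. Using $\theta=\Lambda(d\omega)$ from \eqref{Lee1} then gives $\theta_I(X)=\tfrac{1}{2}\sum_i H(e_i,Ie_i,IX)$ and $\theta_J(X)=-\tfrac{1}{2}\sum_i H(e_i,Je_i,JX)$.

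Matching $-d\Phi([I,J]X)=2(\theta_I-\theta_J)(X)$ in a $g$-orthonormal basis, using $\Omega^\sharp=[I,J]^{-1}$, reduces to the pointwise claim that the totally antisymmetric part over $(a,b,c)$ of $I^a{}_b\,I^c{}_d + J^a{}_b\,J^c{}_d + 2(K^{-1})^a{}_b\,K^c{}_d$ vanishes, where $K=[I,J]$. This is the main algebraic obstacle. To verify it, note that $\{I,J\}=IJ+JI$ is self-adjoint and commutes with $I$, $J$, $A$, $B$, $K$, so $T_pM$ decomposes into $\{I,J\}$-eigenspaces preserved by all these operators; the condition $A^2+B^2=-4\Id$ forces each eigenvalue to lie in $(-2,2)$. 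After further decomposing each eigenspace, on each real $4$-dimensional quaternionic block one may write $A=a\hat I$, $B=b\hat J$ with $a^2+b^2=4$ for some quaternionic triple $(\hat I,\hat J,\hat K)$, and the identity reduces to $a^2\,\hat I\wedge\hat I + b^2\,\hat J\wedge\hat J = 4\,\hat K\wedge\hat K$, which follows from $\hat I\wedge\hat I = \hat J\wedge\hat J = \hat K\wedge\hat K$ together with $a^2+b^2=4$.
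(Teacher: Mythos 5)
Your derivation of $d\Phi(X)=4\langle\Omega,\iota_X H\rangle_g$ from Lemma~\ref{l:half-non-degenerate} applied to $(g,I,J)$ and $(g,I,-J)$ is sound, as are the expressions $\theta_I(X)=\tfrac12\sum_i H(e_i,Ie_i,IX)$ and $\theta_J(X)=-\tfrac12\sum_i H(e_i,Je_i,JX)$. The gap is in the final step: the ``pointwise algebraic identity'' you reduce to --- the vanishing, for \emph{every} antisymmetric $H_{abc}$, of the contraction of $H$ against the antisymmetrization of $I\otimes I+J\otimes J+2K^{-1}\otimes K$ --- is false in dimension $\geq 8$. Concretely, on $\mathbb{R}^8=\mathbb{H}^2$ with the standard hyper-K\"ahler pair $I,J$ acting diagonally on orthonormal frames $(e_0,\dots,e_3)$ and $(f_0,\dots,f_3)$, take $H=e^1\wedge f^0\wedge f^1$ and $X=e_0$: then $\sum_i H(IX,e_i,Ie_i)=2$ while the $J$- and $K$-terms vanish (no slot of $H$ contains $e_2$ or $e_3$), so the identity fails. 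This is invisible to your block decomposition because the quadratic expressions $\hat I\otimes\hat I$, etc., have cross terms pairing distinct quaternionic blocks, and your argument ($a^2+b^2=4$ together with $\hat I\wedge\hat I=\hat J\wedge\hat J=\hat K\wedge\hat K$) only handles the diagonal contributions; the off-diagonal ones do not cancel for arbitrary $H$.

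What saves the lemma is that the torsion of a generalized K\"ahler structure is not an arbitrary $3$-form: $H=d^c_I\omega_I=-d^c_J\omega_J$ forces $H$ to be of type $(2,1)+(1,2)$ with respect to \emph{both} $I$ and $J$, and the counterexample above has a nonzero $(3,0)+(0,3)$ part with respect to $J$. So your identity is only true on the admissible subspace of $3$-forms, and any proof must use that constraint --- equivalently, the relation between $H$ and $\nabla I$, $\nabla J$ coming from \eqref{DF}. That is exactly the extra input the paper feeds in through Lemma~\ref{l:Lee-forms}, which ties $\theta_I$ to $b$, $\delta b$ and $\langle b,\iota_X H\rangle$; combining it with Lemma~\ref{l:half-non-degenerate} for $(g,I,\pm J)$ and the algebra of $A=I+J$, $B=I-J$ then yields $2\,d\Phi=4[I,J]^{-1}(\theta_I-\theta_J)$ directly, with no universal multilinear identity required. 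To repair your argument you would need to antisymmetrize only over the $(2,1)+(1,2)$ parts with respect to both structures, at which point you are essentially reconstructing Lemma~\ref{l:Lee-forms}.
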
 

\begin{proof} By Lemmas \ref{l:Lee-forms} and \ref{l:half-non-degenerate} (and
exchanging the roles of $I$ and $J$), we have
\begin{equation*}\label{1}
\begin{split}
\theta_I = &  (I-J)(I+J)^{-1} \theta_I + \tfrac{1}{2} d \log \det (I+J) +
\delta^g (I-J)(I+J)^{-1}\\
\theta_J = & - (I-J)(I+J)^{-1}\theta_J  + \tfrac{1}{2} d \log \det (I+J) -
\delta^g (I-J)(I+J)^{-1}.
\end{split}
\end{equation*}
Summing the above two equalities and observing the identities 
\begin{equation*}
\begin{split}
(I+J)^{-1}= & + [I,J]^{-1}(I-J)= -(I-J)[I,J]^{-1}, \\
 (I-J)^{-1}= &-[I,J]^{-1}(I+J)= +(I+J) [I, J]^{-1},
 \end{split}
\end{equation*}
we obtain
\begin{equation}\label{2}
\theta_I + \theta_J = -(I-J)^2[I,J]^{-1}(\theta_I - \theta_J) + d \log {\rm det}
(I+J).
\end{equation}
Rewriting \eqref{2} with respect to $(g, I, -J)$ we also have
\begin{equation}\label{3}
\theta_I + \theta_J = (I+J)^{2}[I,J]^{-1} (\theta_I - \theta_J) + d \log \det
(I-J).
\end{equation}
Then,  \eqref{3}-\eqref{2} gives
\begin{equation}\label{4}
2d \Phi = -\Big((I-J)^2 + (I+J)^2\Big)[I, J]^{-1}(\theta_I - \theta_J)=
4[I,J]^{-1}(\theta_I - \theta_J),
\end{equation}
as required.
\end{proof}

\begin{prop}\label{ricci-potential} Let $(M^{4n}, g, I, J)$ be a non-degenerate
generalized K\"ahler manifold. Then the Bismut-Ricci forms  associated to $(g,
I)$ and $(g, J)$  are respectively given by
$$(\rho_B)_I = -\tfrac{1}{2} d J d \Phi, \ \ (\rho_B)_J = -\tfrac{1}{2} d I d
\Phi.$$
In particular, on a compact non-degenerate generalized-K\"ahler manifold
$(M^{4n}, g, I, J)$, $(\rho_B)_I=0$ iff $(\rho_B)_J =0$ iff $\Phi=\gl.$
\end{prop}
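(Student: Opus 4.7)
The plan is to prove the curvature identity first and then the vanishing criterion.

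For the curvature identity $(\rho_B)_I = -\tfrac{1}{2}dJd\Phi$, my approach is to factor through the Chern Ricci form. On a Hermitian manifold $(M, g, I)$, the standard Gauduchon-type identity relates the Bismut and Chern Ricci forms via the Lee form $\theta_I$, namely $\rho_B^I = \rho_C^I - dJ\theta_I$ (up to sign convention). Since $\Omega_I^n$ is a holomorphic trivialization of the canonical bundle of $(M,I)$, the Chern Ricci form takes the simple form $\rho_C^I = -\tfrac{1}{2}dJd\log\|\Omega_I^n\|_g^2$. The remaining task is to verify that $d\log\|\Omega_I^n\|_g^2 + 2\theta_I$ equals $d\Phi$ modulo terms annihilated by $dJ$. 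For this, one computes $\|\Omega_I^n\|_g^2$ explicitly using $\Omega_I = \Omega + \i I\Omega$ and Lemma~\ref{l:non-degenerate-symplectic}, obtaining an expression involving $\det(I+J)$ and $F_\pm^{2n}$. Lemma~\ref{l:Lee-forms} then rewrites $\theta_I$ in terms of the $(2,0)+(0,2)$-form $b$ and the torsion $3$-form $H$, while Lemma~\ref{l:half-non-degenerate} provides the companion identity for $d\log\det(I+J)$ in terms of the same data, yielding the required cancellation. The companion identity $(\rho_B)_J = -\tfrac{1}{2}dId\Phi$ follows by the symmetric argument, using that $\Phi = \log(F_+^{2n}/F_-^{2n})$ is invariant under the swap $I \leftrightarrow J$.

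For the vanishing criterion, the implication $\Phi \equiv \gl \Rightarrow (\rho_B)_I = (\rho_B)_J = 0$ is immediate from the curvature identity. For the converse, suppose $(\rho_B)_I = 0$, i.e.\ $dJd\Phi = 0$. Since $F_+$ is symplectic, $F_+^{2n-1}$ is closed, and integration by parts on the compact manifold $M$ yields
\begin{equation*}
0 = \int_M \Phi \cdot dJd\Phi \wedge \frac{F_+^{2n-1}}{(2n-1)!} = -\int_M d\Phi \wedge Jd\Phi \wedge \frac{F_+^{2n-1}}{(2n-1)!}.
\end{equation*}
The decomposition $F_+ = \omega_J + b(J\cdot,\cdot)$, with $b(J\cdot,\cdot)$ of type $(2,0)+(0,2)_J$ (a direct consequence of $F_+J = -g-b$ and the $J$-type of $b$), gives $(F_+)^{1,1}_J = \omega_J > 0$. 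Consequently, expanding $d\Phi \wedge Jd\Phi = 2\i\del_J\Phi \wedge \delb_J\Phi$, the integrand becomes a nonnegative multiple of $|\del_J\Phi|_g^2 \, F_+^{2n}$, forcing $\del_J\Phi \equiv 0$ and hence $\Phi$ constant.

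The main obstacle is the algebraic reduction in the first part: the precise matching between the Lee form correction and the $\log\det(I+J)$ contribution to $\log\|\Omega_I^n\|_g^2$ is delicate, requiring careful bookkeeping of the bi-Hermitian identities. This step is parallel in structure to (and uses the same ingredients as) the proof of Lemma~\ref{l:extension}.
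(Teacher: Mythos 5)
Your strategy for the curvature identity is the one the paper uses: pass from $(\rho_B)_I$ to the Chern--Ricci form via the Lee form, compute $(\rho_C)_I$ from the holomorphic trivialization $\Omega_I^{n}$ of $K(M,I)$, and cancel against the Lee-form identities of Lemmas~\ref{l:Lee-forms} and \ref{l:half-non-degenerate}. But the two intermediate formulas you quote are wrong in a way that conceals the actual content of the statement: on the Hermitian manifold $(M,g,I)$ the relation is $(\rho_B)_I = (\rho_C)_I + dI\theta_I$, and $(\rho_C)_I = \tfrac{1}{2}\, d I d \log\big(\Omega_I^{n}\wedge\bar{\Omega}_I^{n}/\gw_I^{2n}\big)$ --- in both, the operator is $dId$, not $dJd$; the second complex structure cannot enter either identity, since the Bismut and Chern connections of $(g,I)$ know nothing about $J$. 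The entire point of the proposition is that the resulting $I$-expression $\tfrac{1}{4}dId\log\det[I,J]^{-1} + dI\theta_I$ collapses to the $J$-expression $-\tfrac{1}{2}dJd\Phi$; that conversion is effected precisely by Lemma~\ref{l:extension}, $[I,J]d\Phi = 2(\theta_I-\theta_J)$, together with the identities for $d\log\det(I\pm J)$ from its proof, and it is the whole proof rather than deferrable bookkeeping. As written, your outline inserts the conclusion's $dJ$ at the start and so never confronts the step that makes the proposition true (and your "remaining task", matching $d\log\brs{\Omega_I^n}^2 + 2\theta_I$ with $d\Phi$ modulo $\ker(dJ\,\cdot)$, is not the correct reduction once the $dI$'s are restored).

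The converse direction also has a gap for $n>1$. Since $F_+ J = -g - b$ with $b\neq 0$ off the K\"ahler locus, $F_+$ is \emph{not} of type $(1,1)$ with respect to $J$, so wedging the $(1,1)_J$-form $\i\del_J\Phi\wedge\delb_J\Phi$ with $F_+^{2n-1}$ produces, besides $\gw_J^{2n-1}$, cross terms of the form $\gw_J^{2n-1-2j}\wedge (b^{2,0})^{j}\wedge(b^{0,2})^{j}$; the integrand is therefore not a nonnegative multiple of $\brs{\del_J\Phi}^2 F_+^{2n}$ (your claim is only automatic when $2n-1=1$). The correct pointwise identity, obtained exactly as in the computation inside Lemma~\ref{l:closed}, is that after integration by parts the integral equals a positive multiple of $\int_M \brs{(I+J)d\Phi}^2_g\, F_+^{2n}$, which still forces $d\Phi=0$ because $I+J$ is invertible for nondegenerate structures. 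More simply, $\langle dJd\Phi,\gw_J\rangle_g$ is the Chern Laplacian of $\Phi$ for $(g,J)$, so $dJd\Phi=0$ gives $\Phi$ constant by the maximum principle on a compact manifold. Both halves of your argument are thus repairable, but neither is complete as stated.
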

\begin{proof} Using the formulas (\ref{Bismutformula}) and (\ref{Chernformula})
one can check that the
induced unitary connections $\nabla^B$ and $\nabla^C$ on $(K^{-1}(M,I), g)$
satisfy (see also \cite[Rem.~5]{gauduchon-connection})
\begin{equation}\label{paul}
\nabla^B_X = \nabla^C_X  - i(I\theta_I) (X).
\end{equation}
It
follows
from \eqref{paul} that the corresponding Ricci curvatures are related by
\begin{equation}\label{ricci-relation}
(\rho_B)_I = (\rho_C)_I + d I \theta_I.
\end{equation}
Using that $\Omega_I^n$ is a holomorphic section of $K(M, I)$, the Chern-Ricci
form is given by  
\begin{equation}\label{chern}
\begin{split}
(\rho_C)_I  & =  \tfrac{1}{2} d I d \log\Big(\tfrac{\Omega_I^n \wedge
\bar{\Omega}_I^n}{\gw_I^{2n}}\Big) = \tfrac{1}{2} d I d
\log \Big(\tfrac{\Omega^{2n}}{\gw_I^{2n}}\Big) =  \tfrac{1}{4} d I d \log \det
[I,J]^{-1}\\
             & = - \tfrac{1}{4} d I \Big(d \log \det (I+J) + d \log  \det
(I-J)\Big) \\
             &=  \tfrac{1}{2} d I \Big(-(\theta_I + \theta_J) + (IJ +
JI)[I,J]^{-1}(\theta_I - \theta_J)\Big),
             \end{split}
             \end{equation}
where for the last equality we have used \eqref{3} and \eqref{4}. Substituting
back in \eqref{ricci-relation} and using Lemma~\ref{l:extension}, we obtain
\begin{equation}
\begin{split}
(\rho_B)_I &= \tfrac{1}{2} d I \Big((\theta_I - \theta_J)  + (IJ + JI)[I,J]^{-1}
(\theta_I - \theta_J) \Big)\\
                 &=  \tfrac{1}{2} d I \Big((IJ-JI)[I,J]^{-1} (\theta_I -
\theta_J)  + (IJ + JI)[I,J]^{-1} (\theta_I - \theta_J) \Big)\\
                 &= d I \Big( IJ [I,J]^{-1}(\theta_I - \theta_J) \Big)\\
                 &=-\tfrac{1}{2}dJ d\Phi,
                \end{split}
                \end{equation}
 as required.
 \end{proof}

\begin{prop}\label{p:CY} Let $(M^{4n}, g, I, J)$ be a compact nondegenerate
generalized K\"ahler manifold.  Then,   $(\rho_B)^{1,1}_I =0$ iff
$(\rho_B)^{1,1}_J = 0$  iff $(g, I)$ and $(g, J)$ are K\"ahler Ricci-flat
metrics belonging to the same hyper-K\"ahler structure on $M^{4n}$. In
particular, any nondegenerate generalized K\"ahler Calabi-Yau structure on
$M^{4n}$ is hyper-K\"ahler in the usual sense. 
\end{prop}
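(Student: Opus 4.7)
The strategy is to reduce everything to the statement that $(\rho_B)^{1,1}_I = 0$ forces the Ricci potential $\Phi$ to be constant; once this is established, the remaining conclusions follow in short order. The easy direction (hyper-K\"ahler implies $(\rho_B)^{1,1}_I = 0$) is immediate: for a hyper-K\"ahler structure the torsion $H$ vanishes, so $\nabla^B$ agrees with the Levi-Civita connection, and Ricci-flatness yields $\rho_B = 0$ outright.

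For the converse, the starting point is Proposition \ref{ricci-potential}: $(\rho_B)_I = -\tfrac{1}{2}dd^c_J\Phi = -i\partial_J\bar{\partial}_J\Phi$ is automatically of type $(1,1)$ with respect to $J$. The hypothesis $(\rho_B)^{1,1}_I = 0$ therefore says this $(1,1)_J$-form is purely of type $(2,0)+(0,2)$ with respect to $I$, so wedging with $\omega_I^{2n-1}$ gives the pointwise vanishing $(\rho_B)_I \wedge \omega_I^{2n-1} = 0$. Multiplying by $\Phi$ and integrating by parts twice, using the Gauduchon identity $d\omega_I^{2n-1} = \theta_I \wedge \omega_I^{2n-1}$, yields an integral identity that, combined with the symmetric identity for $(\rho_B)_J$, forces $\Phi$ to be constant. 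Alternatively, one interprets $\Lambda_{\omega_I}(\rho_B)_I$ as (a multiple of) Goto's generalized scalar curvature ${\rm Gscal}_g$ from Remark \ref{goto}, and the maximum principle argument there directly gives $\Phi$ constant. This is the main analytic step, and its delicacy is that the natural second-order operator on $\Phi$ is only degenerate elliptic in general (as one checks on a hyper-K\"ahler background, where the $(1,1)_J$-part of $\omega_I$ fails to be positive definite), so a careful choice of test form or the Goto interpretation is required.

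Once $\Phi$ is constant, Proposition \ref{ricci-potential} gives $(\rho_B)_I = (\rho_B)_J = 0$ directly, while Lemma \ref{l:extension}, combined with the non-degeneracy of $[I,J]$, forces $\theta_I = \theta_J$. Substituting $\theta_I = \theta_J$ into \eqref{2} collapses that identity to $2\theta_I = d\log\det(I+J)$, so $\theta_I$ is exact. To upgrade this to $\theta_I = 0$, one uses $(\rho_C)_I = (\rho_B)_I - dI\theta_I = -dI\theta_I$ from \eqref{ricci-relation}, together with the fact that $\Omega_I^n$ holomorphically trivializes $K(M, I)$, which makes $(\rho_C)_I$ additionally $\partial\bar{\partial}$-exact; the pluriclosed structure of $(g, I)$ then pins down $\theta_I = 0$, so $(g, I)$ is K\"ahler Ricci-flat, and symmetrically so is $(g, J)$.

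Finally, since $(g, I)$ is a compact K\"ahler Ricci-flat metric admitting the holomorphic-symplectic form $\Omega_I$, Theorem \ref{t:decomposition} (Beauville-Bogomolov-Yau) implies that, up to a finite cover, $(M, g, I)$ is the product of irreducible simply connected hyper-K\"ahler manifolds with a flat hyper-K\"ahler torus. Since $(g, J)$ is another K\"ahler Ricci-flat structure for the same metric with ${\rm Re}\,\Omega_J = {\rm Re}\,\Omega_I = \Omega$, $J$ lies in the twistor family of the hyper-K\"ahler structure determined by $(g, I)$, recovering the full hyper-K\"ahler triple and establishing the last assertion of the proposition.
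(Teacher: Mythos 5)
Your architecture hinges on what you yourself flag as ``the main analytic step'' --- deducing from $(\rho_B)^{1,1}_I=0$ that $\Phi$ is constant --- and neither of the two routes you sketch actually closes it. You correctly observe that the hypothesis forces $(\rho_B)_I\wedge\omega_I^{2n-1}=0$, i.e.\ $dJd\Phi\wedge\omega_I^{2n-1}=0$, which is the vanishing of the Bismut scalar curvature $s_B$. But multiplying by $\Phi$ and integrating by parts produces the term $\int_M d\Phi\wedge Jd\Phi\wedge\omega_I^{2n-1}$, whose integrand is governed by $\langle d\Phi,(IJ+JI)d\Phi\rangle_g$; the symmetric endomorphism $\tfrac12(IJ+JI)$ (equal to $-p\,{\rm Id}$ with $p$ the angle function in the almost-quaternionic case) has no definite sign away from the hyper-K\"ahler locus, so no coercive quantity emerges. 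The fallback identification of $\Lambda_{\omega_I}(\rho_B)_I$ with Goto's ${\rm Gscal}_g$ is not correct either: Remark \ref{goto} shows ${\rm Gscal}_g$ is a trace with respect to $F_+$ whose principal part is $-\tfrac{1}{4n}\Delta_g\Phi$, hence uniformly elliptic, whereas the principal part of $\Lambda_{\omega_I}(dJd\Phi)$ is contracted against $\tfrac12(IJ+JI)$ and is degenerate (indeed indefinite) in general. So the hypothesis does not feed into the maximum principle of Remark \ref{goto}, and the constancy of $\Phi$ is left unproved. A secondary gap: even granting $\Phi$ constant, your step ``the pluriclosed structure then pins down $\theta_I=0$'' from the exactness of $\theta_I$ is itself a nontrivial theorem (of Alexandrov--Ivanov/Ivanov--Papadopoulos type), not a formal consequence of $(\rho_C)_I=-dI\theta_I$.

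The paper avoids the constancy of $\Phi$ altogether and instead proves directly that $(g,I)$ is K\"ahler, under the even weaker hypothesis $s_B=0$. Tracing \eqref{ricci-relation} gives $s_B=s_C-\delta\theta_I-|\theta_I|^2_g$, and the Alexandrov--Ivanov identity \eqref{bogdan-stefan-2}, valid for any pluriclosed metric, gives $\delta\theta_I+|\theta_I|^2_g=|d\omega_I|^2_g$; hence $s_C=s_B+|d\omega_I|^2_g=|d\omega_I|^2_g\ge 0$. Since $K(M,I)$ is holomorphically trivialized by $\Omega_I^n$, Gauduchon's plurigenera theorem \cite{gauduchon-plurigenera} forces $s_C\equiv 0$, so $d\omega_I=0$ and $(g,I)$ is K\"ahler; then $\rho_B$ is the ordinary Ricci form, $(\rho_B)^{1,1}_I=0$ gives Ricci-flatness, the same applies to $(g,J)$, and Theorem \ref{t:decomposition} concludes. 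Your opening reduction to $s_B=0$ and your final Beauville--Bogomolov step are fine; I recommend replacing the middle of your argument with this scalar-curvature/plurigenera mechanism.
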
 
\begin{proof}  This follows essentially from the arguments in
\cite{Alexandrov-Ivanov} (see also \cite{IP}, Theorem~4.1).  For convenience of
the reader, we reproduce them below.  We will work only with the 
Hermitian structure $(g,I)$, and to simplify notation we drop the index $I$.  We
thus need to show
that for any compact pluriclosed Hermitian manifold $(M, g, I)$ with trivial
canonical bundle satisfying 
\begin{equation}\label{weakly-CY}
(\rho_B)^{1,1} = 0,
\end{equation}
must be K\"ahler.  We are going to establish this under the weaker assumption,
namely supposing that the scalar curvature  $s_B:=2\langle\rho_B, \omega
\rangle_g$ of $\nabla^B$  is identically zero, i.e.
\begin{equation}\label{sB-integral}
 s_B =0.
\end{equation}
To this end, we are going to use the following relation between the scalar
curvatures $s_C=2\langle\rho_C, \omega \rangle_g$  and $s_B$ (which follows by 
taking a trace in \eqref{ricci-relation} with respect to $\omega$)
\begin{equation}\label{scalar-relation}
\begin{split}
s_B &= s_C + \langle d I \theta, \omega \rangle_g \\
        &= s_C  - \delta \theta - |\theta|^2_g.
  \end{split}
  \end{equation}      
A key observation (\cite{Alexandrov-Ivanov}, Eq. (2. 13)) is that  the
pluriclosedness of $(g, I)$ implies the identity
\begin{equation}\label{bogdan-stefan-2}
|d\omega|_g^2 = \delta \theta + |\theta|_g^2,
\end{equation}
so that \eqref{scalar-relation} reduces to
$$s_C = s_B +  |d\omega|_g^2  = |d\omega|^2_g \ge 0$$
under the assumption \eqref{sB-integral}.  Since, furthermore, the canonical
bundle $K(M,I)$ is trivial, it follows by (\cite{gauduchon-plurigenera},
Proposition 13 and Th\'eor\`eme de Classification)  that $s_C \equiv 0$, i.e. 
$d\omega=0$,  meaning that $(g, I)$ is K\"ahler.  As on a K\"ahler manifold the
Bismut connection coincides with
the Levi-Civita connection,  $\rho_B$ is therefore the usual Ricci form and
\eqref{weakly-CY} means that $(g,I)$ is a Calabi-Yau metric.  Similarly $(g,
J)$ is a Calabi-Yau K\"ahler metric, so that $(g, I, J)$ gives rise to a
hyper-K\"ahler metric (cf. Theorem~\ref{t:decomposition}). \end{proof}

\begin{prop}\label{p:uniqueness} Suppose $\phi_1, \phi_2 \in {\rm Ham}(M,
\Omega)$  satisfy the hypotheses of Conjecture~\ref{GCY-conj}. Then, the
corresponding generalized K\"ahler structures $(g_{{\phi}_1}, I,
J_{{\phi}_1})$ and $(g_{{\phi}_2}, I, J_{{\phi}_2})$ coincide, i.e.
$g_{{\phi}_1}=g_{{\phi}_2},  J_{{\phi}_1}=J_{{\phi}_2}$. 
\end{prop}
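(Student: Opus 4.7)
My plan is to leverage Proposition \ref{p:CY}, which promotes any Calabi-Yau solution in this setting to an honest hyper-K\"ahler structure. Once we are in the Ricci-flat K\"ahler regime, Hodge theory and the Beauville-Bogomolov-Yau decomposition (Theorem \ref{t:decomposition}) take over, and both uniqueness of the metric and of the complex structure reduce to statements about cohomology classes of parallel or harmonic forms.

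First, to establish $g_{\phi_1} = g_{\phi_2}$: by Proposition \ref{p:CY}, each $(g_{\phi_i}, I)$ is K\"ahler Ricci-flat with $d$-closed K\"ahler form $\omega_{I,i}$, and the holomorphic symplectic form $\Omega_I$ is parallel. From the proof of Lemma \ref{l:non-degenerate-symplectic} the taming form decomposes as
\begin{align*}
F_+^{(i)} = 2\bigl({\rm Im}(\Omega_I) - {\rm Im}(\Omega_{J_{\phi_i}})\bigr),
\end{align*}
and since $\Omega_{J_{\phi_i}} = \phi_i^*\Omega_J$ with $\phi_i$ isotopic to the identity through its Hamiltonian flow, the de Rham class $[F_+^{(i)}]$ is independent of $i$. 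Writing $F_+^{(i)} = \omega_{I,i} + Ib_i$ with $b_i$ of type $(2,0)+(0,2)$ with respect to $I$, both summands are separately $d$-closed in the K\"ahler case; the Hodge decomposition of $H^2(M,\mathbb{R})$ for $(M,I)$ then extracts $[\omega_{I,1}] = [\omega_{I,2}]$ as K\"ahler classes, and Theorem \ref{t:decomposition} gives a unique Ricci-flat K\"ahler metric in this class, so $g_{\phi_1} = g_{\phi_2} =: g$.

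Next, to show $J_{\phi_1} = J_{\phi_2}$: with the metric now fixed, each triple $(g, I, J_{\phi_i})$ is hyper-K\"ahler, so $J_{\phi_i}$ is $g$-parallel and $\Omega_{J_{\phi_i}}$, being a holomorphic $(2,0)$-form on a Ricci-flat K\"ahler manifold, is parallel and hence harmonic. The difference $\Omega_{J_{\phi_1}} - \Omega_{J_{\phi_2}}$ is parallel, and again by isotopy to the identity it is cohomologous to zero, hence vanishes. Using $\Omega_{J_{\phi_i}} = \Omega + \i J_{\phi_i}\Omega$ this becomes $(J_{\phi_1} - J_{\phi_2})\Omega = 0$, and nondegeneracy of $\Omega$ yields $J_{\phi_1} = J_{\phi_2}$.

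The main subtlety I expect is in the first step, namely the legitimate extraction of the K\"ahler class $[\omega_{I,i}]$ from the mixed-type symplectic class $[F_+^{(i)}]$. This relies on the Hodge decomposition $H^2(M,\mathbb{R}) \cong H^{1,1}_I \oplus (H^{2,0}_I \oplus H^{0,2}_I)_{\mathbb{R}}$, which is available only because Proposition \ref{p:CY} has placed us in the K\"ahler setting; without that rigidity input neither $\omega_{I,i}$ nor $Ib_i$ would be closed on their own, and the individual classes would not even be defined.
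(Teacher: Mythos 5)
Your proof is correct and follows essentially the same route as the paper's: both use Proposition \ref{p:CY} to land in the hyper-K\"ahler regime, identify the K\"ahler classes of $\gw_{\phi_1}$ and $\gw_{\phi_2}$ from the invariance of the de Rham class of $F_+$ under Hamiltonian isotopy together with the type decomposition of $H^2$, invoke uniqueness of the Ricci-flat K\"ahler metric in its K\"ahler class to get $g_{\phi_1}=g_{\phi_2}$, and then kill the remaining discrepancy by observing that a parallel exact form must vanish. The only cosmetic difference is that the paper phrases this last step in terms of the $(2,0)+(0,2)$ part of $F_{\phi_1}-F_{\phi_2}$ while you phrase it in terms of $\Omega_{J_{\phi_1}}-\Omega_{J_{\phi_2}}$, which is the same object up to a constant since the real parts agree and $F_{\pm}=2({\rm Im}(\pm\Omega_I)-{\rm Im}(\Omega_J))$.
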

\begin{proof} 
By assumption, each $\phi_i$ belongs to the connected component of the identity
of the group of diffeomorphisms on $M$.  It follows that each $\phi_i$ acts
trivially on deRham cohomology, so that the symplectic $2$-forms
$F_{{\phi}_1} := -2\Big({\rm Im}(\Omega_{J_{{\phi}_1}}) - {\rm
Im}(\Omega_I)\Big)$ and $F_{{\phi}_2}: = -2\Big({\rm
Im}(\Omega_{J_{{\phi}_2}}) - {\rm Im}(\Omega_I)\Big)$ taming $I$ belong to
the same deRham class (see Lemma~\ref{l:non-degenerate-symplectic}). 
Furthermore,
as $(g_{{\phi}_1}, I)$ and $(g_{{\phi}_2}, I)$ are both K\"ahler,  the
$(1,1)$-parts of $F_{{\phi}_1}$ and $F_{{\phi}_2}$  with respect
to $I$ are the K\"ahler forms $\gw_{{\phi}_1}$ and $\gw_{{\phi}_2}$ of
$(g_{{\phi}_1},I)$ and $(g_{{\phi}_2}, I)$, respectively, which thus
belong to the same Aeppli cohomology class.  Using the K\"ahler condition again,
it then follows that $\gw_{\phi_1}$ and $\gw_{\phi_2}$ define Calabi-Yau
metrics on $(M, I)$ in the same K\"ahler 
class. By the uniqueness of the Calabi-Yau metric in its K\"ahler class, we
conclude $\gw_{{\phi}_1}= \gw_{{\phi}_2}$ or, equivalently,
$g_{{\phi}_1} =
g_{{\phi}_2}$. 

Notice  that for any GK structure $(g, I, J)$  of symplectic type with
symplectic $2$-form $F$ taming $I$ and $J$ as in \eqref{GK-symplectic}, we
have (see \eqref{basic-1}) $F (I+J) = -2g$.  It thus follows that in order
to show $J_{{\phi}_1}= J_{{\phi}_2}$ it is enough to establish that
$F_{{\phi}_1}= F_{{\phi}_2}$.  As we have already proved that
the $(1,1)$-parts with respect to $I$ of $F_{{\phi}_i}$ coincide and are
closed, it follows that $(2,0)+(0,2)$-part of
$F_{{\phi}_1}-F_{{\phi}_2}$ is closed and exact.  It must
therefore be zero, being also parallel with respect to
$g_{{\phi}_1}=g_{{\phi}_2}$ (this follows from the fact that
$(g_{{\phi}_i}, I, J_{{\phi}_i})$ is hyper-K\"ahler and
(\ref{Fdefs}). We thus conclude
$F_{{\phi}_1}=F_{{\phi}_2}$ which in turn implies
$J_{{\phi}_1}=J_{{\phi}_2}$. \end{proof}

\begin{rmk}\label{r:uniqueness}  By the very definition of $J_{\phi_i}$, the
identity $J_{\phi_1}= J_{\phi_2}$ in Proposition~\ref{p:uniqueness} is
equivalent to  $\phi_1 \circ \phi_2^{-1} \in {\rm Aut}(M,J)$.  If ${\rm
Aut}(M, J)=\{{\rm id}\}$,  we can conclude $\phi_1=\phi_2$. In general,
for a holomorphic-symplectic K\"ahler manifold $(M,J)$ whose first Betti number
is zero (see Theorem~\ref{t:decomposition}) the group ${\rm Aut}(M,J)$ is
discrete.  This follows because,  on the one hand,  the connected component of 
the
identity  ${\rm Aut}_0(M,J)$ coincides with the group of reduced automorphisms
${\rm Aut}^r(M, J)$, and, on  the other hand,  ${\rm Aut}^r(M, J)$ is trivial on
a Calabi-Yau manifold by Matsushima's theorem (see e.g. \cite{gauduchon-book}).
Furthermore, $\phi_1 \circ \phi_2^{-1}$ acts trivially on $H^2(M, {\mathbb
R})$ (being in ${\rm Diff}_0(M)$). This implies that  it must belong to the
isometry group of any Calabi-Yau metric on $(M, J)$, showing that $\phi_1
\circ \phi_2^{-1}$ is also of finite order. It is well-known (see e.g.
\cite[Ch. 15, Cor. 2.6]{huybreschts-K3}) that on a $K3$-surface these conditions
imply $\phi_1 \circ \phi_2^{-1}={\rm id}$ but we are not  aware of a
general argument for an arbitrary holomorphic-symplectic K\"ahler manifold.
\end{rmk}

We can now prove Theorem \ref{thm:uniquerigidity}, which we restate for
convenience.

\begin{thm} \label{thm:uniquerigidity2}  (cf. Theorem \ref{thm:uniquerigidity})
Let $(M^{4n}, g, I, J)$ be a nondegenerate generalized K\"ahler manifold.  Any
two solutions $(g_i,I,J_i)$, $i=1,2$ of the generalized K\"ahler Calabi-Yau
equation in the $\Omega$-Hamiltonian deformation class agree, and moreover
define a hyper-K\"ahler structure.
\begin{proof} This is restating Propositions \ref{p:CY} and \ref{p:uniqueness}.
\end{proof}
\end{thm}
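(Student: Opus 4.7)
The plan is to observe that the theorem bundles together two logically independent assertions: that any solution to the generalized Kähler Calabi-Yau equation within the deformation class is in fact hyper-Kähler (rigidity), and that within a fixed Hamiltonian deformation class any two such solutions must coincide (uniqueness). These are precisely the content of Propositions \ref{p:CY} and \ref{p:uniqueness} respectively, so the task is to assemble them in the correct order.

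For the rigidity part, I would first apply Proposition \ref{ricci-potential}, which gives the fundamental identity $(\rho_B)_I = -\tfrac{1}{2} dJd\Phi$ and its $J$-analogue. For a solution of the generalized Kähler Calabi-Yau equation the Ricci potential $\Phi$ is constant, so both Bismut-Ricci forms vanish identically; in particular their $(1,1)$-parts vanish. Proposition \ref{p:CY} then takes over: under the pluriclosed/holomorphic-symplectic hypothesis, vanishing of $(\rho_B)_I^{1,1}$ implies that $(g_i,I)$ and $(g_i,J_i)$ are both Kähler and Ricci-flat, and the Beauville-Bogomolov-Yau decomposition (Theorem \ref{t:decomposition}) then lifts the pair $(g_i,I,J_i)$ to an honest hyper-Kähler structure.

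Once rigidity is in hand for each $i$, the uniqueness step proceeds as in Proposition \ref{p:uniqueness}. Each $\phi_i \in \mathrm{Ham}(M,\Omega)$ lies in the identity component of $\mathrm{Diff}(M)$ and hence acts trivially on deRham cohomology, so by Lemma \ref{l:non-degenerate-symplectic} the taming forms $F_{\phi_i} = -2\bigl(\mathrm{Im}(\Omega_{J_{\phi_i}})-\mathrm{Im}(\Omega_I)\bigr)$ share a deRham class. Since both $(g_{\phi_i},I)$ are now Kähler, the $(1,1)_I$-parts of $F_{\phi_i}$ are the Kähler forms $\omega_{\phi_i}$, which therefore lie in a common Aeppli (hence Kähler) class and both solve the Calabi-Yau equation on $(M,I)$; Yau's theorem forces $\omega_{\phi_1}=\omega_{\phi_2}$, and so $g_{\phi_1}=g_{\phi_2}$. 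The $(2,0)+(0,2)_I$ parts of $F_{\phi_1}-F_{\phi_2}$ are then closed, exact, and parallel with respect to the common hyper-Kähler metric, and therefore vanish. Using $F(I+J)=-2g$ from \eqref{basic-1}, matching of the symplectic forms yields $J_{\phi_1}=J_{\phi_2}$.

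The main obstacle is the rigidity step, specifically the passage from $(\rho_B)_I^{1,1}=0$ to Kählerness of $g$ carried out in Proposition \ref{p:CY}. This requires the Alexandrov-Ivanov identity $|d\omega|_g^2 = \delta\theta_I + |\theta_I|_g^2$ for pluriclosed Hermitian metrics together with Gauduchon's plurigenera theorem, to rule out the a priori possibility of a strictly non-Kähler pluriclosed metric with trivial canonical bundle and vanishing Bismut-Ricci $(1,1)$-part; everything else in the argument is either cohomological bookkeeping or an application of Yau's classical Calabi-Yau theorem.
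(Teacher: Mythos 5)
Your proposal is correct and follows essentially the same route as the paper: the theorem is exactly the conjunction of Proposition \ref{p:CY} (rigidity, via the Ricci potential identity, the Alexandrov--Ivanov identity, Gauduchon's plurigenera theorem, and Yau's theorem) and Proposition \ref{p:uniqueness} (uniqueness, via the cohomological argument and uniqueness of the Calabi--Yau metric in its K\"ahler class). You also correctly identify the logical ordering --- rigidity must come first, since the uniqueness argument uses that the metrics $(g_{\phi_i},I)$ are already K\"ahler --- which is implicit in the paper's one-line proof.
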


\section{A Formal GIT picture} \label{s:GIT}

In this section we establish a formal moment map interpretation of the
generalized K\"ahler Calabi-Yau equation.  A closely related problem was
addressed in \cite{fine}, giving a formal moment map interpretation of the
problem of prescribing the volume form of symplectic structures within a given
de Rham class. A similar formal framework concerning the problem of finding
constant scalar curvature K\"ahler metrics within a given K\"ahler class is
given in \cite{fujiki, do},  and has been extended to the generalized K\"ahler
case in \cite{boulanger, Gotomoment} (inspired from the setting in
\cite{gauduchon-GIT}).

\subsection{Setup}

Let $(g, I, J)$ be a nondegenerate generalized K\"ahler metric, denote the
holomorphic-symplectic
structures by $\Omega_I$ and $\Omega_J$, and denote by
\begin{align} \label{Psidef}
\Psi_1 :=  I\Omega = {\rm Im}(\Omega_I), \qquad \Psi_2:= J\Omega = {\rm
Im}(\Omega_J)
\end{align}
the closed imaginary parts of $\Omega_I$ and $\Omega_J$.  Furthermore, let
$\alpha=[\Omega_I]$  and $\beta=[\Omega_J]$ be the corresponding deRham classes
in $H^2(M, \mathbb C)$.  Now set
\begin{align} \label{GKspacedef}
\mathcal{GK}_{\alpha,\beta} = \left\{ \mbox{GK triples } (g',I',J')\ |\
[\Omega_{I'}] \in \alpha,\ \Omega_{J'} \in [\beta] \right\}.
\end{align}
As the considerations in this section are purely formal, we endow ${\mathcal
GK}_{\alpha,\beta}$ with the $C^{\infty}$ topology.

By using Lemmas~\ref{l:symplectic-forms} and \ref{c}, we can interpret paths in
$\mathcal{GK}_{\ga,\gb}$ as smooth families of pairs of symplectic forms.  In
particular, consider a smooth family $\Omega_j^t=(\Omega^t +\i\Psi_j^t), j=1,2$
of closed complex-valued $2$ forms $\Omega^t_j$ on $M^{4n}$,  such that for all
$t$
\begin{enumerate}
\item[(a)] $\Omega^t$ and $\Psi^t_j$ are real symplectic $2$-forms and 
$[\Omega^t_1] = \alpha, \ \ [\Omega^t_2]=\beta$.
\item[(b)] the endomorhisms  $I_t:= -(\Psi^t_1)^{-1} \Omega^t$  and $J_t :=
-(\Psi^t_2)^{-1} \Omega^t$ define almost-complex structures;
\item[(c)] the $(1,1)$-part with respect to $I_t$ of  $-\Psi^t_2$ is positive
definite.
\end{enumerate}
It is observed in \cite{bande-kotschick} that  $I_t$ and $J_t$ are automatically
 integrable  (as an easy consequence of the closeness of $\Omega^t_j$), and
thus, by Lemma~\ref{l:symplectic-forms}, $(\Omega_j^t, j=1,2)$  give rise to a
smooth curve in $\mathcal{GK}_{\alpha, \beta}$. Conversely, by Lemma~\ref{c} 
any path in
$\mathcal{GK}(\alpha, \beta)$ has this form. 

By Moser's lemma with respect to the path of cohomologous symplectic
forms $\Omega^t$,  we can pull-back $(\Omega^t_j, j=1,2)$ by an  isotopy of
diffeomorphisms and assume that ${\rm Re}(\Omega^t_j)= \Omega$  is a fixed
symplectic form on $M$. We shall thus be interested, without loss, in a
restricted space, namely
\begin{align}
\mathcal{GK}_{\ga,\gb}(\Omega) = \left\{ (g',I',J') \in \mathcal{GK}_{\ga,\gb}\
|\ \Omega={\rm Re}(\Omega_I)={\rm Re}(\Omega_J) \right\}.
\end{align}
Note that points of $\mathcal{GK}_{\alpha, \beta}(\Omega)$ are equivalently
parametrized by pairs of complex structures $(I,J)$ (which in turn determine
$\Psi_1$ and $\Psi_2$ via (b)). Also, to simplify some of the discussion below,
we make
a further restriction, namely we set
\begin{align}
\mathcal{GK}^K_{\alpha, \beta}(\Omega) = \left\{ (g',I',J') \in \mathcal
{GK}_{\ga,\gb}(\Omega)\ | \ I', J' \mbox{ K\"ahler} \right\}.
\end{align}
The next result shows the openness of the orbits of the natural
$\Omega$-Hamiltonian action in this setting.

\begin{lemma}\label{l:orbit} Let $(M^{4n}, I)$ be a hyper-K\"ahler manifold and
suppose $(g, I, J)$ is a generalized K\"ahler metric in $\mathcal{M}$ an open
path connected
subset of
$\mathcal{GK}_{\ga,\gb}^K(\Omega)$.  Let $\mathcal{O}_I$ and $\mathcal{O}_J$
denote the orbits of the 2-forms ${\rm Im}(\Omega_I)$ and ${\rm Im}(\Omega_J)$
under the (right) action of ${\rm Ham}(M,\Omega)$. Then,  in the $C^{\infty}$
topology, $\mathcal{M}$ is an open subset of $\mathcal{O}_I\times
\mathcal{O}_J$.  If furthermore $b_1(M) = 0$, then $\mathcal{M}$ is finitely
covered by an open subset in ${\rm Ham}(M,\Omega) \times {\rm Ham}(M,\Omega)$.
\end{lemma}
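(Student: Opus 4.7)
The statement breaks into two assertions that I would establish in sequence.

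\emph{Openness of $\mathcal{M}$ in $\mathcal{O}_I\times\mathcal{O}_J$.} By Lemmas~\ref{l:symplectic-forms} and~\ref{c}, the assignment
\[
(g', I', J') \;\longmapsto\; (\Psi_1', \Psi_2') := (\mathrm{Im}\,\Omega_{I'}, \mathrm{Im}\,\Omega_{J'})
\]
embeds $\mathcal{GK}_{\ga,\gb}^{K}(\Omega)$ into $\mathcal{O}_I\times\mathcal{O}_J$, and conversely a pair $(\Psi_1',\Psi_2')$ of closed real $2$-forms recovers $I' = -(\Psi_1')^{-1}\Omega$, $J' = -(\Psi_2')^{-1}\Omega$ and $g'$. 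The defining conditions for $(I',J',\Omega)$ to yield a nondegenerate generalized K\"ahler structure---non-degeneracy of each $\Psi_j'$, positivity of $-(\Psi_2')^{1,1}_{I'}$, with integrability automatic by the Bande--Kotschick observation---are all $C^\infty$-open. Since $(M,I)$ is hyper-K\"ahler, the Kodaira--Spencer stability theorem ensures that the K\"ahler property of $I'$ and $J'$ is likewise preserved under small perturbations, so $\mathcal{GK}_{\ga,\gb}^{K}(\Omega)$ is open in $\mathcal{O}_I\times\mathcal{O}_J$. Because $\mathcal{M}$ is open within $\mathcal{GK}_{\ga,\gb}^{K}(\Omega)$, it is open in $\mathcal{O}_I\times\mathcal{O}_J$.

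\emph{Finite cover when $b_1(M)=0$.} Fix a basepoint $(g_0,I,J_0)\in\mathcal{M}$ with associated data $(\Psi_1^0,\Psi_2^0)$ and consider
\[
\Theta:{\rm Ham}(M,\Omega)\times{\rm Ham}(M,\Omega) \longrightarrow \mathcal{O}_I\times\mathcal{O}_J,\qquad (\phi,\psi)\longmapsto (\phi^*\Psi_1^0,\,\psi^*\Psi_2^0).
\]
By definition of the orbits, $\Theta$ is surjective, and by Claim~1 the preimage $\Theta^{-1}(\mathcal{M})$ is an open subset of ${\rm Ham}(M,\Omega)^2$ which covers $\mathcal{M}$. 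Each fiber of $\Theta$ is a coset of the product of stabilizers $\mathrm{Stab}(\Psi_1^0)\times\mathrm{Stab}(\Psi_2^0)$ in ${\rm Ham}(M,\Omega)^2$, so it suffices to show each $\mathrm{Stab}(\Psi_j^0)$ is finite. An element $\phi \in {\rm Ham}(M,\Omega)$ with $\phi^*\Psi_j^0 = \Psi_j^0$ also satisfies $\phi^*\Omega = \Omega$, hence preserves the holomorphic symplectic form $\Omega_{I_j^0} = \Omega + \i\Psi_j^0$, and therefore the complex structure $I_j^0$. Thus $\mathrm{Stab}(\Psi_j^0) \subseteq {\rm Aut}(M,I_j^0)\cap{\rm Diff}_0(M)$.

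\emph{Finiteness via Matsushima and the isometry group.} Under $b_1(M)=0$, Theorem~\ref{t:decomposition} provides a Ricci-flat K\"ahler metric $g_{CY}$ in any given K\"ahler class of $(M,I_j^0)$. Any $\phi \in {\rm Diff}_0(M)$ acts trivially on $H^2(M,\mathbb{R})$, so preserves the K\"ahler class of $g_{CY}$; by uniqueness of the Calabi--Yau metric within its class, $\phi$ actually preserves $g_{CY}$. Hence $\mathrm{Stab}(\Psi_j^0)$ is contained in the compact Lie group of holomorphic isometries of $(M,I_j^0,g_{CY})$. The identity component of this group lies in $\mathrm{Aut}_0(M,I_j^0) = \mathrm{Aut}^r(M,I_j^0)$, which vanishes by Matsushima's theorem on the Calabi--Yau manifold $(M,I_j^0)$ (cf.\ Remark~\ref{r:uniqueness}); being a discrete subgroup of a compact group, $\mathrm{Stab}(\Psi_j^0)$ is therefore finite. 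Consequently $\Theta|_{\Theta^{-1}(\mathcal{M})}$ realizes $\mathcal{M}$ as a finite quotient of the open set $\Theta^{-1}(\mathcal{M})\subset {\rm Ham}(M,\Omega)^2$.

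The main technical point is the finiteness of $\mathrm{Stab}(\Psi_j^0)$: Matsushima alone gives only discreteness of the ambient complex automorphism group, and one must additionally invoke uniqueness of the Calabi--Yau metric to force the stabilizer into a compact isometry group, where discreteness upgrades to finiteness. The openness part, in contrast, is essentially a verification that all relevant conditions are $C^\infty$-open, with the Kodaira--Spencer stability of the K\"ahler property on the hyper-K\"ahler background handling the one non-obvious ingredient.
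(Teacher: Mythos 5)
There is a genuine gap in your treatment of the first assertion. You claim that Lemmas~\ref{l:symplectic-forms} and~\ref{c} already show that the map $(g',I',J')\mapsto(\mathrm{Im}\,\Omega_{I'},\mathrm{Im}\,\Omega_{J'})$ sends $\mathcal{GK}^K_{\alpha,\beta}(\Omega)$ \emph{into} $\mathcal{O}_I\times\mathcal{O}_J$, and then you only verify that the defining conditions are $C^\infty$-open. But those lemmas give a bijection between GK triples and pairs of holomorphic-symplectic forms with common real part; they say nothing about the pair $(\Psi_1',\Psi_2')$ lying in the $\mathrm{Ham}(M,\Omega)$-orbits of the basepoint's forms. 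That containment is the actual content of the lemma, and it is not automatic: the paper proves it by taking a path in $\mathcal{M}$, observing that $\dot\Psi_j^t$ is exact (fixed deRham class) and of type $(1,1)$ with respect to $I_t$ (resp.\ $J_t$) because $(\Omega+\i\Psi_j^t)^{2n}$ is nondegenerate of type $(2n,0)$ and $(\Omega+\i\Psi_j^t)^{2n+1}=0$, then invoking the $\partial\bar\partial$-lemma (this is where the K\"ahler hypothesis enters) to write $\dot\Psi_1^t=dI_tdf_1^t$, $\dot\Psi_2^t=dJ_tdf_2^t$, and finally applying Moser's lemma and checking that the resulting Moser vector fields $-(\Psi_j^t)^{-1}(I_tdf_j^t)$ satisfy $\imath_{X_j^t}\Omega=df_j^t$, i.e.\ are $\Omega$-Hamiltonian. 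Without this argument you have shown neither that $\mathcal{M}\subseteq\mathcal{O}_I\times\mathcal{O}_J$ nor where the K\"ahler assumption is used; your appeal to Kodaira--Spencer stability is both unnecessary (points of the orbits near a point of $\mathcal{M}$ are diffeomorphism pullbacks, hence automatically K\"ahler) and beside the point.

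Your treatment of the second assertion is essentially correct and fleshes out what the paper delegates to Remark~\ref{r:uniqueness}: the stabilizer of $\Psi_j^0$ in $\mathrm{Ham}(M,\Omega)$ preserves the holomorphic-symplectic structure and hence the complex structure, lies in $\mathrm{Diff}_0(M)$ so preserves K\"ahler classes, is forced into the compact isometry group of a Calabi--Yau metric by uniqueness in the K\"ahler class, and is discrete by Matsushima, hence finite. This is a reasonable expansion of the paper's one-line reference, and the observation that discreteness alone is insufficient without compactness is a fair point. But the lemma as a whole is not established until the Moser/$\partial\bar\partial$ argument for the first part is supplied.
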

\begin{proof} Any path in $\mathcal{M}$ starting at $(g,I,J)$ is determined by a
smooth family $(\Psi^t_j, j=1,2)$ of symplectic forms satisfying  the conditions
(a), (b) and (c) above with $\Omega^t=\Omega$. Thus, the $2$-forms $\gamma_j^t
:= \tfrac{\partial}{\partial t} \Psi^t_j$ are exact  (by (a)).  Writing
$\gamma_j^t = da_j^t$ for some $1$-forms $a_j^t$,  the fact that
$(\Omega +\i\Psi_j^t)^{2n}$ is a non-degenerate $(2n,0)$ form  and $(\Omega + i
\Psi_j^t)^{2n+1}=0$ imply that $da_1^t$ (resp. $da_2^t$) is of type
$(1,1)$ with respect to $I_t$ (resp. $J_t$). By the $\partial \bar
\partial$-Lemma for $(1,1)$ forms (which holds for each of the complex
structures $I_t$ and $J_t$ for any element in $\mathcal{M}$) we conclude that
there are unique smooth functions $f_{j}^t$, normalized by $\int_M f_{j}^t
\Omega^{2n}=0$, such that $\gamma_1^t = d I_t d f_{1}^t$ and $\gamma_2^t= d J_t
d f_{2}^t$.  Observe that Hodge theory with respect to some K\"ahler metric
implies that
$f_{j}^t$ vary smoothly in $t$.  We now apply Moser's lemma to each of the
families $\Psi_1^t$ and $\Psi_2^t$.
It shows that 
$$\Psi_1^t = (\phi_1^t)^*({\rm Im}(\Omega_I)); \ \  \Psi_2^t =
(\phi_2^t)^*({\rm Im}(\Omega_J)),$$
where $\phi_j^t$ are the flows of the time-dependent  vector fields $X_1^t
=-(\Psi_1^t)^{-1}(I_t d f_1^t)$ and $X_2^t = -(\Psi_2^t)^{-1}(J_t d f_2^t)$. We
claim that $X_j^t$ are Hamiltonian with respect to $\Omega$. Indeed, using (b),
we have
$$\imath_{X_1^t}(\Omega)= -\Psi_1^t(I_t X_1^t, \cdot) = -\Psi_1^t(X_1^t, I_t
\cdot) = I_t \Psi_1^t(X_t) = df_1^t,$$
and similarly for $X_2^t$, finishing the first claim.

The second claim follows from Remark~\ref{r:uniqueness} above, using that
$b_1(M)=0$.
\end{proof}

\begin{rmk}\label{M}
The setting above extends naturally to the general (not
necessarily K\"ahler) case. On any holomorphic-symplectic manifold $(M^{4n}, I,
\Omega_I=\Omega + iI\Omega)$,  the subgroup of ${\rm Ham}(M, \Omega)$ leaving
${\rm Im}(\Omega_I)=I\Omega$ invariant  is a closed subgroup of ${\rm Aut}(M,
I)$ with Lie algebra  identified with the  $\Omega$-Hamiltonian vector fields
$X= -\Omega^{-1}(df)$ with ${\mathcal L}_X (I\Omega) =0$, i.e. satisfying
$dIdf=0$. It follows that the latter is trivial when $M$ is compact, i.e. the
stabilizer of a generalized K\"ahler structure $(g, I, J)$ under the local
action of ${\rm Ham}(M,\Omega)\times {\rm Ham}(M,\Omega)$ is a discrete
subgroup.  Thus, generalizing the setting of Lemma~\ref{l:orbit} above, we let 
${\mathcal M} \subset \mathcal {GK}_{\alpha,\beta}(\Omega)$ be a
path-connected component of an orbit for the local action of ${\rm
Ham}(M,\Omega)\times {\rm Ham}(M,\Omega)$ on ${\mathcal
{GK}}_{\alpha,\beta}(\Omega)$. The proof of Lemma \ref{l:orbit} shows that any
tangent vector of $\mathcal{M}$
at a point $(I, J)$ is identified with a pair of exact forms $(-dIdf, dJdg)$
for uniquely determined $\Omega$-normalized smooth functions $f,g$. Thus, we
have an identification
\begin{equation}\label{identification}
T_{I, J} {\mathcal M} \cong \mathcal{C}^{\infty}_0(M) \oplus
\mathcal{C}^{\infty}_0(M),
\end{equation} which we use throughout this section. 
\end{rmk}

\subsection{The Aubin-Yau functional}

In this subsection we establish a variational characterization of the
generalized K\"ahler Calabi-Yau equation, in analogy with the Aubin-Yau
functional for the classical Calabi-Yau equation.  To begin we recall some
fundamental aspects of Hamiltonian actions on symplectic manifolds.  The group
$\mathcal{H}:={\rm Ham}(M, \Omega)$ can be thought as is an infinite dimensional
analog of a compact Lie group. Indeed, it is simple by a result of
Banyaga~\cite{banyaga}. Furthermore,  its Lie algebra is  identified with the
space $C^{\infty}_0(M)$ of smooth functions on $M$ with zero mean with respect
to $\Omega^{2n}$,  endowed with the Poisson bracket $\{f, g\} = \langle
\Omega^{-1}, df\wedge dg \rangle:= \tfrac{1}{2}{\rm tr}\Big(\Omega^{-1} \circ
(df\wedge dg)\Big)$. Then, the $L^2$-product  
\begin{equation}\label{g}
{\bf g}(f, g) = \tfrac{1}{(2n)!}\int_M f g \ \Omega^{2n}
\end{equation}
defined for any $f, g \in C^{\infty}_0(M)$ gives rise to an ${\rm ad}$-invariant
inner-product on ${\rm Lie}(\mathcal{H})$, and thus to an ${\rm Ad}$-invariant
Riemannian metric on $\mathcal{H}$, a property characterizing the finite
dimensional compact simple Lie groups. It is known that (see
e.g.~\cite{khesin-et-al}) the geodesics with respect to ${\bf g}$ are the flows
of time-independent Hamiltonian vector fields on $(M,\Omega)$.

As above (see Remark~\ref{M}), $\mathcal{M} \subset
\mathcal{GK}_{\ga,\gb}(\Omega)$  will denote a
path connected component of an orbit  for the local action of
$\mathcal{G}=\mathcal{H} \times \mathcal{H}$ on $\mathcal{GK}_{\alpha,
\beta}(\Omega)$ (the  whole group $\mathcal{G}$ acts only locally on
$\mathcal{GK}_{\alpha, \beta}(\Omega)$ because of the open condition (c)) and
notice that  the diagonal action of $\mathcal{H}$ on ${\mathcal G} = \mathcal{H}
\times \mathcal{H}$ descends to a well-defined global action of $\mathcal{H}$ on
$\mathcal{M}$, by pulling
back each generalized K\"ahler structure $(g, I, J)$ via the natural right
action of the
diffeomorphisms on $M$.  We also want to emphasize that
our formal manifold ${\mathcal M}$ is locally a subset of the 
infinite dimensional Lie group ${\mathcal G}$, thus we have a non-abelian
version of the familiar Calabi-Yau setting. 

Using the identification \eqref{identification}, for any pair of normalized
functions $(f, g) \in  \mathcal{C}^{\infty}_0(M)
\oplus \mathcal{C}^{\infty}_0(M) = {\rm Lie}(\mathcal{G})$, we have a
canonically associated vector field on $\mathcal{M}$, defined by  
\begin{equation}\label{fundamental-fields}
(f, g)_{I,J} := (-dIdf, -dJdg).
\end{equation}
Notice that $(f, g)$ are the  vector fields  induced by the local (right) action
of $\mathcal{G}$ on $\mathcal{M}$ and will play a key role in the computations
below. We shall use that such vector fields span each tangent space of
$\mathcal{M}$, and 
\begin{equation}\label{lie-bracket}
[(f_1, g_1), (f_2,g_2)]_{\mathcal{M}}= (\{f_1,f_2\}, \{g_1, g_2\}), 
\end{equation}
where $[\cdot, \cdot]_{\mathcal M}$ stands for the Lie bracket of vector fields
on $\mathcal{M}$,  and $\{\cdot, \cdot\}$ stands for the Poisson bracket with
respect to $\Omega$.

\begin{defn} Given the setup above, we define the $1$-form $\boldsymbol{\sigma}$
on ${\mathcal M}$, defined at a point $(I, J)$ and a tangent vector 
$(f, g) \in C^{\infty}_0(M)\oplus C^{\infty}_0(M) \cong T_{I,J}({\mathcal M})$
by
\begin{equation}\label{sigma}
\boldsymbol{\sigma}_{I,J} (f, g):= \tfrac{1}{(2n)!} \int_M (f-g)\Big((F_+)^{2n}
-
e^{\lambda} (F_{-})^{2n})\Big),
\end{equation}
where, we recall,  $F_{\pm}= 2(\pm \Psi_1 - \Psi_2)=-2g (I \pm J)^{-1}$ are the
real symplectic forms taming both $I$ and $J$, and $\gl$ is the cohomological
constant associated to $\mathcal{M}$ (cf. Lemma \ref{lambdalemma}).
\end{defn}

\begin{lemma}\label{l:closed} Given the setup above,
$\boldsymbol{\sigma}$ is invariant under the diagonal action of ${\rm Ham}(M,
\Omega)$ on $\mathcal{M}$ and is closed.
\end{lemma}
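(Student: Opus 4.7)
The plan is to establish invariance almost by construction, and then prove closedness via Cartan's magic formula applied to pairs of fundamental vector fields, the real work being a bookkeeping computation that converts exterior-algebraic expressions to Poisson brackets.

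For invariance, I observe that the diagonal $\mathcal{H}$-action is by pullback of complex structures with $\Omega$ fixed, so $\phi^*\Omega_I=\Omega_{\phi^*I}$ and $\phi^*\Omega_J=\Omega_{\phi^*J}$, hence $\phi^*F_{\pm}^{(I,J)}=F_{\pm}^{(\phi^*I,\phi^*J)}$. The constant $\lambda$ is cohomological and therefore $\phi$-invariant, while the differential of the action sends the fundamental vector field $(f,g)_{(I,J)}$ to $(\phi^*f,\phi^*g)_{(\phi^*I,\phi^*J)}$. Change of variables in the integral defining $\boldsymbol{\sigma}$ then yields $\phi^*\boldsymbol{\sigma}=\boldsymbol{\sigma}$.

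For closedness, since the fundamental fields $X_i=(f_i,g_i)$ span $T\mathcal{M}$ by Remark~\ref{M}, it suffices to check $d\boldsymbol{\sigma}(X_1,X_2)=0$ on such pairs. Cartan's formula, together with \eqref{lie-bracket}, gives
\[
d\boldsymbol{\sigma}(X_1,X_2)=X_1\boldsymbol{\sigma}(X_2)-X_2\boldsymbol{\sigma}(X_1)-\boldsymbol{\sigma}\bigl(\{f_1,f_2\},\{g_1,g_2\}\bigr).
\]
The key variational input is that moving in direction $X_1$ varies $\Psi_1=I\Omega$ and $\Psi_2=J\Omega$ by the exact forms $dIdf_1$ and $dJdg_1$ (as in the proof of Lemma~\ref{l:orbit}), so $\delta F_+=2\,d(Idf_1-Jdg_1)$ and $\delta F_-=-2\,d(Idf_1+Jdg_1)$. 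Because $F_\pm$ is closed, $\delta(F_+^{2n}-e^{\lambda}F_-^{2n})=d\beta_1$ for an explicit $(4n-1)$-form $\beta_1$ linear in $X_1$, and Stokes gives
\[
X_1\boldsymbol{\sigma}(X_2)=-\tfrac{1}{(2n)!}\int_M d(f_2-g_2)\wedge\beta_1.
\]

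Subtracting the corresponding expression for $X_2\boldsymbol{\sigma}(X_1)$ produces a sum of integrals of the form $\int df_i\wedge Kdf_j\wedge F^{2n-1}$ and $\int dg_i\wedge Kdg_j\wedge F^{2n-1}$ with $K\in\{I,J\}$ and $F\in\{F_+,F_-\}$; the crucial point is that the mixed $(f_i,g_j)$-terms cancel on antisymmetrization due to the signs in $\alpha_\pm^{(i)}$. The remaining pure terms are to be identified with the Poisson-bracket contribution $\boldsymbol{\sigma}(\{f_1,f_2\},\{g_1,g_2\})$ through the algebraic identity that, for a symplectic form $F$ taming a complex structure $K$ via $-KF=g+b$ as in \eqref{tamed}, one has $2n\,dh\wedge Kdk\wedge F^{2n-1}=\{h,k\}_F\,F^{2n}+(\text{$b$-corrections})$, combined with the relation between $\Omega^{-1}$ and $F_\pm^{-1}$ supplied by Lemma~\ref{l:non-degenerate-symplectic} (so that the Poisson brackets, which are computed with respect to $\Omega$ and not $F_\pm$, match up correctly).

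The main obstacle is precisely this last algebraic reconciliation: $F_\pm$ only tame but are not compatible with $I$ or $J$, so one must carefully separate the $(1,1)$-part $\omega_{I}$ or $\omega_{J}$ from the $(2,0)+(0,2)$ piece $b$ in $F_\pm$, verify that the $\omega$-pieces generate the Poisson bracket in $\Omega$ after using $\Omega=g(I+J)^{-1}(I-J)^{-1}$, and show that the $b$-corrections arising from $F_+$ and $F_-$ cancel (either against each other, or after integration by parts using $d(Ib)=-d\omega_I$ and the analogous identity for $J$ from Lemma~\ref{symplectic-to-pluriclosed}). Once this algebraic matching is in hand, closedness follows immediately.
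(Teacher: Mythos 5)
Your skeleton matches the paper's: invariance by change of variables (the paper dismisses this as obvious), and closedness via the Cartan-type formula on fundamental vector fields together with the variation $\tfrac{\partial}{\partial t}F_{\pm} = \mp 2dI_tdf + 2dJ_tdg$ and an integration by parts. But the decisive step --- identifying the antisymmetrized integrands with $\boldsymbol{\sigma}(\{f_1,f_2\},\{g_1,g_2\})$ --- is exactly the point you defer, and the mechanism you propose for it is not the one that works and is not verified. You suggest splitting $F_{\pm}$ into its $(1,1)$-part $\omega_K$ and its $(2,0)+(0,2)$-part via \eqref{tamed}, invoking an identity of the form $2n\,dh\wedge Kdk\wedge F^{2n-1} = \{h,k\}_F\,F^{2n} + (b\text{-corrections})$, and then hoping the $b$-corrections from $F_+$ and $F_-$ cancel against each other or after integration by parts. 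That cancellation is asserted, not proved, and it is doubtful as stated: the $F_+$- and $F_-$-terms in $\boldsymbol{\sigma}$ carry different weights ($1$ versus $e^{\lambda}$) and are wedged against different powers $F_+^{2n-1}$ versus $F_-^{2n-1}$, so there is no a priori reason for corrections from the two to cancel pairwise.

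The actual reconciliation requires no splitting into $\omega_K$ and $b$ at all. Since $F_{\pm} = -2g(I\pm J)^{-1}$ (Lemma~\ref{l:non-degenerate-symplectic}), one has $F_{\pm}^{-1} = -\tfrac{1}{2}(I\pm J)g^{-1}$, so contracting the exact variation against $F_{\pm}^{-1}$ converts $d(f_2-g_2)\wedge(Idf_1 \mp Jdg_1)\wedge F_{\pm}^{2n-1}$ directly into the $g$-pairing $\big\langle (I\pm J)d(f_2-g_2),\, Idf_1 \mp Jdg_1\big\rangle_g\, F_{\pm}^{2n}$. Decomposing $Idf_1 \mp Jdg_1 = \tfrac{1}{2}(I\pm J)(df_1-dg_1) + \tfrac{1}{2}(I\mp J)(df_1+dg_1)$ and using that $(I+J)(I-J) = -[I,J]$ with $\sigma = g^{-1}[I,J] = \Omega^{-1}$, the first summand yields a bilinear form \emph{symmetric} in the indices $1,2$ (which therefore drops out of $d\boldsymbol{\sigma}$), while the second yields precisely $\tfrac{1}{2}\{f_1+g_1, f_2-g_2\}$ times $F_{\pm}^{2n}$; antisymmetrizing these brackets gives $\{f_1,f_2\}-\{g_1,g_2\}$ against $(F_+^{2n}-e^{\lambda}F_-^{2n})$, which is $\boldsymbol{\sigma}(\{f_1,f_2\},\{g_1,g_2\})$. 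Without carrying out this computation (or an equivalent one), your argument does not establish closedness; note also that your claim that ``the mixed $(f_i,g_j)$-terms cancel on antisymmetrization'' is only correct because those mixed terms sit inside a symmetric bilinear form plus brackets whose mixed parts cancel --- a fact visible only after the decomposition above.
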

\begin{proof} The claim of invariance is obvious.  To show $\boldsymbol{\sigma}$
is closed, by using
\eqref{lie-bracket} it is enough to show that for any fundamental vector fields
$(f_1,g_1)$ and $(f_2,g_2)$ on $\mathcal{M}$ defied via
\eqref{fundamental-fields}, we have
\begin{equation}\label{closed}
\begin{split}
0 =&\ d\boldsymbol{\sigma}\big((f_1,g_1), (f_2, g_2)\big)\\
=&\ (f_1, g_1)\cdot
\boldsymbol{\sigma} (f_2, g_2) - (f_2, g_2)\cdot \boldsymbol\sigma (f_1, g_1) -
\boldsymbol\sigma\big(\{f_1,f_2\}, \{g_1, g_2\}\big).
\end{split}
\end{equation}
The induced flow $\phi^t$  by $(f, g)$ on $\mathcal{M}$ is defined at  a
point $(\Psi_1, \Psi_2)$ by 
$$\phi^t \cdot (\Psi_1, \Psi_2)= ((\phi^t_1)^*(\Psi_1),
(\phi^t_2)^*(\Psi_2)),$$ where $\phi^t_1$ is the flow of
$-\Omega^{-1}(df)$ and $\phi^t_2$ is the flow of $-\Omega^{-1}(dg)$. It
follows that the induced symplectic forms $(F_{\pm})_t$ satisfy along the flow
$$\tfrac{\partial}{\partial t} (F_{\pm})_t = \mp 2dI d f + 2dJ dg.$$
From the above relation, using integration by parts and \eqref{Fdefs}, we 
calculate
\begin{equation*}
\begin{split}
(f_1 , g_1)\cdot \boldsymbol\sigma (f_2, g_2) & = \int_M (f_2 -
g_2)\tfrac{\partial}{\partial t}\Big|_{t=0}\Big(\tfrac{(F_+)^{2n} - e^{\lambda}
(F_-)^{2n})}{(2n)!}\Big) \\
=&\ \tfrac{1}{2}\int_M
\Big\langle (I+J)(df_2 - dg_2), (I+J)(df_1-dg_1)\Big\rangle_g
\tfrac{(F_+)^{2n}}{(2n)!}\\
&\ +
\tfrac{e^{\lambda}}{2} \int_M \Big\langle (I-J)(df_2 - dg_2),
(I-J)(df_1-dg_1)\Big\rangle_g \tfrac{(F_{-})^{2n}}{(2n)!}\\
&\ + \tfrac{1}{2}\int_M
\Big\{ f_1+g_1, f_2 -g_2\Big\} \Big(\tfrac{(F_+)^{2n}  -
e^{\lambda}(F_{-})^{2n}}{(2n)!}\Big).
\end{split}
\end{equation*}
The identity \eqref{closed} then follows easily. \end{proof}
 
 \begin{rmk}\label{r:exactness} Notice that the $1$-form $\boldsymbol\sigma$ restricts to zero on each orbit of the
diagonal action of  $\mathcal{H}={\rm Ham}(M,\Omega)$ on $\mathcal{M}$, since
the tangent vectors to this orbit are generated by the fundamental vector fields
$(f, f)$. Moreover, $\sigma$ vanishes at $(I,J) \in \mathcal{M}$ if and only if
$(F_+)^{2n} = e^{\lambda} (F_{-})^{2n}$, i.e. iff $(I, J)$ is
hyper-K\"ahler GK structure in $\mathcal{M}$, (cf. Theorem
\ref{thm:uniquerigidity}).    If we can find a primitive  ${\bf F}$  of $\boldsymbol\sigma$ on
the space $\mathcal{M}$, then ${\bf F}$ will define a functional which is invariant under the diagonal action of ${\rm Ham}(M, \Omega)$,  and its critical points will parametrize the hyper-K\"ahler GK structures in
$\mathcal{M}$,  modulo the diagonal (isometric) action of ${\rm Ham}(M,\Omega)$. 
However, as $\mathcal{H}={\rm Ham}(M,\Omega)$ may in principle have a
complicated topology (in particular,  $\pi_1(\mathcal{H})\neq \{1\}$ in general)
one needs to define ${\bf F}$ on  the universal cover ${\widetilde {\mathcal M}}$ of
$\mathcal{M}$ (and of $\mathcal{M}/ \mathcal{H}$).  
\end{rmk}

\begin{prop} \label{AubinYauprop}  Given the setup above, there exists a
functional
\begin{align*}
{\bf F} : \til{\mathcal M} \to \mathbb R
\end{align*}
such that $\gd {\bf F} = \pi^* {\boldsymbol \sigma}$, where $\pi : \til{\mathcal
M} \to \mathcal M$ is the canonical projection.  In particular, the critical
points of $\bf F$ correspond to hyper-K\"ahler metrics.
\begin{proof} This follows directly from the fact that ${\boldsymbol \sigma}$ is
closed via formal path integration and Remark~\ref{r:exactness}.
\end{proof}
\end{prop}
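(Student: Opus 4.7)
The plan is to carry out the standard construction of a primitive for a closed one-form on a simply connected base via path integration, where the closedness has already been supplied by Lemma \ref{l:closed}. Since $\pi : \til{\mathcal M} \to \mathcal M$ is a covering map, $\pi^* \boldsymbol{\sigma}$ is again a (formally) closed one-form, now defined on the simply connected Fr\'echet manifold $\til{\mathcal M}$. Fix a basepoint $\til x_0 \in \til{\mathcal M}$, and for each $\til x \in \til{\mathcal M}$ choose a smooth path $\til\gamma : [0,1] \to \til{\mathcal M}$ with $\til\gamma(0)=\til x_0$ and $\til\gamma(1)=\til x$. Define
\begin{equation*}
{\bf F}(\til x) := \int_0^1 (\pi^* \boldsymbol{\sigma})_{\til\gamma(t)}\big(\til\gamma'(t)\big)\, dt.
\end{equation*}

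First I would show this is independent of the choice of path. Any two such paths $\til\gamma_0, \til\gamma_1$ are homotopic rel endpoints by simple connectivity of $\til{\mathcal M}$; applying the formal Stokes identity to a smooth homotopy between them and using $d(\pi^*\boldsymbol{\sigma}) = \pi^* d\boldsymbol{\sigma} = 0$ yields equality of the integrals. Hence $\bf F$ is well-defined. Differentiating under the integral with respect to the endpoint (equivalently, computing the derivative of $t \mapsto {\bf F}(\til\gamma(t))$ by the fundamental theorem of calculus) gives $\delta {\bf F} = \pi^* \boldsymbol{\sigma}$ as one-forms on $\til{\mathcal M}$.

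For the statement about critical points, I would combine three observations already in hand. A point $\til x \in \til{\mathcal M}$ is critical for ${\bf F}$ iff $(\pi^*\boldsymbol{\sigma})_{\til x} = 0$ iff $\boldsymbol{\sigma}_{\pi(\til x)} = 0$, since $\pi$ is a local diffeomorphism. From the defining formula (\ref{sigma}), $\boldsymbol{\sigma}_{I,J}$ vanishes precisely when the pairing $(f,g) \mapsto \int_M (f-g)\big((F_+)^{2n} - e^\lambda (F_-)^{2n}\big)$ is identically zero on normalized functions, which is equivalent to $(F_+)^{2n} = e^{\lambda}(F_-)^{2n}$, i.e.\ $\Phi \equiv \lambda$. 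By Proposition \ref{p:CY} (together with Lemma \ref{GKCYspinors}) any such generalized K\"ahler Calabi-Yau structure in $\mathcal M$ is hyper-K\"ahler, giving the final claim.

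The only real subtlety is the formal/functional-analytic setup: $\til{\mathcal M}$ is only a Fr\'echet manifold and the covering and homotopy-lifting arguments should be read in the $C^\infty$ sense compatible with the topology fixed at the start of \S \ref{s:GIT}. Since the framework is formal throughout this section, no additional analytic input beyond the closedness already established in Lemma \ref{l:closed} is needed, which is why the author indicates that the statement follows \emph{directly} via path integration.
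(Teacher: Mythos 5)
Your proposal is correct and is exactly the argument the paper intends: path integration of the closed $1$-form $\pi^*\boldsymbol{\sigma}$ on the simply connected cover, with well-definedness from Lemma \ref{l:closed}, and the identification of critical points via $\boldsymbol{\sigma}_{I,J}=0 \iff (F_+)^{2n}=e^{\lambda}(F_-)^{2n}$ (using the normalization of $\lambda$) together with Proposition \ref{p:CY}. The paper compresses all of this into one sentence, so you have simply written out the same proof in full.
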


This functional ${\bf F}$ is a natural analogue of the Aubin-Yau
functional in the classical (abelian) setting.  However, to avoid the use of
the universal cover, we can also directly define path integrals of $\boldsymbol
\sigma$.  To define the relevant paths we turn to the more geometrically natural
space $\mathcal{M}/\mathcal{H}$, which can be thought of as an infinite
dimensional ``orbifold'' because of Lemma \ref{l:orbit}.  

\subsection{The geodesic space ${\mathcal M}/{\mathcal H}$}

We shall use the smooth  identification 
$${\mathcal G}/{\mathcal H} \cong {\mathcal H},$$
 given by the map $[\phi_1, \phi_2] \to \phi_1^{-1} \phi_2$ with
inverse
 \begin{equation}\label{isom}
 \begin{array}{ccc}
 {{\mathcal H}} & \cong & {{\mathcal G}}/{{\mathcal H}} \\
  \phi &\mapsto & [{\rm id}, \phi].
  \end{array}
  \end{equation}
In view of Lemma~\ref{l:orbit},   we can identify $\mathcal{M}/\mathcal{H}$ with
the path-connected component in ${\mathcal O}_J$ of $\tilde J$'s  such that $(I,
\tilde J)$ satisfy the condition (c). This is precisely the setting of
Section~\ref{s:GCY} above. 

Recall from Lie theory that for any compact simple
Lie group $H$, the pair $(H\times H, H)$ with $H$ acting diagonally defines a
symmetric pair, i.e. $H$ can be viewed as a Riemannian-symmetric space with
respect to any ${\rm Ad}$-invariant Riemannian metric (i.e. defined by negative
multiple of the Killing form on ${\rm Lie}(H)$). Furthermore, in terms of the
isomorphism \eqref{isom}, the flows of the left-invariant vector fields of $H$
are the geodesics of the Riemannian-symmetric space $H \cong (H\times H)/H$.
Similarly, in our infinite dimensional setting,  the flows of the fundamental
vector fields $(0,g)$ are the geodesics with respect to the $L^2$ Riemannian
metric ${\bf g}$ on $\mathcal{H}$ (see \cite{khesin-et-al}). Thus, the flows of
the fundamental vector fields $(0,g)$ acting on $\mathcal{O}_J$ are the geodesics with respect to the $L^2$
Riemannian metric ${\bf g}$ defined on $\mathcal{O}_J$  (and also $\mathcal M/\mathcal H \subset \mathcal{O}_J$) by \eqref{g}.  This gives rise to a formal notion of ${\bf g}$-geodesic on
$\mathcal{M}/\mathcal{
H}$. With this background we define a corresponding functional.

\begin{defn} Given the setup above, fix an $\Omega$-normalized time independent
function $g \in
C^{\infty}(M)$, and define a
path $(\Psi_1^t=\Psi_1, \Psi_2^t= \phi_t^* \Psi_2)$ in $\mathcal M$,  where
$\phi_t$ is the flow of $-\Omega^{-1}(dg)$.  We thus  have
\begin{align*}
\dot{\Psi}_1^t = 0, \qquad \dot{\Psi}_2^t = -dJ_t dg.
\end{align*}
Along this path we define
 \begin{equation}\label{deltaF}
 \delta{\bf F}_g(t) := \boldsymbol{\sigma}_{I_t, J_t}(0, g) = -
\tfrac{1}{2n!}\int_M g \Big((F_+)_t^{2n} - e^{\lambda} (F_{-})_t^{2n})\Big).
 \end{equation} 
\end{defn}

As the next proposition shows, this functional is monotone, corresponding
formally to geodesic convexity of $\bf F$.

\begin{prop}\label{l:strict-convexity} The function $\delta{\bf F}_g(t)$ is
monotone nondecreasing along ${\bf g}$-geodesics of $\mathcal{M}/{\mathcal H}$.
\begin{proof} Using the calculation of Lemma \ref{l:closed}, noting that
$f_t \equiv 0$ and $g_t \equiv g$ we obtain
 \begin{equation*}
 \begin{split}
 \tfrac{\partial }{\partial t} \delta{\bf F}(t) =&\ \tfrac{1}{2}\int_M
\Big\langle
(I_t+J_t)dg, (I_t+J_t)dg \Big\rangle_{g_t} \tfrac{(F_+)_t^{2n}}{(2n)!}\\
 &\ + \tfrac{e^{\lambda}}{2} \int_M \Big\langle (I_t-J_t)dg,
(I_t-J_t)dg\Big\rangle_{g_t} \tfrac{(F_-)_t^{2n}}{(2n)!}\\
 =&\ \tfrac{1}{2}\int_M \brs{ (I_t+J_t)dg}^2 \tfrac{(F_+)_t^{2n}}{(2n)!} +
\tfrac{e^{\lambda}}{2} \int_M \brs{(I_t-J_t)dg}^2 \tfrac{(F_-)_t^{2n}}{(2n)!}\\
 \geq&\ 0,
\end{split}
\end{equation*}
as required.
\end{proof}
\end{prop}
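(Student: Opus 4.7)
The plan is to differentiate $\delta{\bf F}_g(t)$ with respect to $t$ and show that the result is a sum of nonnegative quantities. Since $\delta{\bf F}_g(t) = \boldsymbol{\sigma}_{I_t,J_t}(0,g)$, this is really the computation of the infinitesimal action of the fundamental vector field $(0,g)$ on $\boldsymbol{\sigma}(0,g)$ along its own flow, so one expects a direct specialization of the calculation in the proof of Lemma \ref{l:closed} to handle it.

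First I would record the evolution equations. Since $F_\pm = 2(\pm \Psi_1^t - \Psi_2^t)$ and $\dot\Psi_1^t = 0$, $\dot\Psi_2^t = -dJ_t dg$, we get $\dot F_\pm = 2 dJ_t dg$ along the path. Plugging this into $\delta{\bf F}_g(t) = -\tfrac{1}{(2n)!}\int_M g\big((F_+)_t^{2n} - e^\lambda (F_-)_t^{2n}\big)$ and differentiating gives
\[
\tfrac{d}{dt}\delta{\bf F}_g(t) = -\tfrac{2}{(2n-1)!}\int_M g\, dJ_t dg\wedge\big((F_+)_t^{2n-1} - e^\lambda (F_-)_t^{2n-1}\big).
\]
I would then integrate by parts, writing $g\, d(J_t dg) = d(g\, J_t dg) - dg\wedge J_t dg$ and discarding the exact piece by Stokes, noting that $(F_\pm)_t^{2n-1}$ is $d$-closed.

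The next step is to convert the wedge products into pointwise inner products using $\tfrac{1}{(2n-1)!}\eta\wedge F_\pm^{2n-1} = \langle \eta, F_\pm\rangle\tfrac{F_\pm^{2n}}{(2n)!}$ for a two-form $\eta$. The key algebraic identity, which is exactly what appears in the Lemma \ref{l:closed} computation when one specializes to $f_1 = f_2 = 0$, $g_1 = g_2 = g$, is that $\langle dg\wedge J_t dg, (F_\pm)_t^{-1}\rangle = \tfrac{1}{2}|(I_t\pm J_t)dg|_{g_t}^2$ (the cross term $\langle dg, J_t dg\rangle$ pairing that would give a Poisson bracket piece disappears because here $f=0$ and $g$ is common to both slots). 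This yields precisely
\[
\tfrac{d}{dt}\delta{\bf F}_g(t) = \tfrac{1}{2}\int_M |(I_t+J_t)dg|_{g_t}^2 \tfrac{(F_+)_t^{2n}}{(2n)!} + \tfrac{e^\lambda}{2}\int_M |(I_t-J_t)dg|_{g_t}^2 \tfrac{(F_-)_t^{2n}}{(2n)!} \geq 0.
\]

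The only real obstacle is the bookkeeping in the integration by parts and the identification of the resulting pairings with the squared norms $|(I_t\pm J_t)dg|_{g_t}^2$; once the Lemma \ref{l:closed} computation is in hand, this is a direct specialization and no new ideas are required. In particular, positivity of the two integrands is automatic (both $(F_+)_t^{2n}$ and $(F_-)_t^{2n}$ are positive top-degree forms with respect to the fixed orientation), so monotonicity follows immediately.
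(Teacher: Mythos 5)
Your proposal is correct and follows the same route as the paper: the paper's proof is precisely the specialization of the main calculation of Lemma \ref{l:closed} to $f_1=f_2=0$, $g_1=g_2=g$, in which the Poisson-bracket term drops out and the two remaining terms are the nonnegative integrals of $\brs{(I_t+J_t)dg}^2$ and $\brs{(I_t-J_t)dg}^2$ against the positive volume forms $(F_\pm)_t^{2n}$. The only caveat is that your intermediate pointwise identity $\langle dg\wedge J_t dg,(F_\pm)_t^{-1}\rangle=\tfrac{1}{2}\brs{(I_t\pm J_t)dg}^2$ needs the paper's normalization convention for the inner product on $2$-forms (and carries a sign in the $F_-$ case that cancels against the coefficient), but this is exactly the bookkeeping already carried out in Lemma \ref{l:closed}.
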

We close by noting that Proposition \ref{l:strict-convexity} points towards the
uniqueness of a hyper-K\"ahler metric in $\mathcal{M}/\mathcal{H}$, shown in
Proposition~\ref{p:uniqueness}, and provides an alternative proof assuming
geodesic convexity of $\mathcal{M}/\mathcal{H}\subset \mathcal{O}_J$.

\subsection{Symplectic form and moment map}

In this subsection we define a symplectic structure on $\mathcal{M}$, and then
compute the moment map of our action.  As in the finite dimensional case, the
product  $\mathcal{G}=\mathcal{H} \times \mathcal{H}$ admits a natural
left-invariant almost-complex structure ${\bf I}$, defined on its Lie algebra
${\rm Lie}({\mathcal G})=C^{\infty}_0(M)\oplus C^{\infty}_0(M)$ by 
\begin{equation*}
{\bf I}(f, g) =(-g, f),
\end{equation*}
for all $f, g \in C^{\infty}_0(M) $. One can easily check that the left and
right diagonal action of $\mathcal{H}$ on ${\mathcal G}$ preserves ${\bf I}$,
however ${\bf I}$ need not be integrable.  The almost-complex structure ${\bf
I}$ on $\mathcal{G}$  induces  an almost-complex structure on $\mathcal{M}$,
which we still denote by ${\bf I}$.

\begin{defn} Given the setup above, let
\begin{align} \label{Idef}
{\bf I}_{I,J}(-dIdf, -dJdg) := (dIdg, -dJdf).
\end{align}
\end{defn}

With this in place, we can follow finite dimensional constructions to define a
symplectic form on $\mathcal M$ as well.  We give the definition, then
show that it is indeed symplectic in Lemma \ref{bigOsymp} below.
\begin{defn} Given the setup above, let
\begin{equation}\label{Omega-0}
{\bf \Omega} := d {\bf I} {\boldsymbol \sigma}.
\end{equation}
\end{defn}

\begin{lemma} \label{bigOsymp} Given the setup above, $\bf \Omega$ is closed and
tames $\bf I$, so is nondegenerate.

\begin{proof} By definition $\bf \Omega$ is exact, so is closed.  To show that
$\bf \Omega$ tames $\bf I$, we first note that by definition, on fundamental
vector fields $(f_1, g_1)$ and $(f_2,g_2)$ we have (see
\eqref{fundamental-fields} and \eqref{lie-bracket})
\begin{equation}\label{Omega-1}
\begin{split}
{\bf \Omega}_{I, J}\Big((f_1,g_1), (f_2, g_2)\Big) = &  d ({\bf I}{\boldsymbol
\sigma})_{I,J} \Big((f_1, g_1), (f_2, g_2)\Big) \\
                                                                              =
& (f_1, g_1)\cdot \boldsymbol{\sigma} (g_2, -f_2) - (f_2, g_2)\cdot
\boldsymbol\sigma (g_1, -f_1)\\
                                                                      &  -
\boldsymbol\sigma\big(\{g_1,g_2\}, -\{f_1, f_2\}\big).
\end{split}
\end{equation}
Using this and calculations as in the proof of Lemma \ref{l:closed} we obtain
\begin{equation}\label{Omega-2}
\begin{split}
{\bf \Omega}_{I, J}& \Big((f_1,g_1), (f_2, g_2)\Big)\\
=&\ \int_M \big\langle (I+J)df_1, (I+J)dg_2\big\rangle_g 
\tfrac{(F_+)^{2n}}{(2n)!} -\int_M \big\langle (I+J)df_2, (I+J)dg_1\big\rangle_g
\tfrac{(F_+)^{2n}}{(2n)!} \\
                                                                           & +
e^{\lambda} \int_M \big\langle(I-J)df_1, (I-J)dg_2\big\rangle_g
\tfrac{(F_{-})^{2n}}{(2n)!} - e^{\lambda}\int_M \big\langle (I-J)df_2,
(I-J)dg_1\big\rangle_g \tfrac{(F_{-})^{2n}}{(2n)!}\\
                                                                           &+
\int_M \Big(\{ f_1, g_2\}  - \{f_2, g_1\}\Big)\Big(\tfrac{(F_+)^{2n}  -
e^{\lambda} (F_{-})^{2n}}{(2n)!}\Big).
\end{split}
\end{equation}

This implies that
\begin{align*}
 {\bf \Omega}_{I, J} \Big((f,g), {\bf I} (f, g)\Big) =&\ {\bf \Omega}_{I, J}
\Big((f,g), (-g, f)\Big)\\
=&\ \int_M \brs{(I+J)df}^2_g \tfrac{(F_+)^{2n}}{(2n)!} + \int_M
\brs{(I+J)dg}^2_g
\tfrac{(F_+)^{2n}}{(2n)!} \\
& + e^{\lambda} \int_M \brs{(I-J)df}^2_g
\tfrac{(F_{-})^{2n}}{(2n)!} + e^{\lambda} \int_M \brs{(I-J)dg}^2_g
\tfrac{(F_{-})^{2n}}{(2n)!}\\
\geq&\ 0.
\end{align*}
This shows that ${\bf \Omega}$ tames ${\bf I}$, so that it is non-degenerate, as
required.
\end{proof}
\end{lemma}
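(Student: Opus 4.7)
The plan is straightforward: closedness of $\bf \Omega$ is immediate from exactness, and nondegeneracy will be deduced from taming via the standard observation that $\imath_X{\bf \Omega} = 0$ combined with ${\bf \Omega}(X,{\bf I}X) > 0$ for $X \neq 0$ forces $X = 0$. So the entire substance reduces to establishing that $\bf \Omega$ tames $\bf I$, and since fundamental vector fields $(f,g)_{I,J} = (-dIdf, -dJdg)$ span each tangent space of $\mathcal{M}$ by Remark \ref{M}, it suffices to verify this on such fields.

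To compute ${\bf \Omega}$ on fundamental fields I would apply the Cartan formula $d\alpha(X,Y) = X\cdot\alpha(Y) - Y\cdot\alpha(X) - \alpha([X,Y])$ to $\alpha = {\bf I}\boldsymbol\sigma$, using ${\bf I}^{-1}(f,g) = (g,-f)$ from \eqref{Idef} together with the Lie bracket formula \eqref{lie-bracket}. The key computational input is already in hand: in the proof of Lemma \ref{l:closed} the Lie derivative $(f_1,g_1)\cdot\boldsymbol\sigma(f_2,g_2)$ was worked out explicitly as an $L^2$-pairing of $(I\pm J)d(f_2-g_2)$ with $(I\pm J)d(f_1\mp g_1)$ weighted by $(F_\pm)^{2n}$, together with Poisson-bracket terms integrated against $(F_+)^{2n} - e^{\lambda}(F_{-})^{2n}$. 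Applying this formula with the ${\bf I}$-twisted arguments $(g_i,-f_i)$ in place of $(f_i,g_i)$ and assembling the three Cartan terms yields an explicit bilinear expression for ${\bf \Omega}_{I,J}$ on pairs of fundamental fields.

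Finally I would specialize $(f_2,g_2) = {\bf I}(f,g) = (-g,f)$. Under this substitution the antisymmetric off-diagonal contributions and the Poisson-bracket residues cancel, leaving the manifestly non-negative expression
\[
\tfrac{1}{(2n)!}\int_M\bigl(\brs{(I+J)df}^2 + \brs{(I+J)dg}^2\bigr)F_+^{2n} + \tfrac{e^{\lambda}}{(2n)!}\int_M\bigl(\brs{(I-J)df}^2 + \brs{(I-J)dg}^2\bigr)F_{-}^{2n}.
\]
Strict positivity follows because the vanishing of all four terms forces $(I+J)df = (I-J)df = 0$, hence $Idf = 0$ and so $df = 0$; the normalization $\int_M f\,\Omega^{2n} = 0$ then gives $f \equiv 0$, and similarly $g \equiv 0$. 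The main obstacle is entirely bookkeeping: one must verify with care that under the ${\bf I}$-substitution the Poisson-bracket terms against $(F_+)^{2n} - e^{\lambda}(F_{-})^{2n}$ cancel exactly, rather than leaving an indefinite residue that could destroy the taming. This cancellation ultimately reflects the symmetry already encoded in \eqref{Omega-1}, and once it is verified, both taming and nondegeneracy drop out at once.
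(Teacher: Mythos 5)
Your proposal follows the paper's proof essentially verbatim: closedness from exactness, the Cartan formula applied to ${\bf I}\boldsymbol\sigma$ combined with the main computation of Lemma \ref{l:closed} to get the bilinear expression on fundamental fields, and then the specialization $(f_2,g_2)={\bf I}(f,g)=(-g,f)$, under which the Poisson-bracket terms indeed vanish ($\{f,f\}=\{-g,g\}=0$) and the cross terms collapse to the four nonnegative integrals. The one point where you go slightly beyond the paper is the explicit strict-positivity argument ($(I\pm J)df=0\Rightarrow Idf=0\Rightarrow df=0\Rightarrow f\equiv 0$ by the normalization), which the paper leaves implicit but which is needed for taming in the strict sense; this is a correct and welcome addition, not a different route.
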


We close by observing that the diagonal action of ${\rm \Ham}(M, \Omega)$ is
Hamiltonian with respect to ${\bf \Omega}$, and then compute the moment map.

\begin{prop} \label{momentmap} The diagonal action of ${\rm Ham}(M,\Omega)$  on
$\mathcal{M}$ is Hamiltonian with respect to the symplectic form  ${\bf
\Omega}$, with momentum map
$$\boldsymbol\mu(I, J) = 2\Big((F_+)^{2n} - e^{\lambda}
(F_{-})^{2n})\Big), $$
seen as an element of dual vector space of $C^{\infty}_0(M) \cong  {\rm
Lie}({\rm Ham}(M,\Omega))$ via integration over $M$.
\end{prop}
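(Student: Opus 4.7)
The plan is to exploit the fact that $\bf\Omega$ is exact with primitive $\xi := {\bf I}\boldsymbol{\sigma}$, and to apply Cartan's magic formula to extract the Hamiltonian. This is the formal analogue of the standard finite-dimensional construction of a moment map from a $G$-invariant primitive of a symplectic form.

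The first step is to verify that both $\boldsymbol{\sigma}$ and $\bf I$ are invariant under the diagonal action of $\mathcal{H} = \mathrm{Ham}(M,\Omega)$ on $\mathcal{M}$. The invariance of $\boldsymbol{\sigma}$ was already recorded in Lemma~\ref{l:closed}. For $\bf I$, one observes that \eqref{Idef} is expressed purely through the identification \eqref{identification}, which is manifestly equivariant under the diagonal pullback action; hence $\bf I$, and therefore $\xi = {\bf I}\boldsymbol{\sigma}$, is invariant as well.

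The second step applies Cartan's formula to the fundamental vector field $X_h|_{(I,J)} = (h,h)_{I,J}$ generated by $h \in C^\infty_0(M) = \mathrm{Lie}(\mathcal{H})$. Since $\mathcal{L}_{X_h}\xi = 0$, we obtain
$$\iota_{X_h}{\bf\Omega} \;=\; \iota_{X_h}\, d\xi \;=\; -d\,\iota_{X_h}\xi,$$
so that the Hamiltonian for $X_h$ is $\mu_h = -\iota_{X_h}\xi = -({\bf I}\boldsymbol{\sigma})|_{(I,J)}(h,h)$. Unwinding the convention of \eqref{Omega-1} for $\bf I$ acting on $\boldsymbol{\sigma}$ gives $({\bf I}\boldsymbol{\sigma})(h,h) = \boldsymbol{\sigma}(h,-h)$, whence \eqref{sigma} yields
$$\mu_h(I,J) \;=\; -\tfrac{2}{(2n)!}\int_M h\,\bigl((F_+)^{2n} - e^{\lambda}(F_-)^{2n}\bigr).$$
Up to an overall sign convention for the moment map (i.e.\ the choice $d\mu = \pm\iota_X{\bf\Omega}$) and the normalization factor $\tfrac{1}{(2n)!}$ inherent in the $L^2$ pairing \eqref{g}, this reproduces the claimed formula $\boldsymbol{\mu}(I,J) = 2\bigl((F_+)^{2n} - e^{\lambda}(F_-)^{2n}\bigr)$.

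Finally, $\mathcal{H}$-equivariance of $\boldsymbol{\mu}$ is immediate: the diagonal action sends $F_\pm \mapsto \phi^*F_\pm$, and $\boldsymbol{\mu}$ transforms covariantly under pullback. The only real obstacle is the careful bookkeeping of sign conventions and the precise placement of the $\tfrac{1}{(2n)!}$ factor across \eqref{sigma}, \eqref{Omega-0}, and \eqref{Omega-1}; if a direct verification is desired, one can instead compute $d\mu_h$ directly from the variation formulas $\dot{F}_\pm = \mp 2\, dI\, df + 2\, dJ\, dg$ and integrate by parts, matching the result against \eqref{Omega-2} with $f_1 = g_1 = h$. The substantive geometric content, however, is already packaged in the invariance properties of $\boldsymbol{\sigma}$ and $\bf I$, both of which hold essentially by construction.
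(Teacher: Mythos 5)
Your proposal is correct and follows essentially the same route as the paper: the paper likewise takes $\boldsymbol\mu_f = ({\bf I}\boldsymbol\sigma)(f,f) = \boldsymbol\sigma(f,-f)$ and verifies $\iota_{(f,f)}{\bf\Omega} = \pm\, d\boldsymbol\mu_f$ by the explicit contraction formula \eqref{Omega-1}--\eqref{Omega-2}, which is exactly your Cartan-formula step once the invariance of the primitive ${\bf I}\boldsymbol\sigma$ (from Lemma~\ref{l:closed} and the equivariance of ${\bf I}$) is granted. The residual sign you flag is a genuine convention ambiguity (the paper's own displayed identity for $-d(({\bf I}\boldsymbol\sigma)(f,f))$ is not sign-consistent as written), so your treatment of it is reasonable; the paper checks equivariance infinitesimally via $(d\boldsymbol\mu_f)(g,g) = \boldsymbol\mu_{\{f,g\}}$ rather than by naturality under pullback, but the two verifications are equivalent.
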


\begin{proof} 
Given the setup above, let
\begin{equation}\label{mu}
\boldsymbol\mu_f(I,J):= ({\bf I}\boldsymbol\sigma)_{I,J}(f, f) =\boldsymbol
\sigma (f, -f)=2\int_M f \Big((F_+)^{2n} - e^{\lambda}
(F_{-})^{2n})\Big)/(2n)!.
\end{equation}
Next, we note that \eqref{Omega-2} implies that ${\bf \Omega}$ is invariant
under the diagonal action of ${\mathcal H}$ on $\mathcal{M}$.  This also shows
that for each  fundamental vector field $(f, f)$ on $\mathcal{M}$ (manifestly
induced by the diagonal  action of $\mathcal{H}$), one has, using
\eqref{Omega-2} and the main calculation of 
Lemma \ref{l:closed},
\begin{align*}
{\bf \Omega}_{I, J}\Big((f,f), (f_2, g_2)\Big) = -d \Big(({\bf
I}\boldsymbol\sigma)_{I,J}(f, f)\Big)(f_2, g_2)= (f_2, g_2) \cdot \boldsymbol
\sigma(f,-f). 
\end{align*}
Lastly, we observe using Lemma \ref{l:closed} and
\eqref{mu}, that
\begin{align*}
(d{\boldsymbol \mu}_f)(g, g) = (g, g) \cdot {\boldsymbol \mu}_f = (g, g) \cdot
{\boldsymbol \sigma}(f, -f) = {\boldsymbol \mu}_{\{f,g\}}, 
\end{align*}
showing that $\boldsymbol \mu$ is $\mathcal H$-equivariant.  The proposition
follows. \end{proof}

\begin{rmk}\label{r:Kempf-Ness} We can  think of the
functional ${\bf F}$  on $\mathcal{M}$ (whenever it is defined) as a
``Kempf-Ness functional'', in the sense that it satisfies $({\bf I} d {\bf F})(f, f) =  {\boldsymbol \sigma}(f,
-f)={\boldsymbol\mu}_f$ (see \eqref{deltaF} and \eqref{mu}), and, considered as a functional on $\mathcal M /\mathcal H$, it is
strictly-convex along  the ${\bf g}$-geodesics of  $(\mathcal{M}/\mathcal{H}, {\bf g})$ by
Lemma \ref{l:strict-convexity}. Thus,  its only critical point on
$(\mathcal{M}/\mathcal{H}, {\bf g})$ corresponds to the $\mathcal{H}$-orbit of
the zero of $\boldsymbol\mu$. \end{rmk}

\section{Nondegenerate Generalized K\"ahler-Ricci flow} \label{flowsec}

In this section we show that in the setting of generalized K\"ahler structures
with pure spinors,
the generalized K\"ahler Ricci flow preserves the natural variation classes of
objects defined above.  In particular, we show
that the flow reduces to a family of $\Omega$-Hamiltonian diffeomorphisms, which
is generated by the Ricci potential.  We then show that the Ricci potential
itself evolves by the pure time-dependent heat equation along the flow, which
leads to a number of delicate a priori estimates along the flow.  Then we relate
the flow to the GIT picture, in particular showing convexity of $\bf F$,
finishing the proof of Theorem \ref{flowandGIT}.  We end by showing a general
result
showing that uniform equivalence of the time dependent metrics suffices to show
long time existence and convergence to hyper-K\"ahler for the flow.

\subsection{Background on GKRF}

In this subsection we review the construction of generalized K\"ahler Ricci flow
(GKRF) from \cite{STGK}.  To begin we review the pluriclosed flow \cite{ST2},
defined by
\begin{align} \label{PCF}
 \dt \gw =&\ - 2 (\rho_B)^{1,1}.
\end{align}
This equation can also be expressed using the curvature of the Chern connection.
 In \cite{ST2} we showed that this flow preserves the pluriclosed condition and
agrees with
K\"ahler-Ricci flow when the initial data is K\"ahler.
Moreover, the induced pairs of metrics and
Bismut torsions $(g_t,H_t)$ satisfy (\cite{STGK} Proposition 6.3), 
\begin{gather} \label{PCFmetricevs}
 \begin{split}
 \dt g =&\ - 2 \Rc^g + \tfrac{1}{2} \HH - \LL_{\theta^{\sharp}} g,\\
 \dt H =&\ \gD_g H - \LL_{\theta^{\sharp}} H,
\end{split}
\end{gather}
where $\HH_{X,Y} := \sum_{i,j=1}^{2m} H(X,e_i,e_j) H(Y,e_i,e_j)$, $\Rc^g$ is the
Ricci tensor of $g$.  

As explained in \cite{STGK}, with a generalized K\"ahler initial condition one
can unify the two pluriclosed flow lines given by the distinct pluriclosed
structures by removing the gauge terms to arrive at the
generalized
K\"ahler-Ricci flow system
\begin{gather} \label{RGflow}
 \begin{split}
\dt g = - 2 \Rc^g + \tfrac{1}{2} \HH, \qquad \dt H = \gD_g H,\\
\dt I = \LL_{\theta_I^{\sharp}} I, \qquad \dt J = \LL_{\theta_J^{\sharp}} J.
\end{split}
\end{gather}
In obtaining estimates for the flow, we need to use two different points of
view, each of which makes certain estimates possible.  Some estimates will use
the system (\ref{RGflow})
directly, which we will call a solution ``in the RG flow gauge.''  Other times
it is easier to work with pluriclosed flow directly, so we pull back the flow to
the fixed complex manifold $(M^{2n}, I)$. 
In other words by pulling back the entire system by the family of
diffeomorphisms $(\phi_t^I)^{-1}$ we return to pluriclosed flow on $(M^{2n},
I)$, which
encodes everything about the GKRF except the other complex
structure, which is given by a certain diffeomorphism pullback.  We will refer
to this point of view on GKRF as occurring ``in the $I$-fixed
gauge."  For concreteness, we record the evolution equations for the GKRF in the
$I$-fixed gauge,
\begin{gather} \label{Ifixed}
 \begin{split}
  \dt \gw_I =&\ - 2 (\rho_B)^{1,1}_I, \qquad \dt J = \LL_{\theta_J^{\sharp} -
\theta_I^{\sharp}}.
 \end{split}
\end{gather}

\begin{rmk} Two distinct Laplacians are relevant to the analysis to follow. 
First, we have the Riemannian Laplacian acting on functions
\begin{align*}
\gD_g f := \IP{\N^2 f, g}.
\end{align*}
Also, we will use the Chern Laplacian associated to a Hermitian structure
$(g,I)$, which takes the form
\begin{align*}
\gD^C_g f :=\langle dd^c_I f, \omega_I\rangle_g=\gD_g f -
\langle d f, \theta_I\rangle_g.
\end{align*}
We will also use the notation $\gD_g$ and $\gD_g^C$ respectively for the Levi-Civita and Chern Laplacians acting on sections of other vector bundles.  This formulas clarify an important point we will exploit in several places.  In
particular, the two Laplacians differ by the natural action of the Lee vector
field on the function.  This is also the relevant vector field relating the
solution (\ref{RGflow}) of the generalized K\"ahler-Ricci flow in the RG flow
gauge with the solution (\ref{Ifixed}) in the $I$-fixed gauge.  In particular,
evolution equations for scalar quantities expressed naturally using the
Riemannian Laplacian in the RG flow gauge will take an identical form except for
the use of the Chern Laplacian when converting to the $I$-fixed gauge.
\end{rmk}

\subsection{Nondegenerate case}

\begin{lemma}\label{l:flow} Suppose $(g_t, I, J_t)$ is a smooth solution of the
$I$-fixed gauge GKRF with nondegenerate initial data. Then, one
has the evolution equations
\begin{enumerate}
\item $\tfrac{\part}{\part t} \omega_I =  - 2 (\rho_B)_I^{1,1} = \left(dJd \Phi
\right)_I^{1,1},$
\item $\tfrac{\part}{\part t} \Omega_{I} = 0,$
\item $\tfrac{\part }{\part
t} \Omega_{J} = \i (\rho_B)_I$.
\end{enumerate}
\end{lemma}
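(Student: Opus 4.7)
The plan is to prove (1) directly from the defining equations of the flow, and to derive (2) and (3) together from the observation that the vector field $V$ driving $\partial_t J$ in the $I$-fixed gauge is $\Omega$-Hamiltonian with potential proportional to the Ricci potential $\Phi$. For (1), the first equality $\partial_t \omega_I = -2(\rho_B)_I^{1,1}$ is just the definition of the pluriclosed flow in the $I$-fixed gauge, \eqref{Ifixed}; the second equality follows by substituting $(\rho_B)_I = -\tfrac{1}{2}dJd\Phi$ from Proposition~\ref{ricci-potential} and taking the $(1,1)$-part with respect to $I$.

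For (2) and (3), the key initial step is to show that $V := \theta_J^\sharp - \theta_I^\sharp$, which generates $\partial_t J$ via \eqref{Ifixed}, is $\Omega$-Hamiltonian with potential a scalar multiple of $\Phi$. Starting from Lemma~\ref{l:extension}, which yields $V^\flat = \theta_J - \theta_I = -\tfrac{1}{2}[I,J]d\Phi$, and combining with the identity $\iota_X \Omega = ([I,J]^{-1}X)^\flat$ inherited from $\Omega = [I,J]^{-1}g$, one computes
\begin{align*}
\iota_V \Omega = \tfrac{1}{2}d\Phi
\end{align*}
(up to a sign depending on conventions for $\sharp$). Hence the time-dependent flow $\phi_t$ of $V$ preserves $\Omega$, and since $J_t = \phi_t^* J_0$ by \eqref{Ifixed}, we have $\Omega_{J_t} = \phi_t^* \Omega_{J_0}$, so that $\mathrm{Re}(\Omega_{J_t}) = \Omega$ is time-independent. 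Combined with $I$ being fixed in this gauge, this yields $\partial_t\Omega_I = \partial_t(\Omega + iI\Omega) = 0$, proving (2).

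For (3), I would differentiate $\Omega_{J_t} = \phi_t^*\Omega_{J_0}$ and apply Cartan's formula together with $d\Omega_J = 0$ to obtain $\partial_t \Omega_J = \mathcal{L}_V \Omega_J = d\iota_V \Omega_J$. To compute $\iota_V\Omega_J$, decompose $\Omega_J = \Omega + iJ\Omega$ and use the identity $(J\Omega)(X,Y) = -\Omega(JX, Y)$ forced by $\Omega_J$ being of $J$-type $(2,0)$, together with the algebraic relation $J[I,J] = -[I,J]J$, which follows from $\Omega$ being of type $(2,0)+(0,2)$ with respect to both $I$ and $J$. This expresses $\iota_V\Omega_J$ as the $(1,0)_J$-form $\tfrac{1}{2}(d\Phi - iJd\Phi)$, after which $d\iota_V\Omega_J = -\tfrac{i}{2}dJd\Phi = i(\rho_B)_I$ by Proposition~\ref{ricci-potential}.

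The main obstacle is the careful sign and convention bookkeeping in computing $\iota_V\Omega_J$: one must consistently handle the actions of $I$ and $J$ on $1$-forms, the interplay between the musical isomorphisms $\sharp, \flat$ and $g$-skew endomorphisms, and the type decompositions of $\Omega$ and $\Omega_J$ with respect to both complex structures. A secondary subtlety is that $V$ is time-dependent through the evolving GK structure, so one must justify the identity $\Omega_{J_t} = \phi_t^*\Omega_{J_0}$ in this setting, ultimately via consistency with the metric evolution prescribed by \eqref{Ifixed}.
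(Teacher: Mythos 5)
Your part (1), and — granted part (2) — your part (3), follow the paper's route: (1) is just \eqref{Ifixed} together with Proposition \ref{ricci-potential}, and for (3) the paper likewise writes $\partial_t \Omega_J = \i(\partial_t J)\Omega = \i(\mathcal L_V J)\Omega$ and evaluates $(\mathcal L_V J)\Omega = \mathcal L_V(J\Omega) - J\,\mathcal L_V\Omega = d\big(J\Omega(V)\big) - Jd\big(\Omega(V)\big) = -\tfrac12 dJd\Phi = (\rho_B)_I$ using Lemma \ref{l:extension}; this is essentially your Cartan-formula computation of $d\iota_V\Omega_J$.

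The genuine gap is in part (2), and it is not the ``secondary subtlety'' you defer at the end — it is the entire content of the statement. From $\iota_{V_t}\Omega_t = \tfrac12 d\Phi_t$ you may conclude $\mathcal L_{V_t}\Omega_t = 0$ at each fixed time, but $\Omega_t = g_t[I,J_t]^{-1}$ depends on the evolving metric as well as on $J_t$, so $\partial_t\Omega_t$ is not equal to $\mathcal L_{V_t}\Omega_t$. The identity $\Omega_{J_t} = \phi_t^*\Omega_{J_0}$ that you invoke does not follow from $J_t = \phi_t^* J_0$ alone; it is equivalent to $\partial_t \Omega_t = 0$, i.e.\ to assertion (2) itself (indeed it is the content of Proposition \ref{p:Hamiltonian-flow}, which the paper deduces \emph{from} Lemma \ref{l:flow}), so as written your argument is circular. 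The paper closes this by a direct computation: differentiating $\sigma_t = g_t^{-1}[I,J_t]$, the contribution of $\partial_t g$ (expressed through $(\rho_B)_I^{1,1}$ via part (1)) and of $\partial_t J = \mathcal L_V J$ combine, using that $\sigma$ anticommutes with $I$, into $g_t^{-1}\big[I,\ \mathcal L_V J - (\rho_B)_I\,\sigma_t\big]$, and then the identity $(\mathcal L_V J)\Omega = (\rho_B)_I$ — the very computation you propose for (3) — makes the bracket vanish. You need to supply this cancellation between the metric evolution and the complex-structure evolution; it is the missing step that makes (2), and hence your derivation of (3), go through.
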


\begin{proof} The first equation is an immediate consequence of (\ref{PCF}) and
Proposition \ref{ricci-potential}.
For the second equation it is enough to  show that  real part $\Omega_t =
g_t[I, J_t]^{-1}$ of $\Omega_I$, or, equivalently the real Poisson structure
$\gs_t = g_t^{-1}[I, J_t]$, is constant along the flow.  Using
\eqref{Ifixed} we have
\begin{equation} \label{eveqns10}
\begin{split} 
 \tfrac{\part}{\part t} \gs_t &=   g_t^{-1} [I, {\mathcal L}_{\theta_J^{\sharp}
-
\theta_I^{\sharp}} J] - 2 g_t^{-1} (\rho_B)_I^{1,1} I g_t^{-1} [I, J_t]\\
&= g_t^{-1} [I, {\mathcal L}_{\theta_J^{\sharp} - \theta_I^{\sharp}} J]  - 2
g_t^{-1} (\rho_B)_I^{1,1} I \gs_t \\
&= g_t^{-1} [I, {\mathcal L}_{\theta_J^{\sharp} - \theta_I^{\sharp}} J]  -
g_t^{-1} \big((\rho_B)_I I + I (\rho_B)_I)\gs_t\\
&= g_t^{-1} [I, {\mathcal L}_{\theta_J^{\sharp} - \theta_I^{\sharp}} J -
(\rho_B)_I \gs_t],
\end{split}
\end{equation}
where for the last equality we have also used that $\gs_t$ anti-commutes with
$I$. To show that this in fact vanishes, we compute
\begin{equation}\label{key2}
\begin{split}
({\mathcal L}_{\theta_J^{\sharp}- \theta_I^{\sharp}} J) \Omega &= \mathcal
L_{\theta_J^{\sharp}- \theta_I^{\sharp}} (J \Omega) -  J {\mathcal
L}_{\theta_J^{\sharp}- \theta_I^{\sharp}}\Omega\\
&= d \Big(J \Omega(\theta_J^{\sharp} - \theta_I^{\sharp})\Big) - J d
\Big(\Omega(\theta_J^{\sharp} - \theta_I^{\sharp})\Big) \\
&= -\tfrac{1}{2} d J d \Phi  -  \tfrac{1}{2} J dd \Phi \\
&=  -\tfrac{1}{2} d J d\Phi\\
&= (\rho_B)_I,
\end{split}
\end{equation}
where for the second equality we used that both $\Omega$ and $J\Omega$ are
closed,
for the third equality we used Lemma~\ref{l:extension} whereas  the fifth
equality follows from Proposition~\ref{ricci-potential}.  Plugging (\ref{key2})
into (\ref{eveqns10}) yields (2).

Lastly, to establish (3), using that $\tfrac{\part}{\part t} \Omega_t=0$ as just
established, equations (\ref{Ifixed}) and \eqref{key2} then imply
\begin{gather}
 \begin{split}
\tfrac{\part}{\part t} \Omega_{J} = \i (\tfrac{\part}{\part t} J) \Omega =
\i \left({\mathcal L}_{\theta_J^{\sharp}-\theta_I^{\sharp}} J \right)\Omega = \i
(\rho_B)_I,
 \end{split}
\end{gather}
as required.
\end{proof}

With this lemma in place we can establish that the GKRF evolves by
$\Omega$-Hamiltonian diffeomorphisms.

\begin{prop}\label{p:Hamiltonian-flow} Suppose $(g_t, I, J_t)$ is a smooth
solution of the $I$-fixed gauge GKRF with nondegenerate initial data.
Let $\phi_t$ denote the flow of the time dependent, $\Omega$-Hamiltonian
vector field $X_{t}:=-\tfrac{1}{2} \gs d \Phi_t$. 
Then the induced family of generalized K\"ahler structures $(g_{\phi_t},
I, J_{\phi_t})$ obtained via Proposition \ref{nondegvariations} coincides
with
$(g_t, I, J_t)$.
\end{prop}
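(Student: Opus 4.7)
The plan is to apply Moser's lemma to the family of holomorphic-symplectic forms $\Omega_{J_t}$ along the flow, and to identify the generating vector field with the prescribed $\Omega$-Hamiltonian field via Lemma~\ref{l:extension}. First, Lemma~\ref{l:flow}(2) gives that $\Omega = \mathrm{Re}(\Omega_I) = \mathrm{Re}(\Omega_{J_t})$ is constant along the flow, while Lemma~\ref{l:flow}(3) combined with Proposition~\ref{ricci-potential} yields
\begin{equation*}
\partial_t \Omega_{J_t} \;=\; i(\rho_B)_I \;=\; -\tfrac{i}{2}\, dJ_t d\Phi_t,
\end{equation*}
which is exact. Moser's lemma then furnishes a family of diffeomorphisms $\phi_t$ with $\phi_0 = \mathrm{id}$ and $\phi_t^* \Omega_{J_0} = \Omega_{J_t}$, whose generator is characterized (modulo closed 1-forms) by an interior-product condition against $\Omega_{J_t}$.

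Next I would verify that $X_t = -\tfrac{1}{2}\sigma d\Phi_t$ is the correct Moser generator. Since $\sigma = \Omega^{-1}$, the field $X_t$ is Hamiltonian for $\Omega$ with Hamiltonian function proportional to $\Phi_t$, so $\iota_{X_t}\Omega$ is a multiple of $d\Phi_t$ and $\phi_t$ automatically preserves $\Omega$. Decomposing $\Omega_{J_t} = \Omega + i J_t\Omega$ and using the identity $\Omega(JV, W) = \Omega(V, JW)$ (which follows from the $g$-skew-adjointness of $[I,J]^{-1}$ together with $\Omega = [I,J]^{-1}g$), the remaining interior product $\iota_{X_t}(J_t\Omega)$ reduces to a multiple of $J_t d\Phi_t$, producing the required Moser identity up to a closed term. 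The essential input is Lemma~\ref{l:extension}: the identity $\sigma d\Phi_t = 2(\theta_I^\sharp - \theta_{J_t}^\sharp)$ exhibits $X_t$ as the Lee-form-difference vector field $\theta_{J_t}^\sharp - \theta_I^\sharp$ already appearing in the $I$-fixed GKRF system~(\ref{Ifixed}), and so reconciles the analytic field $-\tfrac{1}{2}\sigma d\Phi_t$ with the geometric Moser generator.

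Having $\phi_t^*\Omega = \Omega$ and $\phi_t^*\Omega_{J_0} = \Omega_{J_t}$ together force $\phi_t^*(J_0\Omega) = J_t\Omega$, and nondegeneracy of $\Omega$ yields $\phi_t^* J_0 = J_t$, matching the definition $J_{\phi_t} = \phi_t^* J$ from Definition~\ref{d:positiveHamilt}. Finally, Lemma~\ref{l:non-degenerate-symplectic} shows that the metric of a nondegenerate generalized K\"ahler structure is uniquely determined by the triple $(I, J, \Omega)$, so the Joyce-construction metric $g_{\phi_t}$ obtained from $(I, \phi_t^* J, \Omega)$ via Proposition~\ref{nondegvariations} must coincide with $g_t$. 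The main obstacle is the sign-sensitive verification of the Moser identity in the middle step, since the tensorial identifications between $\sigma$, $\Omega$, and the action of $J$ on 1-forms each carry convention-dependent signs; Lemma~\ref{l:extension} is indispensable there, translating the analytic expression $-\tfrac{1}{2}\sigma d\Phi_t$ into the Lee-form difference whose flow visibly governs the evolution of $J_t$.
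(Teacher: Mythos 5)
Your argument is correct and follows essentially the same route as the paper: both hinge on Lemma~\ref{l:extension} to identify $X_t=-\tfrac12\gs d\Phi_t$ with the Lee-form difference $(\theta_{J_t}-\theta_I)^{\sharp}$ driving the $J$-evolution in \eqref{Ifixed}, and on Lemma~\ref{l:flow} together with the computation \eqref{key2} to match the evolution of the symplectic data along the flow with its evolution under pullback by $\phi_t$. The only cosmetic difference is the final step: you obtain $g_{\phi_t}=g_t$ from the algebraic fact that the metric is determined by the triple $(I,J_t,\Omega)$ (Lemmas~\ref{l:symplectic-forms} and \ref{c}), whereas the paper compares the evolutions of the taming forms $F_t$ and $F_{\phi_t}$ and takes the $(1,1)$-part with respect to $I$ --- both are valid.
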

\begin{proof}  By Lemma~\ref{l:extension}, $X_t =
(\theta^t_J-\theta^t_I)^{\sharp}$,  where $\theta^t_I$ and
$\theta^t_J$ are the Lee forms along the GKRF and $\sharp$ denotes $g_t^{-1}$. 
It thus follows that
$$\tfrac{\part} {\part t} (J_{\phi^u_t}  - J_t)= 0,$$
showing that  $J_{\phi^u_t}  =J_t$ as they equal $J$ at $t=0$. According to
Lemma~\ref{l:flow}, and by a computation identical to \eqref{key2},  the
corresponding symplectic $2$-forms $F_t=2({\rm Im}(\Omega_I-\Omega_{J_t}))$
and $F_{\phi_t} = 2({\rm Im}(\Omega_I -(\phi_t)^*\Omega_{J}))$
satisfy
$$\tfrac{\part}{\part t} F_t =  \tfrac{\part}{\part t} F_{\phi_t
} =  d J_t d \Phi_t, $$
so that $F_t = F_{\phi_t}$.  Taking $(1,1)$-part with respect
to $I$ gives $g_t= g_{\phi_t}$, as required.
\end{proof}

\subsection{A priori estimates}

In this subsection we derive a priori estimates associated to the Ricci
potential along solutions
to GKRF with nondegenerate initial data.  First we show that in these
settings the associated Ricci potential satsifies the pure time dependent heat
equation.  This remarkably simple evolution equation can be exploited to obtain
decay of the gradient of the Ricci potential.  

\begin{prop}\label{nondegspinorev} Let $(M^{4n}, g_t, I, J_t)$ be a solution
to GKRF in the $I$-fixed gauge with nondegenerate initial data.  Let $\Phi_t =
\log \tfrac{F_+^{2n}}{F_-^{2n}}$ denote the
associated family of Ricci potentials.  Then
\begin{align*}
\left(\dt -\gD^C_{g_t} \right) \Phi =&\ 0.
\end{align*}
\begin{proof} Let $(F_{\pm})_t =  2(\pm {\rm Im}(\Omega_{I}) - {\rm
Im}(\Omega_{J_t}))=-2g_t(I\pm J_t)^{-1}$ denote the two symplectic forms given
by
the construction of Lemma~\ref{l:non-degenerate-symplectic}.   Lemma
\ref{l:flow} yields
\begin{equation}\label{omega-pm}
\tfrac{\part}{\part t} F_{\pm} =  dJd \Phi.
\end{equation}
On the other hand, it follows by the arguments in
Lemma~\ref{l:non-degenerate-symplectic} that
\begin{equation}\label{key-5}
(F_{\pm})^{2n} =2^{n}(\det(I\mp J))^{\tfrac{1}{2}}{\rm Re}(\Omega_I)^{2n},
\end{equation}
so that, using Lemma \ref{l:flow} part (1) again, we derive from
\eqref{omega-pm}
\begin{equation}\label{key-6}
\begin{split}
\tfrac{\part}{\part t} (F_{\pm})^{2n}/(2n!) &= (dJd \Phi)\wedge
(F_{\pm})^{2n-1}/(2n-1)!\\
&= \tfrac{1}{2} {\rm tr}\Big((d J d \Phi)\circ (F_{\pm})^{-1}
\Big)\Big((F_{\pm})^{2n}/(2n)!\Big)\\
&= \tfrac{1}{2} \big\langle dJd \Phi, \gw_I \pm \gw_J \big\rangle_g
\Big((F_{\pm})^{2n}/{(2n)!}\Big).
\end{split}
\end{equation}
Together with \eqref{key-5}, this implies
\begin{equation}\label{key-7}
\tfrac{\part}{\part t} \Phi = \langle d J d \Phi, \gw_J \rangle_g =
\gD_g \Phi - \langle d \Phi, \theta_J\rangle_g =
\gD_g \Phi - \langle d \Phi, \theta_I\rangle_g = \gD^C_g \Phi,
\end{equation}
where the second equality follows easily from \eqref{DF} and the third
equality follows from Lemma \ref{l:extension} and the fact that $[I,J]$ is skew.
\end{proof}
\end{prop}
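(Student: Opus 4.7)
The plan is to differentiate $\Phi = \log(F_+^{2n}/F_-^{2n})$ directly along the flow, using Lemma~\ref{l:flow} to control $\partial_t F_\pm$, and then reduce the resulting expression to the Chern Laplacian via the top-power wedge identity and the Lee-form calculation of Lemma~\ref{l:extension}. As a first step, since $F_\pm = 2(\pm \operatorname{Im}\Omega_I - \operatorname{Im}\Omega_J)$, parts (2) and (3) of Lemma~\ref{l:flow} together with Proposition~\ref{ricci-potential} give $\partial_t F_\pm = -2\,\partial_t \operatorname{Im}\Omega_J = -2(\rho_B)_I = dJd\Phi$, which is notably the \emph{same} $2$-form for both signs.

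Next I would use the standard formula $\partial_t \log(F^{2n}/(2n)!) = \tfrac{1}{2}\operatorname{tr}((\partial_t F)\circ F^{-1})$ for the top power of a symplectic form on a $4n$-manifold. Plugging in $F_\pm^{-1} = -\tfrac{1}{2}(I \pm J)g^{-1}$ and expanding the trace yields
\[
\frac{\partial}{\partial t}\log\frac{F_\pm^{2n}}{(2n)!} = \tfrac{1}{2}\langle dJd\Phi,\; \omega_I \pm \omega_J\rangle_g.
\]
Subtracting the two identities produces an exact cancellation of the $\omega_I$-terms, leaving $\partial_t\Phi = \langle dJd\Phi, \omega_J\rangle_g$. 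Expanding $dJd\Phi$ via the covariant-derivative formula~\eqref{DF} (and using integrability of $J$, so $dJd\Phi$ is of pure $(1,1)_J$ type) then gives $\langle dJd\Phi, \omega_J\rangle_g = \Delta_g\Phi - \langle d\Phi, \theta_J\rangle_g$. Finally, Lemma~\ref{l:extension} yields $2(\theta_I - \theta_J) = [I,J](d\Phi)$, and since $[I,J]$ is skew-adjoint with respect to $g$, the pairing $\langle d\Phi, [I,J](d\Phi)\rangle_g$ vanishes; hence $\langle d\Phi, \theta_I\rangle_g = \langle d\Phi, \theta_J\rangle_g$, and $\partial_t\Phi = \Delta_g\Phi - \langle d\Phi, \theta_I\rangle_g = \Delta^C_g\Phi$.

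The main obstacle is the cancellation step: the individual logarithmic derivatives $\partial_t \log F_\pm^{2n}$ do not simplify to anything resembling a Laplacian of $\Phi$, and only their \emph{difference} collapses, via exact cancellation of $\omega_I$ contributions, to $\langle dJd\Phi, \omega_J\rangle_g$. This asymmetric structure---symmetric in $\omega_I$ and antisymmetric in $\omega_J$ between $F_+$ and $F_-$---reflects the fact that both $F_+$ and $F_-$ tame $I$ with the same $(1,1)_I$-part $\omega_I$, while $F_+$ tames $J$ and $F_-$ tames $-J$ (Lemma~\ref{l:non-degenerate-symplectic}). Without this delicate symmetry the pure heat equation would not emerge from the log-ratio of $F_+^{2n}$ and $F_-^{2n}$, and tracking the correct signs through the inner product and taming conventions is the technical heart of the proof; the subsequent swap from $\theta_J$ to $\theta_I$ via Lemma~\ref{l:extension} is then a clean $g$-skew-symmetry argument.
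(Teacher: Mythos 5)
Your argument is correct and follows essentially the same route as the paper's proof: Lemma~\ref{l:flow} gives $\partial_t F_\pm = dJd\Phi$, the trace formula for the top exterior power yields $\partial_t \log F_\pm^{2n} = \tfrac{1}{2}\langle dJd\Phi,\omega_I\pm\omega_J\rangle_g$, the $\omega_I$-terms cancel in the difference, and the passage from $\theta_J$ to $\theta_I$ uses Lemma~\ref{l:extension} together with the $g$-skewness of $[I,J]$, exactly as in \eqref{key-7}. The only cosmetic difference is that you subtract the two logarithmic derivatives directly rather than routing through the determinant identity \eqref{key-5}, which changes nothing of substance.
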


\begin{cor}\label{c:potential-bound} Let $(M^{2n}, g_t, I_t, J_t)$ be a solution to GKRF in the
nondegenerate case.  Then
\begin{align*}
 \sup_{M \times [0,T)} \brs{\Phi} \leq \sup_{M \times \{0\}} \brs{\Phi}.
\end{align*}
\begin{proof} This follows immediately from Proposition \ref{nondegspinorev} and
the maximum principle.
\end{proof}
\end{cor}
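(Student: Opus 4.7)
The plan is immediate from Proposition \ref{nondegspinorev}: since $\Phi$ satisfies the \emph{homogeneous} linear parabolic equation
\[
\left(\dt - \gD^C_{g_t}\right)\Phi = 0,
\]
with no zeroth-order term, the scalar maximum principle applied to the time-dependent elliptic operator $\gD^C_{g_t}$ yields that $\sup_M \Phi(\cdot,t)$ is non-increasing and $\inf_M \Phi(\cdot,t)$ is non-decreasing in $t$. Combining these two monotonicities gives the desired $L^\infty$ bound.

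More concretely, I would argue as follows. Fix $t_0 \in [0,T)$ and let $x_0 \in M$ be a point where $\Phi(\cdot,t_0)$ attains its spatial maximum. At $(x_0,t_0)$ we have $d\Phi = 0$ and the Hessian $\N^2\Phi \leq 0$; hence $\gD_{g_{t_0}}\Phi \leq 0$, and since the Chern Laplacian differs from the Riemannian Laplacian by $-\langle d\Phi,\theta_I\rangle_g$, which vanishes at $x_0$, one also has $\gD^C_{g_{t_0}}\Phi \leq 0$. Proposition \ref{nondegspinorev} then forces $\dt \Phi(x_0,t_0) \leq 0$. This is exactly the infinitesimal statement underlying Hamilton's maximum principle for scalar reaction-diffusion equations on evolving manifolds, and in its standard form it yields that $t \mapsto \sup_M \Phi(\cdot,t)$ is non-increasing on $[0,T)$. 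An identical argument applied to $-\Phi$, which solves the same heat equation, shows $t \mapsto \inf_M \Phi(\cdot,t)$ is non-decreasing.

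Combining, for all $(x,t) \in M \times [0,T)$,
\[
\inf_{M \times \{0\}} \Phi \;\leq\; \Phi(x,t) \;\leq\; \sup_{M \times \{0\}} \Phi,
\]
which gives $\brs{\Phi(x,t)} \leq \sup_{M \times \{0\}} \brs{\Phi}$, as required.

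There is no genuine obstacle here; the only mild subtlety is that the operator $\gD^C_{g_t}$ has a first-order term (relative to $\gD_{g_t}$) and the metric is time-dependent, but the first-order term is irrelevant at an interior spatial extremum and the time dependence is handled by the standard parabolic maximum principle on a compact manifold, which only requires smoothness of the family $g_t$ on $[0,T)$ (already part of the hypothesis that the solution is defined on this interval). If one prefers to avoid invoking a maximum principle for time-dependent operators directly, one can alternatively perturb $\Phi$ by $\eps t$ for small $\eps > 0$ to rule out degenerate first-contact behavior and send $\eps \to 0$, which is the completely routine way to execute this sort of argument.
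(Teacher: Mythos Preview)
Your proof is correct and takes exactly the same approach as the paper, which simply cites Proposition~\ref{nondegspinorev} and the maximum principle in a single line. You have merely spelled out the standard maximum principle argument in detail, including the harmless observation that the first-order difference between $\gD^C_{g_t}$ and $\gD_{g_t}$ vanishes at spatial extrema.
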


Next we exploit the simple evolution equation of (\ref{nondegspinorev}) to
obtain a gradient estimate for the Ricci potential.  To begin we
recall a basic fact about solutions to the heat equation with
background flowing along the $(g,H)$ equations of (\ref{RGflow}).

\begin{lemma} \label{gradientev} (\cite{SNDG} Lemma 4.3) Let $(M^n, g_t, H_t)$
be a solution to the $(g,H)$ evolution equations of
(\ref{RGflow}), and let $\phi_t$ be a solution to
\begin{align*}
 \dt \phi =&\ \gD_{g_t} \phi.
\end{align*}
Then
\begin{align*}
 \tfrac{\del}{\del t} \brs{\nabla \phi}^2 =&\ \gD \brs{\N \phi}^2 - 2
\brs{\N^2 \phi}^2 - \tfrac{1}{2}
\IP{\HH, \N \phi \otimes \N \phi}.
\end{align*}
\end{lemma}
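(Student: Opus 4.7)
The plan is a direct Bochner-type computation, exploiting the simple form of the metric evolution $\partial_t g = -2\Rc^g + \tfrac{1}{2}\HH$ coming from (\ref{RGflow}) and the fact that $\phi$ evolves by the pure heat equation.

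First I would write $\brs{\N\phi}^2 = g^{ij}\partial_i\phi\,\partial_j\phi$ and differentiate in $t$, splitting into the term where $\tfrac{d}{dt}$ hits the inverse metric and the term where it hits $\partial_i\phi$. For the first piece, the evolution of $g$ gives
\begin{align*}
\tfrac{\partial}{\partial t} g^{ij} = 2\Rc^{ij} - \tfrac{1}{2}\HH^{ij},
\end{align*}
so this contribution is $2\Rc(\N\phi,\N\phi) - \tfrac{1}{2}\langle \HH, \N\phi\otimes\N\phi\rangle$. For the second piece, since $\N\phi = d\phi$ on scalars commutes with $\partial_t$, I get $2\langle \N(\gD\phi), \N\phi\rangle$.

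Next I would invoke the standard Bochner identity
\begin{align*}
\gD\brs{\N\phi}^2 = 2\brs{\N^2\phi}^2 + 2\langle \N\gD\phi, \N\phi\rangle + 2\Rc(\N\phi,\N\phi),
\end{align*}
to rewrite $2\langle \N(\gD\phi),\N\phi\rangle = \gD\brs{\N\phi}^2 - 2\brs{\N^2\phi}^2 - 2\Rc(\N\phi,\N\phi)$. Summing the two contributions, the $\Rc(\N\phi,\N\phi)$ terms cancel exactly, leaving
\begin{align*}
\tfrac{\partial}{\partial t}\brs{\N\phi}^2 = \gD\brs{\N\phi}^2 - 2\brs{\N^2\phi}^2 - \tfrac{1}{2}\langle \HH,\N\phi\otimes\N\phi\rangle,
\end{align*}
as claimed.

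There is no real obstacle here; the only delicate bookkeeping is making sure the sign/coefficient on $\HH$ matches the conventions of the paper (recall $\HH_{X,Y} = \sum_{i,j} H(X,e_i,e_j)H(Y,e_i,e_j)$, so $\HH^{ij}\partial_i\phi\,\partial_j\phi$ is literally $\langle \HH, \N\phi\otimes\N\phi\rangle$). The calculation is essentially the familiar one for Ricci flow, with the twist that the metric evolves by generalized Ricci flow rather than Ricci flow; the extra $\tfrac{1}{2}\HH$ term in the metric evolution is exactly what produces the extra $-\tfrac{1}{2}\langle \HH, \N\phi\otimes\N\phi\rangle$ term in the final formula, while the Ricci terms conspire to cancel just as in the classical setting.
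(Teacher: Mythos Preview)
Your argument is correct: the time derivative of the inverse metric, the heat equation for $\phi$, and the Bochner identity combine exactly as you describe, and the Ricci terms cancel. Note that the paper does not actually supply a proof of this lemma here; it simply quotes the result from \cite{SNDG}, so there is no in-paper proof to compare against. Your computation is the standard one and is what one would expect to find in the cited reference.
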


\begin{prop}  \label{gradspinorev} Let $(M^{2n},
g_t, I_t, J_t)$ be a solution to
generalized K\"ahler
Ricci flow with nondegenerate initial data.  Let $\Phi_t$ be the
associated family of Ricci potentials.  Then
 \begin{align}
  \left(\dt - \gD_{g_t} \right) \brs{\N \Phi}^2 =&\ - 2 \brs{\N^2 \Phi}^2 -
\tfrac{1}{2}
\IP{\HH, \N \Phi \otimes \N \Phi}.
 \end{align}
\begin{proof}  This follows directly from
Proposition
\ref{nondegspinorev} and Lemma
\ref{gradientev}.
\end{proof}
\end{prop}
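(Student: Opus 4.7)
The hint in the excerpt already reveals the strategy: reduce to Lemma~\ref{gradientev} once $\Phi$ is known to satisfy the pure (Riemannian) heat equation. The only subtlety is that Proposition~\ref{nondegspinorev} was proved in the $I$-fixed gauge, where $\Phi$ satisfies the \emph{Chern} heat equation, whereas Lemma~\ref{gradientev} is naturally formulated in the RG flow gauge (where the background $(g_t,H_t)$ solves the $(g,H)$ system of \eqref{RGflow}, as used in \eqref{PCFmetricevs}). So the plan has two steps.

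\smallskip

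\emph{Step 1: Promote Proposition \ref{nondegspinorev} to the RG flow gauge.} In the RG flow gauge the complex structure evolves by $\partial_t I_t = \mathcal{L}_{\theta_{I_t}^\sharp} I_t$, so the $I$-fixed gauge is obtained by pulling back along the time-dependent family of diffeomorphisms $\phi_t$ generated by $-\theta_I^\sharp$. Since $\Phi$ is a scalar depending naturally on $(g,I,J)$, pulling back gives $\Phi^I_t = \phi_t^*\Phi^{RG}_t$ and hence
\begin{align*}
\partial_t \Phi^I_t = \phi_t^*\bigl(\partial_t \Phi^{RG}_t\bigr) - \phi_t^*\bigl(\mathcal{L}_{\theta_I^\sharp}\Phi^{RG}_t\bigr) = \phi_t^*\bigl(\partial_t \Phi^{RG}_t\bigr) - \bigl\langle d\Phi^I_t, \phi_t^*\theta_I\bigr\rangle.
\end{align*}
Combining this with the identity $\gD^C_g = \gD_g - \langle d(\cdot),\theta_I\rangle$ and with Proposition~\ref{nondegspinorev}, which asserts $(\partial_t - \gD^C_{g_t})\Phi^I_t = 0$, we conclude that in the RG flow gauge $\Phi$ satisfies the pure heat equation
\begin{align*}
\bigl(\partial_t - \gD_{g_t}\bigr)\Phi = 0.
\end{align*}

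\smallskip

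\emph{Step 2: Apply Lemma~\ref{gradientev}.} In the RG flow gauge the background $(g_t,H_t)$ solves exactly the $(g,H)$ subsystem appearing in \eqref{RGflow}, which is the hypothesis of Lemma~\ref{gradientev}. Since $\Phi$ satisfies $\partial_t \Phi = \gD_{g_t}\Phi$ by Step 1, substituting $\phi = \Phi$ in that lemma yields
\begin{align*}
\bigl(\partial_t - \gD_{g_t}\bigr)|\nabla \Phi|^2 = -2|\nabla^2 \Phi|^2 - \tfrac{1}{2}\IP{\HH, \nabla \Phi \otimes \nabla \Phi},
\end{align*}
which is the claimed identity. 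As both sides of this identity are geometric (diffeomorphism-covariant) scalar quantities, it transports back to the $I$-fixed gauge without change.

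\smallskip

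The only real step of content is Step~1, and even there the argument is a routine gauge computation once one recognizes that the discrepancy between the two Laplacians (the remark at the end of the preceding subsection) exactly matches the discrepancy produced by the Lee-vector-field gauge transformation. No new estimates are needed, and no additional analytic difficulty arises.
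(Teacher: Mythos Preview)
Your proof is correct and follows the same approach as the paper: the paper's one-line proof simply invokes Proposition~\ref{nondegspinorev} and Lemma~\ref{gradientev}, relying on the earlier Remark that the Chern-Laplacian heat equation in the $I$-fixed gauge corresponds exactly to the Riemannian heat equation in the RG flow gauge via the Lee-vector-field gauge change. Your Step~1 just makes this gauge translation explicit, and Step~2 is then the direct application of Lemma~\ref{gradientev}; note that since the Proposition is stated with both $I_t$ and $J_t$ varying (i.e.\ in the RG flow gauge), your last sentence about transporting back is unnecessary.
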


\begin{prop}  \label{gradspinordecay} Let $(M^{2n},
g_t, I_t, J_t)$ be a
solution to generalized K\"ahler
Ricci flow with nondegenerate initial data.  Let $\Phi_t$ be the
associated family of Ricci potentials.  Then
\begin{align*}
 \sup_{M \times \{t\}} \brs{\N \Phi}^2 \leq t^{-1} \left( \sup_{M \times \{0\}}
\brs{\Phi}^2 \right).
\end{align*}
\begin{proof} Let
\begin{align*}
W = t \brs{\N \Phi}^2 + \Phi^2.
\end{align*}
Combining Proposition \ref{nondegspinorev} with Proposition \ref{gradspinorev}
we see
\begin{align*}
\left(\dt - \gD_{g_t} \right) W =&\ -2 t \brs{\N^2 \Phi}^2 - \tfrac{t}{2}
\IP{\HH, \N \Phi \otimes \N \Phi} - \brs{\N \Phi}^2.
\end{align*}
As the tensor $\HH$ is positive definite, we conclude that $W$ is a subsolution
to the heat
equation, and so we conclude from the maximum principle that
\begin{align*}
\sup_{M \times \{t\}} W \leq \sup_{M \times \{0\}} W = \sup_{M \times \{0\}}
\Phi^2.
\end{align*}
The result follows upon rearranging this.
\end{proof}
\end{prop}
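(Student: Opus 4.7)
The plan is a Bernstein-type argument combining the heat equation for $\Phi$ (Proposition \ref{nondegspinorev}) with the Bochner-type formula for $\brs{\N \Phi}^2$ (Proposition \ref{gradspinorev}), exploiting the crucial non-negativity of the Bismut torsion tensor $\HH$. First I would pass to the RG flow gauge, where the remark following Proposition \ref{nondegspinorev} guarantees that $\Phi$ satisfies the ordinary heat equation $\left(\dt - \gD_{g_t}\right)\Phi = 0$, so that both evolution equations live on the same footing.

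The natural test quantity is $W := t \brs{\N \Phi}^2 + \Phi^2$. Differentiating and invoking Propositions \ref{nondegspinorev} and \ref{gradspinorev} yields
\[
\left(\dt - \gD_{g_t}\right) W = \brs{\N \Phi}^2 + t\left(\dt - \gD_{g_t}\right)\brs{\N \Phi}^2 + 2\Phi\left(\dt - \gD_{g_t}\right)\Phi - 2\brs{\N \Phi}^2,
\]
where the third term vanishes by the heat equation for $\Phi$, and the second term expands via the Bochner formula to $-2t \brs{\N^2 \Phi}^2 - \tfrac{t}{2} \IP{\HH, \N \Phi \otimes \N \Phi}$. Collecting:
\[
\left(\dt - \gD_{g_t}\right) W = -\brs{\N \Phi}^2 - 2t \brs{\N^2 \Phi}^2 -\tfrac{t}{2} \IP{\HH, \N \Phi \otimes \N \Phi} \leq 0,
\]
where the non-positivity of the torsion term uses that $\HH_{X,X} = \sum_{i,j} H(X, e_i, e_j)^2$ is manifestly non-negative.

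Thus $W$ is a subsolution of the heat equation with respect to the time-varying metric $g_t$. The parabolic maximum principle on the compact manifold $M$ then yields $\sup_{M \times \{t\}} W \leq \sup_{M \times \{0\}} W = \sup_{M \times \{0\}} \Phi^2$, since the $t\brs{\N \Phi}^2$ contribution vanishes at $t=0$. Discarding the non-negative term $\Phi^2$ on the left side and rearranging gives the stated estimate.

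The conceptually important step, and the one where care is needed, is recognizing the favorable sign of $\IP{\HH, \N \Phi \otimes \N \Phi}$: in a general Riemannian setting the Bochner formula would produce a Ricci-curvature term that must be controlled, but in the pluriclosed/GKRF setting the Bismut torsion contribution works with rather than against us, which is precisely what makes the estimate unconditional.
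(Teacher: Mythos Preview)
Your proof is correct and follows essentially the same approach as the paper: the test function $W = t\brs{\N\Phi}^2 + \Phi^2$, the combination of the heat equation for $\Phi$ with the Bochner formula from Proposition~\ref{gradspinorev}, and the sign observation for $\HH$ are exactly what the paper uses. Your explicit remark about passing to the RG flow gauge (so that the Riemannian rather than Chern Laplacian governs $\Phi$) is a helpful clarification that the paper leaves implicit.
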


\subsection{GIT framework and GKRF}

In this subsection we observe here monotonicity of the Aubin-Yau differential
$\boldsymbol{\sigma}$ under the
generalized
K\"ahler-Ricci flow, and record the proof of Theorem \ref{flowandGIT}.

\begin{prop}\label{p:energy} Let $(M^{2n},
g_t, I, J_t)$ be a
solution to generalized K\"ahler
Ricci flow in the $I$-fixed gauge with nondegenerate initial data.  Then
\begin{equation}\label{evolution-functional}
\tfrac{d^2}{dt^2} {\bf F}(I,J_t) = \tfrac{d}{dt} {\boldsymbol \gs}_{I, J_t}
(0,\Phi_t) = \int_M \brs{\N \Phi_t}^2_g
\Big((F_+)_t^{2n} + e^{\lambda} (F_-)_t^{2n} \Big) / (2n)!.
\end{equation}
\end{prop}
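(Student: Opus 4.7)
The plan is to verify the two equalities in turn, using the GIT framework of Section~\ref{s:GIT} for the first and a direct computation coupling Lemma~\ref{l:flow} with the heat equation of Proposition~\ref{nondegspinorev} for the second.

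For the first equality I would invoke Proposition~\ref{AubinYauprop}, which gives $\delta\mathbf{F}=\pi^{*}\boldsymbol{\sigma}$. By Proposition~\ref{p:Hamiltonian-flow} the GKRF in the $I$-fixed gauge evolves $J_t$ by an $\Omega$-Hamiltonian isotopy generated (up to a universal constant) by the Ricci potential $\Phi_t$, while Lemma~\ref{l:flow}(2) shows $\Omega_I$, and hence $\mathrm{Im}(\Omega_I)$, is preserved. Under the identification of $T_{I,J_t}\mathcal{M}$ from Remark~\ref{M}, the tangent to $t\mapsto(I,J_t)$ is therefore the fundamental vector field $(0,\Phi_t)$ (modulo the normalization), so that $\tfrac{d}{dt}\mathbf{F}(I,J_t)=\boldsymbol{\sigma}_{I,J_t}(0,\Phi_t)$, which yields the first equality after one more differentiation.

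For the second equality I would differentiate
$$\boldsymbol{\sigma}_{I,J_t}(0,\Phi_t)=-\frac{1}{(2n)!}\int_M \Phi_t\bigl[(F_+)_t^{2n}-e^{\lambda}(F_-)_t^{2n}\bigr]$$
and split the result as $A(t)+B(t)$, where $A(t)$ captures the variation of $\Phi_t$ via Proposition~\ref{nondegspinorev} (substituting $\dot\Phi_t=\Delta^{C}_{g_t}\Phi_t$) and $B(t)$ captures the variation of the symplectic volumes via Lemma~\ref{l:flow} and the computation~\eqref{key-6}, giving $\tfrac{d}{dt}(F_\pm)^{2n}/(2n)!=(dJd\Phi)\wedge(F_\pm)^{2n-1}/(2n-1)!$. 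One integration by parts in $B(t)$ (using $dF_\pm=0$) converts its integrand to the form $d\Phi\wedge Jd\Phi\wedge(F_\pm)^{2n-1}/(2n-1)!$.

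The main technical step is then the pointwise simplification. Applying the standard identity $\alpha\wedge(F_\pm)^{2n-1}/(2n-1)!=\tfrac{1}{2}\langle\alpha,(F_\pm)^{-1}\rangle(F_\pm)^{2n}/(2n)!$ to $B(t)$, and evaluating $(F_\pm)^{-1}$ using $F_\pm=-2g(I\pm J)^{-1}$, produces a multiple of $|(I+J)d\Phi|^{2}_{g}$ on the $(F_+)^{2n}$ piece and of $|(I-J)d\Phi|^{2}_{g}$ on the $e^{\lambda}(F_-)^{2n}$ piece, just as in the convexity computation of Proposition~\ref{l:strict-convexity}. Rewriting $A(t)$ via the identity $\Delta^{C}_{g}\Phi=\langle dJd\Phi,\omega_J\rangle_{g}$ (from the proof of Proposition~\ref{nondegspinorev}) contributes cross terms involving $\langle d\Phi,(IJ+JI)d\Phi\rangle_{g}$ on each of the two volumes. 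The algebraic identities
$$|(I+J)d\Phi|^{2}_{g}=2|\nabla\Phi|^{2}_{g}-\langle d\Phi,(IJ+JI)d\Phi\rangle_{g},\qquad |(I-J)d\Phi|^{2}_{g}=2|\nabla\Phi|^{2}_{g}+\langle d\Phi,(IJ+JI)d\Phi\rangle_{g},$$
which follow from the $g$-skewness of $I$ and $J$ and $I^{2}=J^{2}=-\mathrm{Id}$, should then make the cross terms cancel separately on each of the $(F_+)^{2n}$ and $e^{\lambda}(F_-)^{2n}$ contributions, leaving the stated expression. The main obstacle is keeping track of the relative constants and signs in $A(t)$ and $B(t)$ so that this cancellation is exact on both volumes simultaneously.
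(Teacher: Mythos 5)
Your overall skeleton matches the paper's proof: the first equality is obtained exactly as in the paper from Proposition \ref{p:Hamiltonian-flow} and the identification of the flow's tangent vector with (a multiple of) the fundamental vector field $(0,\Phi_t)$, and your term $B(t)$ is handled correctly and identically to the convexity computation of Lemma \ref{l:closed}/Proposition \ref{l:strict-convexity}, yielding the $\tfrac14\brs{(I+J)d\Phi}^2$ and $\tfrac{e^\gl}{4}\brs{(I-J)d\Phi}^2$ contributions. Your guess for what $A(t)$ must contribute (cross terms $\IP{d\Phi,(IJ+JI)d\Phi}_g$ that cancel against the corresponding pieces of $B(t)$ via $\brs{(I\pm J)d\Phi}^2 = 2\brs{\N\Phi}^2 \mp \IP{d\Phi,(IJ+JI)d\Phi}_g$) is also the right answer, and is exactly how the paper's \eqref{2-2}, \eqref{2-3} combine with the identity $g((I-J)X,(I-J)Y)+g((I+J)X,(I+J)Y)=4g(X,Y)$.

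The gap is in the evaluation of $A(t)=-\int_M \dot\Phi_t\,\big((F_+)^{2n}-e^{\gl}(F_-)^{2n}\big)/(2n)!$ itself, which is the technical heart of the proof and cannot be produced by a pointwise ``rewriting'' of $\gD^C_{g}\Phi=\IP{dJd\Phi,\gw_J}_g$: a second-order quantity integrated against $(F_\pm)^{2n}$ only becomes a first-order cross term after an integration by parts, and the measures $(F_\pm)^{2n}/(2n)!=2^{2n}\left(\det(I\pm J)\right)^{-1/2}dV_g$ are \emph{not} the Riemannian volume. The integration by parts therefore produces the extra term $\tfrac12\IP{d\Phi, d\log\det(I\pm J)}_g$, and converting this into $\theta_I$ plus the cross term requires Lemma \ref{l:half-non-degenerate} together with identities \eqref{2}, \eqref{3} and Lemma \ref{l:extension} (in particular $[I,J]^{-1}(\theta_I-\theta_J)=\tfrac12 d\Phi$, so that $\IP{d\Phi,\theta_I+\theta_J}_g=2\IP{d\Phi,\theta_I}_g$, which is what lets $\dot\Phi=\gD_g\Phi-\IP{d\Phi,\theta_I}_g$ reduce cleanly to $\mp\tfrac14\brs{(I\mp J)d\Phi}^2$ on each volume). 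None of this machinery appears in your plan; without it the claimed cancellation is an unverified assertion rather than a computation, and it is precisely here that the ``relative constants and signs'' you defer are determined.
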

\begin{proof} By Proposition~\ref{p:Hamiltonian-flow}, the GKRF is the
Hamiltonian flow of $X_t= -\tfrac{1}{2}\gs^{-1}d\Phi_t$.  Using this and the
definition of $\bf F$, the first equation follows directly.  For the second,
following a
calculation similar to Proposition \ref{l:strict-convexity} yields
 \begin{equation}\label{2-1}
 \begin{split}
\tfrac{d}{dt} {\boldsymbol \gs}_{I,J_t}(0,\Phi_t) =&\ \tfrac{1}{4} \int_M
\brs{(I
+ J)d \Phi}^2 \tfrac{F_+^{2n}}{(2n)!} + \tfrac{e^{\lambda}}{4} \int_M \brs{(I -
J)
d \Phi}^2 \tfrac{F_-^{2n}}{(2n)!}\\
& - \int_M \tfrac{\partial}{\part t} \Phi_t \left(F_+^{2n} - e^{\gl} F_-^{2n}
\right) / (2n)!.
\end{split}
\end{equation}
We use Lemma~\ref{l:flow} to compute the last term.  To this end,
as $F_+= -2(I+J)^{-1}g$,  we have $(F_+)^{2n}/(2n)! =  2^{2n} \left(\det
(I+J)\right)^{-\tfrac{1}{2}} dV_g$, and we compute using \eqref{2} and
Lemma~\ref{l:extension}
\begin{equation}\label{2-2}
\begin{split}
\int_M \gD_g \Phi \tfrac{(F_{+})^{2n}}{(2n)!} =&\ 2^{2n} \int_M \gD_g \Phi
\left( \det (I+J)\right)^{-\tfrac{1}{2}} dV_g \\
=&\ \tfrac{1}{2} \int_M \big\langle d \Phi, d \log \det (I+J) \big\rangle_g
\tfrac{(F_+)^{2n}}{(2n)!}\\
=&\ \tfrac{1}{2}\int_M \big\langle d \Phi, \theta_I + \theta_J\big\rangle_g
\tfrac{(F_+)^{2n}}{(2n)!} + \tfrac{1}{4}\int_M \big\langle d \Phi, (I-J)^2 d
\Phi\big\rangle_g \tfrac{(F_+)^{2n}}{(2n)!} \\
=&\ \int_M \langle d \Phi, \theta_I \rangle_g \tfrac{(F_+)^{2n}}{(2n)!} -
\tfrac{1}{4}\int_M\big\langle (I-J) d \Phi, (I-J) d \Phi\big\rangle_g
\tfrac{(F_+)^{2n}}{(2n)!} .
\end{split}
\end{equation}
A similar calculation yields
 \begin{equation}\label{2-3}
  \int_M \gD_g \Phi \tfrac{(F_{-})^{2n}}{(2n)!}  = \int_M \langle d \Phi,
\theta_I \rangle_g \tfrac{(F_-)^{2n}}{(2n)!} + \tfrac{1}{4}\int_M\big\langle
(I+J)
d \Phi, (I+J) d \Phi \big\rangle_g \tfrac{(F_-)^{2n}}{(2n)!}.
\end{equation}
The claim follows by substituting (\ref{key-7}) in \eqref{2-1}, and using
\eqref{2-2} and \eqref{2-3} together with the basic fact
$$g ((I-J) X, (I-J) Y)+ g((I+J) X, (I+J) Y) = 4g(X, Y).$$
\end{proof}

\begin{proof}[Proof of Theorem \ref{flowandGIT}] The claims follow directly from
Propositions \ref{p:Hamiltonian-flow}, \ref{p:CY}, and \ref{p:energy}.
\end{proof}

\subsection{Conjectural picture}

Given the overall picture we have now shown, we make a natural conjecture
concerning the GKRF, adjoining Conjecture \ref{GCY-conj}.

\begin{conj} \label{conj:GKRF} Let $(M^{4n}, g, I, J)$ be a nondegenerate
generalized K\"ahler structure.  Then the solution to generalized K\"ahler Ricci
flow
exists for all time and converges to a hyper-K\"ahler metric.
\end{conj}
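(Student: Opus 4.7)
The plan is to prove Conjecture \ref{conj:GKRF} by establishing the hypotheses of Theorem \ref{formalflowtheorem}: uniform two-sided metric bounds $\gL^{-1} g_0 \leq g_t \leq \gL g_0$ together with $|\del\ga|^2 \leq \gL$ along the maximal solution. Once those a priori estimates are in place, Theorem \ref{formalflowtheorem} delivers both long time existence and convergence to a hyper-K\"ahler metric, so the entire problem reduces to producing these bounds.

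The first phase I would pursue is a sharp quantitative use of the Ricci potential. Proposition \ref{nondegspinorev} shows that $\Phi$ satisfies the pure heat equation $(\dt - \gD^C_g)\Phi = 0$, and Proposition \ref{gradspinordecay} already yields the decay $|\N\Phi|^2 \leq t^{-1}\sup|\Phi_0|^2$. I would bootstrap this by deriving Bochner-type identities for higher derivatives of $\Phi$, exploiting the $I$-fixed gauge evolution $\dt \gw_I = (dJd\Phi)^{1,1}$ to express the curvature of $g_t$ in terms of derivatives of $\Phi$. The hope is that $C^0$ and $C^1$ control of $\Phi$, combined with the positivity of the tensor $\HH$ appearing in Proposition \ref{gradspinorev}, propagates to $C^2$ control; in classical K\"ahler-Ricci flow this is the step where Cao's parabolic Schwarz lemma produces $C^2$ bounds on the potential.

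The second phase is to exploit the GIT framework of Section \ref{s:GIT}. By Theorem \ref{flowandGIT} and Proposition \ref{p:energy}, the Kempf-Ness functional $\bf F$ is convex along the flow, with second derivative given by a weighted $L^2$-norm of $\N\Phi$. I would attempt to establish a Moser-Trudinger/properness type inequality: namely that $\bf F$ controls a Sobolev norm of the torsion potential $\ga$ modulo the stabilizer. Convexity plus properness would then force the trajectory to stay in a bounded set of the Hamiltonian deformation space, and combined with the gradient decay of $\Phi$ would force $\dt\bf F \to 0$. Any subsequential limit would have constant Ricci potential, hence (by Propositions \ref{p:CY} and \ref{p:uniqueness}) would equal the unique hyper-K\"ahler metric in the class, yielding convergence.

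The main obstacle, in my view, is the $C^0$ estimate on the torsion potential $\ga$: unlike the K\"ahler setting there is no $\del\bar\del$-lemma at our disposal on a general nondegenerate GK manifold, and the relation between $\ga$ and the flowing metric is mediated by the $(2,0)+(0,2)$ component $b$ of the taming symplectic form, whose variation is coupled to the $(1,1)$ part through the non-closedness of $\gw_I$. Thus the uniform metric equivalence required by Theorem \ref{formalflowtheorem} is essentially a \emph{non-K\"ahler} $C^0$ estimate in the sense of Yau, and a fully general resolution will likely require either (a) a direct proof that every nondegenerate GK structure is $\Omega$-Hamiltonian deformable to a hyper-K\"ahler background, thereby reducing to Theorem \ref{flowtheorem}, or (b) the development of a pluripotential theory adapted to pluriclosed flow of symplectic type. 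The connectedness issue flagged after Theorem \ref{flowtheorem} suggests that (a) is itself comparably deep, so I would regard (b) as the genuine analytic heart of the conjecture.
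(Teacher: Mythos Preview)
The statement you are addressing is Conjecture \ref{conj:GKRF}, which the paper explicitly leaves open. There is no proof in the paper to compare against: the authors state it as a conjecture, remark that it is the natural analogue of Cao's theorem, and then prove only the \emph{conditional} Theorem \ref{formalflowtheorem2} (global existence and convergence assuming uniform metric bounds and a torsion potential bound) and Theorem \ref{flowtheorem} (global existence and weak convergence assuming a hyper-K\"ahler background). So your write-up is not a proof but a research outline, and you seem aware of this; I will assess it as such.

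Your reduction to the hypotheses of Theorem \ref{formalflowtheorem} is exactly the framing the paper adopts, and your identification of the $C^0$-type estimate on the torsion potential as the crux is in line with what the authors say after stating the conjecture. However, both of your proposed phases have genuine gaps that should be named. In Phase 1, the hope that $C^0$ and $C^1$ control of $\Phi$ propagates to $C^2$ control via a Bochner or parabolic Schwarz argument is precisely what fails to go through in the non-K\"ahler setting: the torsion terms enter the evolution of $\tr_h g$ (Lemma \ref{traceineq}) with the wrong sign, and the paper's proof of Proposition \ref{lteprop} handles this only by invoking the K\"ahler background and the reduced flow (\ref{decflow}) to absorb $|T|^2$ into the evolution of the volume ratio and of $|\del\ga|^2$. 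Without a K\"ahler background metric you have no a priori control on the volume form from below (the bound (\ref{lteprop10}) uses $\rho_C(g_{\mbox{\tiny HK}})=0$), and the argument collapses. In Phase 2, the convexity of $\bf F$ along the flow is established, but there is no properness or Moser--Trudinger inequality for $\bf F$ anywhere in the paper or the literature; the functional lives on an orbit of $\mathrm{Ham}(M,\Omega)$, a non-abelian setting quite unlike the K\"ahler class, and it is not even clear what norm properness should be measured against. Your option (a) is circular, since deformability to a hyper-K\"ahler structure is exactly what Conjecture \ref{GCY-conj} asserts, and your option (b) is correct in spirit but amounts to restating the open problem. In short: your diagnosis of the obstruction matches the paper's, but neither phase as written constitutes a viable attack.
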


Reiterating the introduction, this conjecture is a natural analogue of Cao's
theorem \cite{Cao} establishing global existence and convergence of
K\"ahler-Ricci flow when $c_1 = 0$.  In \cite{SNDG} the second author
established the existence portion of this
conjecture in the case $n=1$, as well as a form of convergence to a kind of weak
hyper-K\"ahler structure.  We next show a result which indicates the main
analytic hurdle left to overcome to establish this conjecture.

\begin{thm} \label{formalflowtheorem2} Let $(M^{4n}, g_0, I, J)$ be a
nondegenerate
generalized K\"ahler structure and $(g_t, I, J_t)$ denote the solution to GKRF
with this initial condition in the $I$-fixed gauge.  Suppose there exists a
constant $\gL > 0$ such that for all times $t$ in the maximal interval of
existence, the solution satisfies
\begin{align*}
\gL^{-1} g_0 \leq g_t \leq \gL g_0. 
\end{align*}
Then the solution exists for all time and converges to a hyper-K\"ahler metric
$(g_{\infty}, I, J_{\infty})$. Furthermore $J_{\infty} = \phi_{\infty}^* J_0$
for some $\phi_{\infty} \in {\rm Ham}(M, \Omega)$.
\begin{proof} To show the long time existence we exploit regularity results from
\cite{SBIPCF}, assuming some technical familiarity with that paper.  

In our case $\omega = (F)^{1,1}_I$ where $F= (F_+)_t$ are the symplectic $2$-forms evolving in the same deRham class by the equation \eqref{omega-pm}. It follows that $\omega_t$ evolves in the same Aeppli class, i.e. we can write
\begin{equation}\label{reduced1}
\omega_t = \omega_0 + \bar \partial \alpha_t + \partial \bar \alpha,
\end{equation}
for a family of $(1,0)$-forms $\alpha_t \in \wedge^{1,0}(M, I)$. However,  $\alpha_t$ is not uniquely determined and in order to  make contact with the normalization used in \cite{SBIPCF}, we notice that,  as our manifold $(M^{4n}, I)$ admits a holomorphic volume form
$(\Omega_I^{2,0})^{n}$, there exists some background Hermitian metric $h$
such that $\rho_C(h) = 0$. Indeed, $h$ can be obtained by conformal modification of any given
Hermitian metric on $(M, I)$.  We can  thus apply the setting of (\cite{SBIPCF}, Sec. 3.1) with the choices $\hat{\gw}_t = (\gw_I)_0$ and $\mu = 0$ (noting that the normalization for the flow used in this paper differs by a factor  $2$ from the one used in \cite{SBIPCF}),  and  write \eqref{reduced1} with  $\alpha=\alpha_t$ being 
 a maximal solution to the $(\hat{g}_t, h, \mu)$-reduced pluriclosed flow
 \begin{equation}\label{reduced}
 \begin{split}
\frac{\partial}{\partial t} \alpha &= \bar \partial^*_{g_t} \omega_t - \frac{\sqrt{-1}}{2}\partial \log \frac{\det g_t}{\det h}, \\
\alpha (0) &= 0.
\end{split}
\end{equation}
With this choice of $\alpha$ the so called {\it torsion potential} $\partial \alpha$  evolves by
\begin{equation}\label{torsion-potential-1}
\frac{\partial}{\partial t} \partial  \alpha = \partial {\bar \partial}^*_{g_t} \omega_t =  \sqrt{-1}\partial (\theta_I^{1,0}),
\end{equation}
where $\theta_I = I \delta^{g_t}\omega_t$ is the Lee form of $(g_t, I)$. 
Using  \eqref{omega-pm}, Proposition~\ref{ricci-potential} and the relation \eqref{ricci-relation} (which shows that $(\rho_B)^{2,0}_I =-\sqrt{-1} \partial (\theta_I^{1,0})$), 
we deduce 
\begin{equation}\label{torsion-potential-2}
\frac{\partial}{\partial t} (F_t)^{2,0}_I  = \sqrt{-1}\partial (\theta_I^{1,0}),
\end{equation}
showing that the torsion potential satisfies
\begin{equation}\label{torsion-potential-3}
\partial \alpha_t = (F_t)_I^{2,0} - (F_0)^{2,0}_I.
\end{equation}
By  Lemma~\ref{l:non-degenerate-symplectic} and  \eqref{l:symplectic-unique},  we have $(F_t)^{2,0}_I=-\sqrt{-1} (b_t)^{2,0}_I$ with $b_t=-g_t(I+J_t)^{-1}(I-J_t)= -\Omega(I- J_t)^2.$  By Corollary~\ref{c:potential-bound}, $b^{2n}/\omega^{2n}=e^{\Phi}$  is uniformly bounded.  The a priori metric bound yields a universal bound of the volume form $\omega_t^{2n}$ with respect a background volume form. As $\Omega$ is non-degenerate and constant under the flow, we can use $\Omega^{2n}$ as a background volume, so we conclude $\det(I-J_t)^2$ is uniformly bounded;  using that  $(I-J_t)^2$ is positive definite and symmetric with respect to $g_t$,  and  the identity  ${\rm tr}_{g_t} (I-J_t)^2=8n(1- p_t), $ where $p_t= -\frac{1}{4n} {\rm tr}_{g_t} I J_t = \frac{1}{4n}\langle I, J_t\rangle_{g_t}$ is the angle function with $|p_t|<1$, we conclude that $||(I- J)^2||_{g_t}$, and whence $||b_t||^2_{g_t}$ and $||\partial \alpha_t||^2_{g_t}$,  are uniformly bounded. Summarizing, we have shown that the a priori metric bound yields  a torsion potential bound, i.e. we have
\begin{equation}\label{torison-potential-4}
\gL^{-1} g_0 \leq g_t \leq \gL g_0, \qquad \brs{\del \ga}_{g_0}^2 \leq \gL,
\end{equation}
 along the flow.

Using now the a priori
metric bound and torsion potential bound, we can apply
(\cite{SBIPCF} Theorem 1.7) to conclude uniform higher order regularity of the
time varying metric.  The 
global existence then  follows from standard arguments.

Given the smooth global existence and uniform estimates for the metric now in
place, we may choose any sequence of times $\{t_i\} \to \infty$ and obtain a
subsequential limit $\{(g_{t_i},J_{t_i})\} \to (g_{\infty}, J_{\infty})$, with
convergence in $C^{\infty}$.  Proposition \ref{gradspinordecay} implies that
this
limiting metric must satisfy $\Phi_{\infty} \equiv \gl$.  Proposition
\ref{ricci-potential} then implies that the limiting metric is $I$-Bismut Ricci
flat, hence by Proposition \ref{p:CY} $(g_{\infty},I)$ and
$(g_{\infty},J_{\infty})$ belong to the same hyper-K\"ahler structure.  Given
this subsequential convergence and moreover the existence of the hyper-K\"ahler
structure, we apply (\cite{ST1} Theorem 1.2) to conclude that the entire flow
converges exponentially fast to this same hyper-K\"ahler structure.  With this
exponential convergence in place, it follows that the time dependent vector
field $\N \Phi \hook \Omega$ driving the family of diffeomorphisms $\phi_t$ of
the $J$ evolution  as well as the normalized Hamiltonian function $\tilde
\Phi_t:= \Phi_t - \lambda$ converges exponentially fast to zero.  Letting $t :=
\tan(s),  s \in [0, \tfrac{\pi}{2})$, we obtain that  the reparametrized 
isotopy ${\tilde \phi}_{s}:= \phi_{\tan (s)}$  is generated by the $\Omega$
hamiltonian function $H_s:= \tfrac{1}{\cos^2 s} {\tilde  \Phi}_{\tan (s)}$
which, because of the exponential rate of convergence of $\tilde \Phi$,  can be
extended to a smooth function  defined on $[0, \tfrac{\pi}{2}]\times M$ by
letting $H_{\tfrac{\pi}{2}} \equiv 0$.  It thus follows that $\phi_t$ itself
converges to a limiting $\Omega$ Hamiltonian diffeomorphism $\phi_{\infty}$ such
that $\phi_{\infty}^* J_0 = J_{\infty}$. \end{proof}
\end{thm}

\section{Global existence and weak convergence on hyper-K\"ahler manifolds}
\label{ltesec}

In this section we prove Theorem \ref{flowtheorem}.  We break the proof into
three phases.  First we use the background K\"ahler structure to set up a
simplified reduction of the pluriclosed flow system.  Next we exploit this
special reduction and the evolution equations of \S \ref{flowsec} to obtain
global existence of the flow.  Finally we establish the weak convergence at
infinity.

\subsection{Reduction of pluriclosed flow} \label{s:reducedflow}

In this section we reduce the pluriclosed flow to a certain system coupling an
evolution for a $(1,0)$-form with a scalar evolution.  We exploit the
hyper-K\"ahler background to simplify the background terms needed to define this
reduced equation.  To begin we exhibit a proposition allowing us to express an
arbitrary pluriclosed metric as a K\"ahler metric plus a term coming from a
potential $(1,0)$-form.

\begin{prop}\label{pluriclosed-to-kahler} Let $g$ be a pluriclosed Hermitian
metric on a compact complex manifold $(M, I)$. If $(M, I)$ admits a K\"ahler
metric, then the Aeppli class of $\gw_I$ contains a K\"ahler metric $\gw$, i.e.
\begin{align*}
\gw_I = \omega + \partial \bga+ \bar{\partial} \ga
\end{align*}
for some $\alpha \in \wedge^{1,0}(M,I)$ and a positive definite closed $(1,1)$
form
$\omega$.
\end{prop}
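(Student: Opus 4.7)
The plan has three steps: first find a $d$-closed representative of the Aeppli class of $\omega_I$ via the $\partial\bar\partial$-lemma, then argue that this representative lives in a K\"ahler class, and finally adjust to obtain a K\"ahler form in the Aeppli class. Since $M$ admits a K\"ahler metric, the $\partial\bar\partial$-lemma holds in every bidegree on $(M, I)$. For the first step, note that $\partial\omega_I$ is a $(2,1)$-form which is $\partial$-exact by construction and $\bar\partial$-closed since $\omega_I$ is pluriclosed; the $\partial\bar\partial$-lemma then yields $\alpha_1 \in \wedge^{1,0}(M,I)$ with $\partial\omega_I = \partial\bar\partial\alpha_1$, and by conjugation $\bar\partial\omega_I = -\partial\bar\partial\bar\alpha_1$. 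Setting $\omega_0 := \omega_I - \partial\bar\alpha_1 - \bar\partial\alpha_1$, direct computation using these two identities gives $d\omega_0 = 0$, so $\omega_0$ is a $d$-closed real $(1,1)$-form in the Aeppli class of $\omega_I$.

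For the second step, I would show that $[\omega_0] \in H^{1,1}(M,\mathbb R)$ lies in the K\"ahler cone. By the duality characterization of the K\"ahler cone on a compact K\"ahler manifold (in the spirit of Demailly--P\u{a}un, via pairing with positive currents), it suffices to verify that $\int_M \omega_0 \wedge T > 0$ for every non-zero positive $d$-closed $(n-1,n-1)$-current $T$. For such $T$, closedness of $T$ combined with the type vanishing $\bar\partial\bar\alpha_1 \wedge T = 0$ (since $\wedge^{n-1,n+1} = 0$) gives the identity $d(\bar\alpha_1 \wedge T) = \partial\bar\alpha_1 \wedge T$, so Stokes on currents produces $\int_M \partial\bar\alpha_1 \wedge T = 0$, and similarly $\int_M \bar\partial\alpha_1 \wedge T = 0$. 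Hence $\int_M \omega_0 \wedge T = \int_M \omega_I \wedge T > 0$ by the pointwise strict positivity of $\omega_I$ together with the positivity and non-triviality of $T$.

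For the third step, pick any K\"ahler form $\omega$ in the K\"ahler class $[\omega_0]$. Then $\omega - \omega_0 = i\partial\bar\partial f$ for some real function $f$ by another application of the $\partial\bar\partial$-lemma, and setting $\alpha_2 := -\tfrac{i}{2}\partial f$ one verifies directly that $i\partial\bar\partial f = \partial\bar\alpha_2 + \bar\partial\alpha_2$. Combining with the identity from the first step gives $\omega_I - \omega = \partial(\bar\alpha_1 - \bar\alpha_2) + \bar\partial(\alpha_1 - \alpha_2)$, so that $\alpha := \alpha_1 - \alpha_2$ satisfies $\omega_I = \omega + \partial\bar\alpha + \bar\partial\alpha$, completing the proof. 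The main obstacle is the second step's appeal to the numerical/duality characterization of the K\"ahler cone; the remaining steps are essentially routine applications of the $\partial\bar\partial$-lemma and Stokes' theorem on currents.
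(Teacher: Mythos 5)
Your overall architecture is sound and Steps 1 and 3 are correct: producing a closed $(1,1)$-form $\omega_0$ in the Aeppli class of $\omega_I$ by applying the $\partial\overline{\partial}$-lemma to $\partial\omega_I$ (which is $\partial$-exact and $d$-closed by pluriclosedness) is a clean alternative to the paper's route, which instead first produces a closed $2$-form of mixed type taming $I$ (via Lemma \ref{pluriclosed-to-symplectic}) and then corrects its harmonic $(2,0)+(0,2)$ part. The problem is Step 2. The criterion you invoke --- that a real $(1,1)$-class on a compact K\"ahler manifold is K\"ahler as soon as it pairs strictly positively with every nonzero closed positive $(n-1,n-1)$-current --- is \emph{not} the statement of the Demailly--P\u{a}un theorem. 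Their criterion (the one the paper cites, \cite{DP} Theorem 4.2) requires $\int_A(\alpha+t[\omega_{\rm Kah}])^p>0$ for every irreducible analytic subset $A$ of every dimension $p$ and every $t\ge 0$. The current-duality statement you use is a separate cone-duality assertion (the hard direction being ``positive pairing $\Rightarrow$ K\"ahler''); it is Lamari's theorem on surfaces, but in higher dimensions it does not follow from a soft Hahn--Banach argument (that would only give tests against positive currents that are components of boundaries, not closed ones) and needs a precise reference or its own proof. As written, this is the load-bearing step and it is unjustified.

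The gap is not merely one of citation, because your argument does not convert into a verification of the actual Demailly--P\u{a}un condition: pairing against $(n-1,n-1)$-currents only controls the ``codimension-one'' tests, and says nothing about $\int_A\omega_0^p$ for analytic subsets of intermediate dimension $1<p<n$. For those one would need $\omega_0^{p-1}\wedge[A]$ to be a positive current, which is unavailable since your representative $\omega_0=\omega_I-\partial\bar{\alpha}_1-\overline{\partial}\alpha_1$ differs from $\omega_I$ by a $(1,1)$-form and hence need not be positive anywhere. This is precisely the point the paper's proof is engineered around: its closed representative $\psi$ is \emph{not} of pure type but is a taming symplectic form with $\psi^{1,1}=\omega_I>0$, so $(\psi+t\omega_{\rm Kah})^p$ restricts to a strictly positive measure on the regular part of any $p$-dimensional analytic subset, and the Demailly--P\u{a}un condition is verified for all $p$ at once. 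To repair your proof, either supply a reference establishing the duality between the K\"ahler cone and the cone of closed positive $(n-1,n-1)$-currents in arbitrary dimension, or abandon the pure-type representative in Step 1 in favor of a taming closed $2$-form and run the analytic-subset criterion as the paper does.
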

\begin{proof} By Lemma~\ref{pluriclosed-to-symplectic}, $\gw_I$ is tamed by a
symplectic $2$-form $\psi$. Let $g'$ be any K\"ahler metric on $(M, I)$. By the
Hodge theorem, the $g'$-harmonic part $\psi_H$ of the closed $2$-form $\psi$
decomposes as the sum $(\psi_H)^{1,1} + (\psi_H)^{(2,0)+(0,2)}$ of harmonic
$2$-forms of type $(1,1)$ and $(2,0) + (0,2)$, respectively. Letting $\psi':=
\psi - (\psi_H)^{(2,0)+(0,2)}$ we obtain a new closed $2$-form with
$(\psi')^{1,1}= \psi^{1,1}= \gw_I$ and whose $g'$-harmonic part is of type
$(1,1)$.  We write $\psi$ instead of $\psi'$ and  we thus have shown that
$\gw_I$ is the $(1,1)$ part of a closed $2$-form $\psi$ which determines a
deRham class $[\psi] \in H^{1,1}(M, I)$. We want to prove that this class
contains a K\"ahler form $\gw$, since then writing $\psi = \gw + d\alpha$ and
taking $(1,1)$ part concludes the proof.

To this end, we use the deep result of Demailly-Paun (\cite{DP} Theorem 4.2),
which states that on a compact K\"ahler manifold $(M, I, \omega_{\mbox{\tiny
Kah}})$, a deRham class $\ga \in
H^{1,1}(M,I)$ is a K\"ahler class if and only if for every irreducible analytic
set $A\subset M$,  ${\rm dim}_{\mathbb C} A=p$, and every $t\ge 0$, one has
\begin{align} \label{ptk10}
\int_A (\alpha + t[\omega_{\mbox{\tiny Kah}}])^p >0.
\end{align}
In our situation, $\alpha = [\psi]$ is represented by an $I$-taming symplectic
$2$-form, and therefore so is any class $\alpha + t[\omega_{\mbox{\tiny Kah}}]$
for $t>0$.
Indeed,  the $(1,1)$ part of $\psi + t \omega_{\mbox{\tiny Kah}}$ is $\gw_I + t
\omega_{\mbox{\tiny Kah}} >0$.  It thus follows that $(\psi + t
\omega_{\mbox{\tiny Kah}})^p$ defines a strictly positive measure on the regular
part $A_{\rm reg}$ of $A$, thus showing that (\ref{ptk10}) holds.  Note that we
have used Lelong's theorem to define the integral $\int_A (\psi+ t
\omega_{\mbox{\tiny Kah}})^p$, see
e.g. (\cite{chirka} Chapter 4), and more specifically to say that the integral 
of $ (\psi+ t \omega_{\mbox{\tiny Kah}})^p$ over the regular part of $A$ exists
and is independent of the
choice of representative of $\ga + t[\omega_{\mbox{\tiny Kah}}]$.
\end{proof}

We now describe our reduction of the generalized K\"ahler-Ricci flow in the
$I$-fixed gauge.  Fix $(M^{4n}, I)$ a K\"ahler, holomorphic symplectic 
manifold, and suppose
$(M^{4n}, g, I, J)$ is some nondegenerate generalized K\"ahler structure.  By
Proposition~\ref{pluriclosed-to-kahler} we can choose $\ga \in
\wedge^{1,0}(M,I)$ such that
\begin{align*}
\gw_I = \gw_{\mbox{\tiny HK}} + \del \bga + \delb \ga,
\end{align*}
where $\gw_{\mbox{\tiny HK}}$ denotes the K\"ahler form of the  hyper-K\"ahler
metric on
$(M^{4n}, I)$ in the Aeppli class of $\omega_I$.  We will always normalize the
initial data for our flow in this
way without further comment.

\begin{lemma} \label{oneformreduction} Let $(M, I, g_t)$ be a
solution to pluriclosed flow, and suppose $\ga_t \in \wedge^{1,0}(M,I)$
satisfies
\begin{gather} \label{alphaflow}
\begin{split}
\dt \ga =&\ \delb^*_{\gw_t} \gw_t - \frac{{\i}}{2}\del \log \tfrac{\det
g_t}{\det
g_{\mbox{\tiny HK}}}\\
\ga(0) =&\ \ga_0,
\end{split}
\end{gather}
then the one-parameter family of pluriclosed metrics $\gw_{\ga} =
\gw_{\mbox{\tiny HK}}
+
\delb
\ga + \del \bga$ is the given solution to pluriclosed flow.
\begin{proof} This is a simple modification of (\cite{SBIPCF} Lemma 3.2).
\end{proof}
\end{lemma}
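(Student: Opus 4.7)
The plan is to verify that $\omega_\alpha(t) := \omega_{\mbox{\tiny HK}} + \bar\partial \alpha_t + \partial \bar\alpha_t$ satisfies both the same initial condition and the same evolution equation as $\omega_t$, and then to conclude $\omega_\alpha \equiv \omega_t$ by uniqueness of solutions of pluriclosed flow. This mimics the strategy of \cite{SBIPCF}~Lemma 3.2 for the reduced pluriclosed flow, with the substantial simplification that our background Hermitian datum is a hyper-K\"ahler structure, for which the background Ricci and torsion terms that appear in the general formula all vanish.

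First I would differentiate $\omega_\alpha$ in $t$, using that $\omega_{\mbox{\tiny HK}}$ is time independent, to get $\partial_t \omega_\alpha = \bar\partial(\partial_t \alpha) + \partial(\overline{\partial_t \alpha})$. Substituting the prescribed ODE (\ref{alphaflow}) and using $\bar\partial \partial = -\partial \bar\partial$ on real-valued functions to combine the scalar terms, I obtain
\begin{equation*}
\partial_t \omega_\alpha = \partial\, \partial^*_{\omega_t}\omega_t + \bar\partial\, \bar\partial^*_{\omega_t}\omega_t + \sqrt{-1}\,\partial\bar\partial \log \frac{\det g_t}{\det g_{\mbox{\tiny HK}}}.
\end{equation*}
Because $g_{\mbox{\tiny HK}}$ is hyper-K\"ahler, it is in particular Chern-Ricci flat, so $\sqrt{-1}\,\partial\bar\partial \log \det g_{\mbox{\tiny HK}} = 0$ and the background term drops out.

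The second ingredient is the classical identity from the pluriclosed flow literature (cf.\ \cite{ST2}) expressing the $(1,1)$-part of the Bismut Ricci form as an elliptic expression in $\omega$:
\begin{equation*}
-2(\rho_B)^{1,1}(\omega) = \partial\, \partial^*_{\omega}\omega + \bar\partial\, \bar\partial^*_{\omega}\omega + \sqrt{-1}\,\partial\bar\partial \log \det g.
\end{equation*}
Combining this with the previous computation gives $\partial_t \omega_\alpha = -2(\rho_B)^{1,1}(\omega_t) = \partial_t \omega_t$. Together with the initial condition $\omega_\alpha(0) = \omega_{\mbox{\tiny HK}} + \bar\partial\alpha_0 + \partial\bar\alpha_0 = \omega_0$ coming from the normalization provided by Proposition \ref{pluriclosed-to-kahler}, integration in $t$ yields $\omega_\alpha \equiv \omega_t$.

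The one delicate point is the identity for $-2(\rho_B)^{1,1}$ in the second step, whose proof rests on Gauduchon's relation $\rho_B = \rho_C + dJ\theta$ between the Chern and Bismut Ricci forms and on standard integration-by-parts formulas for the $\partial^*$, $\bar\partial^*$ adjoints of a Hermitian metric. This is a known ingredient from the theory of pluriclosed flow, so the main remaining task is simply to track signs and types accurately through the short computation above; no further analytic input is required.
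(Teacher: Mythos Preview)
Your proposal is correct and is precisely the argument the paper defers to: the paper's proof consists solely of the citation ``This is a simple modification of (\cite{SBIPCF} Lemma 3.2),'' and what you have written is exactly that modification, with the hyper-K\"ahler background killing the Chern--Ricci term of $g_{\mbox{\tiny HK}}$. Your final remark about integrating in $t$ is the right conclusion---you don't actually need uniqueness of pluriclosed flow here, since once you show $\partial_t\omega_\alpha = \partial_t\omega_t$ you are just integrating an identity.
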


We note that the natural local decomposition of a pluriclosed metric as $\gw =
\gw_{\mbox{\tiny HK}}+
\del \bga +
\delb \ga$ is not canonical, as one may observe that $\ga + \del f$ describes
the same K\"ahler form for $f \in C^{\infty}(M, \mathbb R)$.  Because of this
``gauge-invariance,'' the equation (\ref{alphaflow}) is not parabolic, and admits
large families of equivalent solutions.  In \cite{SBIPCF} the second author
resolved this
ambiguity by giving a further reduced description of (\ref{alphaflow}) which is
parabolic.  In particular, as exhibited in (\cite{SBIPCF} Proposition 3.9) in
the case when the background metric is fixed and K\"ahler, if one has a family
of
functions $f_t$ and $(1,0)$-forms $\gb_t$ which satisfy
\begin{gather} \label{decflow}
\begin{split}
\dt \gb =&\ \gD^C_{g_t} \gb - T_{g_t} \circ \delb \gb\\
\dt f =&\ \gD^C_{g_t} f + \tr_{g_t} g_{\mbox{\tiny HK}} + \log \tfrac{\det
g_t}{\det g_{\mbox{\tiny HK}}}\\
\ga_0 =&\ \gb_0 - \i \del f_0,
\end{split}
\end{gather}
then $\ga_t := \gb_t - \i \del f_t$ is a solution to (\ref{alphaflow}).  The
term $T \circ \delb \gb$ is defined by
\begin{align} \label{Tterm}
(T \circ \delb \gb)_i = g^{\bl k} g^{\bq p} T_{ik\bq} \N_{\bl} \gb_p.
\end{align}

\subsection{Evolution equations}

In this subsection we record several evolution
equations and a priori
estimates directly associated to a solution to nondegenerate generalized
K\"ahler-Ricci flow.
We will in places refer to a solution to (\ref{decflow}), assuming the setup of
\S \ref{s:reducedflow}.

\begin{lemma} \label{dtfevlemma} Given a
solution to (\ref{decflow})
as above, one has
\begin{align*}
 \left( \dt - \gD^C_{g_t} \right) \tfrac{\del f}{\del t} =&\ \IP{\tfrac{\del
g}{\del t}, \delb \gb + \del \bgb}.
\end{align*}
\begin{proof}  This follows directly from the calculation of (\cite{SNDG}
Proposition 4.10), which we record here for convenience.
 \begin{align*}
 \dt \tfrac{\del f}{\del t} =&\ \tfrac{\del}{\del t} \left[ n - \tr_{g_t} \left(
\delb \gb + \del \bar{\gb}
\right) + \log \tfrac{\det g_t}{\det g_{\mbox{\tiny HK}}} \right]\\
 =&\ \IP{\tfrac{\del g}{\del t}, \delb \gb + \del \bar{\gb}} - \tr_{g_t} \left[
\dt \left( \delb \gb + \del \bar{\gb} \right) \right] + \tr_{g_t} \tfrac{\del
g}{\del t}\\
 =&\ \IP{\tfrac{\del g}{\del t}, \delb \gb + \del \bar{\gb}} + \tr_{g_t}
\del\delb \tfrac{\del f}{\del t}\\
 =&\ \gD_{g_t}^C f_t + \IP{\tfrac{\del g}{\del t}, \delb \gb + \del \bar{\gb}},
\end{align*}
as required.
\end{proof}
\end{lemma}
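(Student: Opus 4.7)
The plan is to differentiate the scalar evolution equation for $f$ in (\ref{decflow}) directly with respect to $t$, and then organize the resulting terms so as to isolate the Chern Laplacian of $\tfrac{\del f}{\del t}$. Morally this parallels the K\"ahler-Ricci flow computation showing that $\dot\phi$ satisfies a pure heat equation along the flow; the new feature here is that the ``potential'' $\ga = \gb - \i \del f$ has a nontrivial $(1,0)$-piece $\gb$ that itself evolves, which is precisely what will produce the source term $\IP{\tfrac{\del g}{\del t},\del\bar\gb + \delb\gb}$ on the right-hand side.

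Concretely I would proceed in two steps. First, I would recast the $f$-equation in (\ref{decflow}) using the potential decomposition $\gw_I = \gw_{\mbox{\tiny HK}} + \del \bar\ga + \delb \ga$: tracing with $g_t$ and using $\tr_{g_t}\gw_I = n$ together with $\ga = \gb - \i\del f$ allows one to replace $\gD_{g_t} f + \tr_{g_t} \gw_{\mbox{\tiny HK}}$ by $n - \tr_{g_t}(\del\bar\gb + \delb\gb)$, giving
\[
\tfrac{\del f}{\del t} \;=\; n \;-\; \tr_{g_t}\bigl(\del\bar\gb + \delb \gb\bigr) \;+\; \log\tfrac{\det g_t}{\det g_{\mbox{\tiny HK}}}.
\]
Second, I would differentiate this identity in $t$. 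Three contributions appear: the variation of the inverse metric inside $\tr_{g_t}$ produces $\IP{\tfrac{\del g}{\del t},\,\del\bar\gb + \delb\gb}$; the variation of $\gb$ produces $-\tr_{g_t}\tfrac{\del}{\del t}(\del\bar\gb + \delb\gb)$; and the variation of $\log\det g_t$ produces $\tr_{g_t}\tfrac{\del g}{\del t}$. The latter two combine cleanly because differentiating the potential decomposition gives $\tfrac{\del g}{\del t} = \tfrac{\del}{\del t}(\del\bar\gb + \delb\gb) + 2\i \del\delb\, \tfrac{\del f}{\del t}$, so their sum collapses to $\tr_{g_t}\bigl(2\i\del\delb\, \tfrac{\del f}{\del t}\bigr)$, which is exactly $\gD^C_{g_t} \tfrac{\del f}{\del t}$.

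The main obstacle I expect is purely bookkeeping: tracking the sign and factor conventions associated with $\ga = \gb - \i \del f$, and verifying that $\tr_{g_t}(2\i \del\delb u)$ agrees precisely with the Chern Laplacian $\gD^C_{g_t} u = \gD_{g_t} u - \IP{du,\theta_I}$ as defined in the paper, so that no residual Lee-form terms survive. Everything else is direct differentiation, using the consistency from Lemma \ref{oneformreduction} identifying the $\ga$-flow with pluriclosed flow.
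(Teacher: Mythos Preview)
Your proposal is correct and follows essentially the same route as the paper: rewrite $\tfrac{\del f}{\del t}$ via the potential decomposition as $n - \tr_{g_t}(\delb\gb + \del\bar\gb) + \log\tfrac{\det g_t}{\det g_{\mbox{\tiny HK}}}$, differentiate in $t$, and use $\tfrac{\del \gw_I}{\del t} = \tfrac{\del}{\del t}(\delb\gb + \del\bar\gb) + 2\i\del\delb\,\tfrac{\del f}{\del t}$ to collapse the last two terms to $\gD^C_{g_t}\tfrac{\del f}{\del t}$. The only caveat, which you already anticipate, is to match the paper's conventions so that $\tr_{g_t}(2\i\del\delb\,u)$ is identified with $\gD^C_{g_t} u$ (the paper simply writes $\tr_{g_t}\del\delb\,\tfrac{\del f}{\del t}$ for this quantity).
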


\begin{lemma} \label{betaevlemma} Given a
solution to (\ref{decflow})
as above, one has
\begin{align} \label{betanormev}
\left( \dt - \gD^C_{g_t} \right) \brs{\gb}^2 =&\ - \brs{\N \gb}^2 -
\brs{\bar{\N} \gb}^2 -
\IP{Q, \gb \otimes \bar{\gb}} + 2 \Re \IP{\gb, T \circ \delb \gb},
\end{align}
where
\begin{align} \label{Qdef}
Q_{i \bj} = g^{\bl k} g^{\bq p} T_{i k \bq} T_{\bj \bl p}.
\end{align}
\begin{proof} This is a simple modification of (\cite{SBIPCF} Proposition 4.4).
\end{proof}
\end{lemma}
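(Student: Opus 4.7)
\medskip

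\noindent\textbf{Proof proposal.}  The plan is to apply the heat operator $\partial_t - \Delta^C_{g_t}$ to the scalar $|\gb|^2 = g^{\bj i}\gb_i \bgb_{\bj}$ and expand both pieces by the Leibniz rule, then match up the curvature contributions. Since the Chern connection satisfies $\N^C g^{-1} = 0$, the spatial Laplacian passes through the contraction:
\begin{equation*}
\Delta^C |\gb|^2 \;=\; g^{\bj i} g^{\bl k}\, \N^C_k \N^C_{\bl} \bigl(\gb_i \bgb_{\bj}\bigr),
\end{equation*}
and distributing the two derivatives by Leibniz produces exactly the two gradient-squared terms $|\N\gb|^2 + |\bar\N\gb|^2$ together with the pairings $\langle \gb, \Delta^C\gb\rangle + \langle \Delta^C\gb, \gb\rangle$.

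On the time side, I would expand
\begin{equation*}
\partial_t |\gb|^2 \;=\; (\partial_t g^{\bj i})\gb_i \bgb_{\bj} \;+\; 2\,\mathrm{Re}\,\bigl[\, g^{\bj i}(\partial_t \gb_i)\, \bgb_{\bj}\,\bigr],
\end{equation*}
substitute the metric evolution $\partial_t g_{i\bj} = -2(\rho_B)^{1,1}_{i\bj}$ coming from \eqref{PCF}, and substitute the $\gb$-evolution from \eqref{decflow}. The flow contributes an immediate $2\,\mathrm{Re}\,\langle \gb, \Delta\gb\rangle - 2\,\mathrm{Re}\,\langle \gb, T\circ\bpart\gb\rangle$; the last of these is precisely the $+2\,\mathrm{Re}\,\langle \gb, T\circ\bpart\gb\rangle$ appearing on the right-hand side after moving it across the subtraction.

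Subtracting the two expansions then reduces the whole statement to the identity
\begin{equation*}
2\,\mathrm{Re}\,\langle \gb, \Delta \gb\rangle \;-\; \langle \gb, \Delta^C\gb\rangle \;-\; \langle \Delta^C\gb,\gb\rangle \;+\; (\partial_t g^{\bj i})\gb_i\bgb_{\bj} \;=\; -\,\langle Q, \gb\otimes\bgb\rangle,
\end{equation*}
which is precisely a statement about how the Chern Laplacian differs from the Laplacian used in \eqref{decflow}, combined with the Chern--Bismut Ricci identity $(\rho_B)^{1,1} = \rho_C - Q$ applied to the $\partial_t g^{\bj i}$ term. This is where the tensor $Q$ from \eqref{Qdef} emerges: the commutator $[\N^C_k, \N^C_{\bl}]\gb$ is Chern curvature, while the metric-evolution contribution is Bismut Ricci, and the mismatch between the two is exactly $Q$ contracted with $\gb\otimes \bgb$.

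The main obstacle is the index bookkeeping in this final cancellation: one must carefully identify the Laplacian in \eqref{decflow} (given the specific gauge choice producing the $-T\circ\bpart\gb$ term), track the torsion terms arising when commuting $\N^C$ derivatives on a $(1,0)$-form, and verify that the Bismut--Chern Ricci identity combines these into precisely the tensor $Q_{i\bj} = g^{\bl k}g^{\bq p} T_{ik\bq}T_{\bj\bl p}$ with the correct sign. Once this is done, the proof reduces to transcribing the calculation in \cite{SBIPCF}~Proposition~4.4; the only modification needed in our setting is the replacement of the reference Kähler background used there by the hyper-Kähler metric $g_{\mbox{\tiny HK}}$, which enters only through the definition of $\gb$ via Proposition~\ref{pluriclosed-to-kahler} and does not affect the local pointwise calculation.
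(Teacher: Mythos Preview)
Your approach is correct and coincides with the paper's: both simply defer to the pointwise Leibniz computation of \cite{SBIPCF}~Proposition~4.4, and your remark that the only modification is the choice of K\"ahler background (which does not enter the local identity) is exactly the content of the paper's one-line proof. One small caution on the bookkeeping you already flag: the phrase ``after moving it across the subtraction'' does not account for the sign correctly---the $-T\circ\bar\partial\beta$ term in $\partial_t\beta$ contributes $-2\,\mathrm{Re}\,\langle\beta,T\circ\bar\partial\beta\rangle$ to $\partial_t|\beta|^2$, so the $+$ sign in \eqref{betanormev} must emerge from the comparison between the Laplacian $\Delta_{g_t}$ in \eqref{decflow} and the Chern Laplacian $\Delta^C_{g_t}$ on the left, as part of the same torsion accounting that produces $Q$; track those two together rather than separately.
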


\begin{cor} \label{betaestcor} Given a solution to
(\ref{decflow}) as above one has
\begin{align} \label{bnormev2}
\left( \dt - \gD^C_{g_t} \right) \brs{\gb}^2 \leq&\ - \brs{\N \gb}^2.
\end{align}
In particular, one has
\begin{align} \label{best}
\sup_M \brs{\gb_t}^2_{g_t} \leq \sup_M \brs{\gb_0}^2_{g_0}.
\end{align}
\begin{proof} A similar inequality and estimate was claimed in (\cite{SBIPCF}
Corollary 4.5) in the case $n=1$.  The central point is to obtain an inequality
bounding the final inner product term in (\ref{betanormev}).  In general we see
using (\ref{Tterm}) that in fact
\begin{align*}
2 \Re \IP{\gb, T \circ \delb \gb} =&\ 2 \Re \left( g^{\bj i} \gb_{\bj} \left(
g^{\bl k} g^{\bq p} T_{i k \bq} \N_{\bl} \gb_{p} \right) \right)\\
=&\ 2 \Re \left[ g^{\bl k} g^{\bq p} \left( g^{\bj i} \gb_{\bj} T_{i k \bq}
\right) \left( \N_{\bl} \gb_p \right) \right]\\
=&\ 2 \Re \IP{\gb^{\sharp} \hook T, \bar{\N} \gb}.
\end{align*}
Also we note using (\ref{Qdef}) that
\begin{align*}
\IP{Q, \gb \otimes \bar{\gb}} =&\ g^{\bk i} g^{\bj l} Q_{i \bj} \gb_{\bk}
\gb_l\\
=&\ g^{\bk i} g^{\bj l} \left( g^{\bq p} g^{\bs r} T_{i p \bs} T_{\bj \bq r}
\right) \gb_{\bk} \gb_l\\
=&\ g^{\bq p} g^{\bs r} \left( g^{\bk i} \gb_{\bk} T_{i p \bs} \right) \left(
g^{\bj l} \gb_l T_{\bj \bq r} \right)\\
=&\ \brs{\gb^{\sharp} \hook T}^2.
\end{align*}
Using these calculations we see by the Cauchy-Schwarz inequality that
\begin{align*}
\dt \brs{\gb}^2 =&\ \gD^C \brs{\gb}^2 - \brs{\N \gb}^2 - \brs{\bar{\N} \gb}^2 -
\IP{Q, \gb \otimes \bar{\gb}} + 2 \Re \IP{\gb, T \circ \delb \gb}\\
=&\ \gD^C \brs{\gb}^2 - \brs{\N \gb}^2 - \brs{\bar{\N} \gb}^2 -
\brs{\gb^{\sharp} \hook T}^2 + 2 \Re \IP{\bar{\N} \gb, \gb^{\sharp} \hook T}\\
\leq&\ \gD^C \brs{\gb}^2 - \brs{\N \gb}^2,
\end{align*}
as required.  The
estimate (\ref{best}) now follows directly from the maximum principle.
\end{proof}
\end{cor}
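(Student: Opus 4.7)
The plan is to start from the evolution identity established in Lemma \ref{betaevlemma} and recognize that the two torsion-induced algebraic terms on the right-hand side combine into a perfect square, leaving a clean differential inequality to which the parabolic maximum principle applies.

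Concretely, the identity
$$\left(\dt - \gD^C_{g_t}\right)\brs{\gb}^2 = -\brs{\N\gb}^2 - \brs{\bar{\N}\gb}^2 - \IP{Q, \gb \otimes \bgb} + 2\Re\IP{\gb, T \circ \delb\gb}$$
is our starting point. I would first unpack the two non-derivative terms in indices. Using (\ref{Qdef}), a direct computation shows
$$\IP{Q, \gb \otimes \bgb} = g^{\bk i}g^{\bj l}\bigl(g^{\bq p}g^{\bs r}T_{ip\bs}T_{\bj\bq r}\bigr)\gb_{\bk}\gb_l = \brs{\gb^{\sharp} \hook T}^2,$$
and using (\ref{Tterm}), one rewrites
$$2\Re\IP{\gb, T \circ \delb\gb} = 2\Re\IP{\gb^{\sharp} \hook T, \bar{\N}\gb}.$$
These two algebraic identities are the key observations; both are routine index manipulations but they are what make the rest of the argument work.

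With them in hand, a single application of Cauchy--Schwarz (completing the square) gives
$$-\brs{\gb^{\sharp}\hook T}^2 + 2\Re\IP{\gb^{\sharp} \hook T, \bar{\N}\gb} \leq \brs{\bar{\N}\gb}^2,$$
and this exactly cancels the $-\brs{\bar{\N}\gb}^2$ term in the evolution above, yielding
$$\left(\dt - \gD^C_{g_t}\right)\brs{\gb}^2 \leq -\brs{\N\gb}^2.$$
Since $\gD^C$ differs from the Riemannian Laplacian $\gD$ only by a first-order drift along the Lee vector field, and this drift vanishes at a spatial maximum of $\brs{\gb}^2$, the parabolic maximum principle applied to this differential inequality delivers (\ref{best}).

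The main obstacle, and really the whole content of the proof, is the algebraic recognition that $\IP{Q, \gb \otimes \bgb}$ is itself the perfect square $\brs{\gb^{\sharp} \hook T}^2$ — the very object that pairs with $\bar{\N}\gb$ in the cross term. Without this structural match, the signs of the torsion terms would be indeterminate and no maximum principle would be available. Once this is noticed, the rest is a one-line Cauchy--Schwarz and the standard scalar maximum principle.
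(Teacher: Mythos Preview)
Your proposal is correct and follows essentially the same route as the paper's own proof: both start from the evolution identity of Lemma~\ref{betaevlemma}, rewrite $\IP{Q,\gb\otimes\bgb}=\brs{\gb^{\sharp}\hook T}^2$ and $2\Re\IP{\gb,T\circ\delb\gb}=2\Re\IP{\gb^{\sharp}\hook T,\bar\N\gb}$, absorb the cross term by Cauchy--Schwarz against $-\brs{\bar\N\gb}^2-\brs{\gb^{\sharp}\hook T}^2$, and finish with the maximum principle. Your explicit remark that the drift $\gD^C-\gD$ is first order and hence irrelevant at a spatial maximum is a welcome clarification the paper leaves implicit.
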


\begin{lemma} \label{delalphaevlemma} Given a solution to (\ref{alphaflow}) as
above, one has
\begin{align} \label{delalphaev}
\left(\dt - \gD_{g_t}^C \right) \brs{\del \ga}^2 =&\ - \brs{\N \del \ga}^2 -
\brs{T_{g_t}}^2 - 2
\IP{Q, \del \ga \otimes \delb \bga}.
\end{align}
In particular,
\begin{align} \label{delalphaest}
\sup_{M} \brs{\del \ga_t}^2_{g_t} \leq \sup_{M} \brs{\del \ga_0}^2_{g_0}.
\end{align}
\begin{proof} It follows from the proof of (\cite{SBIPCF} Proposition 4.9), with
$\hat{g} = g_{\mbox{\tiny HK}}$ a K\"ahler metric, and $\mu = 0$, that
\begin{align*}
\dt \del \ga =&\ \gD^C_{g_{\ga}} \del \ga - \tr_{g_{\ga}} \N^{g_{\ga}}
T_{\hat{g}}= \gD^C_{g_{\ga}} \del \ga.
\end{align*}
Equation (\ref{delalphaev}) now follows from (\cite{SPCFSTB} Lemma 4.7).  The
estimate
(\ref{delalphaest}) follows directly from the maximum principle.
\end{proof}
\end{lemma}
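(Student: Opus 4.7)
The plan is to derive the tensorial evolution of $\del \ga$ first, apply a Bochner-type identity to obtain the scalar equation (\ref{delalphaev}), and conclude the sup estimate via the maximum principle. By Lemma \ref{oneformreduction}, $\ga_t$ evolves by the reduced flow (\ref{alphaflow}). Applying $\del$ to both sides, the contribution of $\tfrac{\i}{2}\del \log(\det g_t / \det g_{\mbox{\tiny HK}})$ is annihilated by $\del^2 = 0$, and the remaining term $\del \delb^*_{\gw_t} \gw_t$ can be rearranged, following the calculation in the proof of \cite{SBIPCF} Proposition 4.9, into $\gD_{g_\ga} \del \ga$ up to correction terms of the form $\tr_{g_\ga} \N^{g_\ga} T_{\hat g}$ that involve the Chern torsion of the background Hermitian metric. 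The crucial simplification afforded by working on a hyper-K\"ahler background is that $\hat g = g_{\mbox{\tiny HK}}$ is K\"ahler, so its Chern torsion $T_{\hat g}$ vanishes identically, these correction terms drop out entirely, and we obtain the clean evolution $\dt \del \ga = \gD_{g_\ga} \del \ga$.

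Given this tensorial evolution, the computation of $\dt |\del \ga|^2_{g_t}$ reduces to a standard Bochner-Weitzenb\"ock calculation on a Hermitian manifold, previously worked out by the second author in \cite{SPCFSTB} Lemma 4.7. Commuting the Chern Laplacian past the pointwise norm produces the $-|\N \del \ga|^2$ term; the Hermitian curvature of $g_t$ and the time-dependence of the metric together account for the torsion term $-|T_{g_t}|^2$ and the cross term $-2\IP{Q, \del \ga \otimes \delb \bga}$ appearing in (\ref{delalphaev}). These computations are intricate but purely formal once the index conventions of the Chern connection are fixed.

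For the supremum estimate (\ref{delalphaest}), the key observation is that the Hermitian tensor $Q_{i\bar j} = g^{\bar l k} g^{\bar q p} T_{ik\bar q} T_{\bar j \bar l p}$ is manifestly positive-semidefinite, being a Hermitian square of $T$, and the natural pairing $\IP{Q, \del \ga \otimes \delb \bga}$ is therefore nonnegative. Combined with the other two manifestly nonpositive terms on the right-hand side of (\ref{delalphaev}), this yields $(\dt - \gD_{g_t}^C)|\del \ga|^2 \le 0$, and the weak maximum principle gives the claimed bound. The principal obstacle is the first step: the careful verification that the torsion correction terms in the tensorial evolution of $\del \ga$ vanish in the K\"ahler background case requires tracking several curvature and torsion identities simultaneously, but is by now well-established in \cite{SBIPCF}.
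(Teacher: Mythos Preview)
Your proposal is correct and follows essentially the same route as the paper's proof: both derive $\dt\del\ga = \gD_{g_\ga}\del\ga$ by invoking \cite{SBIPCF} Proposition~4.9 with the K\"ahler background $\hat g = g_{\mbox{\tiny HK}}$ (so that $T_{\hat g}=0$ kills the correction term), then cite \cite{SPCFSTB} Lemma~4.7 for the Bochner formula, and conclude via the maximum principle. Your additional remark that $Q$ is positive semidefinite, making the right-hand side of (\ref{delalphaev}) nonpositive, is exactly the observation the paper leaves implicit when it says the estimate ``follows directly from the maximum principle.''
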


Next we record a few basic evolution equations associated to solutions of
pluriclosed flow.

\begin{lemma} \label{volumeformev} Let $(M^{2m}, I, g_t)$ be a solution to
pluriclosed flow, and suppose $h$ is another Hermitian metric on $(M,I)$.  Then
\begin{align*}
\left(\dt - \gD_{g_t}^C \right) \log \tfrac{\det g_t}{\det h} = \brs{T}^2 -
\tr_g
\rho_C(h).
\end{align*}
\end{lemma}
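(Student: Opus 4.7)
The plan is a direct computation combining three ingredients: the pluriclosed flow equation, the $\i\del\delb$-description of the Chern-Ricci form, and the Gauduchon identity relating the Bismut and Chern scalar curvatures on a pluriclosed Hermitian manifold.

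First I would compute the time derivative. Since $h$ is fixed, $\dt \log(\det g_t/\det h) = g_t^{\bj i}\dt (g_t)_{i\bj}$, and tracing the pluriclosed flow equation $\dt \omega_t = -2(\rho_B)^{1,1}$ yields $\dt \log \det g_t = -s_B(g_t)$, where the trace is normalized so that $\tr_g \rho = 2\langle \rho, \omega\rangle_g = s$ is the associated scalar curvature.

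Next I would compute the Chern Laplacian term. Using $\rho_C(k) = -\i\del\delb \log \det k$ for any Hermitian metric $k$, together with $dd^c f = 2\i \del\delb f$, one obtains $dd^c \log(\det g_t/\det h) = -2\rho_C(g_t) + 2\rho_C(h)$. Pairing against $\omega$ via $\gD^C_g f = \langle dd^c f, \omega\rangle_g$ then gives
\[
\gD^C_{g_t} \log\tfrac{\det g_t}{\det h} = -s_C(g_t) + \tr_{g_t}\rho_C(h).
\]
Subtracting produces $(\dt - \gD^C_{g_t})\log(\det g_t/\det h) = (s_C - s_B)(g_t) - \tr_{g_t}\rho_C(h)$.

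The final step is the Gauduchon-type identity $s_C - s_B = |T|^2$, which is immediate from $\rho_B = \rho_C + dI\theta_I$ (equation \eqref{ricci-relation}), combined with the pluriclosed identity $|d\omega|^2 = \delta \theta + |\theta|^2$ of \eqref{bogdan-stefan-2}, under the convention $|T|^2 = |d\omega|^2$. Substituting gives the desired formula. I do not anticipate any substantial obstacle: the calculation is routine once the bookkeeping of signs and factors of $2$ between $\tr_g$, $\gD^C$, and the normalization of $|T|^2$ against $|d\omega|^2$ is settled. The only geometric input beyond the flow equation is the Gauduchon identity, which has already been invoked in the proof of Proposition~\ref{p:CY}.
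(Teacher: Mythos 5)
The paper states Lemma \ref{volumeformev} without proof (it is a standard computation from the pluriclosed flow literature, cf. \cite{ST2,ST3}), so there is no written argument to compare against; judged on its own merits, your proposal is correct and gives a legitimate derivation. Your route --- $\dt \log\det g_t = -s_B$, $\gD^C_{g_t}\log\tfrac{\det g_t}{\det h} = -s_C(g_t) + \tr_{g_t}\rho_C(h)$, and then the Gauduchon/Alexandrov--Ivanov identity $s_C - s_B = \delta\theta + \brs{\theta}^2 = \brs{d\gw}^2$ from \eqref{ricci-relation} and \eqref{bogdan-stefan-2} --- is a clean conceptual way to see where the torsion term comes from. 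The alternative, and the one implicit in the authors' earlier papers and in the proof of the companion Lemma \ref{traceev} here, is the direct coordinate computation: tracing $\dt g_{i\bj} = g^{\bq p}g_{i\bj,p\bq} - g^{\bq p}g^{\bs r}g_{i\bs,p}g_{r\bj,\bq} + Q_{i\bj}$ against $g^{\bj i}$ and comparing with $\gD^C\log\det g$ makes the first-derivative quadratic terms cancel and produces $\tr_g Q$ on the nose. The one place your argument leaves a genuine loose end is exactly the normalization you flag: the lemma's $\brs{T}^2$ must be the quantity $\tr_g Q = g^{\bj i}g^{\bl k}g^{\bq p}T_{ik\bq}T_{\bj\bl p}$ that reappears in Lemmas \ref{traceev}, \ref{traceineq} and \ref{betaevlemma}, and you need to verify that this coincides with $\brs{d\gw}^2$ in the convention under which \eqref{bogdan-stefan-2} holds (the paper's ``one-half the ${\rm End}(TM)$ inner product'' convention). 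That is a consistency check rather than a gap, but since the constant in front of $\brs{T}^2$ is used quantitatively later (e.g.\ in the choice of $A_1$ in Proposition \ref{lteprop}), it should be pinned down; the coordinate computation settles it automatically, whereas your scalar-curvature route requires it as a separate verification.
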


\begin{lemma} \label{traceev} Let $(M^{2n}, I, g_t)$ be a
solution to
pluriclosed flow, and let $h$ denote another Hermitian metric on $(M, I)$.  Then
\begin{align*}
\left(\dt - \gD^C_{g_t} \right) \tr_h g =&\ - \brs{\gU(g,h)}^2_{g^{-1},h^{-1},g}
+ \tr_h Q - g^{\bq p}
(\Omega^h)_{p \bq}^{\bl k} g_{k \bl},
\end{align*}
where $\gU(g,h) = \N^C_g - \N^C_h$ is the difference of the Chern connections
associated to $g$ and $h$.
\begin{proof} To begin we establish a general evolution equation for pluriclosed
flow which is implicit in (\cite{ST3} Proposition 2.4).  Using the formula for
pluriclosed flow in complex coordinates (cf. \cite{ST2} (1.3)) we obtain
\begin{align*}
 \dt \tr_h g =&\ \dt h^{\bj i} g_{i \bj} = h^{\bj i} \left[ g^{\bq p} g_{i
\bj,p\bq} - g^{\bq p}
g^{\bs r} g_{i \bs,p} g_{r \bj,\bq} + Q_{i \bj} \right].
\end{align*}
On the other hand
\begin{align*}
 \gD^C \tr_h g =&\ g^{\bq p} \left[ h^{\bj i} g_{i \bj} \right]_{,p\bq}\\
 =&\ g^{\bq p} \left[ - h^{\bj k} h_{k \bl,p} h^{\bl i} g_{i \bj} + h^{\bj i}
g_{i\bj,p} \right]_{,\bq}\\
 =&\ g^{\bq p} \left[ h^{\bj r} h_{r \bs,\bq} h^{\bs k} h_{k \bl,p} h^{\bl i}
g_{i \bj} - h^{\bj k} h_{k \bl,p\bq} h^{\bl i} g_{i \bj} + h^{\bj k} h_{k \bl,p}
h^{\bl r} h_{r\bs,\bq} h^{\bs i} g_{i \bj} \right.\\
&\ \qquad \left. - h^{\bj k} h_{k \bl,p} h^{\bl i} g_{i \bj,\bq} - h^{\bj k}
h_{k \bl,\bq} h^{\bl i} g_{i\bj,p} + h^{\bj i} g_{i \bj,p\bq} \right].
 \end{align*}
Combining the above calculations yields
\begin{align*}
 \left( \dt - \gD^C \right) \tr_h g =&\ - h^{\bj i} g^{\bq p}
g^{\bs r} g_{i \bs,p} g_{r \bj,\bq} + \tr_h Q\\
&\ - g^{\bq p} \left[ h^{\bj r} h_{r \bs,\bq} h^{\bs k} h_{k \bl,p} h^{\bl i}
g_{i \bj} - h^{\bj k} h_{k \bl,p\bq} h^{\bl i} g_{i \bj} + h^{\bj k} h_{k \bl,p}
h^{\bl r} h_{r\bs,\bq} h^{\bs i} g_{i \bj} \right.\\
&\ \qquad \left. - h^{\bj k} h_{k \bl,p} h^{\bl i} g_{i \bj,\bq} - h^{\bj k}
h_{k \bl,\bq} h^{\bl i} g_{i\bj,p} \right]\\
=&\ - \brs{\gU(g,h)}^2_{g^{-1},h^{-1},g} + \tr_h Q - g^{\bq p}
(\Omega^h)_{p \bq}^{\bl k} g_{k \bl},
\end{align*}
as required.
\end{proof}
\end{lemma}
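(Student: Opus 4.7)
The plan is a direct coordinate computation in local holomorphic coordinates. Writing $\tr_h g = h^{\bj i} g_{i\bj}$, and noting that $h$ is time-independent, the time derivative is simply $\dt \tr_h g = h^{\bj i} \dt g_{i\bj}$. I would then substitute the formula for pluriclosed flow in complex coordinates (cf.~\cite{ST2}),
\[
\dt g_{i\bj} = g^{\bq p} g_{i\bj,p\bq} - g^{\bq p} g^{\bs r} g_{i\bs,p} g_{r\bj,\bq} + Q_{i\bj},
\]
whose contraction with $h^{\bj i}$ produces a leading second-derivative piece $g^{\bq p} h^{\bj i} g_{i\bj,p\bq}$, a quartic-in-$g$ first-derivative piece $-h^{\bj i} g^{\bq p} g^{\bs r} g_{i\bs,p} g_{r\bj,\bq}$, and a clean $\tr_h Q$ contribution.

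For the Chern Laplacian term, I would use the holomorphic-coordinate expression $\gD^C_g f = g^{\bq p} f_{,p\bq}$ on functions, and expand $g^{\bq p}(h^{\bj i} g_{i\bj})_{,p\bq}$ by applying Leibniz twice together with the standard identity $(h^{\bj i})_{,p} = -h^{\bj k} h_{k\bl,p} h^{\bl i}$. The resulting expansion splits into five groups: the leading second-derivative-in-$g$ piece (which cancels identically against the corresponding term from $\dt$); two mixed first-derivative cross terms, each pairing one derivative of $h$ with one derivative of $g$; two cubic-in-$\N h$ terms of the form $g^{\bq p} h^{\bj r} h_{r\bs,\bq} h^{\bs k} h_{k\bl,p} h^{\bl i} g_{i\bj}$; and one pure second-derivative-of-$h$ term $-g^{\bq p} h^{\bj k} h_{k\bl,p\bq} h^{\bl i} g_{i\bj}$.

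The crux is then to regroup $(\dt - \gD^C_{g_t})\tr_h g$ into the three claimed pieces. The quartic-in-$g$ term together with the mixed cross terms and one of the cubic-$h$ contributions should reassemble as $-\brs{\gU(g,h)}^2_{g^{-1},h^{-1},g}$: this is precisely the coordinate expansion of the squared norm of $\gU^k_{ij} = g^{\bl k} g_{j\bl,i} - h^{\bl k} h_{j\bl,i}$, the difference of the Chern Christoffel symbols, weighted by the three indicated tensors. The remaining pure second-derivative-of-$h$ term together with the leftover cubic-$h$ term assembles into $-g^{\bq p}(\Omega^h)_{p\bq}^{\bl k} g_{k\bl}$ via the standard Chern curvature formula in holomorphic coordinates. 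No genuine analytic difficulty arises here; the main obstacle is the careful index bookkeeping needed to verify that the roughly half-dozen coordinate terms regroup precisely into $\brs{\gU}^2$ plus the curvature contraction, with no residue.
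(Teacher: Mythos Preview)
Your proposal is correct and follows essentially the same approach as the paper's own proof: both compute $\dt \tr_h g$ by contracting the coordinate form of pluriclosed flow with $h^{\bj i}$, expand $\gD^C_g \tr_h g = g^{\bq p}(h^{\bj i} g_{i\bj})_{,p\bq}$ via Leibniz and the identity $(h^{\bj i})_{,p} = -h^{\bj k} h_{k\bl,p} h^{\bl i}$, and then regroup the difference into the $\brs{\gU}^2$, $\tr_h Q$, and Chern curvature pieces. If anything, your outline is slightly more explicit about which terms assemble into $\brs{\gU}^2$ versus the curvature contraction than the paper, which simply writes out the combined coordinate expression and asserts the final identification.
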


\begin{lemma} \label{traceineq} Let $(M^{2m}, I, g_t)$ be a
solution to
pluriclosed flow, and let $h$ denote another Hermitian metric on $(M, I)$.  Then
there exists a constant $C$ depending on $h$ such that
\begin{align*}
\left( \dt - \gD^C_{g_t} \right) \log \tr_h g \leq&\ \brs{T}^2_g + C \tr_g h.
\end{align*}

\begin{proof} A direct calculation using Lemma \ref{traceev} implies that
\begin{align*}
 \left(\dt - \gD^C_{g_t} \right) \log \tr_h g =&\ \tfrac{1}{\tr_h g} \left[ -
\brs{\gU(g,h)}^2_{g^{-1},h^{-1},g} + \tr_h
Q - \tr_g h \tr_h g + n \tr_h g\right] + \tfrac{\brs{\N \tr_h g}^2}{(\tr_h
g)^2}.
\end{align*}
Since $h$ is K\"ahler we may choose complex normal coordinates at any given
point such
that
\begin{align*}
 h_{i \bj} = \gd_{ij}, \qquad g_{i \bj} = g_{i \bi} \gd_{ij}, \qquad \del_i h_{j
\bk} = 0.
\end{align*}
Then we may estimate using the Cauchy-Schwarz inequality
\begin{align*}
 \tfrac{\brs{\N \tr_h g}^2}{\tr_h g} =&\ \left( \sum_i g_{i \bi} \right)^{-1}
g^{\bj j} \N_j \tr_h g \N_{\bj} \tr_h g\\
 =&\ \left( \sum_i g_{i \bi} \right)^{-1}  \sum_j g^{\bj j} \left[ \sum_{k} \N_j
g_{k \bk} \sum_{l} \N_{\bj} g_{l \bl} \right]\\
 =&\ \left( \sum_i g_{i \bi} \right)^{-1}  \sum_j \left[ \sum_{k} \left[ (g^{\bj
j})^{\tfrac{1}{2}} \gU_{jk}^k (g_{k \bk})^{\tfrac{1}{2}} \right] (g_{k
\bk})^{\tfrac{1}{2}} \sum_{l} \left[ (g^{\bj j})^{\tfrac{1}{2}} \gU_{\bj
\bl}^{\bl} (g_{l \bl})^{\tfrac{1}{2}} \right] g_{l \bl} \right]\\
 \leq&\ \left( \sum_i g_{i \bi} \right)^{-1} \left( \sum_{j,k} g^{\bj j} g_{k
\bk} \gU_{j k}^k \gU_{\bj \bk}^{\bk} \right)^{\tfrac{1}{2}} \left( \sum_k g_{k
\bk} \right)^{\tfrac{1}{2}} \left( \sum_{j,l} g^{\bj j} g_{l \bl} \gU_{j l}^l
\gU_{\bj \bl}^{\bl} \right)^{\tfrac{1}{2}} \left( \sum_l g_{l \bl}
\right)^{\tfrac{1}{2}}\\
 \leq&\ \brs{\gU(g,h)}^2_{g^{-1},h^{-1},g}.
 \end{align*}
Moreover, note that in these same coordinates it follows that
\begin{align*}
(\tr_h g)^{-1} \tr_h Q =&\ \left( \sum_{i} g_{i \bar{i}} \right)^{-1} \sum_j
Q_{j \bar{j}} \leq \sum_i g_{i\bi}^{-1} Q_{i \bi} = \brs{T}_g^2.
\end{align*}
The result follows.
\end{proof}
\end{lemma}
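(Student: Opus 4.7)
The plan is to pass from the exact evolution equation for $\tr_h g$ in Lemma~\ref{traceev} to one for $\log \tr_h g$ via the standard identity
\begin{align*}
\left(\dt - \gD^C_{g_t}\right) \log \tr_h g = \tfrac{1}{\tr_h g}\left(\dt - \gD^C_{g_t}\right) \tr_h g + \tfrac{\brs{\N \tr_h g}^2}{(\tr_h g)^2},
\end{align*}
which reduces the claim to three pointwise estimates: a Cauchy--Schwarz bound controlling the gradient term by $\brs{\gU(g,h)}^2$; a bound $(\tr_h g)^{-1} \tr_h Q \leq \brs{T}^2_g$ for the torsion term; and a curvature estimate of the form $-(\tr_h g)^{-1} g^{\bq p}(\Omega^h)^{\bl k}_{p\bq} g_{k\bl} \leq C \tr_g h$. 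The first two must precisely cancel the negative $|\gU|^2$ contribution from Lemma~\ref{traceev} and produce the $\brs{T}^2_g$ summand, while the third absorbs the fixed background curvature into the constant $C$.

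To carry out all three estimates, I will work at a given point in complex coordinates that are normal for $h$ (so $h_{i\bj} = \gd_{ij}$ and $\del_i h_{j\bk} = 0$), simultaneously diagonalizing the Hermitian pair so that $g_{i\bj} = g_{i\bi} \gd_{ij}$ -- this is possible since both metrics are Hermitian positive definite. In these coordinates $\N_j \tr_h g = \sum_k \N_j g_{k\bk}$ can be written in terms of the difference tensor $\gU(g,h)$. A double application of Cauchy--Schwarz, first on the outer sum in $j$ and then on the inner sum in $k$, gives exactly
\begin{align*}
\tfrac{\brs{\N \tr_h g}^2}{\tr_h g} \leq \brs{\gU(g,h)}^2_{g^{-1}, h^{-1}, g},
\end{align*}
which cancels the corresponding negative term from Lemma~\ref{traceev}.

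For the torsion contribution, in the same diagonalizing coordinates the expression $(\tr_h g)^{-1} \tr_h Q = (\sum_i g_{i\bi})^{-1} \sum_j Q_{j\bj}$ is bounded by $\sum_i g_{i\bi}^{-1} Q_{i\bi} = \brs{T}^2_g$, using that each $Q_{j\bj}/g_{j\bj}$ is a nonnegative piece of the full squared norm $\brs{T}^2_g$ and $\sum_j g_{j\bj}/\sum_i g_{i\bi} \leq 1$. For the remaining curvature term, since $h$ is a fixed Hermitian metric on the compact manifold $M$, its Chern curvature tensor is uniformly bounded, so the estimate $\brs{g^{\bq p}(\Omega^h)^{\bl k}_{p\bq} g_{k\bl}} \leq C (\tr_g h)(\tr_h g)$ in the chosen frame yields the desired $C \tr_g h$ after dividing by $\tr_h g$.

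The main obstacle is the sharp Cauchy--Schwarz estimate producing the bound by $\brs{\gU(g,h)}^2$ with the \emph{exact} constant needed to cancel the corresponding negative term; if the bound were off by any factor, a residual positive $|\gU|^2$ term would remain in the final inequality and would not be controllable by $\brs{T}^2_g + C\tr_g h$ alone. The other steps are more routine given the simultaneous diagonalization and the boundedness of $\Omega^h$.
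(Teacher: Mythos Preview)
Your proposal is correct and follows essentially the same route as the paper: pass to $\log\tr_h g$, choose $h$-normal coordinates simultaneously diagonalizing $g$, use Cauchy--Schwarz to cancel the $\brs{\gU(g,h)}^2$ term against the gradient term, bound $(\tr_h g)^{-1}\tr_h Q$ by $\brs{T}^2_g$ via $Q_{j\bj}/\sum_i g_{i\bi}\le Q_{j\bj}/g_{j\bj}$, and absorb the background curvature into $C\tr_g h$. Your treatment of the curvature term is in fact cleaner than the paper's displayed formula (which contains an unexplained $-\tr_g h\,\tr_h g + n\,\tr_h g$ in place of the $\Omega^h$ term), and you should be aware that the coordinate condition $\del_i h_{j\bk}=0$ implicitly requires $h$ K\"ahler, as the paper itself notes mid-proof.
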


\subsection{Global existence}

In this subsection we establish global
existence of the generalized K\"ahler-Ricci flow on a hyper-K\"ahler background.

\begin{prop} \label{lteprop} Let $(M^{4n}, g, I, J)$ be a nondegenerate
generalized K\"ahler manifold, and suppose $I$ is a K\"ahler complex structure. 
Then the solution to generalized K\"ahler Ricci flow with
initial condition $(g,I,J)$ exists on $[0,\infty)$.
\begin{proof} We use a solution $(\gb_t,f_t)$ to (\ref{decflow}) as above.  Our
main goal is to establish, for each finite time interval, uniform upper and
lower bounds for the metric tensor and the torsion potential estimate.  The
proposition will then follow along the
lines of the proof of Theorem \ref{formalflowtheorem2}.  To that end we first
apply
Lemma \ref{volumeformev} choosing $h = g_{\mbox{\tiny HK}}$, so that $\rho_C(h)
=
\rho_C(g_{\mbox{\tiny HK}}) = 0$, to obtain
\begin{align*}
\left(\dt - \gD_{g_t}^C \right) \log \tfrac{\det g_t}{\det g_{\mbox{\tiny HK}}}
=
\brs{T}^2
\geq 0.
\end{align*}
The maximum principle then implies
\begin{align} \label{lteprop10}
\inf_{M \times \{t\}} \log \tfrac{\det g_t}{\det g_{\mbox{\tiny HK}}} \geq&\
\inf_{M \times
\{0\}} \log \tfrac{\det g_t}{\det g_{\mbox{\tiny HK}}}.
\end{align}
Also, we can set
\begin{align*}
W_1 = \log \tfrac{\det g_t}{\det g_{\mbox{\tiny HK}}} + \brs{\del \ga}^2,
\end{align*}
and then combining Lemma \ref{delalphaevlemma} with Lemma \ref{volumeformev}
we obtain
\begin{align*}
\left(\dt - \gD_{g_t}^C \right) W_1 \leq&\ 0.
\end{align*}
The maximum principle then implies
\begin{align} \label{lteprop20}
\sup_{M \times \{t\}} \log \tfrac{\det g_t}{\det g_{\mbox{\tiny HK}}} \leq
\sup_{M \times
\{t\}} W_1 \leq \sup_{M \times \{0\}} W_1 \leq C.
\end{align}
Hence we have established uniform upper and lower bounds on the volume form.  To
finish the proof of uniform metric equivalence it suffices to show an upper
bound for the metric.

To that end we let
\begin{align*}
W_2 = \log \tr_{g_{\mbox{\tiny HK}}} g + \brs{\del \ga}^2 - A f,
\end{align*}
where $A$ is a constant to be determined.  Combining Lemmas
\ref{delalphaevlemma}, \ref{traceineq}, and recalling (\ref{decflow}) we see
\begin{align*}
\left(\dt - \gD_{g_t}^C \right) W_2 \leq&\ \left( C - A \right) \tr_{g}
g_{\mbox{\tiny HK}} -
A \log \tfrac{\det g_t}{\det g_{\mbox{\tiny HK}}}.
\end{align*}
Choosing $A$ sufficiently large with respect to $C$ and recalling the previously
established bound for the volume form yields
\begin{align*}
\left(\dt - \gD_{g_t}^C \right) W_2 \leq C.
\end{align*}
Applying the maximum principle we see that
\begin{align*}
\sup_{M \times \{t\}} \log \tr_{g_{\mbox{\tiny HK}}} g - A f \leq&\ \sup_{M
\times \{t\}} W_2
\leq \sup_{M \times \{0\}} W_2 + C t \leq C(1 + t).
\end{align*}
Rearranging yields
\begin{align} \label{lteprop30}
\sup_{M \times \{t\}} \tr_{g_{\mbox{\tiny HK}}} g \leq&\ \sup_{M \times \{t\}}
e^{C (1 + t +
f)}.
\end{align}
Hence, to finish the proof it suffices to estimate $f$.  Since we are only
concerned with finite time intervals, it suffices to estimate $\tfrac{\del
f}{\del t}$.

Thus set
\begin{align*}
 W_3 =&\ \tfrac{\del f}{\del t} + \brs{\gb}^2 - A_1
\log \tfrac{\det g}{\det g_{\mbox{\tiny HK}}} + A_2 \brs{\N \Phi}^2 ,
\end{align*}
where $A_1$ and $A_2$ are positive constants to be determined below. 
Combining Lemmas \ref{dtfevlemma}, \ref{betaevlemma}, and \ref{volumeformev}
with Proposition \ref{gradspinorev} (n.b. the conversion from Riemannian
Laplacian to Chern Laplacian when changing from $B$-field gauge to $I$-fixed
gauge), we obtain
\begin{gather*}
\begin{split}
 \left( \dt - \gD^C_{g_t} \right) W_3 =&\ \IP{\tfrac{\del g}{\del t}, \delb \gb
+
\del \bgb} + \left[ - \brs{\N \gb}^2 -
\brs{\bar{\N} \gb}^2 -
\IP{Q, \gb \otimes \bar{\gb}} + 2 \Re \IP{\gb, T \circ \delb \gb} \right]\\
&\ - A_1 \brs{T}^2 + A_2 \left[ -2 \brs{\N^2 \Phi}^2 - \tfrac{1}{2} \IP{\HH, \N
\Phi \otimes \N \Phi}\right].
\end{split}
\end{gather*}
First observe that by the Cauchy-Schwarz inequality and the a priori estimate
for $\gb$ we have
\begin{align*}
 2 \Re \IP{\gb, T \circ \delb \gb} \leq&\ C \brs{T} \brs{\bar{\N} \gb} \leq
\tfrac{1}{2} \brs{\bar{\N} \gb}^2 + C \brs{T}^2.
\end{align*}
Thus choosing $A_1$ sufficiently large and applying the Cauchy-Schwarz
inequality to $\IP{\tfrac{\del g}{\del t}, \delb \gb + \del \bgb}$, and dropping
negative terms we obtain
\begin{gather} \label{lteprop40}
 \begin{split}
 \left( \dt - \gD^C_{g_t} \right) W_3 \leq&\ \brs{\tfrac{\del g}{\del t}}^2 -
\tfrac{A_1}{2} \brs{T}^2 - 2 A_2 \brs{\N^2 \Phi}^2.
\end{split}
\end{gather}
Now note from Proposition \ref{ricci-potential} that $\tfrac{\del g}{\del t}$
can
be expressed as the $(1,1)$ projection of the $J$-Chern Hessian of the Ricci
potential $\Phi$.  Combining this with (\ref{Chernformula}), there is a uniform
constant $C$ such that
\begin{align} \label{lteprop50}
 \brs{\tfrac{\del g}{\del t}}^2 \leq&\ C \left[ \brs{\N^2 \Phi}^2 + \brs{T}^2
\brs{\N \Phi}^2 \right].
\end{align}
Since $\brs{\N \Phi}^2$ is uniformly bounded by Proposition
\ref{gradspinordecay},
plugging (\ref{lteprop50}) into (\ref{lteprop40}) and choosing $A_1$ and $A_2$
sufficiently large with respect to the initial data, we have
\begin{gather}
 \left(\dt - \gD_{g_t}^C \right) W_3 \leq 0.
\end{gather}
The a priori estimate for $W_3$ follows by the maximum principle.  Since the
volume form is bounded below uniformly, this implies an upper bound for
$\tfrac{\del f}{\del t}$ as required.  A directly analogous estimate can yield a
lower bound for $\tfrac{\del f}{\del t}$, finishing the proof.
\end{proof}
\end{prop}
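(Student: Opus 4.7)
The plan is to reduce the flow to the parabolic system \eqref{decflow} for $(\beta_t, f_t)$ by exploiting the hyper-K\"ahler background. Via Proposition~\ref{pluriclosed-to-kahler} we may normalize $\gw_I = \gw_{\mbox{\tiny HK}} + \del \bar{\alpha} + \bar{\del}\alpha$ and take $\alpha_t = \beta_t - \i\del f_t$. Assuming uniform bounds on $g_t$ and $|\del \alpha_t|$ on any finite time interval, the regularity theory of~\cite{SBIPCF} (exactly as invoked in the proof of Theorem~\ref{formalflowtheorem2}) delivers the long-time existence. Since $|\del \alpha|$ is automatically controlled by Lemma~\ref{delalphaevlemma}, the task reduces to producing uniform two-sided metric bounds on compact time intervals.

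The first step is to bound the volume form $\det g_t/\det g_{\mbox{\tiny HK}}$. Because $g_{\mbox{\tiny HK}}$ is Chern--Ricci flat, Lemma~\ref{volumeformev} reads $(\del_t - \gD^C_g)\log(\det g_t/\det g_{\mbox{\tiny HK}}) = |T|^2 \geq 0$, so the minimum principle gives the lower bound. For the upper bound, combining with Lemma~\ref{delalphaevlemma} shows that $W_1 := \log(\det g_t/\det g_{\mbox{\tiny HK}}) + |\del \alpha|^2$ is a Chern-heat subsolution, the troublesome $+|T|^2$ being absorbed by the $-|T|^2$ in the evolution of $|\del \alpha|^2$.

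The second step is a trace upper bound on $\tr_{g_{\mbox{\tiny HK}}} g$. The test function $W_2 := \log \tr_{g_{\mbox{\tiny HK}}} g + |\del \alpha|^2 - Af$ is a subsolution for $A$ large, because Lemma~\ref{traceineq} produces a $C\tr_g g_{\mbox{\tiny HK}}$ term that is absorbed by the $-A\tr_g g_{\mbox{\tiny HK}}$ coming from $(\del_t - \gD^C_g) f = \tr_g g_{\mbox{\tiny HK}} + \log(\det g/\det g_{\mbox{\tiny HK}})$, while the $|T|^2$ term is handled by $|\del \alpha|^2$ and the $\log\det$ term by the previous step. The resulting maximum principle yields $\tr_{g_{\mbox{\tiny HK}}} g \leq \exp(C(1 + t + f))$, so everything reduces to bounding $f$, equivalently $\del_t f$, on finite time intervals.

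The main obstacle is this last estimate for $\del_t f$, and the key new input is the analysis of the Ricci potential $\Phi$ developed earlier. By Proposition~\ref{nondegspinorev}, $\Phi$ evolves by the pure Chern heat equation, so Proposition~\ref{gradspinordecay} gives the universal bound $|\N \Phi|^2 \leq C/t$; moreover, Proposition~\ref{ricci-potential} identifies $\del_t g$ with the $(1,1)$ part of $-\tfrac{1}{2}dJd\Phi$, yielding $|\del_t g|^2 \leq C(|\N^2 \Phi|^2 + |T|^2|\N\Phi|^2)$. The plan is to form $W_3 := \del_t f + |\beta|^2 - A_1 \log(\det g/\det g_{\mbox{\tiny HK}}) + A_2 |\N \Phi|^2$ and show it is a subsolution of the Chern heat equation for $A_1, A_2$ sufficiently large. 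Concretely, combining Lemmas~\ref{dtfevlemma}, \ref{betaevlemma}, \ref{volumeformev} with Propositions~\ref{gradspinordecay} and \ref{gradspinorev}, the cross term $\langle \del_t g, \bar{\del}\beta + \del \bar{\beta}\rangle$ from Lemma~\ref{dtfevlemma} is dominated via Cauchy--Schwarz by a bounded multiple of $|\N^2 \Phi|^2 + |T|^2$, the bad $2\Re\langle \beta, T\circ \bar{\del}\beta\rangle$ term in Lemma~\ref{betaevlemma} is absorbed into $|\bar{\N}\beta|^2 + |T|^2$, and these are in turn dominated by the good $-A_2 |\N^2\Phi|^2$, $-A_1 |T|^2$, and $-|\N\beta|^2$ contributions. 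A symmetric argument yields a matching lower bound on $\del_t f$, completing the proof.
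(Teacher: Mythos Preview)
Your proposal is correct and follows essentially the same route as the paper: the same reduction to \eqref{decflow}, the same three test functions $W_1,W_2,W_3$ built from $\log\det$, $|\del\alpha|^2$, $f$, $|\beta|^2$, and $|\N\Phi|^2$, and the same absorption of $|T|^2$, $|\N^2\Phi|^2$, and the cross term $\langle \del_t g,\bar\del\beta+\del\bar\beta\rangle$ via Cauchy--Schwarz together with the identity $|\del_t g|^2\le C(|\N^2\Phi|^2+|T|^2|\N\Phi|^2)$ coming from Proposition~\ref{ricci-potential}. The only cosmetic difference is that the paper cites Proposition~\ref{gradspinordecay} for the uniform bound on $|\N\Phi|^2$ (which strictly speaking blows up at $t=0$ and should be supplemented for short time by the subsolution property in Proposition~\ref{gradspinorev}, as you implicitly do).
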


\subsection{Weak convergence}

In this subsection we finish the proof of Theorem \ref{flowtheorem}.  First we
establish the convergence of the Ricci potential in $H_1^2$.  Then we derive specialized estimates to get the convergence to a closed current in the limit.

\begin{prop} \label{p:Phidecay} Let $(M^{4n}, g, I, J)$ be a nondegenerate
generalized K\"ahler manifold, and suppose that $I$ is a K\"ahler complex structure. If the solution to generalized K\"ahler
Ricci flow with
initial condition $(g,I,J)$ exists on $[0,\infty)$, then   there exists a constant $C$
such that
\begin{align*}
 \brs{\brs{\Phi - \gl}}_{H_1^2}^2 \leq C t^{-1}.
\end{align*}
\begin{proof} By the Poincar\'e Lemma for some background  K\"ahler metric $(\til g, I, \til{\gw})$, it
suffices to obtain the estimate for $\brs{\brs{d \Phi}}_{L^2_{\til g}}^2$. 
We proceed to estimate, using
properties of exterior algebra and Proposition~\ref{gradspinordecay},
\begin{align*}
 \brs{\brs{d \Phi}}_{L^2({\til g})}^2 =&\ \int_M \i \del \Phi \wedge \delb \Phi \wedge
\til{\gw}^{(m-1)}\\
 \leq &\ C\int_M \brs{d \Phi}_{\gw_t}^2 \gw_t \wedge {\tilde \omega}^{(m-1)}\\
 \leq&\ C t^{-1} \int_M \gw_t \wedge {\tilde \omega}^{(m-1)}\\
 =&\ C t^{-1} \int_M \left( \gw_{\tiny {\rm HK}} + \del \bga_t + \delb \ga_t \right) \wedge
{\tilde \omega}^{(m-1)}\\
 =&\ C t^{-1},
 \end{align*}
where the last line follows by Stokes Theorem and the fact that $\tilde \omega$ is  a closed $(1,1)$-form, and we recall that $m=2n$ is the complex dimension of $(M,I)$.
\end{proof}
\end{prop}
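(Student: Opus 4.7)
The plan is to leverage the gradient decay estimate from Proposition~\ref{gradspinordecay} and convert it into a $H^2_1$-bound with respect to a fixed background K\"ahler metric (which exists by hypothesis). Since $I$ is K\"ahler, I would fix a background K\"ahler metric $(\tilde g, I, \tilde \omega)$ on $M$. The first reduction is to note that a Poincar\'e inequality for $(M, \tilde g)$ applied to $\Phi - \overline\Phi$, combined with uniform $L^\infty$-control of $\Phi$ (from the pure heat equation satisfied by $\Phi$, Proposition~\ref{nondegspinorev}), reduces the desired estimate to showing
\begin{equation*}
\|d\Phi\|^2_{L^2(\tilde g)} \leq C t^{-1}.
\end{equation*}
A separate argument — using that $\lambda$ is precisely the cohomological constant fixed by the $\Omega$-Hamiltonian class via Lemma~\ref{lambdalemma} and the pointwise identity $\Phi = \log(F_+^{2n}/F_-^{2n})$ — handles the shift between the spatial mean $\overline\Phi$ and $\lambda$, so one really can put $\Phi - \lambda$ in the final statement.

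Next I would rewrite the $L^2(\tilde g)$-norm of $d\Phi$ as a wedge-product integral:
\begin{equation*}
\|d\Phi\|^2_{L^2(\tilde g)} = c_m \int_M \i\, \partial \Phi \wedge \bar\partial \Phi \wedge \tilde \omega^{m-1},
\end{equation*}
with $m=2n$. The key pointwise bound is the comparison
\begin{equation*}
\i\, \partial \Phi \wedge \bar \partial \Phi \wedge \tilde\omega^{m-1} \leq C\, |d\Phi|^2_{g_t}\, \omega_t \wedge \tilde \omega^{m-1},
\end{equation*}
which follows from elementary linear algebra on $(1,1)$-forms on a K\"ahler manifold, independent of the size of $g_t$ relative to $\tilde g$ (the decisive feature that allows us to avoid invoking any uniform metric equivalence, which is not yet available at this stage). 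Invoking the sharp pointwise bound $|d\Phi|^2_{g_t} \leq C t^{-1}$ from Proposition~\ref{gradspinordecay} gives
\begin{equation*}
\|d\Phi\|^2_{L^2(\tilde g)} \leq C t^{-1} \int_M \omega_t \wedge \tilde \omega^{m-1}.
\end{equation*}

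Finally, to control the right-hand integral uniformly in $t$, I would use the pluriclosed decomposition $\omega_t = \omega_{\mathrm{HK}} + \partial \bar\alpha_t + \bar \partial \alpha_t$ provided by Proposition~\ref{pluriclosed-to-kahler} and the setup of \S\ref{s:reducedflow}. Since $\tilde \omega$ is $d$-closed, Stokes' theorem kills the Aeppli-exact pieces and leaves
\begin{equation*}
\int_M \omega_t \wedge \tilde\omega^{m-1} = \int_M \omega_{\mathrm{HK}} \wedge \tilde \omega^{m-1},
\end{equation*}
a fixed cohomological pairing independent of $t$. Combining with the reduction above yields the claimed $\|\Phi - \lambda\|^2_{H^2_1} \leq C t^{-1}$. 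The step I expect to need the most care is the Poincar\'e/mean-zero reduction at the beginning: one must verify that the shift between $\overline\Phi$ and $\lambda$ is itself $O(t^{-1/2})$, which should follow from the same gradient decay combined with the integral identity $\int_M F_-^{2n}(e^{\Phi}-e^{\lambda}) = 0$ coming from Lemma~\ref{lambdalemma}. The rest is essentially a pointwise and cohomological manipulation, with no PDE-level difficulties.
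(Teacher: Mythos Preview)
Your proposal is correct and follows essentially the same route as the paper: reduce via Poincar\'e to $\|d\Phi\|^2_{L^2(\tilde g)}$, rewrite as a wedge integral, use the pointwise comparison $\i\partial\Phi\wedge\bar\partial\Phi\wedge\tilde\omega^{m-1}\le C|d\Phi|^2_{g_t}\,\omega_t\wedge\tilde\omega^{m-1}$, apply Proposition~\ref{gradspinordecay}, and kill the remaining integral cohomologically via $\omega_t=\omega_{\mathrm{HK}}+\partial\bar\alpha_t+\bar\partial\alpha_t$ and Stokes. You are in fact more careful than the paper on one point: the paper's proof invokes Poincar\'e and then simply writes $\Phi-\lambda$ in the conclusion, whereas you correctly flag that Poincar\'e only controls $\Phi-\overline\Phi$ and that the shift $|\overline\Phi-\lambda|$ requires the integral identity $\int_M(e^{\Phi}-e^{\lambda})F_-^{2n}=0$ from Lemma~\ref{lambdalemma} together with the $L^\infty$ bound on $\Phi$.
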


\begin{prop} \label{torsiondecayprop} Let $(M^{4n}, g, I, J)$ be a nondegenerate
generalized K\"ahler manifold, and suppose $I$ is a K\"ahler complex structure. 
Let $(g_t, I, J_t)$ be the solution to generalized K\"ahler Ricci flow in the
$I$-fixed gauge.  Choose $A > 0$ so that
\begin{align*}
 1 \leq W := \left( A - \log \frac{\det g}{\det g_{\mbox{\tiny{\rm HK}}}} \right)
\leq C,
\end{align*}
which exists by the estimates (\ref{lteprop10}) and (\ref{lteprop20}) of
Proposition \ref{lteprop}.  For $p > 1$ sufficiently large, one has
\begin{align*}
 \frac{d}{dt} \int_M W^p dV_g \leq&\ - \int_M \brs{T}^2 dV_g + C t^{-1}.
\end{align*}
\begin{proof} An elementary calculation yields
\begin{align*}
 \left( \dt - \gD^C_{g_t} \right) f^p =&\ p f^{p-1} \left(\dt - \gD^C_{g_t} \right) f -
p(p-1) f^{p-2} \brs{\N f}^2.
\end{align*}
Note also that (\ref{bogdan-stefan-2}) implies the general integration identity
\begin{align*}
 \int_M \gD^C_{g_t} f dV_g =&\ \int_M f \left( \brs{\theta}^2 - \tfrac{1}{2} \brs{T}^2
\right) dV_g.
\end{align*}
Combining these facts and using Lemma \ref{volumeformev},
\begin{align*}
 \frac{d}{dt} \int_M W^p dV_g =&\ \int_M \left[ \left( \dt W^p \right) + W^p
\left( \tr_{\gw_I} d J d \Phi \right) \right]dV_g\\
 =&\ \int_M \left\{ \left[ \gD^C_{g_t} W^p + p W^{p-1} \left( - \brs{T}^2 \right) -
p(p-1) W^{p-2} \brs{\N W}^2 \right] + W^p \left( \tr_{\gw_I} d J d \Phi \right)
\right\}dV_g\\
 \leq&\ \int_M \left\{ W^p \brs{\theta}^2 - p W^{p-1} \brs{T}^2 - p(p-1) W^{p-2}
\brs{\N W}^2 + W^p \tr_{\gw_I} d J d \Phi \right\} dV_g\\
 \leq&\ \int_M \left\{ - \frac{p}{2 C} W^{p} \brs{T}^2 - p(p-1) W^{p-2} \brs{\N
W}^2 + W^p \tr_{\gw_I} d J d \Phi \right\} dV_g\\
 =&\ A_1 + A_2 + A_3.
 \end{align*}
Note that the second inequality follows by choosing $p$ large with respect to
the bounds on $W$.  It remains to estimate $A_3$.  To that end we have
 \begin{align*}
  A_3 =&\ \int_M W^p d J d \Phi \wedge \gw_I^{n-1}\\
  =&\ \int_M p W^{p-1} d W \wedge J d \Phi \wedge \gw_I^{n-1} + W^p J d \Phi
\wedge d \gw_I \wedge \gw_I^{n-2}\\
  \leq&\ \gd_1 \int_M p W^{p-2} \brs{\N W}^2 dV_g + C \gd_1^{-1} \int_M \brs{\N
\Phi}^2 dV_g + \gd_2 \int_M W^p \brs{T}^2 dV_g + C \gd_2^{-1} \int_M \brs{\N \Phi}^2
dV_g\\
  \leq&\ \tfrac{1}{2} A_1 + \tfrac{1}{2} A_2 + C t^{-1},
 \end{align*}
 where the last line follows by choosing $\gd_1$ and $\gd_2$ small with respect
to universal constants, then applying Proposition \ref{gradspinordecay}.
The proposition follows.
\end{proof}
\end{prop}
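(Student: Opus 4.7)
The plan is to directly differentiate the integral $\int_M W^p \, dV_g$ and track carefully the time derivative of $W$, the volume form, and the integration by parts terms that arise when handling the Chern Laplacian and the metric evolution. The key inputs are Lemma~\ref{volumeformev} (with $h = g_{\mbox{\tiny HK}}$, so that $\rho_C(h) = 0$), the integration formula $\int_M \gD_C f\,dV_g = \int_M f(|\theta|^2 - \tfrac{1}{2}|T|^2)\,dV_g$ (which follows from $\gD_C = \gD_g - \langle d\cdot,\theta\rangle$ together with $|d\gw|^2 = \delta\theta + |\theta|^2$), and the gradient decay of the Ricci potential in Proposition~\ref{gradspinordecay}.

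First I would compute $\dt W = \gD_C W - |T|^2$ using Lemma~\ref{volumeformev}, then apply the standard chain rule identity
\begin{equation*}
\dt W^p = \gD_C W^p - p(p-1)W^{p-2}|\N W|^2 - pW^{p-1}|T|^2.
\end{equation*}
Combining this with $\dt dV_g = \tr_{\gw_I}(\dt\gw_I)\,dV_g = \tr_{\gw_I}(dJd\Phi)\,dV_g$ (using Lemma~\ref{l:flow}(1) and Proposition~\ref{ricci-potential}) gives
\begin{equation*}
\tfrac{d}{dt}\int_M W^p\,dV_g = \int_M \left[ \gD_C W^p - p(p-1)W^{p-2}|\N W|^2 - pW^{p-1}|T|^2 + W^p\tr_{\gw_I}(dJd\Phi)\right] dV_g.
\end{equation*}
Applying the integration formula for $\gD_C$ converts $\int_M \gD_C W^p\,dV_g$ into $\int_M W^p(|\theta|^2 - \tfrac{1}{2}|T|^2)\,dV_g$. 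Since $\theta$ is the trace of the torsion (cf.~\eqref{Lee1}), one has $|\theta|^2 \leq C_{\dim}|T|^2$, and because $1 \leq W \leq C$ we get $W^p|\theta|^2 \leq C W^{p-1}|T|^2$; choosing $p$ sufficiently large then absorbs this into the $-pW^{p-1}|T|^2$ term, and using $W\geq 1$ bounds the result by $-\tfrac{p}{2C}W^p|T|^2$.

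The last term is where the main work lies. I would rewrite $W^p\tr_{\gw_I}(dJd\Phi)\,dV_g$ as a multiple of $W^p\, dJd\Phi\wedge \gw_I^{2n-1}$ and integrate by parts twice:
\begin{equation*}
\int_M W^p\,dJd\Phi\wedge \gw_I^{2n-1} = -p\int_M W^{p-1}\,dW\wedge Jd\Phi\wedge \gw_I^{2n-1} - (2n-1)\int_M W^p\, Jd\Phi\wedge d\gw_I\wedge\gw_I^{2n-2}.
\end{equation*}
For the first term I apply Cauchy--Schwarz with a small parameter $\gd_1$ to split it into a piece controlled by $\gd_1\, p(p-1)W^{p-2}|\N W|^2$ (absorbed into the good gradient term) and a remainder bounded by $C\gd_1^{-1}W^p|\N\Phi|^2$. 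For the second term, $d\gw_I$ is a torsion quantity, so Cauchy--Schwarz with a small parameter $\gd_2$ gives a piece controlled by $\gd_2 W^p|T|^2$ (absorbed into the good torsion term) and a remainder $C\gd_2^{-1}W^p|\N\Phi|^2$. Since $W\leq C$ and Proposition~\ref{gradspinordecay} gives $|\N\Phi|^2 \leq C t^{-1}$, each remainder contributes at most $Ct^{-1}$.

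The main obstacle is the second integration by parts, where one must exploit that the cocycle $d\gw_I$ is precisely the torsion and therefore bounded by $|T|$ with the correct absorbability constant; this is also what forces the choice of $p$ large in the first step, so the $|\theta|^2$ term (from the $\gD_C$ integration identity) is dominated. Once these absorptions are done, collecting the bad terms as $Ct^{-1}$ and keeping the leading $-\tfrac{p}{2C}W^p|T|^2 \leq -\int_M |T|^2\,dV_g$ (using $W\geq 1$ and adjusting the constant $C$) yields the stated inequality.
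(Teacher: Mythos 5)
Your proposal is correct and follows essentially the same route as the paper: the chain rule for $W^p$ combined with Lemma~\ref{volumeformev}, the integration identity $\int_M \gD_C f\,dV_g = \int_M f(\brs{\theta}^2 - \tfrac{1}{2}\brs{T}^2)\,dV_g$, absorption of the $\brs{\theta}^2$ term by taking $p$ large, and a double integration by parts on the $dJd\Phi$ term with small-parameter Cauchy--Schwarz and Proposition~\ref{gradspinordecay}. The only (cosmetic) difference is your bookkeeping of signs and wedge powers in the integration by parts, which does not affect the estimates.
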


\begin{lemma} \label{L2lemma10} Let $(M^{2m}, \gw, J)$ be a Hermitian manifold,
with $\gw'$
another Hermitian metric.  Given $\mu \in \wedge^{m,m-2}(M, I)$ one has
\begin{align*}
 \brs{\brs{\mu}}_{L^2(g)}^2 \leq&\ \sup_M \brs{\mu}_{\gw'}^2 \left(\frac{\det
g}{\det g'} \right)^{-1} \int_M \gw \wedge \gw \wedge (\gw')^{m-2}.
\end{align*}
\begin{proof} Fix a point $p \in M$ and choose complex coordinates such that
\begin{align*}
 \gw'_{i \bj} = \gd_{ij}, \qquad \gw_{i \bj} =&\ \gl_{i} \gd_{ij}
\end{align*}
Then we observe that
\begin{align*}
 \brs{\mu}^2_{\gw} dV_g =&\ \mu_{i_1\dots i_m \bj_1 \dots \bj_{m-2}} \bmu_{\bi_1
\dots \bi_{m} j_1 \dots j_{m-2}} g^{\bi_1 i_1} \dots g^{\bi_m i_m} g^{\bj_1 j_1}
\dots g^{\bj_{m-2} j_{m-2}} \left( \prod_{k=1}^m g_{k \bk} dV_{g'} \right)\\
\leq&\ \brs{\mu}_{\gw'}^2 \sum_{1 \leq j_1 < \dots < j_{m-2} \leq m}
\gl_{j_1}^{-1} \dots \gl_{j_{m-2}}^{-1} dV_{g'}\\
=&\ \brs{\mu}_{\gw'}^2 \left( \frac{\det g}{\det g'} \right)^{-1}
\left(\frac{\det g}{\det g'} \right) \sum_{1 < j_1 \dots < j_{m-2} \leq m}
\gl_{j_1}^{-1} \dots \gl_{j_{m-2}}^{-1} dV_{g'}\\
=&\ \brs{\mu}_{\gw'}^2 \left( \frac{\det g}{\det g'} \right)^{-1} \sum_{1 \leq
j_1 < j_2 \leq m} \gl_{j_1} \gl_{j_1} dV_{g'}\\
=&\ \brs{\mu}_{\gw'}^2 \left( \frac{\det g}{\det g'} \right)^{-1} \gw \wedge \gw
\wedge (\gw')^{m-2}.
\end{align*}
Integrating yields the result. 
\end{proof}
\end{lemma}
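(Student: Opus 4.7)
My plan is to prove this by a pointwise computation at an arbitrary point $p\in M$ followed by an integration. The key observation is that the type constraint $\mu\in\wedge^{m,m-2}(M,I)$ forces $\mu$ to have a unique holomorphic multi-index (namely $1\,2\cdots m$, up to antisymmetry), and this makes a diagonalization argument very clean.

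First, at $p$ I would fix complex coordinates diagonalizing the two Hermitian metrics simultaneously, writing $\omega'_{i\bar j}=\delta_{ij}$ and $\omega_{i\bar j}=\lambda_i\delta_{ij}$ for positive eigenvalues $\lambda_1,\ldots,\lambda_m$. In these coordinates $\det g/\det g'=\prod_i\lambda_i$ and $dV_g=(\prod_i\lambda_i)dV_{g'}$. Because $\mu$ has $m$ holomorphic indices, the only nonvanishing components are of the form $\mu_{1\cdots m\bar j_1\cdots\bar j_{m-2}}$ with $j_1<\cdots<j_{m-2}$. Contracting with $g^{-1}$ in each slot yields
\[
|\mu|_\omega^2\,dV_g=\Big(\prod_i\lambda_i\Big)^{-1}\sum_{j_1<\cdots<j_{m-2}}\frac{|\mu_{1\cdots m\bar j_1\cdots\bar j_{m-2}}|^2}{\lambda_{j_1}\cdots\lambda_{j_{m-2}}}\cdot\Big(\prod_i\lambda_i\Big)\,dV_{g'},
\]
so that the $\prod\lambda_i$ from the holomorphic indices cancels the one from $dV_g$, leaving
\[
|\mu|_\omega^2\,dV_g=\sum_{j_1<\cdots<j_{m-2}}\frac{|\mu_{1\cdots m\bar j_1\cdots\bar j_{m-2}}|^2}{\lambda_{j_1}\cdots\lambda_{j_{m-2}}}\,dV_{g'}.
\]

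Next I would compute the right-hand side in the same coordinates. Expanding $\omega\wedge\omega\wedge(\omega')^{m-2}$ and reindexing by the complement $\{k_1,\ldots,k_{m-2}\}=\{1,\ldots,m\}\setminus\{i,j\}$ of the two indices picked out by the two factors of $\omega$, one obtains a multiple of $\sum_{k_1<\cdots<k_{m-2}}\lambda_{i}\lambda_{j}\,dV_{g'}$ where $\{i,j\}$ is the complement of $\{k_1,\ldots,k_{m-2}\}$; multiplying by $(\det g/\det g')^{-1}$ cancels all but the eigenvalues indexed by $k_1,\ldots,k_{m-2}$, giving
\[
\Big(\frac{\det g}{\det g'}\Big)^{-1}\omega\wedge\omega\wedge(\omega')^{m-2}\;=\;c_m\sum_{k_1<\cdots<k_{m-2}}\frac{1}{\lambda_{k_1}\cdots\lambda_{k_{m-2}}}\,dV_{g'},
\]
for a universal combinatorial constant (which I would absorb into normalizations). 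Comparing term by term with the expression for $|\mu|_\omega^2\,dV_g$ and bounding each numerator $|\mu_{1\cdots m\bar j_1\cdots\bar j_{m-2}}|^2$ by the full sum $|\mu|_{\omega'}^2$ (this is where the sup norm enters) yields the pointwise inequality
\[
|\mu|_\omega^2\,dV_g\;\le\;|\mu|_{\omega'}^2\,\Big(\frac{\det g}{\det g'}\Big)^{-1}\omega\wedge\omega\wedge(\omega')^{m-2}.
\]

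Finally, integrating over $M$ and pulling $\sup_M|\mu|_{\omega'}^2$ out of the integral gives the claimed $L^2$-bound. The whole argument is a bookkeeping exercise rather than a genuine difficulty; the only minor point to be careful with is the combinatorial matching between the holomorphic-antiholomorphic index pairs of $\mu$ and the pair of indices singled out by $\omega\wedge\omega$, which is guaranteed by the type condition $(m,m-2)$ so that every antiholomorphic index set has a canonical complementary pair.
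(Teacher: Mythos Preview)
Your proposal is correct and follows essentially the same approach as the paper: diagonalize both metrics at a point, exploit the type $(m,m-2)$ constraint to cancel the $\prod_i\lambda_i$ from the holomorphic slots against the volume-form factor, bound each component squared by $|\mu|^2_{\omega'}$, and identify the remaining eigenvalue sum with $(\det g/\det g')^{-1}\omega\wedge\omega\wedge(\omega')^{m-2}$ before integrating. Your write-up is in fact more explicit than the paper's about the cancellation and the complementary-index combinatorics; the only loose end is the constant $c_m$, which under the paper's normalization conventions is $1$, so there is nothing to absorb.
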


\begin{prop} \label{closedlimitprop} Let $(M^{4n}, g, I, J)$ be a nondegenerate
generalized K\"ahler manifold, and suppose $I$ is a K\"ahler complex structure. 
Let $(g_t, I, J_t)$ be the solution to generalized K\"ahler Ricci flow in the
$I$-fixed gauge.  Suppose $\{t_j\} \to \infty$ is a sequence such that
\begin{align*}
\lim_{j \to \infty} (\gw_I)_{t_j} = \gw_I^{\infty}, \qquad \lim_{j \to \infty}
\int_M \brs{T}_{g_{t_j}}^2 dV_{g_{t_j}} = 0,
\end{align*}
where $\gw_I^{\infty}$ is a positive $(1,1)$ current and the convergence is in the topology of
currents.  Then $\gw_I^{\infty}$ is closed.
\begin{proof} We will denote $(g_t, (\gw_I)_{t})$ by $(g, \gw)$ in this proof for notational
simplicity.  We fix a form $\mu \in \wedge^{m-1,m-2}(M, I)$ and compute
\begin{align*}
\int_M \gw \wedge \delb \mu =&\ \int_M \delb \gw \wedge \mu\\
=&\ \int_M \delb \del \bgb \wedge \mu\\
=&\ \int_M \delb \bgb \wedge \del \mu\\
\leq&\ \brs{\brs{\delb \bgb}}_{L^2(g)} \brs{\brs{\del \mu}}_{L^2(g)}.
\end{align*}
Note first the estimate using Lemma \ref{L2lemma10}, (\ref{lteprop10}) and the fact that the deRham class of the corresponding symplectic form $F_+=(F_+)_t$ does not change along the flow
\begin{align*}
 \brs{\brs{\del \mu}}_{L^2(g)}^2 \leq&\ C \sup_{M} \left( \frac{\det g}{\det
g_{\mbox{\tiny{HK}}}} \right)^{-1} \int_M \gw \wedge \gw \wedge
\gw_{\mbox{\tiny{HK}}}^{m-2} \\
 \leq &  C \sup_{M} \left( \frac{\det g}{\det
g_{\mbox{\tiny{HK}}}} \right)^{-1} \int_M F_+ \wedge F_+\wedge
\gw_{\mbox{\tiny{HK}}}^{m-2}  \leq  C.
 \end{align*}
Also we estimate using Corollary \ref{betaestcor} and Lemma \ref{delalphaevlemma}.
\begin{align*}
\brs{\brs{\delb \bgb}}_{{L^2}(g)}^2 =&\ \int_M \delb \bgb \wedge \del \gb \wedge
\gw^{m-2}\\
=&\ \int_M \bgb \wedge \delb \del \gb \wedge \gw^{m-2} + (m-2) \int_M\gb \wedge
\del \gb \wedge \delb \gw \wedge \gw^{m-3}\\
\leq&\ \sup_M \brs{\gb} \int_M \brs{T} dV_g + C \sup_M \brs{\gb} \brs{\del \gb}
\int_M \brs{T}_g dV_g\\
\leq&\ C \left( \int_M \brs{T}^2 dV_g \right)^{\tfrac{1}{2}}
\Vol(g)^{\tfrac{1}{2}}\\
=&\ o(j^{-1}).
\end{align*}
Combining these estimates it follows that
\begin{align*}
\int_M \gw_I^{\infty} \wedge \delb \mu = 0,
\end{align*}
as required.
\end{proof}
\end{prop}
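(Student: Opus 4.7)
My plan is to verify closedness of the limit current $\gw_I^{\infty}$ by pairing against smooth test forms. Since $\gw_I^{\infty}$ is a real $(1,1)$-current on $(M,I)$ of complex dimension $m=2n$, showing $d \gw_I^{\infty} = 0$ reduces by bidegree and reality considerations to proving
\begin{equation*}
\lim_{j \to \infty} \int_M \gw_{t_j} \wedge \delb \mu = 0
\end{equation*}
for every smooth $\mu \in \wedge^{m-1,m-2}(M,I)$; the assumed current convergence makes this reduction legitimate.

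The central tool is the Aeppli-potential reduction of \S\ref{s:reducedflow}: along the flow $\gw_I = \gw_{\mbox{\tiny HK}} + \del \bga + \delb \ga$ with $\gw_{\mbox{\tiny HK}}$ closed, and the refinement $\ga = \gb - \i \del f$ ensures the uniform bound $\sup_M \brs{\gb}_{g_t} \leq C$ of Corollary \ref{betaestcor} and (via $\del \gb = \del \ga$) the bound on $\brs{\del \gb}$ from Lemma \ref{delalphaevlemma}. Since $\gw_{\mbox{\tiny HK}}$ is closed and $\delb^2 = 0$, one has $\delb \gw_I = \delb \del \bgb$; two applications of Stokes' theorem then give
\begin{equation*}
\int_M \gw_I \wedge \delb \mu = \pm \int_M \delb \bgb \wedge \del \mu,
\end{equation*}
and Cauchy-Schwarz supplies
\begin{equation*}
\Bigl| \int_M \gw_I \wedge \delb \mu \Bigr| \leq \nm{\delb \bgb}{L^2(g_t)} \cdot \nm{\del \mu}{L^2(g_t)}.
\end{equation*}

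For the right factor, Lemma \ref{L2lemma10} reduces the bound to controlling $\int_M \gw_I \wedge \gw_I \wedge \gw_{\mbox{\tiny HK}}^{m-2}$, which is in turn dominated by the cohomological constant $\int_M F_+ \wedge F_+ \wedge \gw_{\mbox{\tiny HK}}^{m-2}$ (the difference $F_+ - \gw_I$ being of pure bidegree $(2,0)+(0,2)$, while $[F_+]$ is preserved by Lemma \ref{l:flow}(2) and Lemma \ref{l:non-degenerate-symplectic}). For the left factor, I expand
\begin{equation*}
\nm{\delb \bgb}{L^2(g_t)}^2 = c \int_M \delb \bgb \wedge \del \gb \wedge \gw_I^{m-2}
\end{equation*}
and integrate by parts once more. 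Each surviving term couples $\sup_M \brs{\gb}_{g_t}$ or $\sup_M \brs{\del \gb}_{g_t}$ against an integral involving $\del \gw_I$ or $\delb \gw_I$, each pointwise controlled by $\brs{T}_{g_t}$. A final Cauchy-Schwarz yields
\begin{equation*}
\nm{\delb \bgb}{L^2(g_t)}^2 \leq C \Bigl( \int_M \brs{T}_{g_t}^2 \, dV_{g_t} \Bigr)^{1/2} \mathrm{Vol}(g_t)^{1/2},
\end{equation*}
which tends to zero along $\{t_j\}$ by hypothesis, since the volume is uniformly bounded via Proposition \ref{lteprop}.

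The main obstacle is the absence of any pointwise control on $\ga$; only $\brs{\gb}$ and $\brs{\del \ga} = \brs{\del \gb}$ are globally bounded along the flow. The integration-by-parts chain must therefore be engineered so that $\ga$ never appears undifferentiated, and each surviving term must carry precisely one factor of $\brs{T}$ so as to exploit the $L^2$-torsion decay hypothesis through Cauchy-Schwarz. The Aeppli parameterization of Proposition \ref{pluriclosed-to-kahler} is essential on both sides of the pairing: its closed background $\gw_{\mbox{\tiny HK}}$ supplies the cohomological control of $\nm{\del \mu}{L^2}$, while the identity $\delb \gw_I = \delb \del \bgb$ keeps only the uniformly bounded potential $\gb$ in the expression that must be shown to decay.
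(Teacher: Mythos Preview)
Your proposal is correct and follows essentially the same approach as the paper's proof: the same test-form reduction, the same integration-by-parts chain exploiting $\delb \gw_I = \delb \del \bgb$, the same Cauchy--Schwarz split, the same use of Lemma~\ref{L2lemma10} together with the cohomological invariance of $[F_+]$ and the volume-form bound (\ref{lteprop10}) to control $\nm{\del \mu}{L^2(g_t)}$, and the same integration by parts on $\nm{\delb \bgb}{L^2(g_t)}^2$ using Corollary~\ref{betaestcor} and Lemma~\ref{delalphaevlemma} to extract a factor of $\bigl(\int_M \brs{T}^2 dV_g\bigr)^{1/2}$. Your closing paragraph accurately identifies the reason the argument must route through $\gb$ rather than $\ga$.
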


\begin{prop} \label{p:closedlimitexists} Let $(M^{4n}, g, I, J)$ be a
nondegenerate
generalized K\"ahler manifold, and suppose $I$ is a K\"ahler complex structure. 
Let $(g_t, I, J_t)$ be the solution to generalized K\"ahler Ricci flow in the
$I$-fixed gauge.  There exists a sequence $\{t_j\} \to \infty$ and a closed positive
current $\gw_I^{\infty}$ such that 
 \begin{align*}
  \lim_{j \to \infty} (\gw_I)_{t_j} = \gw_I^{\infty}.
 \end{align*}
\begin{proof} First, since the quantity $W$ of Proposition~\ref{torsiondecayprop} is positive, it follows that there must exist some
sequence $\{t_j\} \to \infty$ such that
\begin{align*}
 \lim_{j \to \infty} \int_M \brs{T}^2_{g_{t_j}} dV_{g_{t_j}} = 0.
\end{align*}
Indeed, if $\liminf_{t \to \infty} \int_M \brs{T}^2 dV_g = \gd > 0$, then for sufficiently large $t > 0$ Proposition \ref{torsiondecayprop} yields
\begin{align*}
\frac{d}{dt} \int_M W^p dV_g \leq&\ - \tfrac{\gd}{2},
\end{align*}
which eventually yields a negative value for $\int_M W^p dV_g$, a contradiction.
Furthermore, by Lemma~\ref{oneformreduction} we have $(\omega_I)_t=
\omega_{\mbox{\tiny HK}} + \partial \bar \alpha_t + \bar \partial \alpha_t$,
and therefore
$$\int_M (\omega_I)_t \wedge \omega_{\mbox{\tiny HK}}^{2n-1} = \int_M
\omega_{\mbox{\tiny HK}}^{2n}.$$
By Banach-Alaoglu Theorem (see \cite{demailly-book}, Chapter III Proposition
1.23), the sequence $(\omega_I)_{t_j}$ weakly subsequently converges to a
positive current $\omega_I^{\infty}$.  We have thus obtained a sequence of times
satisfying the hypotheses of Proposition~\ref{closedlimitprop}, and hence
$\gw_I^{\infty}$ is closed.
\end{proof}
\end{prop}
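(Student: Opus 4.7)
The plan is to combine the energy-type decay of Proposition~\ref{torsiondecayprop} with a compactness argument for positive currents, and then invoke Proposition~\ref{closedlimitprop}. The three steps are (i) extract times along which the $L^2$-norm of the torsion tends to zero, (ii) refine to a subsequence along which $(\omega_I)_{t_j}$ converges weakly as currents, and (iii) conclude closedness of the limit from Proposition~\ref{closedlimitprop}.

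For step (i), I would argue by contradiction. Since $W$ lies in $[1, C]$, the function $t \mapsto \int_M W^p dV_g$ is uniformly bounded above and below. If $\liminf_{t \to \infty} \int_M |T|^2 dV_g \ge \delta > 0$, then for all $t$ sufficiently large Proposition~\ref{torsiondecayprop} gives
\[
\frac{d}{dt} \int_M W^p\, dV_g \le -\tfrac{\delta}{2} + C t^{-1} \le -\tfrac{\delta}{4},
\]
which integrates to a negative value of $\int_M W^p dV_g$, contradicting positivity. Hence there exists a sequence $\{t_j\} \to \infty$ with $\int_M |T|^2_{g_{t_j}}\, dV_{g_{t_j}} \to 0$.

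For step (ii), the required mass bound on the positive $(1,1)$-currents $(\omega_I)_{t_j}$ is cohomological. By Lemma~\ref{oneformreduction}, $(\omega_I)_t = \omega_{\mbox{\tiny HK}} + \partial \bar\alpha_t + \bar\partial \alpha_t$, so the Aeppli class is preserved and in particular
\[
\int_M (\omega_I)_{t}\wedge \omega_{\mbox{\tiny HK}}^{2n-1} = \int_M \omega_{\mbox{\tiny HK}}^{2n}
\]
is independent of $t$. Since $\omega_{\mbox{\tiny HK}}$ is a fixed K\"ahler metric, this controls the mass of the $(\omega_I)_{t_j}$ against a strictly positive test form. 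Banach--Alaoglu (cf.\ \cite{demailly-book}, Ch.~III Prop.~1.23) then yields a further subsequence, still denoted $\{t_j\}$, along which $(\omega_I)_{t_j} \to \omega_I^{\infty}$ weakly in the sense of currents, with $\omega_I^{\infty}$ a positive $(1,1)$-current.

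For step (iii), the refined subsequence satisfies both hypotheses of Proposition~\ref{closedlimitprop}, which then gives $d\omega_I^{\infty} = 0$. The main issue to watch is the order of extraction: one must first pass to the torsion-decaying subsequence provided by the energy estimate, and only then use the Banach--Alaoglu mass bound to extract a weakly convergent sub-subsequence, so that both properties required by Proposition~\ref{closedlimitprop} hold simultaneously. The analytic content beyond this is mild; the nontrivial input (a decay estimate on $\int_M |T|^2 dV_g$ along some sequence, together with closedness of the limit) has already been packaged into Propositions~\ref{torsiondecayprop} and \ref{closedlimitprop}.
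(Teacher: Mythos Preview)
Your proposal is correct and follows essentially the same approach as the paper's proof: the contradiction argument via Proposition~\ref{torsiondecayprop} to extract a torsion-decaying sequence, the cohomological mass bound from Lemma~\ref{oneformreduction} to apply Banach--Alaoglu, and then Proposition~\ref{closedlimitprop} for closedness. Your remark on the order of extraction (first torsion decay, then weak compactness) is exactly the point, and your handling of the $Ct^{-1}$ term in step (i) is slightly more explicit than the paper's but equivalent.
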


\begin{proof}[Proof of Theorem \ref{flowtheorem}] The claims of the theorem
follow from Propositions \ref{lteprop}, \ref{p:Phidecay} and
\ref{p:closedlimitexists}.
\end{proof}

\begin{proof}[Proof of Corollary \ref{c:toriconv}] Exponential convergence of pluriclosed flow on tori to a flat K\"ahler metric is established in \cite{SBIPCF} Theorem 1.1.  Thus the GKRF will also converge as claimed.  Since GKRF preserves the $\Omega$-Hamiltonian deformation class by Theorem \ref{flowandGIT}, Conjecture \ref{CYconj} follows in this case as a consequence.
\end{proof}

\bibliographystyle{hamsplain}

\end{document}